\crefname{hypothesis}{Hypothesis}{Hypotheses}
\title{Optimality Conditions for Multivariate Chebyshev Approximation: A Survey
\thanks{{\bf Funding:} This author was funded by the CNRS grant Emergence AMEGO.
}}
\author{Alexandre Goldsztejn\thanks{Nantes Université, École Centrale Nantes, IMT Atlantique, CNRS, LS2N, UMR 6004, F-44000 Nantes, France (\email{alexandre.goldsztejn@cnrs.fr}%
  ).}
}
\newcommand{\lb}[1]{\underline{#1}}
\newcommand{\ub}[1]{\overline{#1}}
\newcommand{\lx}{\lb{x}}
\newcommand{\ux}{\ub{x}}
\newcommand{\ba}{{\ub{a}}}
\newcommand{\bp}{\ub{p}}
\newcommand{\bg}{\ub{g}}
\newcommand{\pLP}{\tilde{p}}
\newcommand{\aLP}{\tilde{a}}
\newcommand{\xLP}{\tilde{x}}
\newcommand{\sLP}{\tilde{s}}
\newcommand{\lLP}{\tilde{\lambda}}
\newcommand{\SLP}{\tilde{S}}
\newcommand{\aNEW}{\ub{a}}
\newcommand{\xNEW}{\ub{x}}
\newcommand{\SNEW}{\ub{S}}
\newcommand{\lNEW}{\ub{\lambda}}
\DeclareMathOperator{\one}{\boldsymbol{1}}
\DeclareMathOperator{\abs}{abs}
\DeclareMathOperator{\conv}{conv}
\DeclareMathOperator{\sign}{sign}
\DeclareMathOperator{\Sign}{Sign}
\DeclareMathOperator{\act}{act}
\DeclareMathOperator{\ext}{ext}
\DeclareMathOperator{\sig}{\Sigma}
\DeclareMathOperator{\interior}{int}
\newcommand{\N}{\mathbb{N}}
\newcommand{\R}{\mathbb{R}}
\newcommand{\B}{\mathbb{B}}
\newcommand{\Sp}{\mathbb{S}}
\newcommand{\C}{\mathcal C}
\newcommand{\V}{\mathcal P}
\newcommand{\F}{\mathcal F}
\newcommand{\tX}{\tilde X}
\begin{document}

\maketitle

\begin{abstract}
Uniform polynomial approximation, also called minimax approximation or Chebyshev approximation, consists in searching polynomial approximation that minimizes the worst case error. Optimality conditions for the uniform approximation of univariate functions defined in an interval are governed by the equioscillation theorem, which is also a key ingredient in algorithms for computing best uniform approximation, like Remez's algorithm and the two-step approach. Multivariate polynomial approximation is more complicated, and several optimality conditions for uniform multivariate polynomial approximation generalize the equioscillation theorem. We review these conditions, including, from oldest to newest, Kirchberger's kernel condition, Kolmogorov criteria, Rivlin and Shapiro's annihilating measures. An emphasis is given to conditions for strong optimality, which have some strong theoretical and practical importance, including Bartelt's and Smarzewsky's conditions. Optimality conditions related to more general relative Chebyshev centers are also presented, including Tanimoto's and Levis et al.'s conditions. In a second step, conditions obtained by standard convex analysis, subdifferential and directional derivative, applied to uniform approximation are formulated. Their relationship to previous conditions is investigated, providing sometimes enlightening interpretations of the laters, e.g., relating Kolmogorov criterion with directional derivative, and strong uniqueness with sharp minimizers. Finally, numerical applications of the two-step approach to three uniform approximation problems are presented, namely the approximation of the multidimensional Runge function, the approximation of the two dimensional inverse model of the DexTAR parallel robot, and the approximation problem consisting in minimizing the sum of both the polynomial approximation error and the polynomial evaluation error in Horner form.
\end{abstract}

\begin{keywords}
Chebyshev approximation problem, multivariate polynomial approximation, optimality conditions, strong uniqueness, convex optimization
\end{keywords}

\begin{MSCcodes}
41A50 (best approximation, Chebyshev systems), 41A63 (multidimensional problems), 41A52 (uniqueness of best approximation), 90C25 (convex programming), 41-02 (research exposition, survey article)
\end{MSCcodes}


\vspace{0.2cm}
\tableofcontents


\section{Introduction}\label{s:intro}

Polynomial approximation ranges from very practical considerations, arising from the need of having simple polynomial expressions to model complex phenomena, to deep theoretical quantitative and qualitative information on approximability of certain classes of functions. A cornerstone in the theory of polynomial approximation of continuous univariate real valued functions of a compact interval is Weierstrass approximation theorem, which simply but accurately states that such functions can be approximated by polynomials uniformly with arbitrary precision. Approximation in the sense of the uniform norm corresponds to minimizing the worst case error in the domain, and is also called minimax approximation, or Chebyshev approximation. Using the usual notation $E_n(f)$ for the error of the best uniform approximation of $f$ by polynomial of degree less or equal to $n$, Weierstrass Approximation Theorem says that $E_n(f)$ converges to zero as $n$ goes to infinity. Providing some quantitative information about the asymptotic of $E_n(f)$ has been thoroughly investigated, starting by the concomitant works of Bernstein and Jackson, who discovered direct and converse theorems relating properties of functions to be approximated, e.g., Lipschitz continuity or differentiability, with the rate of convergence of their uniform polynomial best approximations. Among direct theorems, if $f$ is $p$ times differentiable with $p^\mathrm{th}$ derivative Lipschitz continuous ($p=0$ meaning $f$ is simply Lipschitz) then $E_n(f)=O(\tfrac{1}{n^{p+1}})$, formalizing the intuitive idea that the smoother a function the better it is approximated by polynomials. A remarkable converse theorem states that if $E_n(f)$ converges quick enough to zero so that the series $\sum_{n=1}^\infty E_n(f)\, n^{p-1}$ converges then $f$ has to be $p$ times continuously differentiable.
The interested reader is referred to the passionating historical book~\cite{Steffens2006} and to~\cite{Meinardus1967,Holland1981,Powel1981,Cheney1982,Trefethen2019} (\cite[Jackson's Theorem V page 147]{Cheney1982} and~\cite[Chapter 16]{Powel1981} provide some statements with explicit constants and elementary proofs).
%

\begin{figure}[t!]
	\centering
	\includegraphics[width=0.45\linewidth]{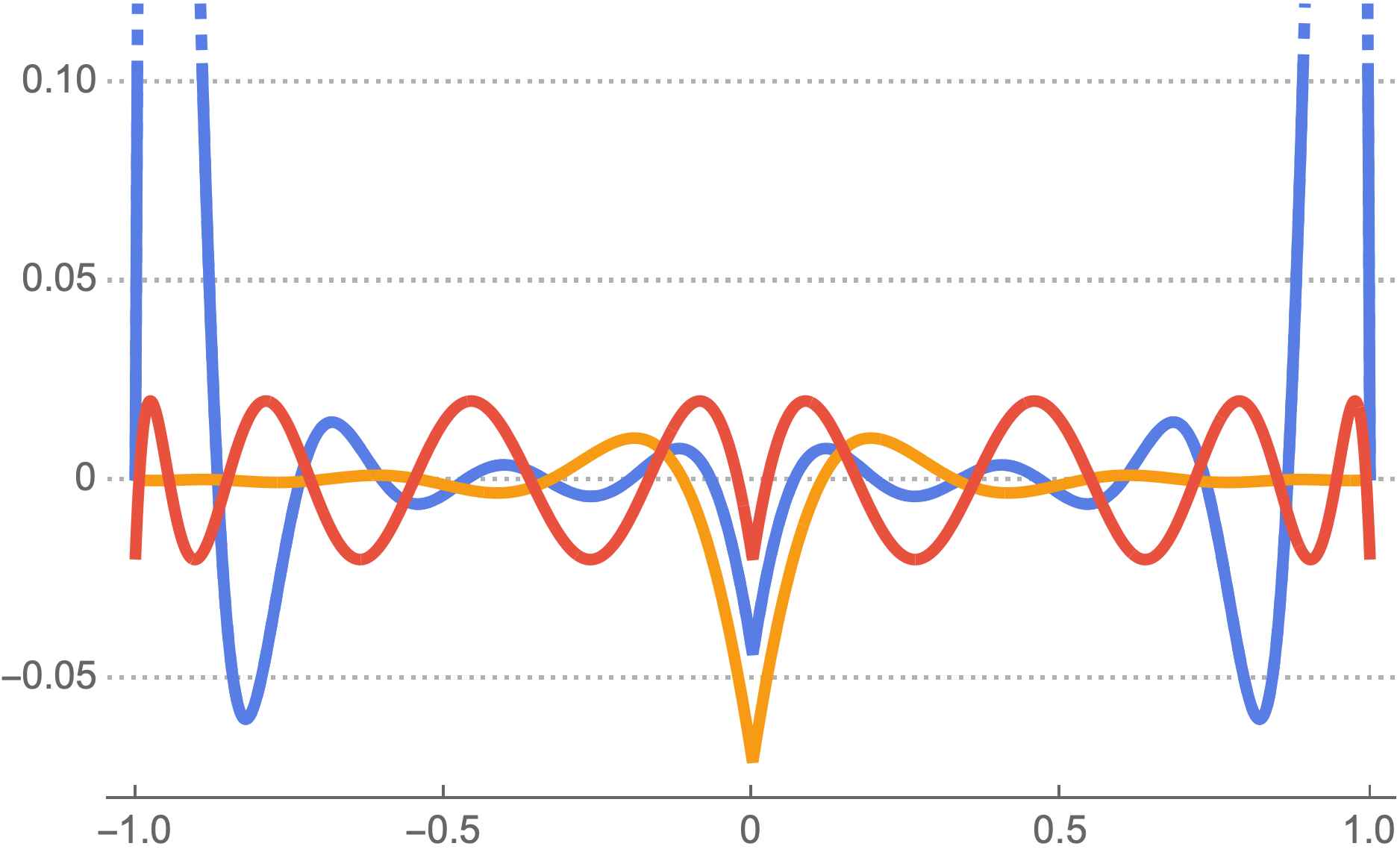}\hspace{1cm}\includegraphics[width=0.45\linewidth]{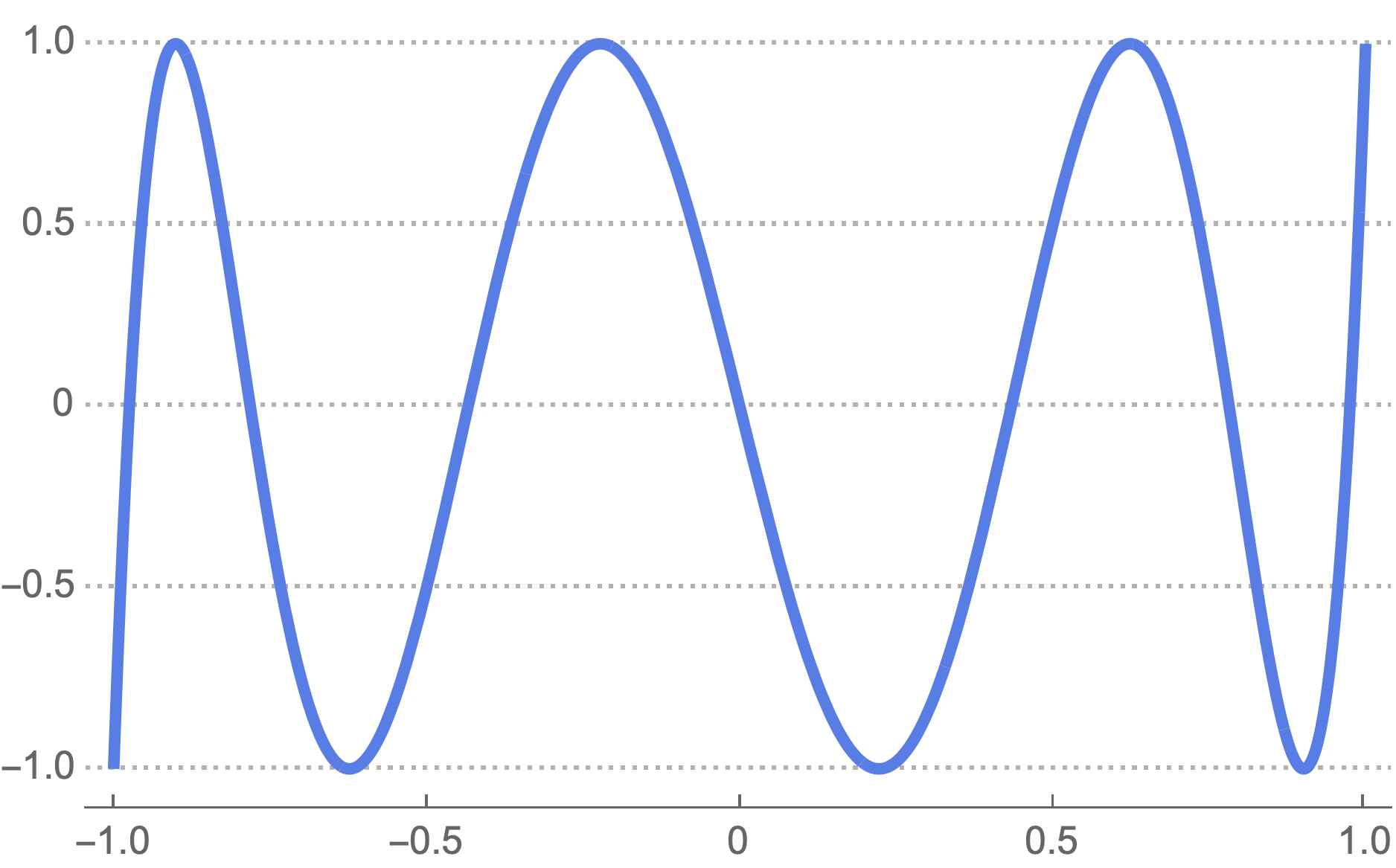}
	\caption{Left plot: error functions for three degree $15$ polynomials approximating $\abs$ in the interval $[-1,1]$ (blue, orange and red corresponding to equidistant nodes interpolation, Chebyshev nodes interpolation and best uniform). Right plot: $T_7(x)=\cos(7\arccos(x))=64 x^7- 112 x^5+ 56 x^3-7 x  $, which has $8$ oscillating extrema. As a consequence $p(x)=112 x^5- 56 x^3+7 x $ is a best degree $6$ uniform approximation of $f(x)=64x^7$.\label{fig:univariate-polynomial-approximtion}}
\end{figure}

The more practical problem of actually building polynomial approximations has been tackled in several ways. The simplest way is to interpolate a degree $n$ polynomial at given $n+1$ points, which offers cheap and accurate polynomial approximations provided that interpolation points are chosen correctly. Using equidistant interpolation points leads to the Runge phenomenon~\cite{Platte2011,Corless2020} and to poor quality approximations, e.g., the left plot of Figure~\ref{fig:univariate-polynomial-approximtion} shows in blue the degree $15$ polynomial interpolated at equidistant nodes for approximating the absolute value function, which shows a high error near the interval endpoints. Using Chebyshev interpolation nodes leads to accurate polynomial approximations, e.g., the left plot of Figure~\ref{fig:univariate-polynomial-approximtion} shows in orange the degree $15$ polynomial interpolated at Chebyshev nodes for the absolute value function, which shows a very good error, although this approximation is seen to be more accurate at the endpoints where more Chebyshev nodes are located than at the center of the domain. Furthermore, this latter enjoys a close to best uniform error: Bernstein~\cite[Section 4 page 6]{Bernstein1918}\footnote{This old paper of Bernstein is available on the editor's website.
} proved that the uniform approximation error of the degree $n$ polynomial interpolated at Chebyshev nodes is asymptotically $O(E_f(n)\,\log(n))$, hence at a factor $\log(n)$ of the best uniform approximation.
Interpolation at Chebyshev nodes is often preferred to the slightly more accurate projection on the orthogonal basis Chebyshev polynomials, which requires computing integrals, see~\cite[Chapter 7]{Trefethen2019}. Searching for the actual best polynomial that minimizes the uniform error started from practical engineering considerations with the works on Poncelet and Chebyshev on parallelogram mechanisms, as explained in details in the historical texts~\cite{Goncharov2000,Steffens2006}. The best uniform approximation is also commonly called Chebyshev approximation. Very soon after appeared the celebrated optimality condition for Chebyshev approximation, called here the equioscillation theorem, to be pronounced with a charming Ukrainian accent: a degree $n$ polynomial $p$ is a best uniform approximation of a continuous function $f$ inside $[a,b]$ if and only if the approximation error $p-f$ has $n+2$ (global) extrema $a\leq x_0<\cdots<x_{n+1}\leq b$ with oscillating errors, so that extrema are minimizers and maximizers of the error with the same magnitude. This best uniform approximation is now known to be furthermore unique, and even strongly unique\footnote{Strong uniqueness is defined in Section~\ref{ss:strong-uniqueness}. It is a crucial property that has been thoroughly investigated in the context of uniform approximation, which is in particular related to the success of discretization-based algorithms~\cite[Section 12 page 74]{Kroo2010}.}. It seems to be agreed that the theorem was known by Chebyshev and finally proved by Borel, see again~\cite{Goncharov2000,Steffens2006}. The approximation error of the best degree $15$ polynomial uniform approximation of $\abs$ is shown in red in the left plot of Figure~\ref{fig:univariate-polynomial-approximtion}, where we see $17$ oscillating extrema. Such optimality conditions bring some understanding on the problem of uniform approximation, and have some strong impact on practical methods for computing them either formally (e.g., the Chebyshev polynomials $T_n(x)=\cos(n\arccos(x))=2^{n-1}x^n+\cdots$ is a degree $n$ polynomial, which equioscillates, since $n\arccos(x)$ ranges over $[0,n\pi]$ when $x\in[-1,1]$, see the right plot of Figure~\ref{fig:univariate-polynomial-approximtion} for $n=7$, hence the degree $n-1$ polynomial $p(x)=2^{n-1}x^n-T_n(x)$ is a best uniform approximation of $f(x)=2^{n-1}x^n$ with error $p(x)-f(x)=-T_n(x)$), or numerically (e.g., Remez' exchange algorithm and the simple two phase method both rely on the equioscillation theorem, see~\cite{Watson2000}).

We now turn our attention to the general case of approximating continuous real-valued functions defined in an arbitrary compact Hausdorff\footnote{Spaces that are compact but not Hausdorff are exotic spaces, where continuity and limit of sequences are not well defined.} set $X$. The domain $X$ is typically a compact subset of $\R^m$, and we speak about approximating a function of $m$ variables, and of multivariate approximation if $m\geq2$. The space of continuous functions $C(X)$ is endowed with the uniform norm, and we consider a finite dimensional subspace $\V$ of $C(X)$. Elements of $\V$ are called generalized polynomials. The best uniform approximation of $f\in C(X)$ by generalized polynomials from $\V$ is the projection of $f$ onto $\V$, i.e., a generalized polynomial with minimal uniform distance to $f$. Given a basis $\{\phi_1,\ldots,\phi_n\}\subseteq\V$ of $\V$, the best uniform approximation problem becomes the minimax optimization problem
\begin{equation}
	\min_{a\in\R^n} \ \max_{x\in X} \ \Bigl|\sum_{i=1}^na_i\phi_i(x)-f(x)\Bigr|.
\end{equation}
For example, approximation of two variables functions by polynomials of degree two can be performed with the basis $\{1,x_1,x_2,x_1x_2,x_1^2,x_2^2\}$. In this general framework, the Stone–Weierstrass theorem generalizes Weierstrass theorem, and proves in particular that multivariate polynomials are dense in $C(X)$. Jackson type theorems are known for multivariate approximation, relating smoothness of the multivariate function $f$ to be approximated to the asymptotic of $E_n(f)$, e.g., the work of Paul Montel~\cite{Montel1918} at the beginning of the twentieth century for box domains,~\cite{Newman1964} for some ball domains,~\cite{Ganzburg1981} for some more general convex domains, and~\cite{Plesniak2009,Totik2020} for more general non-convex domains. Actually building multivariate polynomial approximations is much more complicated than the univariate case. Distributions of nodes in domains of multivariate functions may lead to singular interpolation matrices, see~\cite{Neidinger2019}, while explicit generalizations of Chebyshev nodes are known only for the two variable square domains $X=[a,b]\times[c,d]$, so-called Fekete and Padua points, leading to uniform approximation error $O(E_f(n)\,\log(n)^2)$, see~\cite[Theorem 3]{Bos2006} and~\cite{Bos2010}. On the other hand, current computing capabilities of computers encourage building multivariate polynomial approximations using multivariate least square regression, which intrinsically removes the singularity issue of multivariate interpolation by using more nodes than necessary. It can produce good polynomial approximations but asks for efficient sampling strategies~\cite{Guo2020}.

While in the context of univariate approximation, the compromise between complexity of computation and accuracy favors the interpolation at Chebyshev nodes, multivariate best uniform approximation challenges other techniques in the multivariate case, where their complexities all sensibly increase. Multivariate best uniform approximation also comes with its own burden of complications. Even in the favorable situation of approximation with multivariate polynomials, uniqueness or strong uniqueness of best uniform approximation are not granted. In fact, for a fixed subspace $\V$ of dimension $n$, strong uniqueness of best uniform approximation independently of the function to be approximated is well-known to be equivalent to the so-called Haar condition~\cite[Theorem 3]{Newman1963}, that each non null element of $\V$ has at most $n-1$ zeros inside $X$.
Subspaces satisfying the Haar condition are often called Chebyshev sets. Univariate polynomials of degree $n$ form a $n+1$ dimensional Chebyshev set since they cannot have more than $n$ roots unless being identically zero.
However, Marhuber's theorem~\cite{Mairhuber1956} shows that if $X\subseteq \R^m$ satisfies the Haar condition then it is homeomorphic to a segment or a circle, that is, multivariate approximation cannot satisfy the Haar condition. See~\cite[Chapter 2]{Alimov2021} for more details. Non-strong uniqueness is actually common in multivariate approximation, such approximation problems being called singular by several authors~\cite{Osborne1969,Reemtsen1990,Watson2000} because the linear problems arising in discretization methods tends to become singular if strong uniqueness is lacking. It is also common that multivariate approximation problems have several solutions, hence infinitely many solutions since uniform approximation is convex by nature, the optimal solutions form a convex set. This was recently analyzed in~\cite{Roshchina2024}, where the authors proved that the degree $6$ bivariate polynomial $x_1^6 + x_2^6 + 3x_1^4x_2^2 + 3x_1^2x_2^4 + 6x_1x_2^2-2x_1^3$ has infinitely many best uniform quadratic approximations. Some multivariate Chebyshev polynomials were computed in~\cite{Dressler2024}, using a specific algorithm exploiting the moment-SOS, aka Lasserre hierarchy~\cite{Lasserre2001}, that allows handling non-uniqueness of the corresponding uniform approximation problems.

\begin{figure}[t!]
	\includegraphics[width=0.28\linewidth]{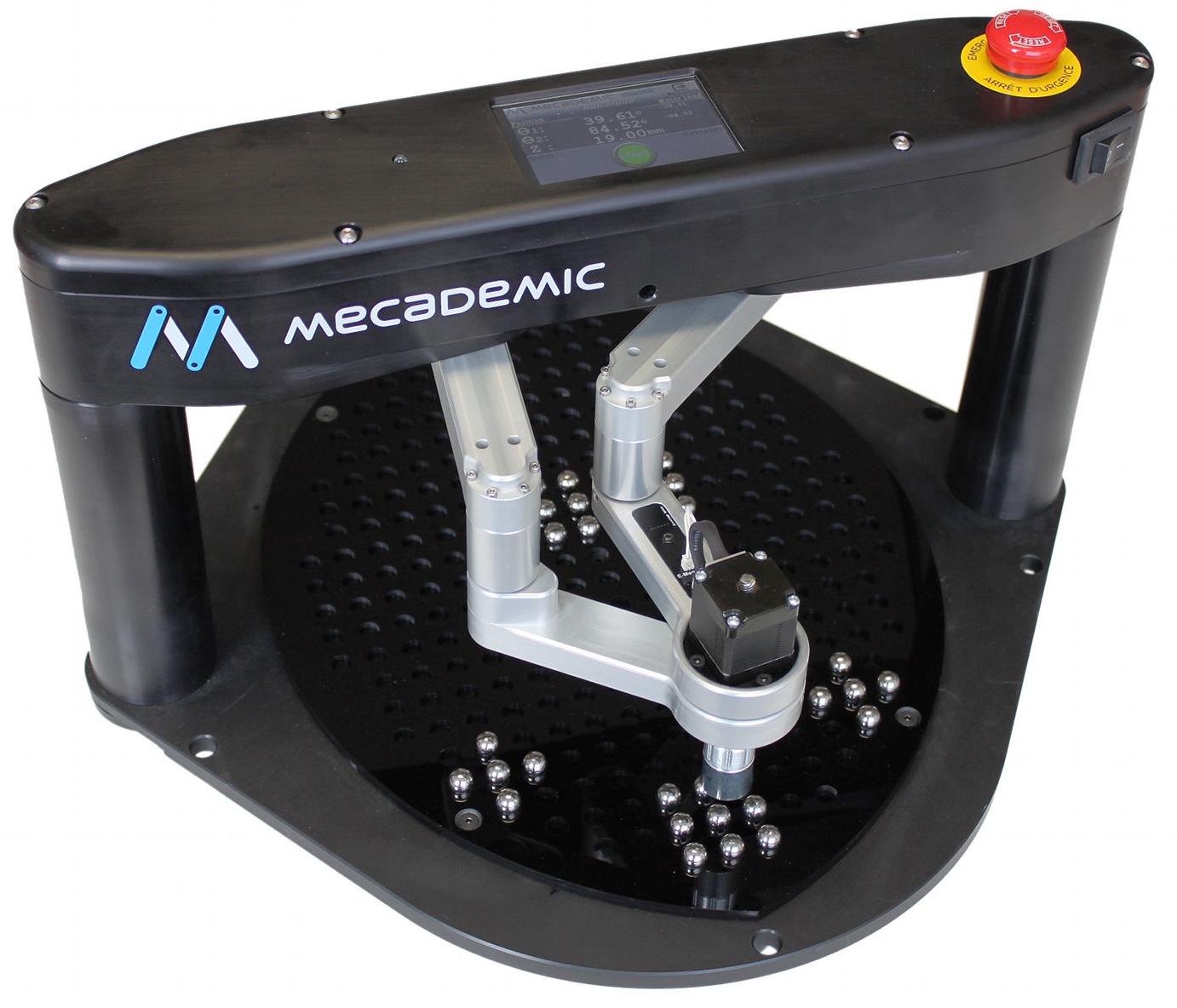}\hfill\includegraphics[width=0.34\linewidth]{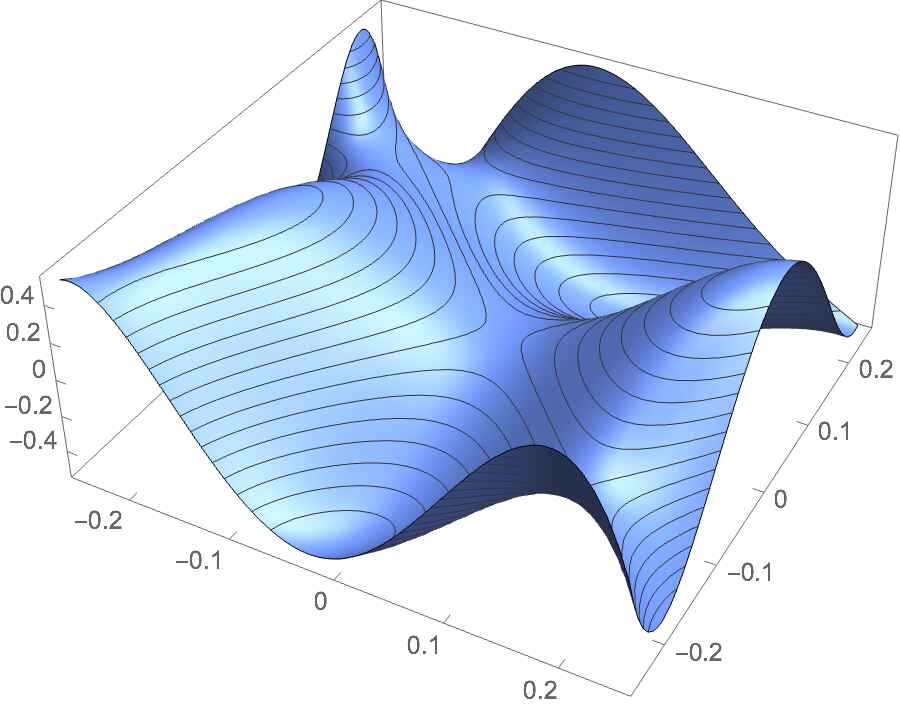}\hfill\includegraphics[width=0.34\linewidth]{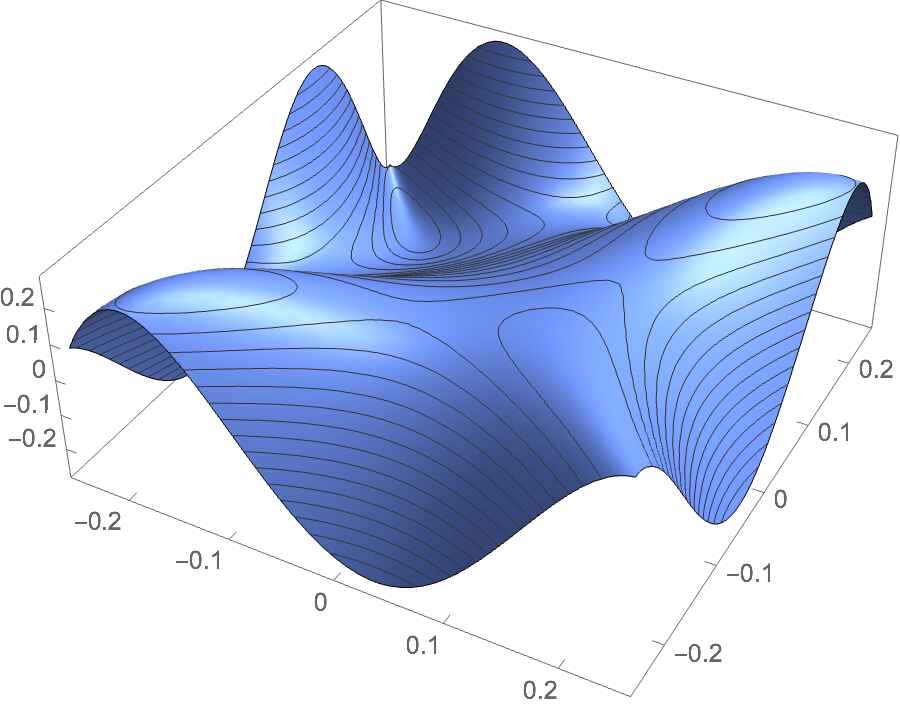}
	\caption{On the left, the DexTAR parallel robot, whose arm lengths are $90$ millimeters. Center and right plots show the error functions of best uniform degree three polynomial approximations of each coordinate $x_1$ and $x_2$ the position of the end effector for a range of actuated joint angles inside $[-0.25,0.25]\times[-0.25,0.25]$ (in radians), showing a maximal error of $0.53$ millimeter for $x_1$ and $0.28$ millimeter for $x_2$.\label{fig:DexTAR}}
\end{figure}
As a consequence, uniqueness and strong uniqueness have to be characterized depending on the function to be approximated, even in the favorable case of multivariate polynomial approximation, and optimality conditions for multivariate approximation usually have variants that include uniqueness and strong uniqueness. Best uniform multivariate approximations are also characterized through the extrema of their error, similarly to the equioscillation theorem, but multivariate error functions and their extrema are much richer. Figure~\ref{fig:DexTAR} shows the DexTAR parallel robot~\cite{Koessler2020}, and the two dimensional error functions of the best degree three polynomial approximations $p_1(\theta)$ and $p_2(\theta)$ of its direct geometric models $x_1(\theta)$ and $x_2(\theta)$, which maps actuated joint angular coordinates $\theta=(\theta_1,\theta_2)$ to each end-effector coordinates $x_1$ and $x_2$. Such polynomial approximations of direct geometric models are useful for example in the context of continuum parallel robots inside some singularity free workspace~\cite{Briot2022,Boyer2023}, where the evaluation of the direct geometric model requires the resolution  of a boundary valued problem, which needs to be solved at the frequency of the controller (usually one KHz). Comparing univariate and multivariate error functions in Figure~\ref{fig:univariate-polynomial-approximtion} and Figure~\ref{fig:DexTAR}, it is obvious that the characterization of multivariate best uniform approximations cannot be as simple as the equioscillation theorem. The first formulation of the optimality condition for best uniform multivariate approximation is due to Kirchberger~\cite{Kirchberger1902-PhD,Kirchberger1903} at the beginning of the twentieth century. Kirchberger's condition was sharpened and other conditions were later discovered. These conditions are presented in Section~\ref{s:classical-optimality-conditions}, where conditions applying to relative Chebyshev centers are presented homogeneously with conditions for uniform approximation (relative Chebyshev centers define worst case best uniform approximation for a set of target functions, instead of a single target function, somehow introducing some kind of robustness in uniform approximation, see Section~\ref{ss:def-center}). The focus given here to the approximation of real valued functions allows applying standard optimality condition of convex optimization to uniform approximation. They are presented in Section~\ref{s:convex-optimality-conditions}, together with basic techniques for subdifferential computation tailored to uniform approximation, homogenizing and providing new lights on classical optimality conditions for uniform approximation. Three numerical examples are presented in Section~\ref{s:numerical-applications}: the first shows the approximation of the 2D Runge function. The second shows the polynomial approximation of the direct geometric model of the DexTAR. The last shows how subdifferential calculus can help finding optimality conditions and numerical algorithms for a non-standard approximation problem that consists in minimizing the evaluation error with the uniform approximation error, here in the univariate case for simplicity.

%


\section{Formal definition of the problems}

Chebyshev approximation problems and relative Chebyshev centers are defined in the following two subsections. The notion of strong uniqueness of an optimal generalized polynomial is presented in the last subsection.

\subsection{Chebyshev approximation problems}

The space $\C(X)$ of real-valued continuous functions defined on the compact Hausdorf set $X$ is endowed with the uniform norm $\|f\|_\infty=\max_{x\in X}|f(x)|$, where the domain of the function is implicitly $X$ throughout the paper. Typically, $X$ is a compact subset of $\R^m$, and we speak about approximating functions of $m$ variables. We consider a $n$-dimensional subspace $\V$ and $n$ basis functions $\phi_i:X\rightarrow\R$, which are stacked in the basis function vector $\phi:X\rightarrow\R^n$ defined by $\phi(x)=(\phi_1(x),\ldots,\phi_n(x))$. Elements $p(x)=\phi(x)^Ta=a_1\,\phi_1(x)+\cdots+a_n\,\phi_n(x)$ of $\V$, where $a\in\R^n$ are the coordinates of $p$ in the basis, are called generalized polynomials. 
\begin{remark}
Generalized polynomials $p\in\V$ are considered as functions with domain $X$, and the dimension of $\V$ may depend on this domain. As an extreme example, it is easy to see that with $X=\{0\}$ and $\phi(x)=(1,x,x^2,\ldots,x^n)$ the canonical basis of degree $n$ polynomials, the dimension of $\V$ is actually $1$. In this case, $\phi(x)^Ta$ is to be understood as a function from $\{0\}$ to $\R$, therefore $1$ and $1+a_1x+a_2x^2+\cdots+a_nx^n$ are two representations of the same function, and the dimension of $\V$ is one. The requirement that the dimension of $\V$ is equal to the the number of basis function is equivalent to requiring that there exists $x_1,\ldots,x_n\in X$ such that the matrix $\bigl(\phi(x_1)\cdots\phi(x_n)\bigr)\in\R^{n\times n}$ is nonsingular, so that two generalized polynomials are equal to each other if and only if their coordinates in the basis agree. It is also equivalent to $\|\phi(\cdot)^Ta\|_\infty$ is a norm on the space $\R^n$ of coefficients in the basis of $\V$. This is the typical situation, e.g., for the canonical polynomial basis $\phi(x)=(1,x,x^2,\ldots,x^n)$, the matrix $\bigl(\phi(x_1)\cdots\phi(x_{n+1})\bigr)$ is a Vandermonde matrix which is regular hence the requirement is met as soon as $X$ contains $n+1$ distinct elements. The failure of this requirement leads to non practical situations where the functions $\phi_i(x)$ span $\V$ but are not a basis.
\end{remark}
We also consider one continuous function $f:X\rightarrow \R$ that does not belong to $\V$. The uniform approximation problem, or minimax approximation problem, or Chebyshev approximation problem, consists in computing the generalized polynomials that are closest to $f$ for the uniform norm on $X$:
\begin{equation}\label{eq:chebyshev}
	\min_{p\in \V}\|p-f\|_{\infty}= \min_{a\in\R^n} \ \max_{x\in X} \ \bigl| \ \phi(x)^Ta-f(x) \ \bigr|.
\end{equation}
It is well known that this problem has at least one optimal solution. Since we assume $f\notin \V$, the above minimum is strictly positive.

The extremal points of a function $e:X\rightarrow \R$ are denoted by $\ext(e)=\{x\in X:|e(x)|=\|e\|_X\}$, i.e., the maximizers of $|e|$. The extremal points of the error function, i.e., $\ext(p-f)$, are of central importance for optimality conditions. A signature is a subset of $X\times\{-1,1\}$, which associates signs to some elements $X$. The support of the signature is its projection onto $X$. A signature is naturally associated to a non identically zero continuous function $e:X\rightarrow\R$, which maps its extremal points to their sign:
\begin{equation}\label{eq:def-sig}
\sig(e)=\bigl\{(x,s)\in X\!\times\!\{\pm1\}:e(x)=s\,\|e\|_{\infty}\bigr\}.
\end{equation}
The signature of the error function $\sig(p-f)$ associates the sign of the error to its extremal points. It is also of central importance for optimality conditions. The following example will be used throughout the next sections to illustrate optimality conditions.

\begin{figure}
	\centering
	\includegraphics[width=0.35\linewidth]{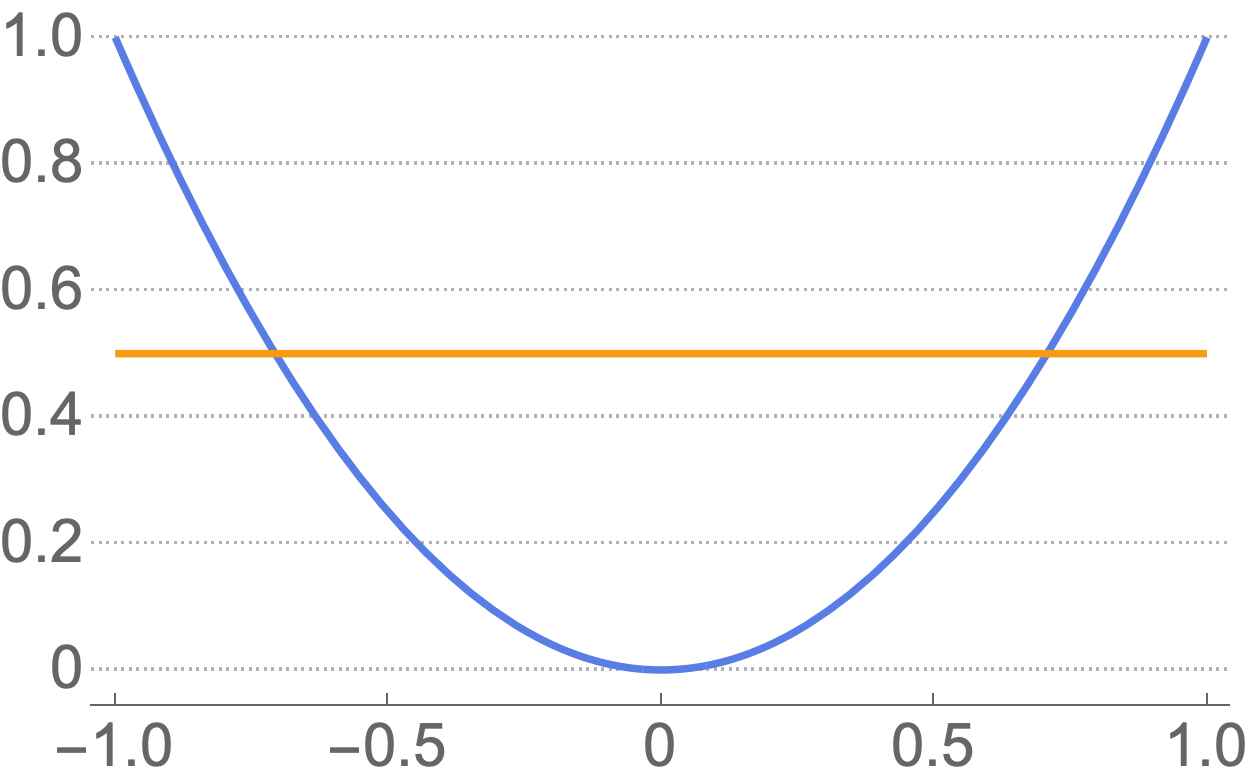}\hspace{1cm}\includegraphics[width=0.48\linewidth]{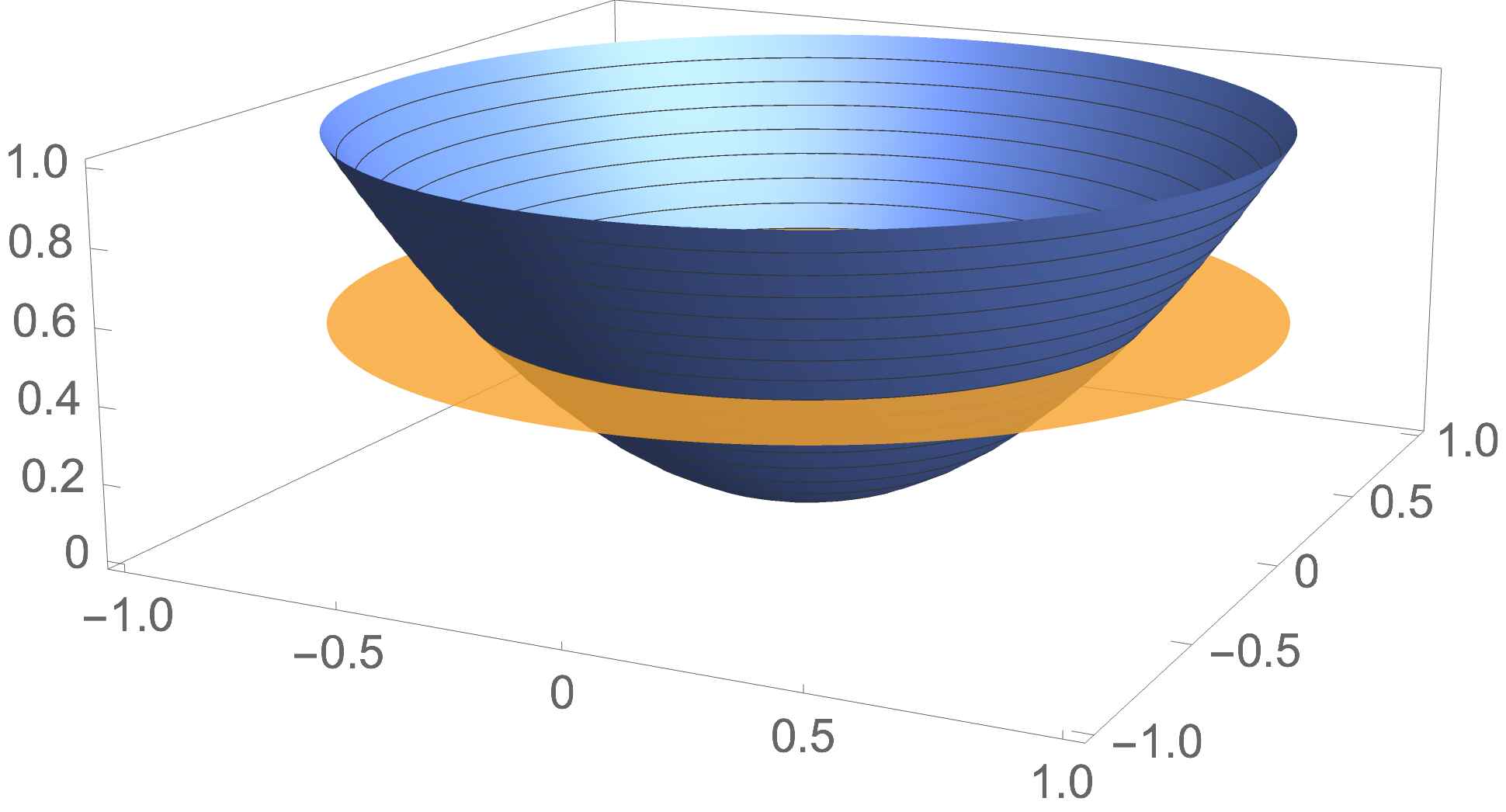}
	\caption{Left and right: the Chebyshev approximation problems of Example~\ref{ex:example-1} consisting in approximating the blue functions by an affine function. Best uniform affine approximations are represented in orange.\label{fig:example-1}}
\end{figure}
\newcommand{\ab}{a_{\one}}
\begin{example}\label{ex:example-1}
	We consider the problem of approximating the quadratic function $f(x)=x^Tx$ by an affine function $p(x)=\ab^T\,x+a_0$ uniformly inside $n$-ball $X=\B^{m}=\{x\in\R^m:x^Tx\leq1\}$, where for convenience $a=(a_0,a_1,\ldots,a_m)^T\in\R^{n}$, hence $n=m+1$, and $\ab=(a_1,\ldots,a_m)^T\in\R^{n-1}$. The optimal solution is $p(x)=\tfrac{1}{2}$, which is represented in orange in Figure~\ref{fig:example-1} for $n=1$ and $n=2$. The extremal points of the error $p-f$ are here the solutions of the Karush-Kuhn-Tucker system~\cite{bertsekas1997,Nocedal2006}:
    \begin{subequations}\label{eq:KKT}
    \begin{align}
        \ab-2x+2\mu x&=0
        \\ \mu(x^Tx-1)&=0
        \\ x^Tx&\leq1,
    \end{align}
    \end{subequations}
    where the absence of any sign restriction on the multiplier allows computing minimizers and maximizers of the error. After basic computations, the following extremal points are found: with $\ub\alpha=\max\{2,\|\ab\|\}$, $\lb\alpha=\min\{2,\|\ab\|\}$ and $\alpha_0=-\tfrac{1}{8}\lb\alpha^2+\tfrac{1}{2}\lb\alpha+\tfrac{1}{2}$ the extremal points are
    \begin{equation}\label{eq:example-1-extreme}
    	\ext(p-f)=\left\{\begin{array}{ll} \{\tfrac{-\ab}{\|\ab\|_2}\} & \text{if } a_0<\alpha_0 \\ \{\tfrac{\ab}{\ub\alpha}\} & \text{if } a_0>\alpha_0 \\  \{\tfrac{-\ab}{\|\ab\|_2}\}\cup\{\tfrac{\ab}{\ub\alpha}\} & \text{if } a_0=\alpha_0\end{array}\right.,
    \end{equation}
    where $\{\tfrac{-\ab}{\|\ab\|_2}\}$ is the $(m-1)$-sphere $\Sp^{{m-1}}$ if $\ab=0$. The extremal points $\{\tfrac{-\ab}{\|\ab\|_2}\}$ are minimizers of the error, including $\Sp^{{m-1}}$ when $\ab=0$, while the extremal point $\{\tfrac{\ab}{\ub\alpha}\}$ is a maximizer. For the optimal solution $\bp(x)=\tfrac{1}{2}$ we have $\ub\alpha=2$, $\lb\alpha=0$, $\alpha_0=\tfrac{1}{2}$, hence $a_0=\alpha_0$, and $\ext(\bp-f)=\Sp^{{m-1}}\cup\{0\}$, which can be seen on the two plots of Figure~\ref{fig:example-1}. The corresponding signature is $\Sigma(\bp-f)=(\Sp^{{m-1}}\!\times\!\{-1\})\cup\{(0,1)\}$.
\end{example}
In this example, the set of extremal points of the best approximation error is infinite. This is an atypical situation caused by some strong symmetry in the problem. Nevertheless, this is a good test case for optimality conditions presented in the next sections.

\subsection{Relative Chebyshev centers}\label{ss:def-center}

We follow here~\cite{Levis2023,Levis2024}. The Chebyshev center of a uniformly bounded set of target functions $\F\subseteq \C(X)$ relative to the subspace $\V$ is the generalized polynomial that minimizes the worst case distance to functions of $\F$. It is computed by solving
\begin{equation}\label{eq:crc-1}
	\inf_{p\in \V}\sup_{f\in \F}\|p-f\|_X.
\end{equation}
In the sequel, Chebyshev centers will be relative to $\V$ and simply called relative Chebyshev centers. This problem is also called simultaneous uniform approximation of the target functions inside $\F$. The classical Chebyshev approximation is the special case $\F=\{f\}$. The theory is restricted to so-called totally complete sets of target functions, for which both $f^-(x):=\inf_{f\in\F}f(x)=\min_{f\in\F}f(x)$ and $f^+(x):=\sup_{f\in\F}f(x)=\max_{f\in\F}f(x)$, and both functions are continuous. In this case, the relative Chebyshev center of $\F$ is the same as the one of $\{f^-,f^+\}$, see~\cite[Theorem 2.3]{Levis2023}. We assume this condition throughout the paper.
\begin{figure}
	\centering
	\includegraphics[width=0.32\linewidth]{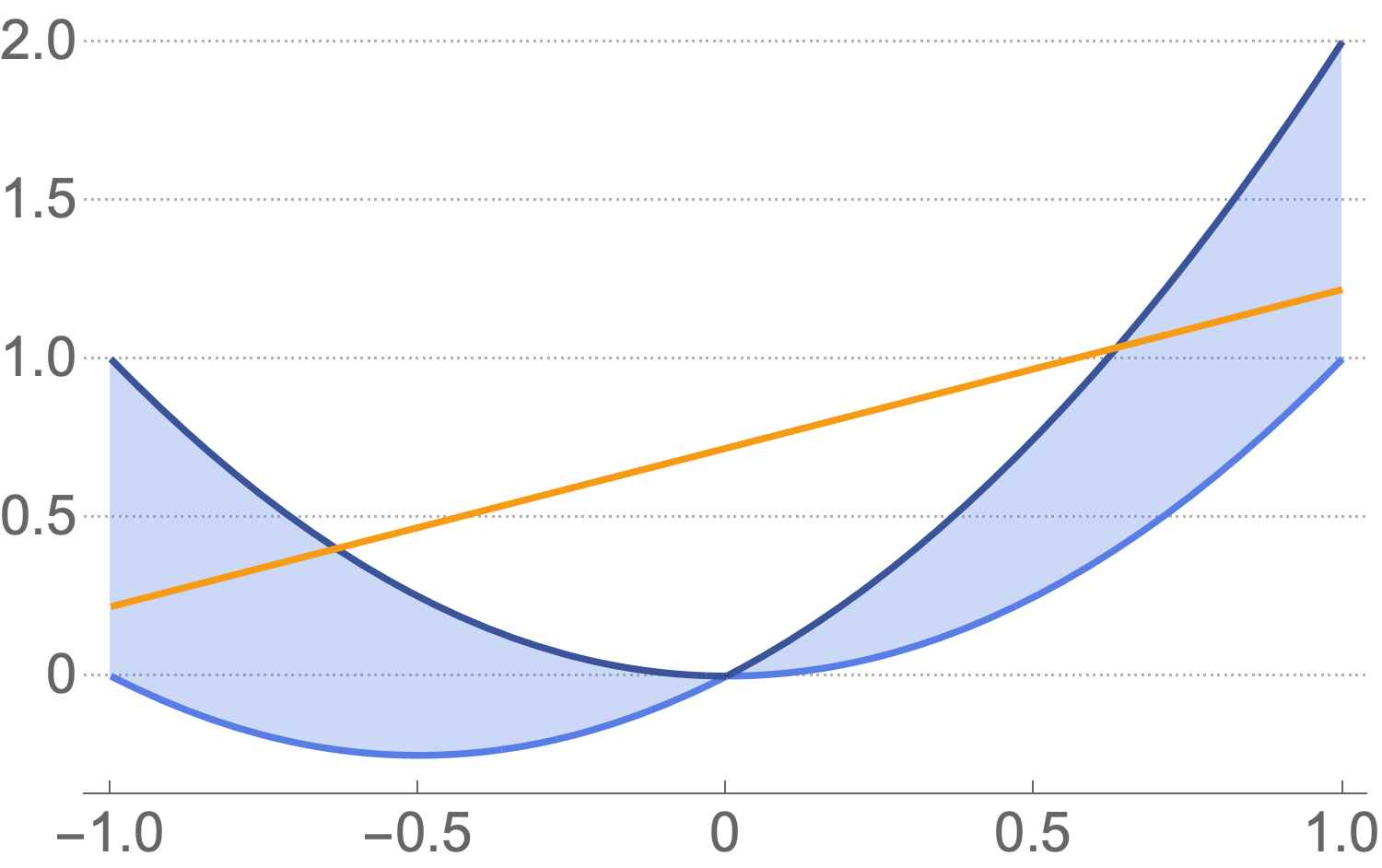}\hspace{0.2cm}\includegraphics[width=0.32\linewidth]{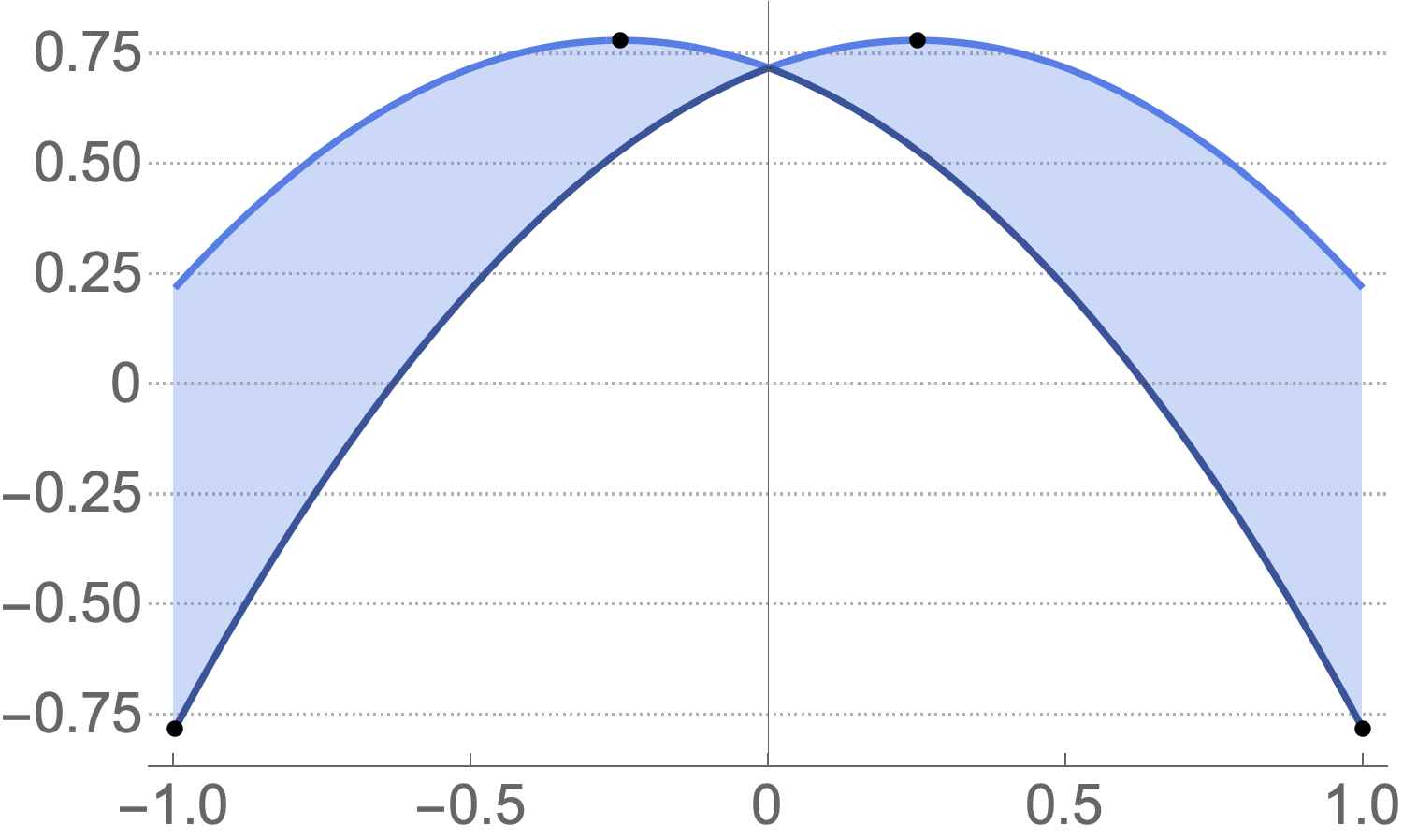}\hspace{0.2cm}\includegraphics[width=0.32\linewidth]{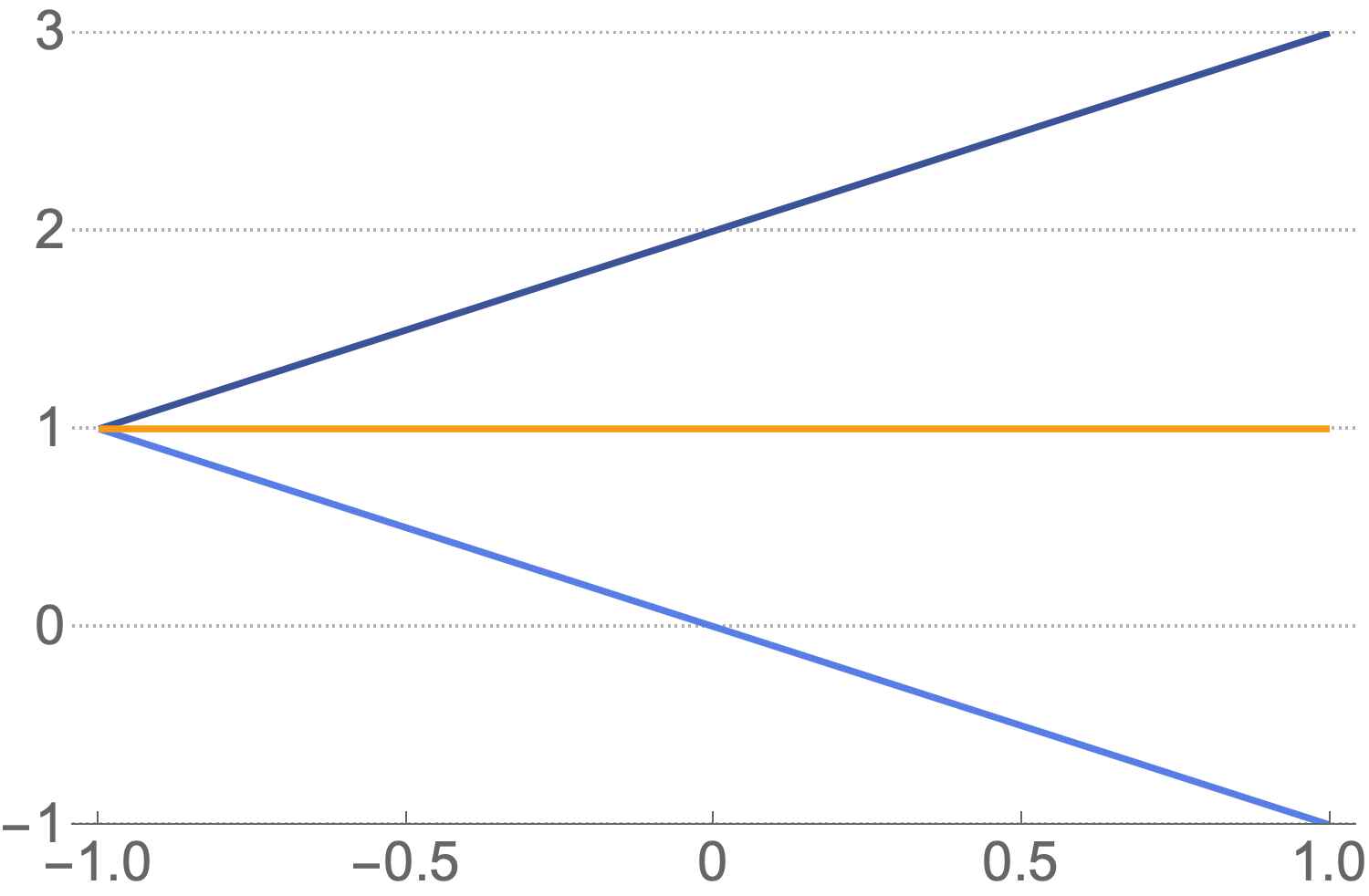}
	\caption{Left: the uncertain quadratic function of Example~\ref{ex:example-2-bis} and it best uniform affine approximation in orange. Right: the relative Chebyshev center problem of Example~\ref{ex:example-2}, consisting in approximating simultaneously the two blue functions by a constant function.\label{fig:example-2}}
\end{figure}
\begin{example}\label{ex:example-2-bis}
	We consider the uncertain quadratic function $f_{\delta}(x)=x^2+\delta x$ with $\delta\in[0,1]$, so $\F=\{f_\delta:\delta\in[0,1]\}$. We have $f^+(x)=x^2$ and $f^-(x)=x^2+x$ if $x\leq0$, and $f^+(x)=x^2+x$ and $f^-(x)=x^2$ otherwise. The set of functions is represented in light blue in the left plot of Figure~\ref{fig:example-2}, the functions $f^-(x)$ and $f^+(x)$ being represented in blue and darker blue respectively. We approximate it uniformly by affine functions $p(x)=a_1x+a_0$, with best uniform approximation $\bp(x)=\tfrac{1}{2}x+\tfrac{23}{32}$ and uniform error $\tfrac{25}{32}$, which is represented in orange in the left plot of Figure~\ref{fig:example-2}.
\end{example}

Aiming at an homogeneous presentation of optimality conditions, we define here set-valued error functions and their signatures. First of all, the set of target functions is interpreted as a set-valued function $\F:X\rightarrow 2^\R$ with $\F(x)=\{f(x):f\in\F\}$. Real operations are naturally extended to set-valued operations or to mixed real-valued and set-valued operations, e.g., $p-\F$ is the set-valued function $x\mapsto \{p(x)-f(x):f\in\F\}$. The uniform norm of the set-valued function $e:X\rightarrow2^\R$ is $\|e\|_\infty=\max\{|y|:y\in e(x),x\in X\}$, its extremal points being
\begin{equation}
\ext(e)=\{x\in X:e(x)\ni -\|e\|_\infty\text{ or }e(x)\ni \|e\|_\infty\}.
\end{equation}
With these definitions, the Chebyshev center of $\F\subseteq \C(X)$ relative to $\V$ is the uniform approximation of the set-valued function $X\ni x\mapsto \F(x)$.
\begin{example}\label{ex:example-2-ter}
Revisiting Example~\ref{ex:example-2-bis}, the uncertain quadratic function is now the set-valued function $x\mapsto\F(x)=\{f_\delta(x):\delta\in[0,1]\}=[f^-(x),f^+(x)]$, which is an interval in this case. The optimal approximation set-valued error $\bp(x)-\F(x)$ is represented in the center plot of Figure~\ref{fig:example-2}, together with its four extremal points.
\end{example}

Some non-emptiness, compactness and hemicontinuity requirements would typically be introduced here for characterizing the set-valued function $\F$, aiming at applying Berge Maximum Theorem, but the total completeness assumption simplifies the treatment of the set-valued functions involved here: we say that the set-valued function $\F$ is totally complete if it comes from a totally complete set of functions, in which case we have the following properties
\begin{subequations}
\begin{align}
\conv\F(x)&=[f^-(x),f^+(x)]
\\\|\F\|_\infty&=
\max\{\|f^-\|_\infty,\|f^+\|_\infty\}
\\
\|p-\F\|_\infty&=
\max\{\|p-f^-\|_\infty,\|p-f^+\|_\infty\}.\label{eq:prop-set-valued-signature}
\end{align}
\end{subequations}
With these definitions, the relative Chebyshev center problem consists is solving $\min_{p\in \V}\|p-\F\|_\infty$. The definition of signatures is naturally extended to the set-valued function $e:X\rightarrow2^\R$ as follows:
\begin{equation}\label{eq:sig-rcc}
	\sig(e)=\{(x,s)\in X\!\times\!\{\pm1\}:e(x)\ni s\,\|e\|_{\infty}\bigr\}.
\end{equation}
The definition~\eqref{eq:sig-rcc} matches the definition for real-valued function if we identify the real valued function $e:X\rightarrow\R$ with the set-valued function $x\mapsto \{e(x)\}$.
\begin{example}
As seen on the center plot of Figure~\ref{fig:example-2}, the signature of the set-valued error function of Example~\ref{ex:example-2-ter} is $\sig(p-\F)=\{(-1,-1),(-\tfrac{1}{4},1),(\tfrac{1}{4},1),(1,-1)\}$.
\end{example}
As we will see, the signature $\sig(p-\F)$ will be of central importance in optimality conditions of relative Chebyshev centers: it will allow for a convenient expression of Tanimoto's corrected condition in Section~\ref{sss:Tanimoto}, and an easy derivation of the associated subdifferential in Section~\ref{sss:subdifferentitial-center}. Using~\eqref{eq:prop-set-valued-signature}, the signature~\eqref{eq:sig-rcc} can be expressed directly using the functions $f^-$ and $f^+$:
\begin{subequations}
\begin{align}
\begin{split}
	\sig(p-\F)=\bigl\{ & (x,s)\in X\!\times\!\{\pm1\}:\\&p(x)-\F(x)\ni s\max\{\|p-f^-\|_\infty,\|p-f^+\|_\infty\} \bigr\}
\end{split}
\\
\begin{split}
	=\bigl\{& (x,s)\in X\!\times\!\{\pm1\}:\\&\{p(x)-\!f^-(x),p(x)-\!f^+(x)\}\!\ni\! s\max\{\|p-\!f^-\|_\infty,\|p-\!f^+\|_\infty\} \bigr\}
\end{split}
\end{align}
\end{subequations}
The following more explicit case by case expression is a simple consequence of the previous one, and will be useful in the sequel:
\begin{equation}\label{eq:sig-center-explicit}
\sig(p-\F)=\left\{\begin{array}{cl}
\sig(p-f^-)&\text{if }\|p-f^-\|_\infty>\|p-f^+\|_\infty
\\\sig(p-f^+)&\text{if }\|p-f^-\|_\infty<\|p-f^+\|_\infty
\\\sig(p-f^-)\cup\sig(p-f^+)&\text{otherwise.}
\end{array}\right.
\end{equation}
Informally, if $\|p-f^-\|_\infty>\|p-f^+\|_\infty$ then the extremal points in the signature $\sig(p-\F)$ appear only with respect to $f^-$. Although not shown explicitly in~\eqref{eq:sig-center-explicit} for clarity, in this case the sign of the error needs to be positive, i.e., $\sig(p-f^-)=\ext(p-f^-)\!\times\!\{1\}$.
When $\|p-f^-\|_\infty<\|p-f^+\|_\infty$, the reverse situation applies to $f^+$ and $\sig(p-f^+)=\ext(p-f^+)\!\times\!\{-1\}$. Finally, when $\|p-f^-\|_\infty=\|p-f^+\|_\infty$ extremal points come from both $f^-$ and $f^+$.

The following example is taken from~\cite{Levis2024} and is a counter example to the wrong original condition of Tanimoto~\cite[Theorem 2.4]{Levis2023} and to the strong uniqueness condition initially presented by Levis et al. in~\cite[Theorem 4.5]{Levis2023} (which were both corrected by Levis et al. in~\cite{Levis2024}). It will also be used in the next section to illustrate optimality conditions.
\begin{example}\label{ex:example-2}
	We consider the problem of approximating simultaneously the two univariate affine functions $f_1(x)=-x$ and $f_2(x)=x+2$ by constant functions $p(x)=a_0$ uniformly inside $X=[-1,1]$. The optimal solution is $p(x)=1$, as can be seen on the right plot of Figure~\ref{fig:example-2} where the two functions $f_1(x)$ and $f_2(x)$ are represented together with the optimal approximation. The uniform norm of the error function $p-\F$ is
	\begin{equation}\label{eq:ex:example-2}
		\|p-\F\|_\infty=\left\{\begin{array}{ll}3-a_0&\text{if }a_0\leq1 \\ a_0+1&\text{if }a_0\geq1 \end{array}\right.=|a_0-1|+2.
	\end{equation}
	One can also see on the figure that $\ext(e_a)=\{1\}$ independently of $a_0$, the extremal point being attained for $f_2=f^+$ if $a_0\leq1$ and for $f_1=f^-$ if $a_0\geq1$. So the signature is
	\begin{equation}
		\sig(p-\F)=\left\{\begin{array}{cl}
		\{(1,-1)\}&\text{if }a_0<1
		\\\{(1,1)\}&\text{if }a_0>1
		\\\{(1,-1),(1,-1)\}&\text{if }a_0=1
		\end{array}\right..
	\end{equation}
\end{example}

In this example, it is worth noting that when $a_0=1$ the signature contains two elements with opposite signs for the same extremal point. This situation does not happens in the case of Chebyshev approximation of one target function, where a given extremal point is associate to the sign of the error of a real valued function, and is either a minimizer or a maximizer but not both. This is related to the error in the original statement of Tanimoto's condition for optimality~\cite{Tanimoto1998} and the original statement of Levis et al. condition for strong uniqueness~\cite{Levis2023}, which were both corrected by Levis et al. in~\cite{Levis2024}.

\subsection{Strong uniqueness of optimal generalized polynomials}\label{ss:strong-uniqueness}

\begin{figure}
	\centering
	\includegraphics[width=0.42\linewidth]{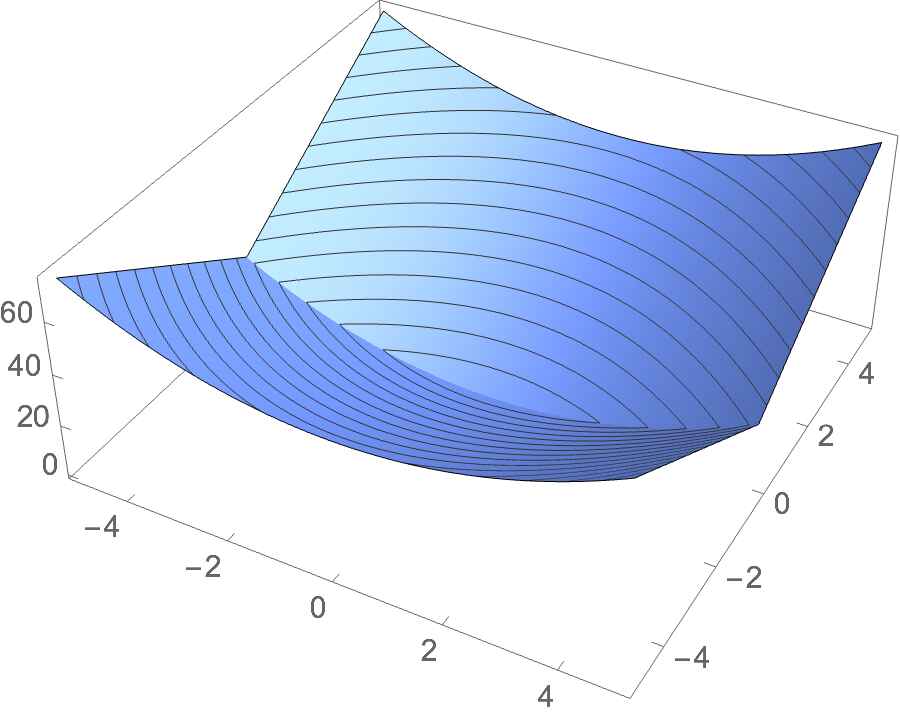}\hspace{1cm}\includegraphics[width=0.42\linewidth]{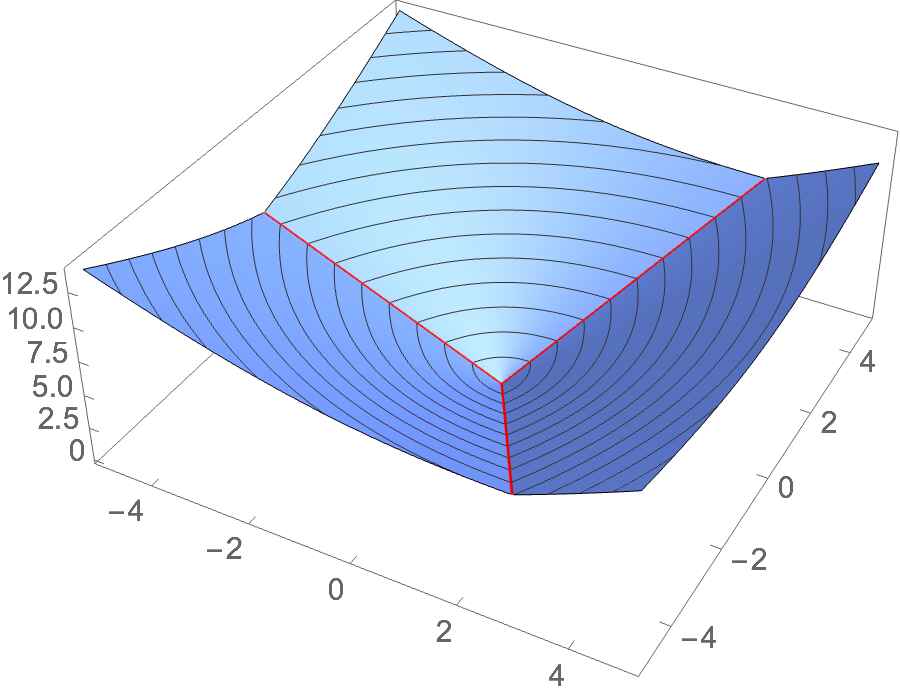}
	\caption{Convex functions with typical unique but non strongly unique minimizer on the left ($f(a)=a_1^2+10|a_2|$), and strongly unique minimizer on the right ($f(a)=a_1^2+a_2^2+\max\{g_0^Ta,g_1^Ta,g_2^Ta\}$ with $g_k=(\cos\tfrac{2k\pi}{3},\sin\tfrac{2k\pi}{3})^T$ for $k\in\{0,1,2\}$).\label{fig:non-strong-uniqueness}}
\end{figure}
A generalized polynomial $\bp$ is a strongly unique best uniform approximation if and only if there exists $r>0$ such that
\begin{equation}\label{eq:strong-definition}
	\forall p\in\V \ , \ \|p-f\|_\infty\geq\|\bp-f\|_\infty+r\|\bp-p\|_\infty.
\end{equation}
With $r=0$ this conditions reduces to simple optimality. Strong uniqueness obviously implies uniqueness. Figure~\ref{fig:non-strong-uniqueness} shows two typical objective functions, representing in an abstract way non strongly unique and strongly unique minimizers\footnote{In fact, minimizers represented in Figure~\ref{fig:non-strong-uniqueness} are respectively non sharp and sharp minimizers of the corresponding convex objective functions, as defined in Section~\ref{sss:sharp}, which exactly corresponds to strongness of minimizers.}.
Strong uniqueness is difficult to check directly, even in the simple case of Example~\ref{ex:example-1}, whose best approximation is actually strongly unique, as proved using Smarzewski's condition in Example~\ref{ex:example-1-Smarzewski}. Strong uniqueness is defined similarly for relative Chebyshev centers. The framework of set-valued functions presented in the previous section offers a definition that mirrors~\eqref{eq:strong-definition}: the generalized polynomial $\bp$ is a strongly unique relative Chebyshev center if and only if there exists $r>0$ such that
\begin{equation}
	\forall p\in\V \ , \ \|p-\F\|_\infty\geq\|\bp-\F\|_\infty+r\|\bp-p\|_\infty.
\end{equation}
As previously, it matches the definition for uniform approximation with $\F=\{f\}$. The best approximation of Example~\ref{ex:example-2} is strongly unique, as shown by~\eqref{eq:ex:example-2}.


As explained in the introduction, strong uniqueness has a practical impact on discretization based methods. E.g., numerical experiments carried out in~\cite{Reemtsen1990} tend to show that non-strong uniqueness is common in multivariate approximation, and requires algorithmic treatment. On the other hand, it is worth mentioning that functions having a strongly unique best approximation are dense in the set of functions having a unique minimizer~\cite[Theorem~3.5]{Nurnberger1982}.


\section{Optimality conditions for multivariate Chebyshev approximation}\label{s:classical-optimality-conditions}

Similarly to the equioscillation theorem, optimality conditions of multivariate Chebyshev approximation problems rely on the extremal points of the error $p-f$, the set of these extremal points being $\ext(p-f)=\{x\in X:|p(x)-f(x)|=\|p-f\|_\infty\}$ as defined in the previous section. Optimality conditions belong to two classes: the first class, presented in Section~\ref{ss:signature-based}, relies on signatures. These optimality conditions use only the information of the signature, i.e., the error extrema and their sign, to characterize the best approximation. Chebyshev's equioscillation theorem is a typical signature based condition in the case of univariate approximation. For multivariate approximations, the main signature based conditions are  Kolmogorov criterion and Rivlin and Shapiro's annihilating measure condition. The second class relies on basis function vectors evaluated at extremal points and is presented in Section~\ref{ss:basis-function-conditions}, including the kernel condition of Kirchberger, the zero in the convex hull condition of Cheney, and Smarzewski's condition for strong minimality.

\begin{remark}
Some conditions apply more generally to approximation problems in complex variables, namely Kolmogorov criterion, Bartelt's condition, Rivlin and Shapiro's annihilating measure condition and Cheney's convex hull condition, but we present here their real counterparts only.
\end{remark}

\subsection{Conditions based on signatures}\label{ss:signature-based}

The signature of the error has a critical importance because it contains enough information to characterize exactly the optimality of Chebyshev approximation problems. The criteria that allows deciding if a signature corresponds to an optimal solution of the associated Chebyshev problem are of great theoretical and practical importance. For example, in the context of univariate Chebyshev optimization, the equioscillation theorem says that signatures containing $n+1$ extremal points associated to oscillating signs correspond to optimal solutions.
The two main criteria for such signatures in the context of multivariate Chebyshev approximation are Kolmogorov criterion and Rivlin and Shapiro's annihilating measures, presented in the next two subsections. Rivlin and Shapiro have proved that both criteria are equivalent~\cite[Remark 1 and Remark 2 page 677]{Rivlin1961}, hence we can use Rivlin and Shapiro's extremal signature denomination for a signature that satisfies one of them, and therefore correspond to the error of a best uniform approximation. The case of approximation by affine functions enjoys a simple signature based optimality condition, which is presented in the third subsection. Finally, Tanimoto's corrected condition is presented in the last subsection, where the framework of set-valued error functions allows emphasizing its similarity with Rivlin and Shapiro's annihilating measure condition. Bartelt's condition and Levis et al.'s condition characterize strongly unique best approximations, extending Kolmogorov's criterion and Tanimoto's condition respectively, and are presented in the corresponding subsections.

\subsubsection{Kolmogorov criterion and Bartelt's condition}
The most fundamental optimality condition is the so-called Kolmogorov criterion~\cite{Kolmogorov1948}. Informally, a generalized polynomial $\bp$ is the best uniform approximation $f$ if and only there exists no generalized interval whose sign matchs the sign of the error at the extremal points.
\begin{theorem}[Kolmogorov criterion for real-valued uniform approximation]\label{thm:Kolmogorov}
The generalized polynomial $\bp$ is the best uniform approximation of $f$ if and only if
\begin{equation}\label{eq:synchronized}
\neg\Bigl( \ \exists p\in\V \ , \ \forall x\in\ext(\bp-f) \ , \ (\bp(x)-f(x))\, p(x)>0 \ \Bigr).
\end{equation}
\end{theorem}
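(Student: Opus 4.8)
The plan is to establish the two contrapositive implications: $\bp$ fails to be optimal if and only if the parenthesized statement in~\eqref{eq:synchronized} holds, i.e.\ a ``synchronizing'' $p\in\V$ exists. Throughout write $e=\bp-f$ and $M=\|e\|_\infty$; since $f\notin\V$ we have $M>0$, and $\ext(e)$ is a nonempty compact subset of $X$ because $e$ is continuous and $X$ is compact.

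\emph{Non-optimality implies synchronization.} Suppose some $q\in\V$ satisfies $\|q-f\|_\infty<M$, and set $p:=\bp-q\in\V$. For $x\in\ext(e)$ with $e(x)=M$ one has $\bp(x)=M+f(x)$, hence $p(x)=M-(q(x)-f(x))>0$ because $q(x)-f(x)<M$; thus $(\bp(x)-f(x))\,p(x)=M\,p(x)>0$. The case $e(x)=-M$ is symmetric, yielding $p(x)<0$ and again $(\bp(x)-f(x))\,p(x)>0$. So $p$ synchronizes, and the criterion fails.

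\emph{Synchronization implies non-optimality.} Suppose $p\in\V$ satisfies $e(x)p(x)>0$ for all $x\in\ext(e)$; I claim $\|e-tp\|_\infty<M$ for all sufficiently small $t>0$, so that $\bp-tp$ strictly improves on $\bp$. First, $p\not\equiv0$ (otherwise $ep\equiv0$ on the nonempty set $\ext(e)$), so $P:=\|p\|_\infty>0$. The crucial step is a compactness argument: there is $\epsilon_0\in(0,M)$ such that $e(x)p(x)>0$ on the compact superlevel set $A:=\{x\in X:|e(x)|\ge M-\epsilon_0\}$. Indeed, otherwise one could pick $x_k\in X$ with $|e(x_k)|\ge M-1/k$ and $e(x_k)p(x_k)\le0$; a convergent subsequence would have a limit $x^\star$ with $|e(x^\star)|=M$, so $x^\star\in\ext(e)$, yet $e(x^\star)p(x^\star)\le0$, contradicting the hypothesis. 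On $A$ the values $e(x)$ and $p(x)$ then share their sign, so for $0<t<\epsilon_0/P$ a short case analysis (according to whether $t|p(x)|$ is at most $|e(x)|$ or not) gives $|e(x)-tp(x)|<M$ pointwise on $A$, hence $\max_A|e-tp|<M$ by compactness of $A$; off $A$ one bounds $|e(x)-tp(x)|\le|e(x)|+tP<(M-\epsilon_0)+tP<M$. Combining the two bounds gives $\|e-tp\|_\infty<M$, so $\bp$ is not optimal.

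The sign bookkeeping in both directions is routine. The single genuine obstacle is the compactness step in the converse, which upgrades ``$ep>0$ on $\ext(e)$'' to ``$ep>0$ on a full superlevel neighborhood $A$ of $\ext(e)$''. Without it, the perturbation $\bp-tp$ could increase the error at points lying just outside $\ext(e)$, and this is precisely where compactness of $X$ (equivalently, of $\ext(e)$) enters the argument.
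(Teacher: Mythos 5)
Your proof is correct, but it takes a genuinely different route from the paper. You give the classical direct argument (essentially the one in Cheney or Powell, which the paper only cites): non-optimality of $\bp$ against a better $q$ produces the synchronizing polynomial $p=\bp-q$, and conversely a synchronizing $p$ is upgraded, by a compactness argument, from being sign-matched on $\ext(\bp-f)$ to being sign-matched on the superlevel set $A=\{x:|e(x)|\ge M-\epsilon_0\}$, after which the perturbation $\bp-tp$ strictly decreases the uniform error for small $t>0$. The paper instead obtains Theorem~\ref{thm:Kolmogorov} as the case $r=0$ of Theorem~\ref{thm:convex-Chebyshev-approximation-dirder}: it computes the subdifferential of the pointwise supremum $m(a)=\|\phi^Ta-f\|_\infty$ via~\eqref{eq:subdifferential-ps}, reads off the directional derivative $m'(\bp,p)=\max\{s\,p(x):(x,s)\in\sig(\bp-f)\}$, and invokes the standard convex-analysis equivalence between optimality and nonnegativity of all directional derivatives (Proposition~\ref{prop:strong-uniqueness} with $r=0$). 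Your approach is elementary and self-contained, needing only continuity and compactness; the paper's approach buys uniformity, since the same computation yields Bartelt's strong-uniqueness condition ($r>0$), the convex-hull and kernel conditions, and the extension to relative Chebyshev centers. One small caveat on your compactness step: the ambient hypothesis in the paper is only that $X$ is compact Hausdorff, and compactness does not imply sequential compactness, so extracting a convergent subsequence from $(x_k)$ is not justified in that generality; for the typical case $X\subseteq\R^m$ your argument stands as written, and in general it is repaired without sequences by observing that the closed sets $\{x:|e(x)|\ge M-\tfrac1k\}\cap\{x:e(x)p(x)\le0\}$ are nested with empty intersection, so one of them is already empty by the finite intersection property, which supplies exactly the $\epsilon_0$ you need.
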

Many authors use the statement~\eqref{eq:synchronized} of Kolmogorov criteria, e.g.: Rice~\cite[page 446]{Rice1963} says that extremal points satisfying this statement are not isolable; Shapiro~\cite[Lemma 2.2.1 page 7]{Shapiro1971} proves Kolmogorov's criterion; Powel proves the same criterion in~\cite[Equation (7.6) page 74]{Powel1981} and uses it to prove Chebyshev's alternation theorem. An equivalent statement of Kolmogorov criteria is that
\begin{equation}\label{eq:Kolmogorov-min}
\forall p\in\V \ , \ \Bigl(\min_{x\in\ext(\bp-f)}\bigl(\,\bp(x)-f(x)\bigr)\,p(x)\Bigr) \ \leq \ 0,
\end{equation}
used e.g. in~\cite{Brosowski1983}. To see the equivalence, just expand the nonpositive in~\eqref{eq:synchronized} and note that the minimum of some values is nonpositive if and only if there exists a negative such value. It is also equivalent to
\begin{equation}\label{eq:Kolmogorov-max}
\forall p\in\V \ , \ \Bigl(\max_{x\in\ext(\bp-f)}\bigl(\,\bp(x)-f(x)\bigr)\,p(x)\Bigr) \ \geq \ 0,
\end{equation}
used e.g. in~\cite{Bartelt1974}.
\begin{remark}\label{rem:signture-vs-error}
In~\eqref{eq:synchronized},~\eqref{eq:Kolmogorov-min} and~\eqref{eq:Kolmogorov-max} we can change $f(x)-\bp(x)$ to $\bp(x)-f(x)$ because $p$ and $-p$ belong to $\V$. Also, in our context where we approximate real-valued functions, we can replace $f(x)-\bp(x)$ by $\sign(f(x)-\bp(x))$, e.g.,~\eqref{eq:synchronized} then becomes the signature condition
\begin{equation}\label{eq:synchronized-sign}
\neg\Bigl( \ \exists p\in\V,\forall (x,s)\in\sig(\bp-f) \ , \ s\, p(x)>0 \ \Bigr).
\end{equation}
and the condition~\eqref{eq:Kolmogorov-min} becomes
\begin{equation}
\forall p\in\V \ , \ \Bigl(\min_{(x,s)\in\sig(\bp-f)}s\,p(x)\Bigr) \ \leq \ 0.
\end{equation}
In the context of complex-valued approximation, the characterization~\eqref{eq:synchronized} is not applicable but characterizations~\eqref{eq:Kolmogorov-min} and~\eqref{eq:Kolmogorov-max} can be adapted.
\end{remark}

\begin{example}[Application to Example~\ref{ex:example-1}]
	For the uniform approximation of $f(x)=x^Tx$ inside $X=\B^m$, the optimal polynomial is $p(x)=\tfrac{1}{2}$ and its error signature is $\Sigma(\bp-f)=(\Sp^{{m-1}}\!\times\!\{-1\})\cup\{(0,1)\}$. Here, generalized polynomials $p(x)$ are affine functions, for which the image of a convex combination of some vectors is the combination of the images of these vectors, i.e., with provided that $\sum\lambda_i=1$,
	\begin{equation}\label{eq:Kolmogorov-convex-combination}
	\forall p\in\V \ , \ p\bigl(\sum\lambda_ix_i\bigr)=\sum \lambda_ip(x_i).
	\end{equation}
	As a consequence, if a generalized polynomial is negative on $\Sp^{{m-1}}$ then it is also negative in the convex hull of $\Sp^{{m-1}}$ and therefore cannot be positive at $0$. Eventually, no generalized polynomial (here affine function) can match the sign of the signature, and Kolmogorov criterion proves $\bp$ is optimal.
	For any other affine function, in view of the expression~\eqref{eq:example-1-extreme} of the extremal points, there are either a single extremal point (an error minimizer or an error maximizer), or two extremal points (one being an error minimizer the other one an error maximizer), or a sphere of minimizers. In all cases, we can find a affine function with the same sign as the error on one or both extremal points, hence Kolmogorov criterion proves the non optimality.
\end{example}

Identifying signatures patterns preventing any generalized polynomial to match their sign allows deriving practical optimality conditions. For example, in the special case of univariate polynomial approximation, signature that prevent sign matching with any polynomial of degree $n-1$ are exactly those containing $n+1$ points with alternating sign (a degree $n-1$ polynomial with $n+1$ alternating signs has $n$ zeros and is therefore null), leading to Chebyshev equioscillation theorem. This argument extends readily to univariate generalized polynomials satisfying the Haar condition. In the case of multivariate approximation, identifying signature patterns preventing sign matching with generalized polynomials is far more difficult. A noticeable exception is the approximation by affine functions, which enjoy~\eqref{eq:Kolmogorov-convex-combination}, the corresponding optimality conditions being presented in Section~\ref{sss:affine}. More generally, Shapiro~\cite{Shapiro1967} calls patterns preventing sign match with any polynomials Chebyshev patterns. He uses the far reaching Euler-Jacobi formula to deduce some signature patterns preventing sign matching. E.g., with $X\subseteq\R^2$ a signature having $2n$ points on an ellipse with alternating signs when traveling around the ellipse does prevent any polynomial degree $n-1$ to match the sign pattern. Gearhart~\cite{Gearhart1973} deduces from this condition the optimal approximation over the unit disk of two variables monomials $x_1^nx_2^m$ by polynomials of degree $n+m-1$, extending to multivariate the ways and means of Chebyshev polynomials.

\paragraph{Strong uniqueness}

Kolmogorov criterion has been extended to offer a necessary and sufficient condition for strong uniqueness by Bartelt. His condition~\cite[Theorem 5]{Bartelt1973} is striking by the simplicity of its expression and its precision:
\begin{theorem}[Bartelt's condition]\label{thm:Bartelt}
The generalized polynomial $\bp$ satisfies the strong uniqueness condition~\eqref{eq:strong-definition}
\begin{equation}
	\forall p\in\V \ , \ \|p-f\|_\infty\geq\|\bp-f\|_\infty+r\|\bp-p\|_\infty.
\end{equation}
if and only if
\begin{equation}\label{eq:Bartelt}
\forall p\in\V \ , \ \Bigl(\max_{x\in\ext(\bp-f)}\bigl(\,\bp(x)-f(x)\bigr)\,p(x)\Bigr) \ \geq \ r\,\|\bp-f\|_\infty\,\|p\|_\infty,
\end{equation}
\end{theorem}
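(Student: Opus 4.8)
The plan is to reduce Bartelt's condition to a more tractable reformulation of strong uniqueness, then exploit the homogeneity of the quantities involved. First I would observe that, since $\bp$ is assumed to be a best uniform approximation (so that the case $r=0$ holds trivially in both statements), the strong uniqueness inequality \eqref{eq:strong-definition} can be rewritten, after setting $p=\bp+tq$ for $q\in\V$ and letting $t\to0^+$, as a statement about the one-sided directional derivative of the convex functional $p\mapsto\|p-f\|_\infty$ at $\bp$. Concretely, $\|\bp+tq-f\|_\infty\ge\|\bp-f\|_\infty+rt\|q\|_\infty$ for all small $t>0$ is, by convexity, equivalent to requiring this for all $t>0$, which is in turn equivalent to the directional-derivative bound $\lim_{t\to0^+}\tfrac{1}{t}\bigl(\|\bp+tq-f\|_\infty-\|\bp-f\|_\infty\bigr)\ge r\|q\|_\infty$ for every $q\in\V$.

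Second I would compute that directional derivative explicitly in terms of the extreme set. Writing $e=\bp-f$ and $\|e\|_\infty=\mu>0$, a standard computation (Danskin-type argument for the max of a parametrized family, using compactness of $X$ and continuity of everything in sight) gives
\begin{equation}\label{eq:dirderiv-sketch}
\lim_{t\to0^+}\frac{\|\bp+tq-f\|_\infty-\|\bp-f\|_\infty}{t}=\max_{x\in\ext(e)}\sign(e(x))\,q(x)=\frac{1}{\mu}\max_{x\in\ext(e)}e(x)\,q(x).
\end{equation}
Substituting \eqref{eq:dirderiv-sketch} into the reformulated strong uniqueness condition and multiplying through by $\mu=\|\bp-f\|_\infty$ yields exactly \eqref{eq:Bartelt} with $\|q\|_\infty$ in place of $\|p\|_\infty$; since the free variable ranges over all of $\V$, renaming $q$ to $p$ finishes the equivalence. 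One small bookkeeping point: the inequality \eqref{eq:Bartelt} is trivially satisfied when $p=0$ (both sides are $0$), so no case distinction is needed there, and for $p\ne0$ one may if desired normalize $\|p\|_\infty=1$ by homogeneity of both sides in $p$.

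The main obstacle is the rigorous justification of \eqref{eq:dirderiv-sketch} — specifically, the interchange of the limit $t\to0^+$ with the maximum over $x$. The $\ge$ direction is immediate by fixing any maximizing $x\in\ext(e)$ and using the lower bound $\|\bp+tq-f\|_\infty\ge |e(x)+tq(x)|\ge \sign(e(x))(e(x)+tq(x))$. For the $\le$ direction one needs an upper semicontinuity / uniform-convergence argument: points $x_t$ nearly attaining $\|\bp+tq-f\|_\infty$ must, as $t\to0^+$, accumulate on $\ext(e)$ (otherwise $|e(x_t)|$ stays bounded away from $\mu$, contradicting that $\|\bp+tq-f\|_\infty\to\mu$), and then a Taylor/Lipschitz estimate on $|e(x_t)+tq(x_t)|$ controls the difference quotient from above. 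This is exactly the kind of argument underlying Kolmogorov's criterion (Theorem~\ref{thm:Kolmogorov}), which is the special case $r=0$, so the structure of the proof parallels the classical one with the extra term $r\|p\|_\infty$ carried along throughout; alternatively, one can cite the subdifferential computation developed later in the paper (Section~\ref{s:convex-optimality-conditions}) to obtain \eqref{eq:dirderiv-sketch} directly from $\partial\|\cdot-f\|_\infty$ at $\bp$.
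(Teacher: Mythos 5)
Your proof is correct. Note, though, that the paper itself does not prove Theorem~\ref{thm:Bartelt}: it simply cites Bartelt's original 1973 result, and its own convex-analytic derivation later on (Theorem~\ref{thm:convex-Chebyshev-approximation-dirder}, obtained from Proposition~\ref{prop:strong-uniqueness} together with the subdifferential formula~\eqref{eq:subdifferential-Chebyshev-approximation-3}) is carried out in coefficient space and therefore produces only a variant of Bartelt's condition with the Euclidean norm $\|\cdot\|_2$ of the coefficient vector in place of $\|p\|_\infty$ --- a discrepancy the paper explicitly points out. Your route uses the same machinery (one-sided directional derivative of the pointwise supremum, monotonicity of difference quotients of a convex function, Danskin-type evaluation of the derivative on $\ext(\bp-f)$), but you apply it intrinsically in $\C(X)$ with the uniform norm on $\V$; since the equivalence between strong uniqueness with constant $r$ and the directional-derivative bound $m'(\bp;q)\geq r\,N(q)$ holds verbatim for any norm $N$ on $\V$ (only the (1)$\Leftrightarrow$(3) part of Proposition~\ref{prop:strong-uniqueness} is needed, not the subdifferential-ball characterization), you recover exactly Bartelt's inequality, including the factor $r\,\|\bp-f\|_\infty\,\|p\|_\infty$ coming from $\sign(e(x))=e(x)/\|\bp-f\|_\infty$ on the extreme set. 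In short: what you do differently is to keep the uniform norm throughout rather than passing to coordinates, and what this buys is a self-contained proof of the exact statement of Theorem~\ref{thm:Bartelt} from the tools the paper develops in Section~\ref{s:convex-optimality-conditions}, whereas the paper only obtains the $2$-norm analogue and otherwise defers to Bartelt. Your handling of the limit--max interchange in~\eqref{eq:dirderiv-sketch} (lower bound pointwise on $\ext(\bp-f)$, upper bound by compactness and accumulation of near-maximizers on $\ext(\bp-f)$) is the standard and adequate justification, and can indeed be replaced by citing~\eqref{eq:dirder-general} with~\eqref{eq:subdifferential-Chebyshev-approximation-3}.
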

The case $r=0$ is exactly Kolmogorov criterion. As for Kolmogorov criterion, Bartelt's condition~\eqref{eq:Bartelt} can be expressed equivalently using signatures in our case of approximation of real functions:
\begin{equation}\label{eq:Bartelt-sig}
\forall p\in\V \ , \ \max\{s\,p(x):(x,s)\in\sig(\bp-f)\} \ \geq \ r\,\|p\|_\infty,
\end{equation}
Although it extends Kolmogorov's criterion~\eqref{eq:Kolmogorov-max} in a very natural way, Bartelt's condition is difficult to use in practice, even in the simple case of Example~\ref{ex:example-1}.

\subsubsection{Rivlin and Shapiro's annihilating measure condition}

Rivlin and Shapiro~\cite[Theorem 2 page 678]{Rivlin1961} (see also the lecture notes~\cite[Main Theorem page 14]{Shapiro1971} and~\cite[Theorem 2.6]{Rivlin2020}) have proposed another characterization of optimality based on signatures:
\begin{theorem}[Rivlin and Shapiro annihilating measure condition]
The generalized polynomial $\bp$ is a best uniform approximation of $f$ inside $X$ if and only if there exists a finite signature $\{(x_1,s_1),\ldots,(x_k,s_k)\}\subseteq\sig(\bp-f)$ and constants $c_i>0$ such that
\begin{equation}\label{eq:annihilate}
	\forall p\in\V \ , \ \sum_{i=1}^k \ c_i \, s_i \, p(x_i) = 0.
\end{equation}
The integer $k$ can be chosen less or equal to $n+1$.
\end{theorem}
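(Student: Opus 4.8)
The plan is to prove the theorem via standard convex-analytic duality, identifying the statement as a separating-hyperplane / Hahn-Banach argument dressed up in the language of signatures. First I would observe that $\bp$ is a best uniform approximation of $f$ if and only if $0$ is a minimizer of the convex function $p\mapsto\|p+\bp-f\|_\infty$ over $\V$, equivalently, the zero functional lies in the subdifferential of $\|\cdot-f\|_\infty$ at $\bp$ restricted to $\V$. By the classical description of the subdifferential of the sup-norm (a consequence of the representation $\|e\|_\infty=\max\{\langle\mu,e\rangle:\mu\in\mathcal P(X)\}$ over probability measures, combined with the Ioffe--Tikhomirov theorem), the optimality of $\bp$ is equivalent to the existence of a nonzero nonnegative Borel measure $\mu$ on $X$, supported on $\ext(\bp-f)$, such that $\int_X \sign\bigl(\bp(x)-f(x)\bigr)\,p(x)\,d\mu(x)=0$ for every $p\in\V$. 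Rescaling $\mu$ to a probability measure and absorbing the sign into the support via the signature $\sig(\bp-f)$, this is exactly condition~\eqref{eq:annihilate} but with an integral in place of a finite sum.

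The second step is the discretization: reduce from a general annihilating measure to a finitely supported one with at most $n+1$ atoms. This is a direct application of a Carath\'eodory-type / Tchakaloff-type argument. Consider the linear map $T:C(X)^*\supseteq\mathcal M_+(X)\to\R^n$ sending a measure $\nu$ to the vector $\bigl(\int \sign(\bp-f)\,\phi_1\,d\nu,\ldots,\int\sign(\bp-f)\,\phi_n\,d\nu\bigr)$. The existence of an annihilating measure says $0$ lies in the image of the nonnegative cone under $T$ as a nonzero combination; equivalently, $0\in\R^n$ is a nontrivial nonnegative combination of the points $s\,\phi(x)=s\bigl(\phi_1(x),\ldots,\phi_n(x)\bigr)$ for $(x,s)\in\sig(\bp-f)$. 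Since $0$ is a nonnegative combination of a set of vectors in $\R^n$, by Carath\'eodory's theorem for cones it is a nonnegative combination of at most $n$ of them; to get a genuinely nontrivial combination (not all $c_i$ zero) one normalizes, e.g.\ by requiring $\sum c_i=1$, which Carath\'eodory applied to the affine hull gives with at most $n+1$ atoms. This yields the finite signature $\{(x_1,s_1),\ldots,(x_k,s_k)\}$ with $k\le n+1$ and $c_i>0$ satisfying~\eqref{eq:annihilate}, and conversely any such finite annihilating signature is in particular an annihilating measure, giving the reverse implication immediately.

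For the converse direction in full (sufficiency of~\eqref{eq:annihilate} for optimality) I would argue directly and elementarily, without invoking duality: suppose~\eqref{eq:annihilate} holds and let $q\in\V$ be arbitrary. Evaluating~\eqref{eq:annihilate} at $p=q-\bp$ and using $s_i=\sign(\bp(x_i)-f(x_i))$ together with $|\bp(x_i)-f(x_i)|=\|\bp-f\|_\infty$, one gets $\sum_i c_i s_i\bigl(q(x_i)-\bp(x_i)\bigr)=0$, hence $\sum_i c_i s_i\bigl(q(x_i)-f(x_i)\bigr)=\sum_i c_i s_i\bigl(\bp(x_i)-f(x_i)\bigr)=\|\bp-f\|_\infty\sum_i c_i$. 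Since each $s_i\bigl(q(x_i)-f(x_i)\bigr)\le|q(x_i)-f(x_i)|\le\|q-f\|_\infty$ and the $c_i>0$, this forces $\|q-f\|_\infty\ge\|\bp-f\|_\infty$, proving optimality.

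The main obstacle is the necessity direction, i.e.\ producing the annihilating measure in the first place: one must ensure the separating functional obtained from Hahn--Banach is (a) represented by a \emph{nonnegative} measure and (b) supported precisely on $\ext(\bp-f)$ and not merely on $X$. The cleanest route is to apply the subdifferential formula $\partial\|\cdot\|_\infty(e)=\{\mu\in\mathcal M_+(X):\|\mu\|=1,\ \mathrm{supp}\,\mu\subseteq\sig(e)\text{ with matching signs},\ \langle\mu,e\rangle=\|e\|_\infty\}$ — a standard fact, but one whose proof itself rests on the compactness of $X$ and weak-$*$ compactness of the unit ball of $C(X)^*$ — and then intersect with the annihilator of $\V$; the fact that this intersection is nonempty is precisely the optimality of $\bp$ via the normal-cone characterization $0\in\partial\bigl(\|\cdot-f\|_\infty|_\V\bigr)(\bp)$. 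Handling the sign bookkeeping — that $\mathrm{supp}\,\mu$ meets $X\times\{-1,+1\}$ only at points where the error actually attains $\pm\|\bp-f\|_\infty$ with the corresponding sign — is the fiddly part, and is exactly what the signature notation $\sig(\bp-f)$ is designed to streamline.
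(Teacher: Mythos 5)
Your proposal is correct, and both directions hold up: the elementary sufficiency argument (evaluate the annihilating relation at $p=q-\bp$ and use $s_i(\bp(x_i)-f(x_i))=\|\bp-f\|_\infty$) is exactly right, and the necessity argument via the sup-norm subdifferential in $C(X)$ plus a Carath\'eodory discretization is sound, granting the standard facts you flag (Riesz representation, weak-$*$ compactness, and the chain rule through the linear map $a\mapsto\phi(\cdot)^Ta-f$, which needs no qualification since the norm is finite and continuous). The route, however, is genuinely different from the one this paper takes. The paper does not reprove Rivlin--Shapiro directly; it recovers the condition in Section~\ref{s:convex-optimality-conditions} by working entirely in the finite-dimensional coefficient space: $m(a)=\max_{x\in X}|\phi(x)^Ta-f(x)|$ is a pointwise supremum of functions that are differentiable at active indices with gradient $s\,\phi(x)$, so the pointwise-supremum rule gives $\partial m(\ba)=\conv\{s\,\phi(x):(x,s)\in\sig(\bp-f)\}$ at once, optimality is $0\in\partial m(\ba)$, Carath\'eodory in $\R^n$ yields $k\leq n+1$, and Remark~\ref{rem:annihilating-subdifferential} identifies the weights $c_i$ with the components of the kernel vector of the subgradient matrix. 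In other words, where you pass through $\mathcal{M}(X)$ and then discretize a general annihilating measure to at most $n+1$ atoms, the paper's derivation is already finite from the start and never touches measures; your approach is closer to Rivlin and Shapiro's original functional-analytic viewpoint (the Hahn--Banach/Riesz route the paper mentions only in a footnote) and would extend to infinite-dimensional or complex-valued settings, while the paper's buys a shorter, measure-free argument that plugs directly into the kernel conditions used in its algorithms. Two cosmetic points to tighten if you write it up: the set you apply Carath\'eodory to, $\{s\,\phi(x):(x,s)\in\sig(\bp-f)\}$, should be noted to be compact so that the barycenter of the normalized measure lies in its (closed $=$ ordinary) convex hull, and the subdifferential description should be phrased with signed measures of total variation one aligned with $\bp-f$ on $\ext(\bp-f)$ rather than ``$\mu\in\mathcal{M}_+(X)$ supported in $\sig(\bp-f)$'', since $\sig(\bp-f)$ lives in $X\times\{\pm1\}$, not in $X$.
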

It is convenient to interpret the $c_i$ as a signed measure whose support is a finite subset of extremal points:
\begin{equation}
\mu(f)=\sum c_i\,s_i\,f(x_i),
\end{equation}
whose support and sign correspond to a subsignature of the error. A signed measure satisfying~\eqref{eq:annihilate} for all generalized polynomials is said to annihilate the subspace of generalized polynomials\footnote{Rivlin and Shapiro present the annihilating measure condition as a "measure-theoretic language convenient shorthand" to their zero in the convex hull condition given in~\cite[Theorem 1 page 672]{Rivlin1961}. This measure-theoretic language also makes appear the connection to standard results in functional analysis, where annihilating linear functionals of Banach space of continuous functions defined in a compact Hausdorff set (which are signed measures by Riesz representation theorem) are used in conjunction with Hahn-Banach extension theorem to characterize points closest to possibly infinite dimensional subspaces, see, e.g.,~\cite[Corollary~3.3 page~520]{Deutsch1967},~\cite[Remark 2.3.5 page 17 and Section 5]{Shapiro1971}, and~\cite[Section 2.1]{Dressler2024}.}. Although equivalent to Kolmogorov criterion, the annihilating measure condition is less easy to use. In particular, its relationship to Chebyshev equioscillation theorem is technical, see~\cite[pages 681 and 682]{Rivlin1961}, and not presented here. A relationship between the annihilating measures condition and the subdifferential condition is described in Remark~\ref{rem:annihilating-subdifferential} in Section~\ref{sss:subdifferential-kernel}.
\begin{example}[Application to Example~\ref{ex:example-1}]
	For the uniform approximation of $f(x)=x^Tx$ inside $X=\B^m$, the optimal polynomial is $p(x)=\tfrac{1}{2}$ and its error signature is $\Sigma(\bp-f)=(\Sp^{{m-1}}\!\times\!\{-1\})\cup\{(0,1)\}$. We pickup from this signature the finite subset $\{(x_1,s_1),(x_2,s_2),(x_3,s_3)\}=\{(-\one,-1),(0,1),(\one,-1)\}$, where $\one\in\R^n$ is the unit norm vector $\tfrac{1}{\sqrt{n}}(1,\ldots,1)^T$. By the compatibility~\eqref{eq:Kolmogorov-convex-combination} of affine functions and convex combinations, we have for an arbitrary $p\in \V$ that $p(x_2)=\tfrac{1}{2}p(x_1)+\tfrac{1}{2}p(x_3)$, from which we deduce the following signed measure with the same finite support and sign as the selected finite signature:
    \begin{subequations}
    \begin{align}
		\mu(p)&=\tfrac{1}{2}s_1p(x_1)+s_2p(x_2)+\tfrac{1}{2}s_3p(x_3)
    	\\ &=-\tfrac{1}{2}p(-\one)+p(0)-\tfrac{1}{2}p(\one).
    \end{align}
    \end{subequations}
	One can see that it actually annihilates all affine functions: with an arbitrary $p(x)=\ab^T\,x+a_0$ we have $\mu(p)=-\tfrac{1}{2}p(-\one)+p(0)-\tfrac{1}{2}p(\one)=-\tfrac{1}{2}a_0+a_0-\tfrac{1}{2}a_0=0$. As a consequence of the annihilating measure condition, $\bp=\tfrac{1}{2}$ is a best uniform approximation of $x^Tx$ inside $\B^m$.
%
\end{example}

No extension of the annihilating measure condition with strong uniqueness has been proposed in the context of Chebyshev approximation. However, Levis et al.'s strong uniqueness condition for relative Chebyshev centers, given in Section~\ref{sss:Tanimoto}, is an annihilating measure condition for strong uniqueness, which applies in particular to Chebyshev approximation.


\subsubsection{Conditions for approximation by affine functions}\label{sss:affine}

Approximation by affine functions enjoys an elegant optimality condition based on signatures. This line was started by Collatz~\cite{Collatz1956}, recognized by many authors in spite of the difficulty of obtaining the original text nowadays. The characterization was further refined by~\cite{Brosowski1965}, and the final formulation is given by Rivlin and Shapiro~\cite[Remark page 697]{Rivlin1961} as a special case of their intersecting convex hull condition~\cite[Theorem 4]{Rivlin1961} (which is presented in Section~\ref{ss:basis-function-conditions} below): with $\phi(x)=(1,x_1,\ldots,x_n)$, the affine function $\bp(x)=\phi(x)^T\ba$ is an optimal Chebyshev approximation of $f(x)$ if and only if the convex hull of $\ext^+(\bp-f)$ intersects the convex hull of $\ext^-(\bp-f)$, where $\ext^\pm(\bp-f)$ contain extremal points with positive and negative error respectively. This easily follows from the annihilating measure condition: suppose that the extremal point $x^-$ has a negative error and is in the convex hull of the extremal points $x^+_1,\ldots,x^+_k$ that have a positive error. Then for an arbitrary affine function $p(x)$ we have $p(x^-)=p(\sum \lambda_ix_i)$, where $\sum\lambda_i=1$ and we can suppose $\lambda_i>0$ (otherwise $x_i$ can be removed from the sum). Hence $p(x^-)=\sum\lambda_ip( x_i)$. So we can define the annihilating measure by $\mu(f)=\sum\lambda_if( x_i)-f(x^-)$.
\begin{example}[Application to Example~\ref{ex:example-1}]
	In the signature $\Sigma(\bp-f)=(\Sp^{{m-1}}\!\times\!\{1\})\cup\{(0,-1)\}$ we see immediately that the extremal point $0$ with negative error is inside the convex hull $\Sp^{{m-1}}$, the convex hull of the extremal points with positive error, hence validating the condition.
\end{example}

Note that for univariate approximation by affine functions, the above condition $\ext^-(\bp-f)\cap\ext^+(\bp-f)\neq \emptyset$ is exactly the alternation condition of the equioscillation theorem. Rivlin and Shapiro~\cite[Problem 4 page 697]{Rivlin1961} deduce from this condition an elegant formal solution to the Chebyshev approximation problem that consists in finding the best affine approximation of $f(x)=\sum_{i=1}^m x_i^2$ on an arbitrary compact convex set $X\subseteq\R^m$. To this end, one needs only the minimal radius sphere circumscribing $X$, say with center $c\in\R^m$ and radius $r>0$. Then the best affine Chebyshev approximation is
\begin{equation}
\bp(x)=\sum_{i=1}^m x_i^2-\sum_{i=1}^m (x_i-c_i)^2+\tfrac{1}{2}r^2,
\end{equation}
which is affine because quadratic terms cancel each other exactly.

\paragraph{Uniqueness} Approximation by affine function enjoys several sufficient conditions for uniqueness, which apply to continuously differentiable functions $f$ and require some assumptions on the domain. For example, Collatz's conditions~\cite{Collatz1956} states that if $X\subseteq\R^2$ is strictly convex then the approximation by affine functions is unique. Other conditions applying to domains in $\R^n$ are presented in~\cite{Rivlin1960} but requires more technical presentations and are not detailed here.

\subsubsection{Tanimoto's condition and Levis et al.'s condition for relative Chebyshev centers}\label{sss:Tanimoto}

An extension of the annihilating measure condition to relative Chebyshev centers has been proposed by Tanimoto~\cite{Tanimoto1998}. His statement was inaccurate and was recently corrected by Levis et al.~\cite{Levis2024}. The presentation given here uses signatures of set-valued error, as defined in Section~\ref{ss:def-center}, and offers a statement homogeneous with the previous annihilating measure condition.

\begin{theorem}[Tanimoto's annihilating measure condition, corrected by Levis et al.]\label{thm:annihilate-center}
The generalized polynomial $\bp$ is a Chebyshev center of the totally complete $\F\subseteq\C(X)$ relative to $\V$ if and only if there exists a finite signature $\{(x_1,s_1),\ldots,(x_k,s_k)\}\subseteq\sig(\bp-\F)$ and constants $c_i>0$ such that
\begin{equation}\label{eq:annihilate-center}
	\forall p\in\V \ , \ \sum_{i=1}^k \ c_i \, s_i \, p(x_i) = 0.
\end{equation}
The integer $k$ can be chosen less or equal to $n+1$. Furthermore, $\tfrac{1}{2}\|f^--f^+\|_\infty<\|\bp-\F\|_\infty$ if and only if all extremal points in the signature $\sig(\bp-\F)$ are different.
\end{theorem}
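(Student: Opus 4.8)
The plan is to realize the relative Chebyshev center of a totally complete $\F$ as an ordinary Chebyshev approximation problem on a doubled domain, and then quote Rivlin and Shapiro's annihilating measure condition. Throughout we may assume $\|\bp-\F\|_\infty>0$, the opposite case forcing $f^-=f^+\in\V$ and being degenerate. Let $\widehat X$ be the (compact Hausdorff) space consisting of two disjoint copies of $X$. Since total completeness makes $f^-$ and $f^+$ continuous, the function $\widehat f:\widehat X\to\R$ equal to $f^-$ on the first copy and to $f^+$ on the second is continuous. Embedding $\V$ diagonally, associate to each $p\in\V$ the function $\widehat p\in\C(\widehat X)$ equal to $p$ on both copies, and set $\widehat\V=\{\widehat p:p\in\V\}$. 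Because the matrix $(\phi(x_1)\cdots\phi(x_n))$ is nonsingular for suitable $x_1,\ldots,x_n\in X$, viewed inside the first copy of $X$ these points make $\widehat\V$ an $n$-dimensional subspace in the sense of the paper's standing requirement, and $\widehat f\notin\widehat\V$ (otherwise $\bp$ would have zero error). By~\eqref{eq:prop-set-valued-signature}, for every $p$,
\begin{equation}
\|\widehat p-\widehat f\|_\infty=\max\{\|p-f^-\|_\infty,\|p-f^+\|_\infty\}=\|p-\F\|_\infty,
\end{equation}
so $\bp$ is a relative Chebyshev center of $\F$ if and only if $\widehat\bp$ is a best uniform approximation of $\widehat f$ in $\widehat\V$.

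Next I would transfer signatures. Let $\pi$ send a point of $\widehat X$ lying over $x\in X$, together with a sign $s$, to $(x,s)\in X\times\{\pm1\}$. Using the explicit formula~\eqref{eq:sig-center-explicit} for $\sig(\bp-\F)$ in terms of $\sig(\bp-f^-)$ and $\sig(\bp-f^+)$, one checks that $\pi$ maps $\sig(\widehat\bp-\widehat f)$ onto $\sig(\bp-\F)$: a point over $x$ in the first copy carries sign $s$ in $\sig(\widehat\bp-\widehat f)$ exactly when $\bp(x)-f^-(x)=s\|\bp-\F\|_\infty$, which by~\eqref{eq:sig-center-explicit} holds if and only if $(x,s)\in\sig(\bp-\F)$ through the $\sig(\bp-f^-)$ term, and symmetrically for the second copy and $f^+$. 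Now apply Rivlin and Shapiro's theorem on $\widehat X$ to $\widehat\bp$ and $\widehat f$: optimality is equivalent to the existence of points $\widehat x_1,\ldots,\widehat x_k\in\widehat X$ with signs $s_i$ and $(\widehat x_i,s_i)\in\sig(\widehat\bp-\widehat f)$, and constants $c_i>0$, with $\sum_i c_i\,s_i\,\widehat p(\widehat x_i)=0$ for all $\widehat p\in\widehat\V$, where the bound on $k$ from that theorem, applied to the $n$-dimensional $\widehat\V$, gives $k\le n+1$. Since $\widehat p(\widehat x_i)=p(x_i)$ where $x_i\in X$ underlies $\widehat x_i$, and quantifying over $\widehat\V$ is the same as quantifying over $\V$, this is precisely~\eqref{eq:annihilate-center} with $\{(x_i,s_i)\}\subseteq\sig(\bp-\F)$; merging any $\widehat x_i$ projecting to the same pair only lowers $k$. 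For the converse, given a finite signature in $\sig(\bp-\F)$ and weights $c_i>0$ satisfying~\eqref{eq:annihilate-center}, lift each pair to $\widehat X$ (possible since $\pi$ is onto $\sig(\bp-\F)$) and apply the ``if'' part of Rivlin and Shapiro's theorem to conclude that $\widehat\bp$, hence $\bp$, is optimal.

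For the ``furthermore'' claim, suppose first that some extreme point is repeated, i.e.\ $(x,+1),(x,-1)\in\sig(\bp-\F)$ for some $x$. By~\eqref{eq:sig-center-explicit} and $f^-\le f^+$, the only consistent possibility (it even forces the two norms to coincide) is $\bp(x)-f^-(x)=\|\bp-\F\|_\infty$ and $\bp(x)-f^+(x)=-\|\bp-\F\|_\infty$, whence $f^+(x)-f^-(x)=2\|\bp-\F\|_\infty$ and therefore $\|\bp-\F\|_\infty\le\tfrac12\|f^--f^+\|_\infty$. Conversely, the triangle inequality gives $\|\bp-\F\|_\infty=\max\{\|\bp-f^-\|_\infty,\|\bp-f^+\|_\infty\}\ge\tfrac12(\|\bp-f^-\|_\infty+\|\bp-f^+\|_\infty)\ge\tfrac12\|f^--f^+\|_\infty$, so if $\|\bp-\F\|_\infty\le\tfrac12\|f^--f^+\|_\infty$ then all these inequalities are equalities; choosing $x_0$ with $f^+(x_0)-f^-(x_0)=\|f^--f^+\|_\infty=2\|\bp-\F\|_\infty$, the bounds $|\bp(x_0)-f^\pm(x_0)|\le\|\bp-\F\|_\infty$ force $\bp(x_0)-f^-(x_0)=\|\bp-\F\|_\infty$ and $\bp(x_0)-f^+(x_0)=-\|\bp-\F\|_\infty$, so that $x_0$ is a repeated extreme point. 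Hence the extreme points in $\sig(\bp-\F)$ are all distinct if and only if $\tfrac12\|f^--f^+\|_\infty<\|\bp-\F\|_\infty$.

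The main obstacle is the first paragraph: one must verify scrupulously that the doubled domain genuinely instantiates the hypotheses under which Rivlin and Shapiro's theorem was stated — that it is compact Hausdorff, that the diagonal $\widehat\V$ has dimension exactly $n$ in the paper's sense, and that $\widehat f\notin\widehat\V$ — and that extreme points and their signs correspond correctly under $\pi$, including the degenerate coincidences that produce repeated extreme points. Once this reduction is secured, the remaining steps are routine.
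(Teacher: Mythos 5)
Your proposal is correct, but it follows a genuinely different route from the paper: the paper does not prove Theorem~\ref{thm:annihilate-center} at all, it imports it from Levis et al.~\cite{Levis2024} and only adds, in the remark that follows, a translation between the original formulation (weights $\lambda_i\,(\bp(x_i)-f(x_i))$) and the signature formulation (weights $c_i\,s_i$ with $\lambda_i=c_i\,|\bp(x_i)-f(x_i)|$), plus the observation that signatures turn the final claim into an equivalence. You instead reduce the relative-center problem to an ordinary Chebyshev approximation on the doubled domain $\widehat X=X\sqcup X$, with $\widehat f=f^-$ and $f^+$ on the two copies and $\V$ embedded diagonally, and then invoke Rivlin and Shapiro's annihilating measure condition; the identification of objective values via~\eqref{eq:prop-set-valued-signature}, the surjectivity of the sign-preserving projection of $\sig(\widehat{\bp}-\widehat f)$ onto $\sig(\bp-\F)$ via~\eqref{eq:sig-center-explicit}, the merging of lifted points over a common pair, and the bound $k\leq n+1$ from the $n$-dimensional $\widehat\V$ all check out, and your two-sided elementary argument for the ``furthermore'' equivalence (repeated extreme point $\Leftrightarrow$ $\|\bp-\F\|_\infty\leq\tfrac12\|f^--f^+\|_\infty$, using the triangle inequality and compactness to locate $x_0$) proves exactly the sharpened form stated here, without using optimality of $\bp$. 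What each approach buys: the paper's citation-plus-dictionary stays faithful to the literature at no cost, while your reduction makes the kinship with Rivlin--Shapiro (already hinted at in the paper's presentation) into an actual derivation and supplies a proof of the improved ``furthermore'' claim.

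One small imprecision to repair: your justification ``$\widehat f\notin\widehat\V$ (otherwise $\bp$ would have zero error)'' conflates two different things, since assuming $\|\bp-\F\|_\infty>0$ does not exclude $f^-=f^+\in\V$ with $\bp\neq f^-$, in which case the cited Rivlin--Shapiro statement (whose standing hypothesis is $f\notin\V$) does not apply verbatim. The clean fix is to take as nondegeneracy hypothesis that the optimal value $\inf_{p\in\V}\|p-\F\|_\infty$ is positive --- the exact analogue of the paper's standing assumption $f\notin\V$, and precisely the condition $\widehat f\notin\widehat\V$ --- or to dispose of that corner case directly: if $f^-=f^+\in\V$ and $\bp$ is not optimal, testing~\eqref{eq:annihilate-center} against $p=\bp-f^-\in\V$ gives $\sum_i c_i\,s_i\,(\bp(x_i)-f^-(x_i))=\|\bp-\F\|_\infty\sum_i c_i>0$, so no annihilating measure exists and both sides of the equivalence fail together. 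This is a one-line repair, not a structural flaw.
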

\begin{remark}
In the original statement~\cite[Theorem~2.4]{Levis2024}, which does not use signatures explicitely, the condition involves $\lambda_i \, (\bp(x_i)-f(x_i)) \, p(x_i)$, for some $f\in\F$, instead of $c_i \, s_i \, p(x_i)$ for some $(x,s)\in\sig(\bp-\F)$ in the statement of Theorem~\ref{thm:annihilate-center}. They are equivalent with $\lambda_i=c_i\,|\bp(x_i)-f(x_i)|$. Using signatures here, in addition to clarifying the connection with Rivlin and Shapiro's classical signature condition, allows a slightly more accurate statement: in the original statement~\cite[Theorem~2.4]{Levis2024} the condition $\tfrac{1}{2}\|f^--f^+\|_\infty<\|\bp-\F\|_\infty$ implies the possibility of choosing different extremal points while here the condition $\tfrac{1}{2}\|f^--f^+\|_\infty<\|\bp-\F\|_\infty$ is equivalent to the signature containing only different extremal points. This slight improvement is because two different functions $f_1,f_2\in\F$ can have the same error on an extremal point, while they will contribute to only one single element in the signature.
\end{remark}
\begin{example}[Application to Example~\ref{ex:example-2-bis}]\label{ex:example-2-bis-Tanimoto}
	We consider again the uniform approximation of $\F(x)=\{x^2+\delta x:\delta \in[0,1]\}$ by affine functions, and the affine function $\bp(x)=\tfrac{1}{2}x+\tfrac{23}{32}$. The signature of the corresponding error is $\sig(\bp-\F)=\{(x_1,s_1),(x_2,s_2),(x_3,s_3),(x_4,s_4)\}=\{(-1,-1),(-\tfrac{1}{4},1),(\tfrac{1}{4},1),(1,-1)\}$. The error maximizer $x_3$ is a convex combination of the error minimizers $x_1$ and $x_4$, i.e. $x_3=\lambda x_1+(1-\lambda)x_4$ with $\lambda=\tfrac{3}{8}$. Therefore for any affine function $p\in\V$ we also have $p(x_3)=\lambda p(x_1)+(1-\lambda)p(x_4)$. We deduce that the following measure
	\begin{subequations}
    \begin{align}
		\mu(f)&=c_1\,s_1\,f(x_1)+c_3\,s_3\,f(x_3)+c_4\,s_4\,f(x_4)
    	\\ &=-\tfrac{3}{8}f(-1)+f(\tfrac{1}{4})-\tfrac{5}{8}f(1)
    \end{align}
    \end{subequations}
	annihilates every affine function, proving that $\bp$ is optimal by Tanimoto's condition.
\end{example}
\begin{example}[Application to Example~\ref{ex:example-2}]\label{ex:example-2-Tanimoto}
	We consider the simultaneous uniform approximation of $f_1(x)=-x$ and $f_2(x)=x+2$ by constant functions $p(x)=a_0$ uniformly inside $X=[-1,1]$. The error signature of $\bp(x)=1$ is $\{(1,-1),(1,1)\}$, with annihilating measure $\mu(f)=-f(1)+f(1)=0$, i.e. in~\eqref{eq:annihilate-center}, $1\times(-1)\times p(1)+1\times1\times p(1)$. In this case, the measure does not only annihilate $\V$ but also $\C(X)$ entirely. This is a rather delicate situation, where the statement~\cite[Theorem 2.4]{Levis2024} proves optimality.
\end{example}
The signature formulation~\eqref{eq:annihilate-center} shows that an extremal point appearing twice in the signature, hence with different signs, is a sufficient condition for optimality. This is explained in Remark~\ref{rem:annihilating-subdifferential} in Section~\ref{sss:subdifferential-kernel} (page~\pageref{rem:annihilating-subdifferential}) using subdifferentials calculus. Here the signature formulation of Tanimoto's condition reveals a rather odd situation: if $\tfrac{1}{2}\|f^--f^+\|_\infty=\|\bp-\F\|_\infty$ then $\bp$ is a Chebyshev center of $\F$ relative to $\V$, but this remains valid for all finite dimensional subspaces that contain $\V$. In particular, if $\|f^--f^+\|_\infty=\max_{x\in X}f^+(x)-\min_{x\in X}f^-(x)$, i.e., $f^-$ and $f^+$ attain their respective minimum and maximum at a common point, then the constant function $\bp(x)=\tfrac{1}{2}\bigl(\max_{x\in X}f^++\min_{x\in X}f^-\bigr)$ is optimal for every finite dimensional subspace that contains a constant function. Example~\ref{ex:example-2-Tanimoto} is one such situation: the constant function $\bp(x)=1$ is also the optimal affine uniform approximation, because no affine function can have a lower error at $x$=1. In such a situation, there is typically infinitely many optimal affine uniform approximation, e.g., $\bp(x)=a(x-1)+1$ for $a\in[-1,1]$ are all optimal affine uniform approximation of $\F$ in Example~\ref{ex:example-2-Tanimoto}.

\paragraph{Strong uniqueness}

Levis et al.~\cite{Levis2024} give the following characterization of optimality with strong uniqueness. The restriction of a function $f$ to a set $E\subseteq X$ is denoted by $f|_E$, and the $\V|_E$ denotes the space of generalized polynomials restricted to $E$. The statement is similar to Tanimoto's condition, expressed here again using signatures, with an additional requirement on the dimension of $\V|_{\{x_1,\ldots,x_k\}}$. 
\begin{theorem}[Levis's condition for strong uniqueness]
The generalized polynomial $\bp$ is a strongly unique Chebyshev center of the totally complete $\F\subseteq\C(X)$ relative to $\V$ if and only if there exists a finite signature $\{(x_1,s_1),\ldots,(x_k,s_k)\}\subseteq\sig(\bp-\F)$ and constants $c_i>0$ such that
\begin{equation}\label{eq:annihilate-center-strong}
	\forall p\in\V \ , \ \sum_{i=1}^k \ c_i \, s_i \, p(x_i) = 0
\end{equation}
and $\dim\V|_{\{x_1,\ldots,x_k\}}=n$. The integer $k$ is greater than $n$ and can be chosen less or equal to $2n$. Furthermore, $\tfrac{1}{2}\|f^--f^+\|_\infty<\|\bp-\F\|_\infty$ if and only if all extremal points in the signature $\sig(\bp-\F)$ are different.
\end{theorem}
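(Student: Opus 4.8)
The plan is to reduce the strong-uniqueness inequality \eqref{eq:strong-definition} (with $\F$ in place of $f$) to a lower bound on the one-sided directional derivative of the convex objective $g(p)=\|p-\F\|_\infty$ at $\bp$, and then to turn that bound into the geometric statement $0\in\interior\conv V$, where $V=\{\,s\,\phi(x):(x,s)\in\sig(\bp-\F)\,\}\subseteq\R^n$; the annihilating measure \eqref{eq:annihilate-center-strong} and the dimension requirement then come out of a Steinitz-type reduction. First I would set $h^\pm(p)=\|p-f^\pm\|_\infty$, so that $g=\max\{h^-,h^+\}$ by \eqref{eq:prop-set-valued-signature}, observe that $h^\pm$ and hence $g$ are convex on $\V$, and compute the directional derivative by the standard rule for a pointwise maximum --- first inside each $h^\pm$ over $\ext(\bp-f^\pm)$, then over the active members of $\{h^-,h^+\}$, using \eqref{eq:sig-center-explicit} to say which is active. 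This gives, for every direction $q\in\V$,
\[
g'(\bp;q)=\max_{(x,s)\in\sig(\bp-\F)} s\,q(x);
\]
the three cases of \eqref{eq:sig-center-explicit} correspond to ``$h^-$ active'', ``$h^+$ active'', and ``both active''. Since $\|\cdot\|_\infty$ is a norm on $\V$ under the standing dimension hypothesis and $g$ is convex, the identity $g'(\bp;q)=\inf_{t>0}\tfrac{1}{t}\bigl(g(\bp+tq)-g(\bp)\bigr)$ shows that $\bp$ satisfies \eqref{eq:strong-definition} for some $r>0$ (all $p\in\V$) if and only if $\max_{(x,s)\in\sig(\bp-\F)} s\,q(x)\ge r\|q\|_\infty$ for all $q\in\V$.

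Next I would read this inequality geometrically. Writing $a\in\R^n$ for the coordinate vector of $q$, its left-hand side is $\max_{v\in\conv V}v^Ta$, and since all norms on $\R^n$ are equivalent we may replace $\|q\|_\infty$ by $\|a\|_2$ up to a positive factor. Hence the existence of $r>0$ making the inequality hold for all $q$ is equivalent to $\conv V$ containing a Euclidean ball centered at the origin, that is, to $0\in\interior\conv V$. (Here $V$, and therefore $\conv V$, is compact because the support of $\sig(\bp-\F)$ is a closed subset of the compact $X$.) This is precisely the strengthening of the reading $0\in\conv V$ of Theorem~\ref{thm:annihilate-center}, exactly as Bartelt's condition strengthens Kolmogorov's criterion.

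It then remains to pass to a finite sub-signature. By Steinitz's theorem, $0\in\interior\conv V$ implies $0\in\interior\conv\{v_1,\dots,v_k\}$ for some $v_i=s_i\phi(x_i)\in V$ with $k\le 2n$, and necessarily $k\ge n+1$ since this convex hull must be $n$-dimensional. An interior point of a polytope is a convex combination of its vertices with all weights strictly positive, so $0=\sum_i c_i\,s_i\,\phi(x_i)$ with $c_i>0$, which is \eqref{eq:annihilate-center-strong}; moreover $0\in\interior\conv\{v_i\}$ forces $\{v_i\}$, hence $\{\phi(x_i)\}$ (as $s_i=\pm1$), to span $\R^n$, that is, $\dim\V|_{\{x_1,\dots,x_k\}}=n$. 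Conversely, if \eqref{eq:annihilate-center-strong} holds with $c_i>0$ and $\dim\V|_{\{x_1,\dots,x_k\}}=n$, then after normalizing the $c_i$ the origin is a strictly positive convex combination of points $\{v_i\}$ that span $\R^n$, so it lies in the relative interior of $\conv\{v_i\}$, which here coincides with its interior; thus $0\in\interior\conv\{v_i\}\subseteq\interior\conv V$. Finally, the equivalence between $\tfrac{1}{2}\|f^--f^+\|_\infty<\|\bp-\F\|_\infty$ and distinctness of the extreme points in $\sig(\bp-\F)$ is proved exactly as in Theorem~\ref{thm:annihilate-center}: a point $x$ appearing with both signs forces $\bp(x)-f^-(x)=\|\bp-\F\|_\infty$ and $\bp(x)-f^+(x)=-\|\bp-\F\|_\infty$, hence $f^+(x)-f^-(x)=2\|\bp-\F\|_\infty$, and the converse is the same computation read backwards.

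The step I expect to be the main obstacle is the directional-derivative computation in the set-valued setting: establishing $g'(\bp;q)=\max_{(x,s)\in\sig(\bp-\F)}s\,q(x)$ in the delicate boundary case $\|\bp-f^-\|_\infty=\|\bp-f^+\|_\infty$, where both $h^-$ and $h^+$ are active and a single point of $X$ may enter $\sig(\bp-\F)$ with two opposite signs --- this is exactly the configuration where the original, uncorrected statement of the condition failed, so the case analysis must be carried out carefully. The remaining separation and Steinitz arguments are routine finite-dimensional convexity.
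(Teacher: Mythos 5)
Your argument is correct, and it follows in substance the paper's own convex-analytic route to this result (the paper states the theorem as a cited result of Levis et al., but re-derives it in Section~\ref{ss:convex-Chebyshev-approximation} as Theorem~\ref{thm:convex-Chebyshev-approximation}): you use the signature expression of the directional derivative of $\|\cdot-\F\|_\infty$, the equivalence between sharpness and a ball (equivalently, zero in the interior) inside the subdifferential $\conv\{s\,\phi(x):(x,s)\in\sig(\bp-\F)\}$, and Steinitz's theorem for the reduction to $k\leq 2n$ points, exactly as in Proposition~\ref{prop:strong-uniqueness} and the discussion around~\eqref{eq:finite-convex-hull-strict-uniqueness}; the delicate boundary case $\|\bp-f^-\|_\infty=\|\bp-f^+\|_\infty$ that you flag is precisely what the computation of~\eqref{eq:subdifferential-rcc} in Section~\ref{sss:subdifferentitial-center} settles via~\eqref{eq:sig-center-explicit}, so your formula $g'(\bp;q)=\max\{s\,q(x):(x,s)\in\sig(\bp-\F)\}$ is fully justified by that development. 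The one genuine difference is the last step: where the paper passes from ``zero interior to the convex hull of finitely many subgradients'' to the kernel form (full-rank matrix with strictly positive kernel vector) via Stiemke's alternative theorem (Corollary~\ref{cor:sharp-optimality}), you instead use the elementary facts that an interior point of $\conv\{v_1,\ldots,v_k\}$ is a strictly positive convex combination of the $v_i$ and, conversely, that a strictly positive combination of points spanning $\R^n$ (with $0$ in their convex hull, so affine hull $=\R^n$) lies in the interior; this lands directly on the annihilating-measure-plus-dimension form of Levis et al., the two forms being identified in Remark~\ref{rem:annihilating-subdifferential} and in the Haar-matrix reformulation of $\dim\V|_{\{x_1,\ldots,x_k\}}=n$. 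Your handling of the norm in the strong-uniqueness inequality by finite-dimensional norm equivalence, the compactness of the generating set needed to replace suprema by maxima, and the ``furthermore'' equivalence via $f^+(x)-f^-(x)=2\|\bp-\F\|_\infty$ at a doubly-signed extreme point are all sound; note only that, as the theorem's title indicates, the characterization is of \emph{strongly unique} relative Chebyshev centers, which is indeed what you prove.
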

The abstract condition $\dim\V|_{\{x_1,\ldots,x_k\}}=n$, which does not require knowing explicitly a basis of $\V$, is equivalent to requiring that the matrix
\begin{equation}
	H(x_1,\ldots,x_k)=\begin{pmatrix}\phi(x_1)&\phi(x_2)&\cdots&\phi(x_k)\end{pmatrix}\in\R^{n\times k},
\end{equation}
is full rank. This matrix appears in Equation~\eqref{eq:def-HaarMatrix} and is used in the context of optimality conditions based on basis function vectors presented in the next section, where it is called a Haar matrix. This latter condition is more practical using explicitly a basis of the subspace $\V$.
\begin{example}[Application to Example~\ref{ex:example-2-bis}]
In example~\ref{ex:example-2-bis-Tanimoto} we used the subsignature $\sig(\bp-\F)=\{(x_1,s_1),(x_3,s_3),(x_4,s_4)\}=\{(-1,-1),(\tfrac{1}{4},1),(1,-1)\}$, which has an annihilating measure. Now, we need to check whether $\dim\V|_{\{x_1,x_3,x_4\}}=2$ or not. For two affine functions $p_i(x)=\phi(x)^Ta_i=a_{i0}+a_{i1}x$, $i\in\{1,2\}$, we have $p_1|_{\{x_1,x_3,x_4\}}=p_2|_{\{x_1,x_3,x_4\}}$ if and only if both restricted functions agree on their restricted domain, i.e., $p_1(x_i)=p_2(x_i)$ for $i\in\{1,3,4\}$. Those three equations correspond to $H(x_1,x_3,x_4)^Ta_1=H(x_1,x_3,x_4)^Ta_2$, which implies $a_1=a_2$ because $H(x_1,x_3,x_4)\in\R^{2\times 3}$ is a Vandermonde matrix and hence is full rank. Hence the dimension of $\dim\V|_{\{x_1,x_3,x_4\}}$ is the same as the dimension of the coefficients of the affine functions, which is $2$, and Levis et al.'s condition proves that $\bp(x)=\tfrac{1}{2}x+\tfrac{23}{32}$ is strongly unique.
\end{example}

\subsection{Conditions based on basis function vectors}\label{ss:basis-function-conditions}

The following conditions need not only the error extremal points $x\in \ext(a)$ and the corresponding signs, but also the basis functions evaluation $\phi$ at extremal points. The advantage of using this additional information is the easiness of checking them with respect to previous signature based conditions. The first three conditions, Kirchberger's kernel condition, Cheney's convex hull condition and Smarzewski's kernel condition, somehow express that zero is in the convex hull of a specified set. The set specified by Cheney's condition may be infinite, in which case Carathéodory theorem proves that zero is in the convex hull of a finite subset of the specified set. Such finite convex hull conditions are expressed using matrix kernel equivalent conditions, the later being easier to check in practice. Finally, we present Rivlin and Shapiro's intersecting convex hull condition, which is in fact a reformulation of the previous conditions.

\subsubsection{Kirchberger's kernel condition}

The following condition is granted to the 1903 work of Kirchberger~\cite{Kirchberger1903} by Watson's historical paper~\cite{Watson2000} and by the historical book~\cite{Steffens2006}. The same statement appears in~\cite{Osborne1969}, presented as a straightforward reformulation of Cheney's Characterization Theorem (see Section~\ref{sss:Cheney} below).

The modern statement~\cite{Watson2000,Osborne1969} uses a Haar matrix evaluated at some extremal points in $X$:
\begin{equation}\label{eq:def-HaarMatrix}
	H(x_1,\ldots,x_k)=\begin{pmatrix}\phi(x_1)&\phi(x_2)&\cdots&\phi(x_k)\end{pmatrix}\in\R^{n\times k}.
\end{equation}
In the special case of univariate polynomial approximation, the Haar matrix becomes a Vandermonde matrix
\begin{equation}
	H(x_1,\ldots,x_k)=\begin{pmatrix}1&1&\cdots&1\\x_1&x_2&\cdots&x_k\\x_1^2&x_2^2&\cdots&x_k^2\\\vdots&\vdots&\ddots&\vdots\\x_1^{n-1}&x_2^{n-1}&\cdots&x_k^{n-1}\end{pmatrix}\in\R^{n\times k}.
\end{equation}
Then, Kirchberger's condition states that a generalized polynomial $\bp(x)$ is optimal if and only if the Haar matrix $H(x_1,\ldots,x_k)$ evaluated at some error extremal points $x_i\in \ext(\bp-f)$ has a non-trivial kernel vector $u\in\R^k$ whose component signs match the error signs:
\begin{theorem}[Kirchberger's condition]\label{thm:Kirchberger}
The generalized polynomial $\bp(x)$ is a best uniform approximation of $f$ inside $X$ if and only if there exist a finite signature $\{(x_1,s_1),\ldots,(x_k,s_k)\}\subseteq\sig(\bp-f)$ such that
\begin{equation}\label{eq:Kirchberger}
\exists u(\neq0)\in\R^k, \ H(x_1,\ldots,x_k)\,u=0, \ s_i\,u_i \geq 0 \text{ for all } i\in\{1,\ldots,k\}.
\end{equation}
The integer $k$ can be chosen less or equal to $n+1$.
\end{theorem}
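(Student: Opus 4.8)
The plan is to derive Kirchberger's condition directly from Rivlin and Shapiro's annihilating measure condition, already established above, by rewriting the statement ``$\mu$ annihilates $\V$'' in linear-algebraic terms using the basis function vector $\phi$. Writing an arbitrary $p\in\V$ as $p(x)=\phi(x)^Ta$ with $a\in\R^n$, a signed measure $\mu(p)=\sum_{i=1}^k c_i\,s_i\,p(x_i)$ satisfies $\mu(p)=0$ for all $p\in\V$ if and only if $\bigl(\sum_{i=1}^k c_i\,s_i\,\phi(x_i)\bigr)^Ta=0$ for every $a\in\R^n$, i.e.\ if and only if $\sum_{i=1}^k c_i\,s_i\,\phi(x_i)=0$, which is exactly $H(x_1,\ldots,x_k)\,w=0$ for $w=(c_1 s_1,\ldots,c_k s_k)^T$. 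This one observation is the whole bridge between the two conditions.

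For the ``only if'' direction, assume $\bp$ is a best uniform approximation. Rivlin and Shapiro's condition furnishes a finite subsignature $\{(x_1,s_1),\ldots,(x_k,s_k)\}\subseteq\sig(\bp-f)$ with $k\le n+1$ and constants $c_i>0$ with $\sum_i c_i s_i\phi(x_i)=0$. Setting $u_i:=c_i s_i$ gives $u\neq0$ (each $c_i>0$), $H(x_1,\ldots,x_k)\,u=0$, and $s_i u_i=c_i s_i^2=c_i>0\ge0$, which is precisely \eqref{eq:Kirchberger}. Conversely, given a subsignature and a nonzero $u\in\R^k$ with $H\,u=0$ and $s_i u_i\ge0$ for all $i$, discard every index with $u_i=0$: the surviving subsignature still lies in $\sig(\bp-f)$, the restricted $u$ is still nonzero and still a kernel vector of the restricted Haar matrix, and there $s_i u_i>0$, so $c_i:=s_i u_i=|u_i|>0$ yields $\sum_i c_i s_i p(x_i)=\sum_i u_i p(x_i)=(Hu)^Ta=0$ for all $p=\phi(\cdot)^Ta\in\V$; the bound $k\le n+1$ is inherited.

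Alternatively, one can give a self-contained argument resting only on Kolmogorov's criterion (Theorem~\ref{thm:Kolmogorov}). For ``if'', suppose such a $u$ exists but, contrary to optimality, there is $p=\phi(\cdot)^Ta\in\V$ with $s\,p(x)>0$ for all $(x,s)\in\sig(\bp-f)$ (the signature form of Remark~\ref{rem:signture-vs-error}); then $0=(Hu)^Ta=\sum_i u_i p(x_i)=\sum_i (s_i u_i)(s_i p(x_i))$ is a sum of nonnegative terms, at least one strictly positive since $u\neq0$, a contradiction. For ``only if'', optimality plus Kolmogorov says no $p\in\V$ has $s\,p(x)>0$ on all of $\sig(\bp-f)$, which forces $0\in\conv\{s\,\phi(x):(x,s)\in\sig(\bp-f)\}\subseteq\R^n$ (otherwise a strict separating hyperplane would produce such a $p$); Carathéodory's theorem then writes $0$ as a convex combination of at most $n+1$ of these vectors, and clearing the zero coefficients gives the subsignature, the kernel vector, and $k\le n+1$.

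There is no deep difficulty here: the proof is essentially a change of language. The points needing care are the bookkeeping around vanishing components of $u$ (so the reconstructed weights $c_i$ are \emph{strictly} positive and the subsignature genuinely sits in $\sig(\bp-f)$), and, in the self-contained route, the elementary topological fact that $\{s\,\phi(x):(x,s)\in\sig(\bp-f)\}$ is compact—because $X$ is compact, $\phi$ continuous, and $\sig(\bp-f)$ closed—so that its convex hull is closed and the strict separation theorem applies.
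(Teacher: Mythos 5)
Your argument is correct, but it does not follow a proof in the paper, because the paper states Kirchberger's condition as a classical result (citing Kirchberger, Osborne, Watson) without proving it; what you have produced is a self-contained derivation assembled from ingredients the survey presents elsewhere. Your first route is exactly the observation of Remark~\ref{rem:annihilating-subdifferential} run in both directions: a measure $\sum_i c_i s_i\,\delta_{x_i}$ annihilates $\V$ if and only if $H(x_1,\ldots,x_k)\,u=0$ with $u_i=c_i s_i$, so Kirchberger's condition and Rivlin--Shapiro's condition are the same statement in two languages, and your bookkeeping for vanishing components of $u$ is the only point that needs care and is handled correctly. Your second route (Kolmogorov in the signature form of Remark~\ref{rem:signture-vs-error}, then strict separation of $0$ from the compact set $\{s\,\phi(x):(x,s)\in\sig(\bp-f)\}$, then Carath\'eodory) is in substance the paper's Section~\ref{s:convex-optimality-conditions} derivation of Theorem~\ref{thm:convex-Chebyshev-approximation} done by hand: the set you separate is precisely the generating set of subgradients in~\eqref{eq:subdifferential-Chebyshev-approximation-3}, ``no sign-matching $p$'' is the nonnegativity of the directional derivative, and Carath\'eodory plus clearing zero coefficients is Corollary~\ref{cor:optimality}; the compactness/closedness points you flag are exactly the ones that matter. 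What your approach buys is a direct elementary proof of the classical theorem without invoking the convex-analysis machinery, at the price of taking Kolmogorov (or Rivlin--Shapiro) as given; the paper's route buys the same kernel condition uniformly for relative Chebyshev centers and with the strong-uniqueness refinement. Two trivial slips, neither a gap: in the converse of your first route the bound $k\leq n+1$ is not ``inherited'' but simply irrelevant (it is only part of the existence claim in the forward direction), and $(Hu)^Ta$ there should read the restricted Haar matrix after discarding the zero components.
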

Obviously, extremal points for which $u_i=0$ are useless in the characterization. More precisely, as noted in~\cite{Osborne1969,Brosowski1983}, the condition can be sharpened: the set of extremal points used in~\eqref{eq:Kirchberger} can always be selected so that it is minimal, in the sense that no strict subset satisfies the same condition. In this case, we have that $H(x_1,\ldots,x_k)$ is rank $k-1$, i.e., has a kernel of dimension $1$, and therefore $k\leq n+1$, $u_i>0$ and $e(a,x_i)\,u_i > 0$ hold for all $i\in\{1,\ldots,k\}$.

\begin{example}[Application to Example~\ref{ex:example-1}]\label{ex:example-1:Kirchberger}
	We consider again the uniform approximation of $f(x)=x^Tx$ inside $\B^m$ and the optimal polynomial $\bp(x)=\tfrac{1}{2}$. We need to select a finite number of extremal points $\{x_1,\ldots,x_k\}$ and form Kirchberger's matrix
	\begin{equation}
	H=\begin{pmatrix}
		1&1&\cdots & 1
		\\x_1&x_2&\cdots&x_k
	\end{pmatrix}\in\R^{(m+1)\times k}
	.
	\end{equation}
	Since the first row contains only positive entries, any kernel vector must have components with different signs. Hence, we must select extremal points with positive and negative errors: $x_1=0$ with positive error, and $x_2$ anywhere in $S^{m-1}$ and the opposite extremal point $-x_2$, both with negative error. This leads to
	\begin{equation}
	H=\begin{pmatrix}
		1&1& 1
		\\0&x_2&-x_2
	\end{pmatrix}\in\R^{(m+1)\times 3}
	,
	\end{equation}
	whose kernel is $u=(-2,1,1)$. This kernel vector satisfies $s_i\,u_i\geq0$ hence Kirchberger condition proves that $\bp$ is optimal.
\end{example}

The optimal error of this example has infinitely many extremal points, which is untypical. In typical situations, the kernel of Kirchberger's matrix has dimension $1$, unless, e.g., the problem has symmetries. From a practical numerical point of view, only approximations of extremal points will be used. If there are more extremal points than basis functions, i.e., $k\geq n+1$, then the approximate Haar matrix has one more columns than rows, and the kernel of the approximate matrix can be computed and is an approximate kernel of the exact matrix. However, if $k\leq n$ then the exact Haar matrix has a kernel but the approximate Haar matrix has generically no kernel. In this situation, a SVD should be used to compute an approximate kernel, by chopping near zero singular values.

This kernel formulation of the optimality condition allows casting it into a system of equations, which is usually only a necessary condition (like solving $\nabla m(a)=0$ if $m(a)$ was differentiable). A standard approach in optimization is the to use a Newton operator to solve the system, see, e.g.,~\cite{Hettich1976}, where extremal points, polynomial coefficients and kernel vector components are searched together. A computer assisted proof of the existence of the solution to the resulting system of optimality conditions, together with an error bound, can be performed using Kantorovich theorem~\cite{Ciarlet2012} or interval Newton operators~\cite{Goldsztejn-C2007}.

\paragraph{Relationship to the equioscillation theorem}

The equioscillation theorem is deduced of Kirchberger's kernel condition using the following lemma, which extends to Haar matrices the well known property from $n\times(n+1)$ Vandermonde matrices that their kernel vector components have oscillating signs.
\begin{lemma}[Lemma page 74 of~\cite{Cheney1982}]\label{lem:Cheney}
	With $X=[\lx,\ux]\subseteq \R$ and the Haar condition holds for $\phi$, the Haar matrix $H(x_1,\ldots,x_{n+1})\in\R^{n\times(n+1)}$ with $x_i<x_{i+1}$ has a one dimensional kernel with a kernel vector $0\neq u\in\R^{n+1}$ satisfying $u_i\,u_{i+1}<0$.
\end{lemma}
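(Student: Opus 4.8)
The plan is to prove the sign-alternation property of the kernel vector of the Haar matrix $H(x_1,\dots,x_{n+1})$ by a contradiction argument that exploits the defining property of Haar systems, namely that a nonzero generalized polynomial in an $n$-dimensional Chebyshev set has at most $n-1$ zeros in $X$. First I would establish that the kernel of $H(x_1,\dots,x_{n+1})\in\R^{n\times(n+1)}$ is exactly one-dimensional: since the Haar condition holds, any $n$ of the $n+1$ columns $\phi(x_1),\dots,\phi(x_{n+1})$ are linearly independent (these are evaluations of the basis at $n$ distinct points, and a nontrivial linear dependence among them would produce a generalized polynomial vanishing at $n$ distinct points, contradicting the Haar property). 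Hence $H$ has rank exactly $n$, so $\ker H$ is one-dimensional; pick a nonzero $u\in\R^{n+1}$ spanning it, and note that every component $u_i$ must be nonzero (if some $u_i=0$, the relation $\sum_j u_j\phi(x_j)=0$ becomes a nontrivial dependence among $n$ of the columns, again a contradiction).

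Next I would translate the claim $u_iu_{i+1}<0$ into a statement about sign changes. The relation $Hu=0$ reads $\sum_{j=1}^{n+1}u_j\,\phi(x_j)=0$, i.e. the linear functional $p\mapsto\sum_j u_j\,p(x_j)$ annihilates $\V$. Suppose for contradiction that $u$ has at most $n-1$ sign changes along the ordered sequence $x_1<\cdots<x_{n+1}$; equivalently, there are indices that partition $\{1,\dots,n+1\}$ into at most $n$ consecutive blocks on which $u$ has constant sign. The idea is then to construct a generalized polynomial $p\in\V$ that matches these signs, i.e. with $\sign p(x_j)=\sign u_j$ for all $j$: one does this by choosing a point strictly between each pair of consecutive blocks of opposite sign (at most $n-1$ such separating points), and taking $p$ to be the (up to scaling) unique element of $\V$ vanishing at exactly those separating points — here one uses that a Haar system interpolates/has the requisite sign-change structure, specifically that a nonzero generalized polynomial with $k\le n-1$ prescribed zeros changes sign exactly at those zeros and is otherwise of definite sign on each subinterval. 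With such a $p$, the quantity $\sum_j u_j\,p(x_j)=\sum_j |u_j|\,|p(x_j)|>0$ is strictly positive (every term is nonnegative and at least one, in fact all, are strictly positive since $u_j\neq0$ and $p(x_j)\neq0$ as the $x_j$ avoid the separating points), contradicting $Hu=0$. Therefore $u$ has at least $n$ sign changes; but with only $n+1$ components the maximum possible number of sign changes is $n$, so $u$ has exactly $n$ sign changes, which forces $u_i\,u_{i+1}<0$ for every $i$.

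The main obstacle is the construction of the sign-matching generalized polynomial $p$ and the justification that a Haar system element vanishing at a prescribed set of $k\le n-1$ points actually changes sign at each of them (so that the prescribed block-sign pattern is realized). This requires invoking the standard structural facts about Chebyshev sets: existence of an interpolant at any $n$ distinct nodes, and that the interpolant through $n-1$ nodes together with one extra sign constraint is determined up to a positive scalar. I would handle this by noting that since only at most $n-1$ separating points are needed and $\V$ is $n$-dimensional, there is a nonzero $p\in\V$ vanishing at all of them; by the Haar property $p$ has no other zeros, so it keeps a constant sign on each of the at most $n$ open subintervals determined by the separating points, and these signs genuinely alternate across each separating point — otherwise $p$ could be perturbed within the one-parameter family of such vanishing polynomials to create an extra zero, again violating the Haar bound. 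After fixing the global sign of $p$ (replacing $p$ by $-p$ if needed) we get exactly the pattern $\sign p(x_j)=\sign u_j$ on all the nodes, completing the contradiction. The remaining details — counting blocks, placing separating points strictly between consecutive nodes of opposite $u$-sign, and checking $p(x_j)\neq0$ — are routine.
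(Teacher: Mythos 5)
The paper itself gives no proof of this lemma (it is imported verbatim from Cheney's book), so your attempt stands on its own. Your first step is correct: by the Haar condition every $n\times n$ submatrix of $H$ is nonsingular (a singular one would yield, through its left kernel, a nonzero generalized polynomial with $n$ zeros), so $H$ has rank $n$, its kernel is one-dimensional, and every component of a kernel vector is nonzero. Your overall strategy --- read $Hu=0$ as saying that the functional $p\mapsto\sum_j u_j\,p(x_j)$ annihilates $\V$, and contradict this with an element of $\V$ whose signs at the nodes match those of $u$ --- is a legitimate alternative to the classical argument, which instead takes $u_i$ proportional to $(-1)^i$ times the determinant of $H$ with its $i$-th column deleted and observes that these determinants are nonzero and all of one sign, because the determinant at ordered nodes never vanishes and the set of ordered node tuples is connected.

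The genuine gap is in the construction of the sign-matching $p$, which is exactly the nontrivial content of your route. Let $k\le n-1$ be the assumed number of sign changes of $u$. First, if $k<n-1$ the set of $p\in\V$ vanishing at the $k$ separating points has dimension $n-k\ge2$, so it is not ``unique up to scaling'', and the Haar bound of $n-1$ zeros does not forbid such a $p$ from having up to $n-1-k$ further zeros --- possibly at some nodes $x_j$, or sign-change zeros strictly inside a block, which would destroy the match $\sign p(x_j)=\sign u_j$; neither ``no other zeros'' nor the sign match follows from what you wrote. Second, even when $k=n-1$, a zero of a nonzero Haar element need not be a sign change (in the $3$-dimensional space of quadratics, $x^2$ vanishes at $0$ without changing sign), and your perturbation remark does not exclude this: perturbing \emph{within} the one-parameter family of elements vanishing at all separating points only rescales $p$ and cannot create an extra zero. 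The correct argument perturbs $p$ by a small multiple of an element vanishing at the other $n-2$ separating points but not at the suspect one, producing two zeros near it and hence $n$ zeros in total --- a contradiction that is only available because $k=n-1$. So the statement you need (existence of $p\in\V$ changing sign exactly at $k\le n-1$ prescribed interior points, or at least satisfying $u_j\,p(x_j)\ge0$ for all $j$ with strict inequality for some $j$) is a classical lemma on Haar systems that must itself be proved, e.g.\ via $p(x)=\det\bigl(\phi(x)\ \phi(t_1)\ \cdots\ \phi(t_{n-1})\bigr)$ together with the constant sign of ordered Haar determinants, plus a padding or limiting argument when $k<n-1$; alternatively, applying the Cramer-type cofactor argument to $H$ directly yields the alternating kernel vector at once and bypasses this lemma entirely.
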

The equioscillation theorem then follows directly from Kirchberger's condition: with $X=[\lx,\ux]\subseteq \R$ and the Haar condition holds for $\phi$, the generalized polynomial is optimal if and only if there exists $\{(x_1,s_1),\ldots,(x_k,s_k)\}\subseteq\sig(\bp-f)$, with $x_i<x_{i+1}$, and $\exists u(\neq0)\in\R^k$ such that $H(x_1,\ldots,x_k)\,u=0$ and $s_i\,u_i \geq 0$. Finally by Lemma~\ref{lem:Cheney} $u_i$ have alternating sign, hence so do the signs $s_i$ of the error at extremal points. For the converse, if the equioscillation theorem holds, then $\{(x_1,s_1),\ldots,(x_k,s_k)\}\subseteq\sig(\bp-f)$, with $x_i<x_{i+1}$, has oscillating signs. So does the kernel vector Haar matrix $H(x_1,\ldots,x_k)\,u=0$ by Lemma~\ref{lem:Cheney}, therefore either $u$ or $-u$ satisfies the requirement of Kirchberger's condition.

\subsubsection{Cheney's zero in the convex hull condition and Bartelt's condition}\label{sss:Cheney}

Cheney's Characterization Theorem~\cite[page 73]{Cheney1982} consists in testing whether $0$ is inside the convex hull of a set of vectors related to basis functions vectors evaluated at some extremal points:
\begin{theorem}[Cheney's convex hull condition]
The generalized polynomial $\bp$ is a best uniform approximation of $f$ inside $X$ if and only if
\begin{equation}\label{eq:Cheney-convex-hull}
0\in\conv\bigl\{\bigl(\bp(x)-f(x)\bigr)\,\phi(x): x\in\ext(\bp-f)\bigr\}.
\end{equation}
\end{theorem}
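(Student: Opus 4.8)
The plan is to obtain Cheney's condition directly from Kolmogorov's criterion (Theorem~\ref{thm:Kolmogorov}) by a finite-dimensional separation argument. Write $E=\ext(\bp-f)$ and set
\begin{equation}
	S=\bigl\{\,\bigl(\bp(x)-f(x)\bigr)\,\phi(x):x\in E\,\bigr\}\subseteq\R^n.
\end{equation}
First I would collect the topological facts that make everything work: since $\bp-f$ is continuous on the compact set $X$ and $f\notin\V$, the set $E$ is nonempty and compact and $\|\bp-f\|_\infty>0$; hence $S$, being the continuous image of $E$ under $x\mapsto(\bp(x)-f(x))\phi(x)$, is compact, and therefore $\conv S$ is a nonempty compact convex subset of $\R^n$ (Carath\'eodory's theorem together with compactness of $S$). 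The bridge between the two formulations is the identity, valid for any $p(x)=\phi(x)^Tc\in\V$,
\begin{equation}
	\bigl(\bp(x)-f(x)\bigr)\,p(x)=c^T\bigl[\bigl(\bp(x)-f(x)\bigr)\phi(x)\bigr],
\end{equation}
so that the existence of $p\in\V$ with $(\bp(x)-f(x))p(x)>0$ for all $x\in E$ is \emph{exactly} the existence of $c\in\R^n$ with $c^Tv>0$ for every $v\in S$, equivalently (by linearity) for every $v\in\conv S$.

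With this dictionary the two implications are short. For the ``if'' direction, suppose $0\in\conv S$, say $0=\sum_i\lambda_i\bigl(\bp(x_i)-f(x_i)\bigr)\phi(x_i)$ with $x_i\in E$, $\lambda_i>0$, $\sum_i\lambda_i=1$. Given any $p=\phi(\cdot)^Tc\in\V$, multiplying by $c^T$ gives $0=\sum_i\lambda_i\bigl(\bp(x_i)-f(x_i)\bigr)p(x_i)$; a convex combination of real numbers that equals zero cannot have all its terms strictly positive, so $\bigl(\bp(x_i)-f(x_i)\bigr)p(x_i)\le0$ for some $i$. Thus for every $p$ there is an extreme point where the product is nonpositive, which is precisely Kolmogorov's criterion~\eqref{eq:synchronized}, so $\bp$ is optimal. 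For the ``only if'' direction I argue the contrapositive: if $0\notin\conv S$, then since $\conv S$ is compact and convex the strict separation theorem in $\R^n$ yields $c\in\R^n$ and $\alpha>0$ with $c^Tv\ge\alpha$ for all $v\in\conv S$; via the dictionary this produces $p=\phi(\cdot)^Tc\in\V$ with $(\bp(x)-f(x))p(x)\ge\alpha>0$ on all of $E$, contradicting Kolmogorov's criterion, so $\bp$ is not optimal.

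The only genuinely delicate point is the compactness bookkeeping behind the separation step: strict separation of $0$ from $\conv S$ requires $\conv S$ to be closed, which in turn rests on $E=\ext(\bp-f)$ being compact so that $S$ is compact and hence $\conv S$ is compact in finite dimensions. If one wished to avoid invoking Kolmogorov's theorem, the same separation computation could be carried out against the directional-derivative characterization of optimality instead, but reusing Theorem~\ref{thm:Kolmogorov} keeps the argument within the material already established.
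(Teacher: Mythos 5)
Your argument is correct: $\ext(\bp-f)$ is indeed nonempty and compact, so the set $S$ and hence $\conv S$ are compact, the identity $(\bp(x)-f(x))\,p(x)=c^T\bigl[(\bp(x)-f(x))\,\phi(x)\bigr]$ for $p=\phi(\cdot)^Tc$ is the right dictionary, the ``if'' direction is the elementary observation that a convex combination summing to zero cannot have all terms strictly positive, and the ``only if'' direction is a legitimate strict separation of the point $0$ from the compact convex set $\conv S$, contradicting Kolmogorov's criterion (Theorem~\ref{thm:Kolmogorov}). The route differs from the paper's: being a survey, the paper does not prove the theorem directly but cites Cheney's book, identifies the condition with Kirchberger's kernel condition, and later recovers it from convex analysis, namely as the case $r=0$ of Theorem~\ref{thm:convex-Chebyshev-approximation-subdifferential}, i.e., $0\in\partial\|\phi^Ta-f\|_\infty$ with the subdifferential of the pointwise supremum computed in~\eqref{eq:subdifferential-Chebyshev-approximation-3} (the set there is homothetic to yours, signs versus errors). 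Your proof is in effect a hand-rolled instance of the duality between conditions (2) and (3) of Proposition~\ref{prop:optimality} specialized to this problem, as you yourself note in the closing remark: Kolmogorov's criterion is the nonnegativity of the directional derivative, and separation converts that into $0$ lying in the convex hull of the generating subgradients. What your approach buys is a short, self-contained derivation using only Section~\ref{s:classical-optimality-conditions} material plus finite-dimensional separation; what the paper's convex-analytic route buys is uniformity (the same computation covers relative Chebyshev centers) and the quantitative strengthening $r\,\B^n\subseteq\conv S$ characterizing strong uniqueness, which a plain separation argument does not deliver.
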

Note that in the convex hull condition, $f-\bp$ could be used instead of $\bp-f$, since changing the sign of all vectors does not impact the convex hull condition.
\begin{example}[Application to Example~\ref{ex:example-1}]\label{ex:example-1:Cheney}
For the uniform approximation of $f(x)=x^Tx$ inside $X=\B^m$, the optimal polynomial is $\bp(x)=\tfrac{1}{2}$ with uniform error $\tfrac{1}{2}$, and its error signature is $\Sigma(\bp-f)=(\Sp^{m-1}\!\times\!\{-1\})\cup\{(0,1)\}$. The basis vector function being $\phi(x)=(1,x)$, we see that the set $\bigl\{\bigl(\bp(x)-f(x)\bigr)\,\phi(x): x\in\ext(\bp-f)\bigr\}$ is $\bigl\{\tfrac{s}{2}\,\phi(x): (s,x)\in\sig(\bp-f)\bigr\}$, that is
\begin{equation}\label{eq:example-1:Cheney}
\tfrac{1}{2} \ \Bigl( \ (\{-1\}\!\times\!\Sp^{{m-1}})\cup\{(1,0)\} \ \Bigr).
\end{equation}
It is represented in the left plot of Figure~\ref{fig:Cheney-example-1} for $m=2$, where the blue circle represents vectors $-\tfrac{1}{2}\,\phi(x)$ for extremal points $x\in\Sp^{m-1}$ whose error is $-\tfrac{1}{2}$, and the red point represents $\tfrac{1}{2}\,\phi(x)$ for the extremal point $x=0$ whose error is $\tfrac{1}{2}$. Their convex hull is the gray cone, which contains 0. Hence Cheney's conditions proves that $\bp$ is optimal.
\end{example}
\begin{figure}
\centering
\includegraphics[width=0.32\linewidth]{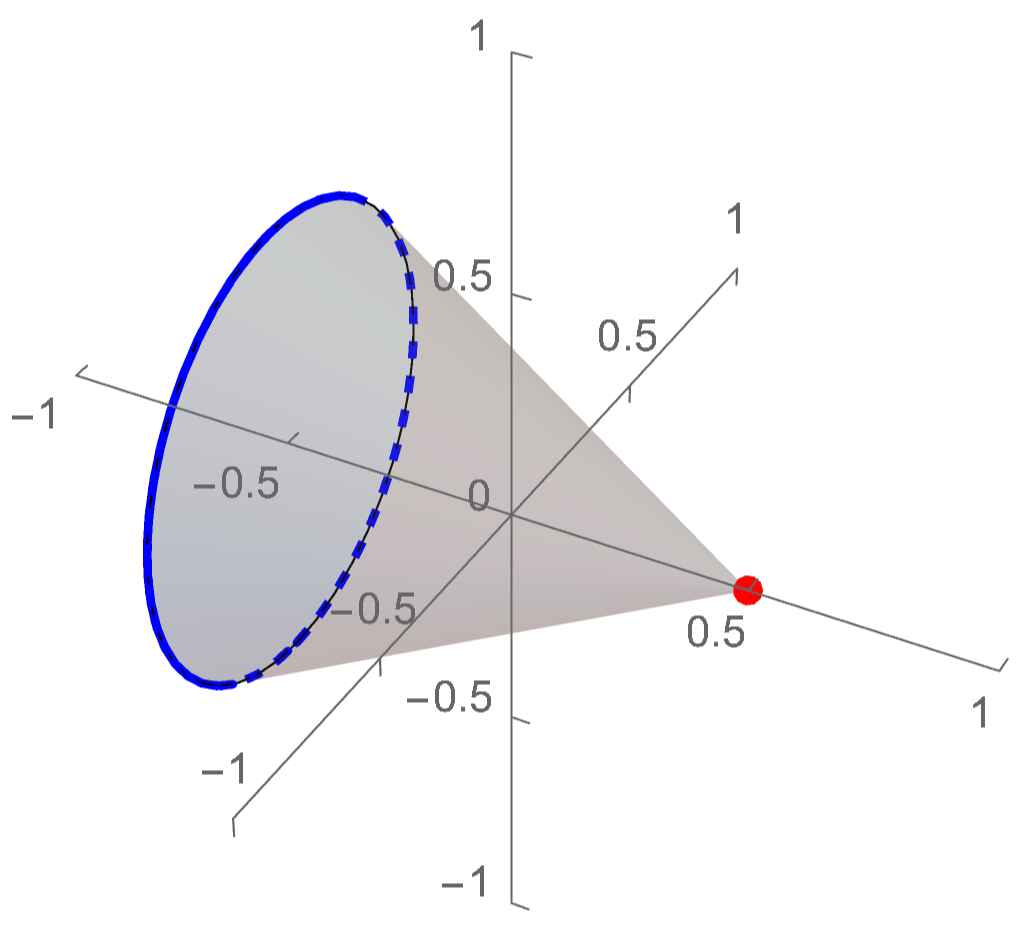}\hfill\includegraphics[width=0.32\linewidth]{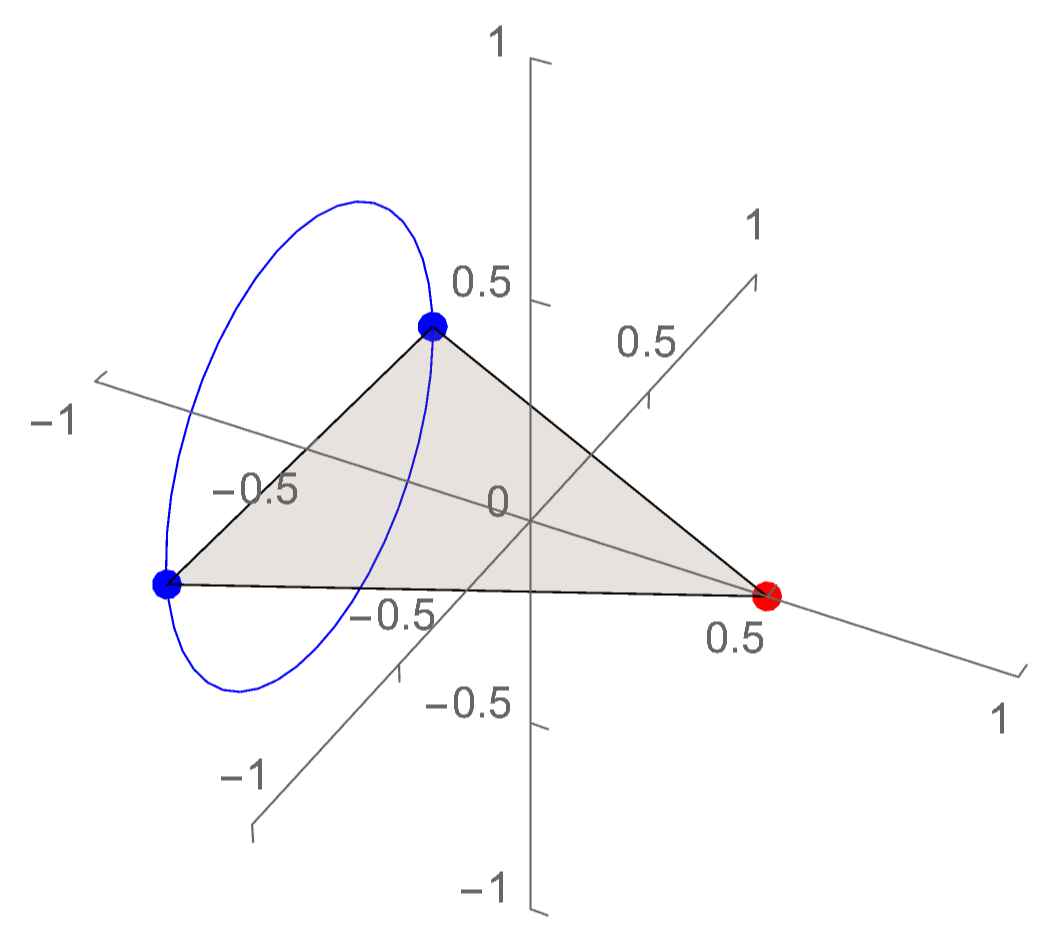}\hfill\includegraphics[width=0.32\linewidth]{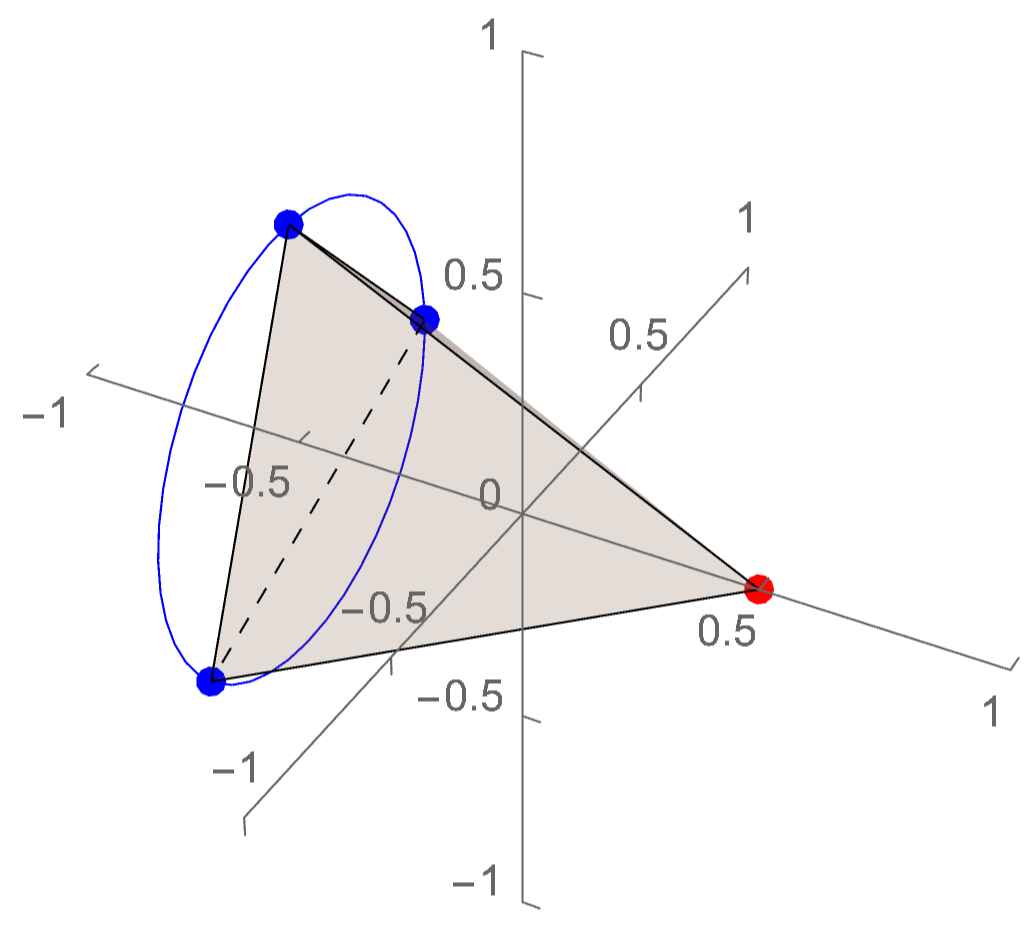}
\caption{In blue and red, the vector $\bigl(\bp(x)-f(x)\bigr)\,\phi(x)$ involved in~\eqref{eq:Cheney-convex-hull} evaluated at error extremal points that are minimizers and maximizers respectively. In There convex hull is represented in gray. From left to right: the full set of extremal points, three extremal points (two minimizers and the maximizer) and four extremal points (three minimizers and the maximizer).\label{fig:Cheney-example-1}}
\end{figure}
The set of extremal points $\ext(\bp-f)$ can be infinite, like in the previous example, but by Carathéodory theorem only a finite set of extremal points is actually necessary. We emphasize here the equivalent kernel condition for further comparison to other conditions. It involves the matrix
\begin{equation}\label{eq:Cheney-matrix}
	C(x_1,\ldots,x_k)=\begin{pmatrix} \ e_1\,\phi(x_1)&e_2\,\phi(x_2)&\cdots&e_k\,\phi(x_k) \ \end{pmatrix}\in\R^{n\times k},
\end{equation}
where $e_i=\bp(x_i)-f(x_i)$ is the error evaluated at $x_i$. The kernel version of Cheney's condition is that the matrix $C(x_1,\ldots,x_k)$ evaluated at some extremal points $x_i\in \ext(\bp-f)$ has a non-trivial kernel vector $u\in\R^k$ with non-negative components: 
\begin{corollary}[Cheney's kernel condition]
The generalized polynomial $\bp$ is a best uniform approximation of $f$ inside $X$ if and only if there exist $x_1,\ldots,x_k\in\ext(\bp-f)$ such that
\begin{equation}\label{eq:Cheney-finite}
\exists u(\neq0)\in\R^k, \ C(x_1,\ldots,x_k)\,u=0, \ u_i \geq 0 \text{ for all } i\in\{1,\ldots,k\}.
\end{equation}
The integer $k$ can be chosen less or equal to $n+1$.
\end{corollary}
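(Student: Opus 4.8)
The plan is to obtain this corollary directly from Cheney's convex hull condition together with Carathéodory's theorem: the kernel condition~\eqref{eq:Cheney-finite} is nothing but a coordinate rephrasing of the statement that $0$ lies in the convex hull of finitely many of the vectors $\bigl(\bp(x)-f(x)\bigr)\phi(x)$ appearing in~\eqref{eq:Cheney-convex-hull}.

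For the sufficiency part I would assume we are given $x_1,\ldots,x_k\in\ext(\bp-f)$ and $0\neq u\in\R^k$ with $u_i\geq0$ for all $i$ and $C(x_1,\ldots,x_k)\,u=0$. Since $u\neq0$ and all its components are nonnegative, the sum $\sigma:=\sum_{i=1}^k u_i$ is strictly positive, so after replacing $u$ by $u/\sigma$ we may assume $\sum_i u_i=1$. Then
\[
0 \ = \ C(x_1,\ldots,x_k)\,u \ = \ \sum_{i=1}^k u_i\,\bigl(\bp(x_i)-f(x_i)\bigr)\,\phi(x_i)
\]
exhibits $0$ as a convex combination of vectors of the form $\bigl(\bp(x)-f(x)\bigr)\phi(x)$ with $x\in\ext(\bp-f)$, hence $0\in\conv\bigl\{\bigl(\bp(x)-f(x)\bigr)\phi(x):x\in\ext(\bp-f)\bigr\}$, and Cheney's convex hull condition yields that $\bp$ is a best uniform approximation of $f$.

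For the necessity part I would start from optimality of $\bp$, which by Cheney's convex hull condition gives $0\in\conv S$ with $S=\bigl\{\bigl(\bp(x)-f(x)\bigr)\phi(x):x\in\ext(\bp-f)\bigr\}\subseteq\R^n$. Applying Carathéodory's theorem in $\R^n$ produces $k\leq n+1$ extreme points $x_1,\ldots,x_k\in\ext(\bp-f)$ and weights $u_i\geq0$ with $\sum_i u_i=1$ such that $\sum_{i=1}^k u_i\bigl(\bp(x_i)-f(x_i)\bigr)\phi(x_i)=0$; this is exactly $C(x_1,\ldots,x_k)\,u=0$ with $u\neq0$ (its components sum to $1$) and $u_i\geq0$. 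The same computation also settles the claimed bound on $k$: if~\eqref{eq:Cheney-finite} holds for some $k$ at all, then by the sufficiency part $\bp$ is optimal, and then the necessity part re-establishes the condition with $k\leq n+1$.

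There is no genuinely hard step here; the only points worth recording are that the set $S$ may be infinite (as already happens in Example~\ref{ex:example-1}), so it is precisely Carathéodory's theorem that carves out a finite subset, and that the passage between~\eqref{eq:Cheney-finite} and convex hull membership requires normalising the nonnegative kernel vector to unit sum, which is legitimate exactly because $u\neq0$ forces $\sum_i u_i>0$.
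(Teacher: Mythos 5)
Your proposal is correct and follows essentially the same route as the paper: the corollary is obtained from Cheney's convex hull condition by invoking Carathéodory's theorem to extract at most $n+1$ extreme points, and then observing that the finite convex-combination statement is just the kernel condition $C(x_1,\ldots,x_k)\,u=0$ with a nonnegative, nonzero (normalizable) vector $u$. The only detail you add beyond the paper's brief justification is the explicit normalization $\sum_i u_i=1$, which is indeed the right way to pass between the two formulations.
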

The connection of Cheney's condition to Kirchberger's connection is straightforward: the columns of $C(x_1,\ldots,x_k)$ are the same as the columns of $H(x_1,\ldots,x_k)$ but multiplied by $e_i$, hence the components of the kernel vectors are also multiplied by $e_i$. As a consequence, non-negativeness of the components of a kernel vector of $C(x_1,\ldots,x_k)$ is equivalent to a kernel vector of $H(x_1,\ldots,x_k)$ having the same sign of the error for the last. Rivlin and Shapiro proved that this condition is necessary~\cite[Theorem 1 page 672]{Rivlin1961}. As mentioned previously, Osborn and Watson identifies Cheney's condition with Kirchberger's condition in~\cite{Osborne1969}. Cheney's condition is also proved in~\cite[Theorem 2.4 page 72]{Rivlin2020}.
\begin{example}[Application to Example~\ref{ex:example-1}]\label{ex:example-1:Cheney-finite}
As in Example~\ref{ex:example-1:Kirchberger}, we select the error maximum $x_1=0$ and to opposite minimizers $x_2=v_1$, where $v_1=(1,0,\ldots,0)^T$ is the first vector basis, and $x_3=-v_1$. The center plot of Figure~\ref{fig:Cheney-example-1} shows that zero is still in the convex hull for this subset of extremal points. Equivalently, Cheney's matrix
	\begin{equation}\label{eq:ex:example-1:Cheney-finite}
	C=\frac{1}{2}\begin{pmatrix}
		1&-1& -1
		\\0&-v_1&v_1
	\end{pmatrix}\in\R^{(m+1)\times 3},
	\end{equation}
	whose columns are $e_i\,\phi(x_i)$ with $\phi(x)=(1,x)$, has a nontrivial non-negative kernel vector, here $u=(2,1,1)^T$.
\end{example}

\paragraph{Strong uniqueness}

Bartelt~\cite[Theorem 6]{Bartelt1973} proves the following necessary and sufficient condition for strong uniqueness.
\begin{theorem}[Bartelt's convex hull condition]\label{thm:Bartelt-convexhull}
The generalized polynomial $\bp$ is a strongly unique best uniform approximation of $f$ inside $X$ if and only if
\begin{equation}\label{eq:thm:Bartelt-convexhull}
0\in\interior\conv\bigl\{\bigl(\bp-f\bigr)\,\phi(x): x\in\ext(\bp-f)\bigr\}.
\end{equation}
\end{theorem}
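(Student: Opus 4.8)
The plan is to deduce this from the already-stated Bartelt condition (Theorem~\ref{thm:Bartelt}) by a standard separation argument, exactly paralleling how Cheney's convex hull condition (Theorem, Eq.~\eqref{eq:Cheney-convex-hull}) relates to Kolmogorov's criterion~\eqref{eq:Kolmogorov-max}. Write $K=\conv\{(\bp(x)-f(x))\,\phi(x):x\in\ext(\bp-f)\}\subseteq\R^n$; note $K$ is compact since $\ext(\bp-f)$ is compact (extreme points of a continuous function on a compact set) and $\phi$ is continuous. A generalized polynomial $p\in\V$ is identified with its coefficient vector $a\in\R^n$ via $p(x)=\phi(x)^Ta$, so that $(\bp(x)-f(x))\,p(x)=\langle (\bp(x)-f(x))\phi(x),a\rangle$. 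Then
\begin{equation*}
\max_{x\in\ext(\bp-f)}(\bp(x)-f(x))\,p(x)=\max_{v\in K}\langle v,a\rangle=\sigma_K(a),
\end{equation*}
the support function of $K$. Since the paper has established (Remark following the norm discussion) that under the standing nondegeneracy assumption $a\mapsto\|\phi(\cdot)^Ta\|_\infty$ is a norm on $\R^n$, call it $\|a\|_\V:=\|p\|_\infty$. With this notation Bartelt's condition~\eqref{eq:Bartelt} reads: $\bp$ is strongly unique with constant $r$ iff $\sigma_K(a)\geq r\,\|\bp-f\|_\infty\,\|a\|_\V$ for all $a\in\R^n$. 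Thus $\bp$ is strongly unique (for \emph{some} $r>0$) iff $\inf_{\|a\|_\V=1}\sigma_K(a)>0$.

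The next step is the purely convex-geometric lemma: for a compact convex set $K\subseteq\R^n$, one has $0\in\interior K$ if and only if $\sigma_K(a)>0$ for all $a\neq0$, and moreover (by compactness of the unit sphere and continuity of $\sigma_K$) this is equivalent to $\inf_{\|a\|=1}\sigma_K(a)>0$ for any fixed norm. The forward direction is a one-line separation argument: if $\sigma_K(a_0)\leq0$ for some $a_0\neq0$ then $K$ lies in the closed half-space $\{v:\langle v,a_0\rangle\leq0\}$, whose interior misses $0$'s neighbourhoods in the $-a_0$ direction, so $0\notin\interior K$. Conversely, if $0\notin\interior K$, then since $K$ is convex and closed, either $0\notin K$ (separate $0$ strictly from $K$ by a hyperplane, giving $\sigma_K(a_0)<0$ for its normal $a_0$) or $0\in\partial K$ (a supporting hyperplane at $0$ gives $\sigma_K(a_0)\le 0$); in both cases $\sigma_K$ fails to be strictly positive. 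The norm-equivalence on $\R^n$ (all norms equivalent) lets us pass freely between $\|\cdot\|_\V$ and, say, the Euclidean norm, so $\inf_{\|a\|_\V=1}\sigma_K(a)>0$ is equivalent to $0\in\interior K$.

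Chaining these: $\bp$ is a strongly unique best approximation $\iff$ $\sigma_K(a)\ge r\|\bp-f\|_\infty\|a\|_\V$ for all $a$ and some $r>0$ $\iff$ $\inf_{\|a\|_\V=1}\sigma_K(a)>0$ (absorbing the fixed positive factor $\|\bp-f\|_\infty>0$, which is positive since $f\notin\V$) $\iff$ $0\in\interior K$, which is exactly~\eqref{eq:thm:Bartelt-convexhull}. I would remark that the case $r=0$ recovers the equivalence between Kolmogorov's~\eqref{eq:Kolmogorov-max} and Cheney's~\eqref{eq:Cheney-convex-hull} conditions ($\sigma_K(a)\ge0$ for all $a$ iff $0\in K$), which situates the result. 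The main obstacle, such as it is, is bookkeeping rather than depth: one must be careful that $\sup$/$\max$ over $\ext(\bp-f)$ really equals the support function of the \emph{convex hull} $K$ (true since a linear functional attains its max on $\conv S$ at a point of $S$), and that the strong-uniqueness constant $r$ can genuinely be taken positive precisely when the infimum of $\sigma_K$ over the unit sphere is positive — here compactness of the sphere is what converts "$\sigma_K(a)>0$ pointwise" into a uniform bound, and $\|\bp-f\|_\infty>0$ is what lets the factor be harmlessly normalized away. No infinite-dimensional subtleties arise because everything has been reduced to the finite-dimensional coefficient space $\R^n$.
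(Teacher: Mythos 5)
Your argument is correct. It is worth pointing out, however, that the survey itself gives no proof of Theorem~\ref{thm:Bartelt-convexhull}: the statement is quoted from Bartelt's 1973 paper, and the closest the paper comes to a derivation is the convex-analytic route of Section~\ref{s:convex-optimality-conditions}, where Proposition~\ref{prop:strong-uniqueness} (sharp minimality $\iff r\,\B^n\subseteq\partial m(\ba)$) combined with the subdifferential formula~\eqref{eq:subdifferential-Chebyshev-approximation-3} yields Theorem~\ref{thm:convex-Chebyshev-approximation-subdifferential}, a quantitative ball-in-the-subdifferential version of~\eqref{eq:thm:Bartelt-convexhull}. Your route is genuinely different: taking the max-form condition of Theorem~\ref{thm:Bartelt} as given (it too is only cited in the paper), you read~\eqref{eq:Bartelt} as a lower bound on the support function $\sigma_K$ of the compact convex set $K=\conv\{(\bp(x)-f(x))\,\phi(x):x\in\ext(\bp-f)\}$ and reduce everything to the separation characterization ``$0\in\interior K$ iff $\sigma_K>0$ off the origin,'' with compactness of the unit sphere converting pointwise positivity into a uniform constant and norm equivalence on $\R^n$ handling the $\|\cdot\|_\infty$ versus coefficient-norm issue. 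This buys an elementary, self-contained equivalence between the two Bartelt conditions with no subdifferential calculus, at the price of being conditional on the cited max-form theorem and of discarding the value of $r$, whereas the paper's machinery re-derives both forms from scratch and keeps the constant (with the $2$-norm). Two minor points: the points of a neighbourhood of $0$ that escape the half-space $\{v:\langle v,a_0\rangle\leq0\}$ lie in the $+a_0$ direction, not $-a_0$ (harmless slip); and your normalization step implicitly uses positive homogeneity of $\sigma_K$ to pass from the unit sphere to all of $\R^n$, which you should state when writing the proof out, together with $\|\bp-f\|_\infty>0$ (guaranteed by $f\notin\V$) as you already note.
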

\begin{example}[Application to Example~\ref{ex:example-1}]
As in example~\ref{ex:example-1:Cheney}, we see on the left plot of Figure~\ref{fig:Cheney-example-1} that not only zero is inside the convex hull of Cheney's $\conv\bigl\{\bigl(\bp-f\bigr)\,\phi(x): x\in\ext(\bp-f)\bigr\}$ but that it furthermore inside its interior. This holds obviously in arbitrary dimension for the convex hull of~\eqref{eq:example-1:Cheney}. Hence, $\bp(x)=\tfrac{1}{2}$ is strongly unique.
\end{example}
Theorem~\ref{thm:Bartelt-convexhull} is a qualitative condition in the sense that it does not provide any information on the value of $r$ in the definition of strong uniqueness. We deduce the following interesting property of Bartelt's condition: the convex hull in~\eqref{eq:thm:Bartelt-convexhull} must have a nonempty interior, therefore there must exist at least $n+1$ extremal points so that the optimal approximation is strongly unique.

\subsubsection{Smarzewski's condition for strong uniqueness}

Smarzewski~\cite[Corollary 2.2 page 218]{Smarzewski1990} proved a kernel optimality condition for strong uniqueness. He uses a matrix similar to Kirchberger's and Cheney's matrices:
\begin{equation}\label{eq:Smarzewski-matrix}
	S(x_1,\ldots,x_k)=\begin{pmatrix}s_1\phi(x_1)&s_2\phi(x_2)&\cdots&s_k\phi(x_k)\end{pmatrix}\in\R^{n\times k},
\end{equation}
where $s_i$ is the sign of the error at the extremal point $x_i$. Its columns are again proportional to the columns of the previous two matrices. In particular, the vectors $\phi(x_i)$ are multiplied by the sign of the error in Smarzewski's matrix instead of the error itself in Cheney's matrix, so that $C(x_1,\ldots,x_k)=\|p-f\|_\infty \,S(x_1,\ldots,x_k)$.
\begin{theorem}[Smarzewski's kernel condition]\label{thm:Smarzewski}
The generalized polynomial $\bp$ is the strongly unique best uniform approximation of $f$ inside $X$ if and only if there exists a finite signature $\{(x_1,s_1),\ldots,(x_k,s_k)\}\subseteq\Sigma(\bp-\F)$ such that
\begin{equation}\label{eq:Smarzewski}
\exists u\in\R_{>0}^k, \ S\,u=0,
\end{equation}
where $S=\begin{pmatrix}s_1\phi(x_1)&s_2\phi(x_2)&\cdots&s_k\phi(x_k)\end{pmatrix}\in\R^{n\times k}$ is furthermore full rank. The integer $k$ is greater than $n$ and can be chosen less or equal to $2n$.
\end{theorem}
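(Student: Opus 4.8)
The plan is to reduce Smarzewski's condition to Bartelt's convex hull condition (Theorem~\ref{thm:Bartelt-convexhull}), in exactly the way Cheney's kernel condition is the finite/kernel reformulation of Cheney's convex hull condition. Since $|\bp(x)-f(x)|=\|\bp-f\|_\infty>0$ on every extreme point, we have $\bigl(\bp(x)-f(x)\bigr)\phi(x)=\|\bp-f\|_\infty\,s\,\phi(x)$ for $(x,s)\in\sig(\bp-f)$, so the set appearing in~\eqref{eq:thm:Bartelt-convexhull} is a positive scaling of $V:=\{s\,\phi(x):(x,s)\in\sig(\bp-f)\}\subseteq\R^n$, and Bartelt's condition is equivalent to
\begin{equation*}
0\in\interior\conv V .
\end{equation*}
Note that the columns of the matrix $S$ in~\eqref{eq:Smarzewski} are precisely finitely many elements $s_i\phi(x_i)\in V$.

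The technical core is the elementary convex-geometric lemma: for finitely many vectors $v_1,\dots,v_k\in\R^n$, one has $0\in\interior\conv\{v_1,\dots,v_k\}$ if and only if $\Span\{v_1,\dots,v_k\}=\R^n$ and there exists $u\in\R_{>0}^k$ with $\sum_i u_i v_i=0$. For the "only if" direction, $0\in\interior\conv\{v_i\}$ makes the convex hull full-dimensional, so its affine hull is a linear subspace containing all $v_i$, hence $\Span\{v_i\}=\R^n$ (equivalently $S$ full rank); and $0$, lying in the interior, lies in particular in the relative interior of the polytope, which is exactly the set of convex combinations of the $v_i$ with strictly positive weights. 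For the "if" direction one runs the same two facts in reverse: $\sum u_i v_i=0$ with $u>0$ (rescale so $\sum u_i=1$) puts $0$ in the relative interior of $\conv\{v_i\}$, while $\Span\{v_i\}=\R^n$ together with $0\in\conv\{v_i\}$ forces the affine hull to be all of $\R^n$, so relative interior equals interior.

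With these two ingredients the sufficiency part of the theorem is immediate: given a full-rank $S$ with $Su=0$, $u>0$, the lemma gives $0\in\interior\conv\{s_i\phi(x_i)\}\subseteq\interior\conv V$, i.e.\ Bartelt's condition holds, so $\bp$ is strongly unique. For the necessity part, strong uniqueness gives $0\in\interior\conv V$ by Theorem~\ref{thm:Bartelt-convexhull}, and the key step is to pass from the possibly infinite $V$ to a finite subset still containing $0$ in the interior of its convex hull. This is Steinitz's theorem (the interior version of Carathéodory's theorem): if $0\in\interior\conv V$ in $\R^n$, then $0\in\interior\conv V'$ for some $V'\subseteq V$ with $|V'|\le 2n$. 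Choosing a sub-signature $\{(x_1,s_1),\dots,(x_k,s_k)\}\subseteq\sig(\bp-f)$ realizing such a $V'$ and applying the lemma produces the required positive kernel vector $u$ and the full rank of $S$; since an $n$-dimensional polytope with $0$ in its interior has at least $n+1$ vertices, $k\ge n+1>n$, while Steinitz gives $k\le 2n$.

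The main obstacle is the necessity direction, specifically that the extracted finite subset must keep $0$ in the \emph{interior} of the convex hull: ordinary Carathéodory ($k\le n+1$) does not suffice, which is exactly why the bound degrades to $2n$, and invoking Steinitz's theorem is the natural remedy and transparently accounts for the ``$n<k\le 2n$'' clause. A secondary point requiring care is that $\ext(\bp-f)$ may be infinite (as in Example~\ref{ex:example-1}), so the reduction to finitely many extreme points genuinely needs the Steinitz step rather than naive linear algebra; the rest is routine translation between ``$0$ in the interior of a convex hull'', ``a strictly positive convex representation of $0$'', and ``a full-rank matrix with a strictly positive kernel vector''.
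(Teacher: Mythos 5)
Your argument is correct and complete, but it takes a different route from the one the paper uses to justify this condition (the paper cites Smarzewski and re-derives the condition in Section~\ref{ss:convex-Chebyshev-approximation} as Theorem~\ref{thm:convex-Chebyshev-approximation}). The paper's derivation is purely convex-analytic: strong uniqueness is identified with sharp minimality of the pointwise supremum $m(a)$, characterized by $r\,\B^n\subseteq\partial m(\ba)$ (Proposition~\ref{prop:strong-uniqueness}), then Steinitz's theorem gives the finite reduction~\eqref{eq:finite-convex-hull-strict-uniqueness}, and Stiemke's alternative theorem converts ``$0$ in the interior of the convex hull of finitely many subgradients'' into the full-rank/positive-kernel form (Corollary~\ref{cor:sharp-optimality}), instantiated with the generating subgradients $s\,\phi(x)$ from~\eqref{eq:subdifferential-rcc}. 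You instead start from Bartelt's convex hull condition (Theorem~\ref{thm:Bartelt-convexhull}), note that the Bartelt set is the positive multiple $\|\bp-f\|_\infty\,\{s\,\phi(x):(x,s)\in\sig(\bp-f)\}$, use Steinitz identically for the bound $k\leq 2n$, and replace Stiemke's theorem by the elementary facts that the relative interior of $\conv\{v_1,\ldots,v_k\}$ is exactly the set of strictly positive convex combinations and that $0\in\conv\{v_i\}$ together with $\Span\{v_i\}=\R^n$ makes relative interior and interior coincide. Both skeletons are the same (interior-of-convex-hull characterization, Steinitz, kernel reformulation), but your ingredients differ: your route is more elementary (no theorem of the alternative) and sidesteps the $\|\cdot\|_2$ versus $\|\cdot\|_\infty$ bookkeeping that the convex-analytic route settles by norm equivalence on the coefficient space, at the price of taking Bartelt's theorem as a black box and losing the quantitative radius information; the paper's route is self-contained in convex analysis, retains the radius $r$ up to the finite reduction, and extends verbatim to relative Chebyshev centers. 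Two cosmetic points: the relative-interior fact you invoke deserves a one-line proof or a reference, and the bound $k>n$ follows even more directly from $S$ being full rank of rank $n$ while having a nontrivial kernel.
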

\begin{example}[Application to Example~\ref{ex:example-1}]\label{ex:example-1-Smarzewski}
	Like in Example~\ref{ex:example-1:Cheney-finite}, we can use the signature $\{(0,1),(v_1,-1),(-v_1,-1)\}$, which leads to  Smarzewski's matrix $S=2C$ with $C$ Cheney's matrix~\eqref{eq:ex:example-1:Cheney-finite}. For $m=1$ the matrix $S$ is full rank in addition to have a kernel vector with positive components, hence Smarzewski's condition proves the optimal polynomial $\bp(x)=\tfrac{1}{2}$ is strongly unique. However, for $m\geq 2$ the matrix is not full rank anymore (it has a nontrivial kernel and the number of rows is greater or equal to the number of columns). In this case, we need to select another finite signature whose convex hull has an interior that contains zero\footnote{The connection between Smarzewski's kernel condition for strong minimality and the convex hull conditions will be clarified in Section~\ref{sss:subdifferential-kernel} using the subgradient interpretation of Smarzewski's condition.}, like in the third plot of Figure~\ref{fig:Cheney-example-1}:
	\begin{equation}\label{eq:ex:example-1-Smarzewski-signature}
	\{(0,1),(e_1,-1),\ldots,(e_m,-1),(-\one,-1)\},
	\end{equation}
	where $e_i\in\R^m$ is the $i^\mathrm{th}$ basis vector and $\one\in\R^m$ is again the unit norm vector $\tfrac{1}{\sqrt{m}}(1,\ldots,1)^T$, leading to the following Smarzewski's matrix:
	\begin{equation}\label{eq:ex:example-1-Smarzewski-S}
	S=\frac{1}{2}\begin{pmatrix}
		1&-\one^T& -1
		\\0&-I&\one
	\end{pmatrix}\in\R^{(m+1)\times (m+2)}
	\end{equation}
	whose columns are $s_i\,\phi(x_i)$ with $\phi(x)=(1,x)$. This matrix is full rank and has a positive kernel vector $(m+\sqrt{m},1,\ldots,1,\sqrt{m})\in\R^{m+2}$, therefore Smarzewski's condition proves the optimal polynomial $\bp(x)=\tfrac{1}{2}$ is strongly unique.
\end{example}

Brosowski~\cite[Theorem 3]{Brosowski1983} gives an equivalent condition expressed in a technical way involving the set of all minimal extremal signatures, which is not detailed here.

\subsubsection{Rivlin and Shapiro's intersecting convex hull condition}

In the typical case where the set of basis functions includes a constant function, Rivlin and Shapiro~\cite[Theorem 4 page 696]{Rivlin1961} used their annihilating measure condition to prove the following more practical condition: the generalized polynomial $p(x)=\phi(x)^Ta$ is optimal if and only if
\begin{equation}
\conv\{\phi(x):x\in\ext^+(a)\}\cap\conv\{\phi(x):x\in\ext^-(a)\}\neq\emptyset,
\end{equation}
where as previously $\ext^\pm(\ba )$ contain extremal points with positive and negative error respectively. This condition was rediscovered by Sukhorukova et~al.~\cite[Theorem 2]{Sukhorukova2018}, where the authors made appear its connection to zero in the convex hull conditions, and further improved it to obtain a practical test~\cite{Sukhorukova2021}.
\begin{example}[Application to Example~\ref{ex:example-1}]
	For the uniform approximation of $f(x)=x^Tx$ inside $X=\B^m$, in view of the expression~\eqref{eq:example-1-extreme}, the only case where the convex hull of the error minimizers intersects the convex hull of the error maximizers is when minimizers are a sphere, and the maximizer is a point inside the sphere, corresponding to the optimal affine function $\bp(x)=\tfrac{1}{2}$.
\end{example}

\section{Convex optimization optimality conditions}\label{s:convex-optimality-conditions}

The function $m:\R^n\rightarrow \R$ defined by
\begin{equation}\label{eq:m}
m(a)=\max_{x\in X}|\phi(x)^Ta-f(x)|=\|p-f\|_\infty
\end{equation}
is called a pointwise supremum in the context of nonsmooth convex optimization. This section briefly presents the background in convex analysis needed for the optimality conditions of such pointwise supremum functions. The reader is referred to~\cite{jbhu2001,Nesterov2018} for an introduction to convex optimization (\cite[Section 5.3 page 198]{jbhu2001} includes some basic facts about uniform approximation in the framework of convex optimization, including a simplified version of Chebyshev equioscillation theorem). In the context of nonsmooth convex analysis, subgradients and subdifferentials of nonsmooth convex functions generalize gradient of differentiable functions. They are presented in Subsection~\ref{ss:subdifferential}, together with some elementary rules for their computation. Optimality conditions for convex functions rely on subdifferentials and are presented in Subsection~\ref{ss:optimality-convex}, with an emphasis on conditions for sharp minimality, the convex optimization counterpart of strong minimality. This framework is instantiated to the pointwise supremum function~\eqref{eq:m} in Subsection~\ref{ss:Chebyshev-convex}, leading to optimality conditions for Chebyshev approximation, where relative Chebyshev centers are treated in a homogeneous way.

\subsection{The subdifferential of a convex function}\label{ss:subdifferential}

Convex functions are generally defined inside $\R^n$ with values into $\R\cup\{+\infty\}$, where the the value $+\infty$ allows encoding a domain: vectors where the value is $+\infty$ are not candidate for being minimizers, hence interpreted as outside the domain. This offers an elegant homogeneous treatment of constrained and unconstrained optimization, with unified optimality conditions. Here, we restrict our attention to convex functions $m:\R^n\rightarrow\R$, which enjoy a sensibly simpler theory. Most definitions and properties presented here extend naturally to the general case, the reader is reffered to standard textbooks, e.g.,~\cite{jbhu2001,Nesterov2018}.

\subsubsection{Definition of the subdifferential and basic facts}

Let $m:\R^n\rightarrow \R$ be a convex function. If $m$ is differentiable at $\ba$ then it's gradient $g=\nabla m(\ba)$ is the unique vector such that the affine function $t(a)=m(\ba)+g^T(a-\ba)$ is the tangent, i.e., it satisfies $m(a)=t(a)+o(\|a-\ba\|)$. It turns out that, because of convexity, this tangent is also an affine underestimator, i.e.
\begin{equation}\label{eq:affine-underestimator}
\forall a\in\R^n \ , \ m(a)\geq m(\ba)+g^T(a-\ba).
\end{equation}
The tangent is in addition the only affine underestimator. In fact, the uniqueness of the affine underestimator characterizes the differentiability of convex functions. A convex function that has several affine underestimators at some $\ba$ is not differentiable at this $\ba$, see Figure~\ref{fig:subdifferential-definition}. Subgradients generalize gradients in the sense that they correspond to affine underestimators of convex functions: $g\in\R^n$ is a subgradient of $m$ at $\ba$ if and only if $m(a)\geq m(\ba)+g^T(a-\ba)$ holds for all $a\in\R^n$. The subdifferential of $m$ at $\ba$, denoted by $\partial m(\ba)$, is the set of all subgradients at $\ba$. As said previously, a convex function $m(a)$ is differentiable at $\ba$ if and only if it has a unique affine underestimator at $\ba$, which turns out to be the gradient, i.e., if and only if $\partial m(a)=\{\nabla m(a)\}$. Subdifferentials are of critical importance for characterizing optimal solutions of convex optimization problems, hence the need of understanding their structure. In our finite dimensional setting, subdifferentials are nonempty, compact and convex. Subdifferentials often result of the convex hull of a smaller set of subgradients, called here \emph{generating sets of subgradients}. These generating sets of subgradients are important because they offer a convenient representation of subdifferentials.
%
%
The following two examples show subdifferentials of typical convex functions.
\begin{figure}[t!]
	\includegraphics[width=0.5\linewidth]{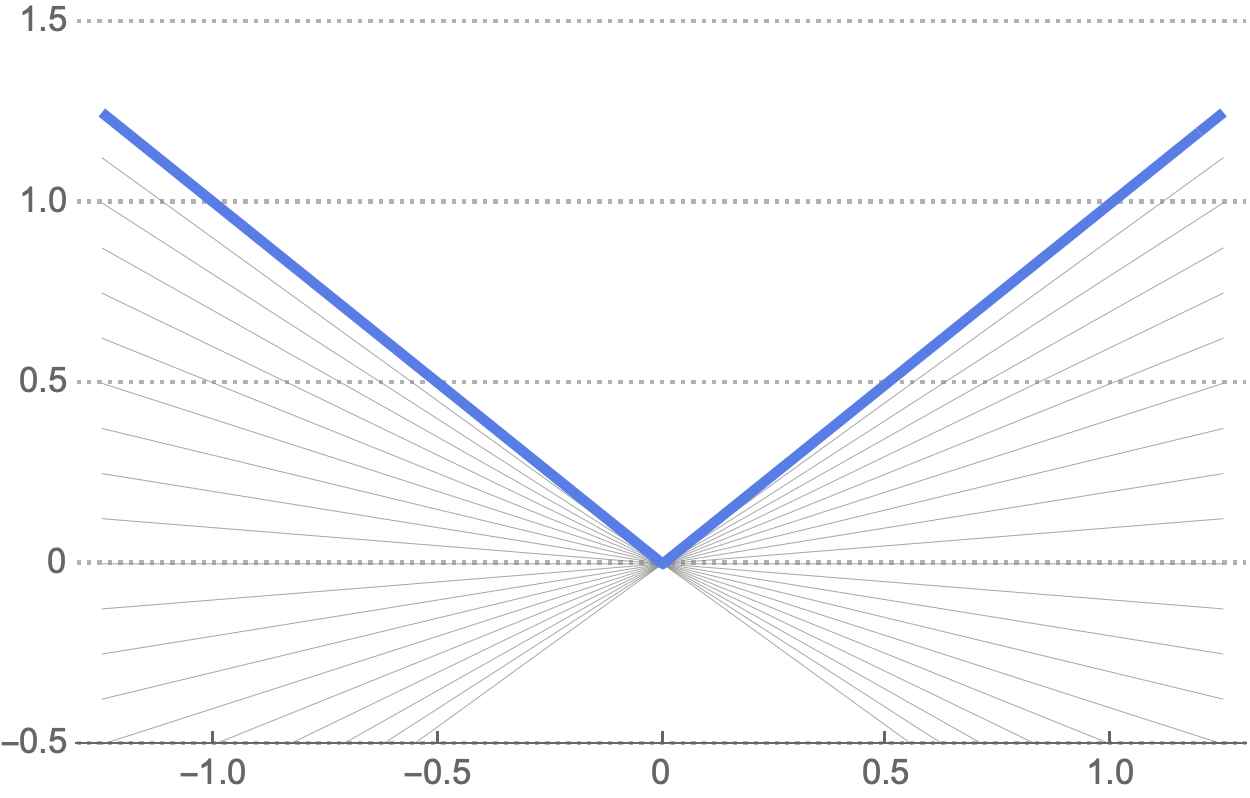}\hfill\includegraphics[width=0.4\linewidth]{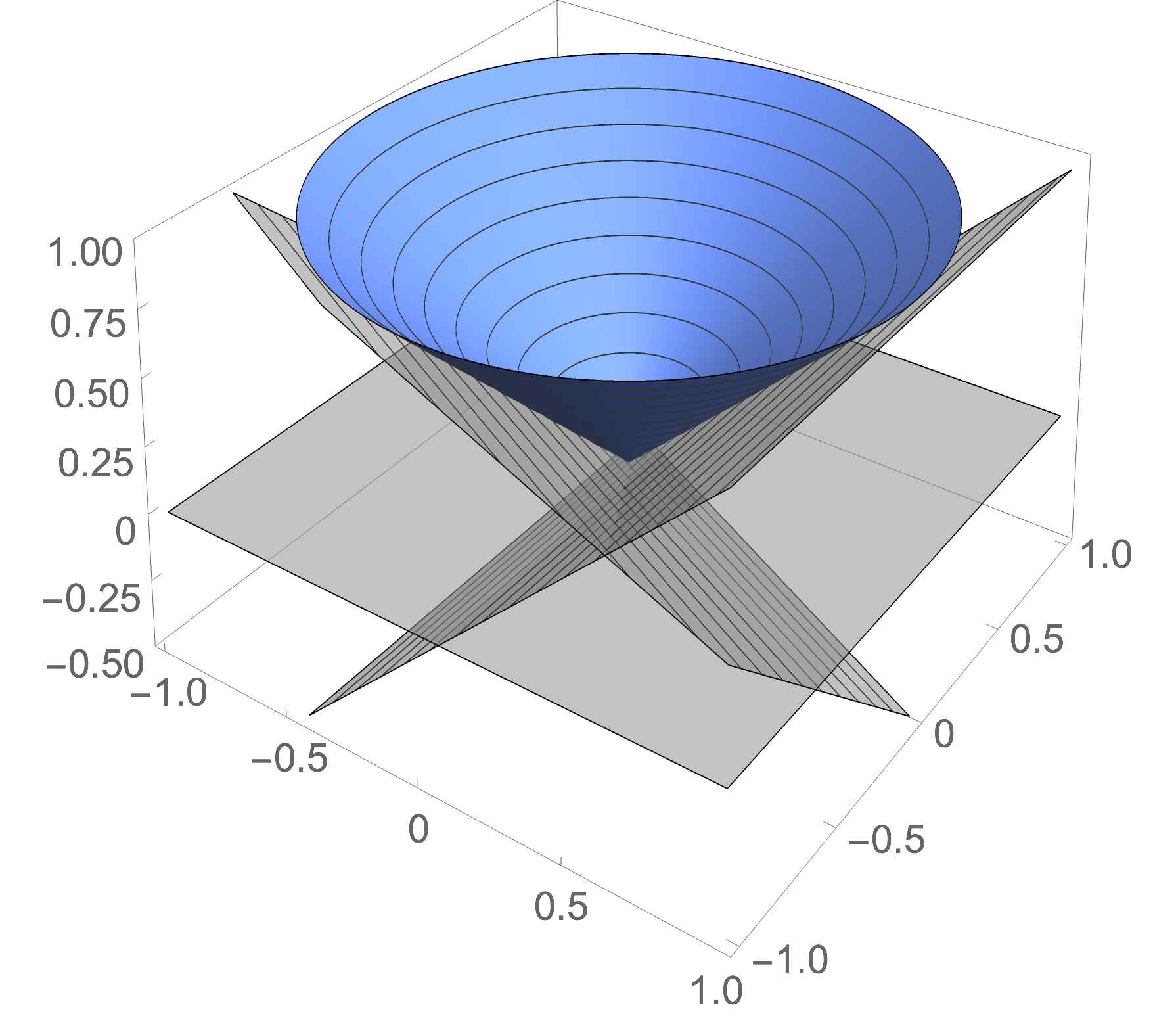}
	\caption{The function $m(a)=|a|$ on the left, the function $m(a)=\|a\|_2=\sqrt{a_1^2+a_2^2}$ on the right. In blue, the graph of the functions; in gray, some affine underestimators at $a=0$. The functions are differentiable for $a\neq0$, therefore the only affine underestimator is the tangent.\label{fig:subdifferential-definition}}
\end{figure}
\begin{example}[Subdifferential of $\abs$ function]\label{ex:abs}
	In the context of univariate functions, subgradients are vectors of dimension one which are identified to scalars and called subderivatives. We consider the function $m(a)=|a|$. It is differentiable for $a\neq0$, in which case $\partial m(a)=\{m'(a)\}$, that is explicitly, $\partial m(a)=\{-1\}$ if $a<0$ and $\partial m(a)=\{1\}$ if $a>0$. Now for $a=0$, the left plot of Figure~\ref{fig:subdifferential-definition} shows all affine underestimators $a\mapsto m(0)+g\,(a-0)=g\,a$. We can see on the figure that such an affine function is an underestimator if and only if $g\in[-1,1]$, hence $\partial m(0)=[-1,1]=\conv\{-1,1\}$. The set $\{-1,1\}$ is therefore a generating set of subderivatives, and furthermore it is the smallest generating set of subderivatives.
\end{example}
In the context of subdifferentials, it is convenient to define the sign function as the set-value function corresponding to the subdifferential of the absolute value function:
\begin{equation}
	\Sign(a)=\left\{\begin{array}{ll}\{-1\}&\text{if }a<0\\{[-1,1]}&\text{if }a=0\\\{1\}&\text{if }a>0,\end{array}\right.
\end{equation}
so that $\partial|a|=\Sign(a)$. We will use the non capitalized name function for the usual real-valued $\sign$ function $\sign(a)=a/|a|$ for $a\neq0$ and $\sign(0)=0$.
\begin{example}[Subdifferential of the Euclidean norm]\label{ex:2norm}
	The function $m(a)=\|a\|_2$ is represented in the right plot of Figure~\ref{fig:subdifferential-definition} for $n=2$. It is differentiable for $a\neq0$ with $\nabla m(a)=a/\|a\|_2$ therefore $\partial m(a)=\left\{a/\|a\|_2\right\}$. In order to compute $\partial m(0)$ we need to find all affine underestimators at the origin, i.e., find all vectors $g$ such that $m(0)+g^T(a-0)\leq \|a\|_2$ holds for all $a\in\R^n$ (three such underestimators are represented in the right plot of Figure~\ref{fig:subdifferential-definition}). By Cauchy-Schwartz inequality, those vector $g$ are exactly those that satisfy $\|g\|_2\leq1$. As a consequence, $\partial m(0)$ is the unit Euclidean ball. The unit Euclidean sphere is the smallest generating set of subgradients.
\end{example}

An crucial property of subdifferentials is that they allow generalizing the directional derivative computation that uses gradient in the case of differentiable functions: the directional derivative of a convex function can be computed using its subdifferential~\cite[Theorem 4.4.2 page 189]{jbhu2001} by
\begin{equation}\label{eq:dirder-general}
m'(a,u)=\max\{u^T\,g:g\in\partial m(a)\}.
\end{equation}
If the function is differentiable at $a$ then $\partial m(a)=\{\nabla m(a)\}$ and we recover the usual directional derivative of differentiable functions: $m'(a,u)=u^T\,\nabla m(a)$.

\subsubsection{Elementary computation rules}

Standard subdifferential computation rules allow finding practical generating sets of subgradients, see e.g.~\cite[Section 4 page 183]{jbhu2001} or~\cite[Section 3.1.6 page 167]{Nesterov2018}. One such simple and useful subdifferential calculation rule is the subdifferential of the sum of two functions, which is the Minkowski sum of their respective subdifferentials: $\partial (m_1+m_2)(a)= \partial m_1(a)+\partial m_1(a)$. The inclusion $\partial (m_1+m_2)(a)\supseteq \partial m_1(a)+\partial m_1(a)$ is an easy consequence of the subgradient definition: if $g_i\in m_i(\ba)$ then we have $m_i(\ba)+g_i^T(a-\ba)\leq m_i(a)$ hold for all $a\in\R^n$. Summing the two inequalities leads to
\begin{equation}
	m_1(\ba)+m_2(\ba)+(g_1+g_2)^T(a-\ba)\leq m_1(a)+m_2(a)
\end{equation}
for all $a\in\R^n$, which means that $g_1+g_2\in \partial (m_1+m_2)(\ba)$. The reverse inclusion is more subtil and enjoys no simple elementary proof,~\cite{jbhu2001} and~\cite{Nesterov2018} give two different proofs based on the epigraph of the function and on the directional derivative of the function respectively. This subdifferential calculation rule is illustrated in the following two examples.
\begin{figure}[t!]
	\includegraphics[width=0.45\linewidth]{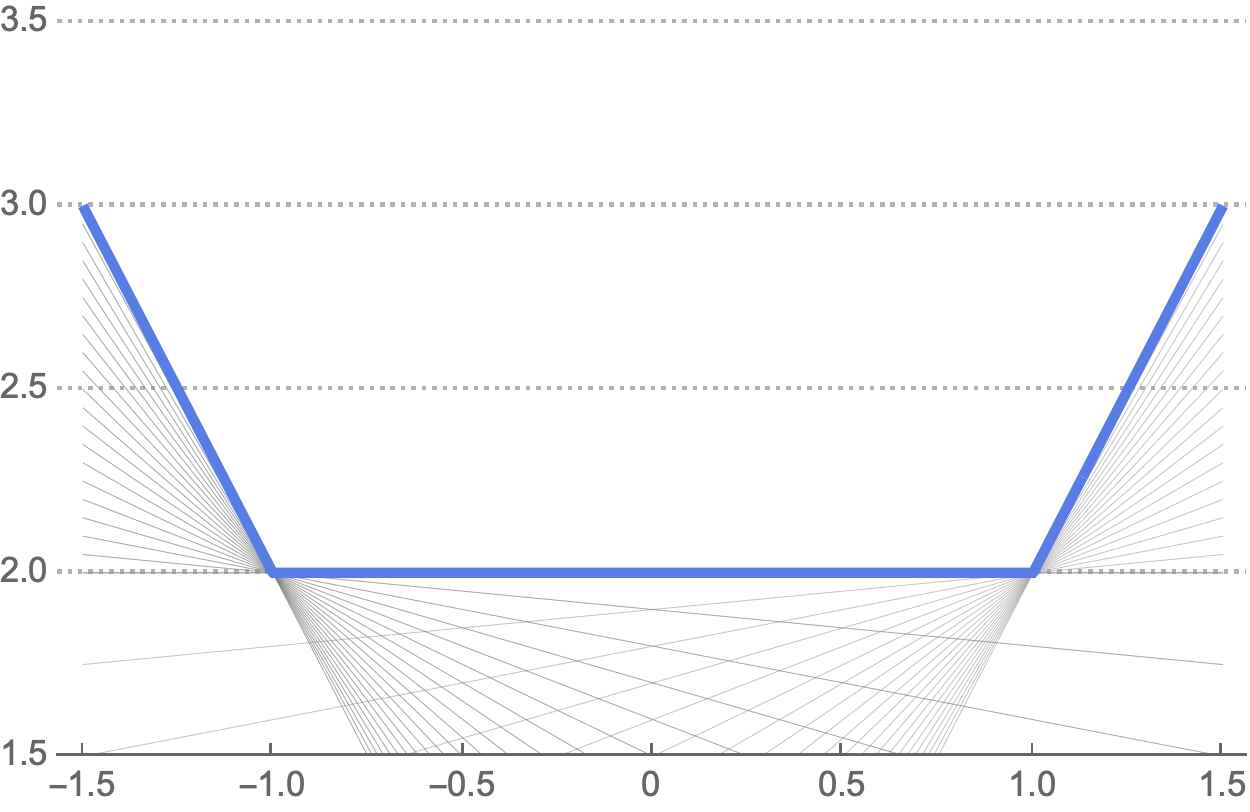}\hfill\includegraphics[width=0.4\linewidth]{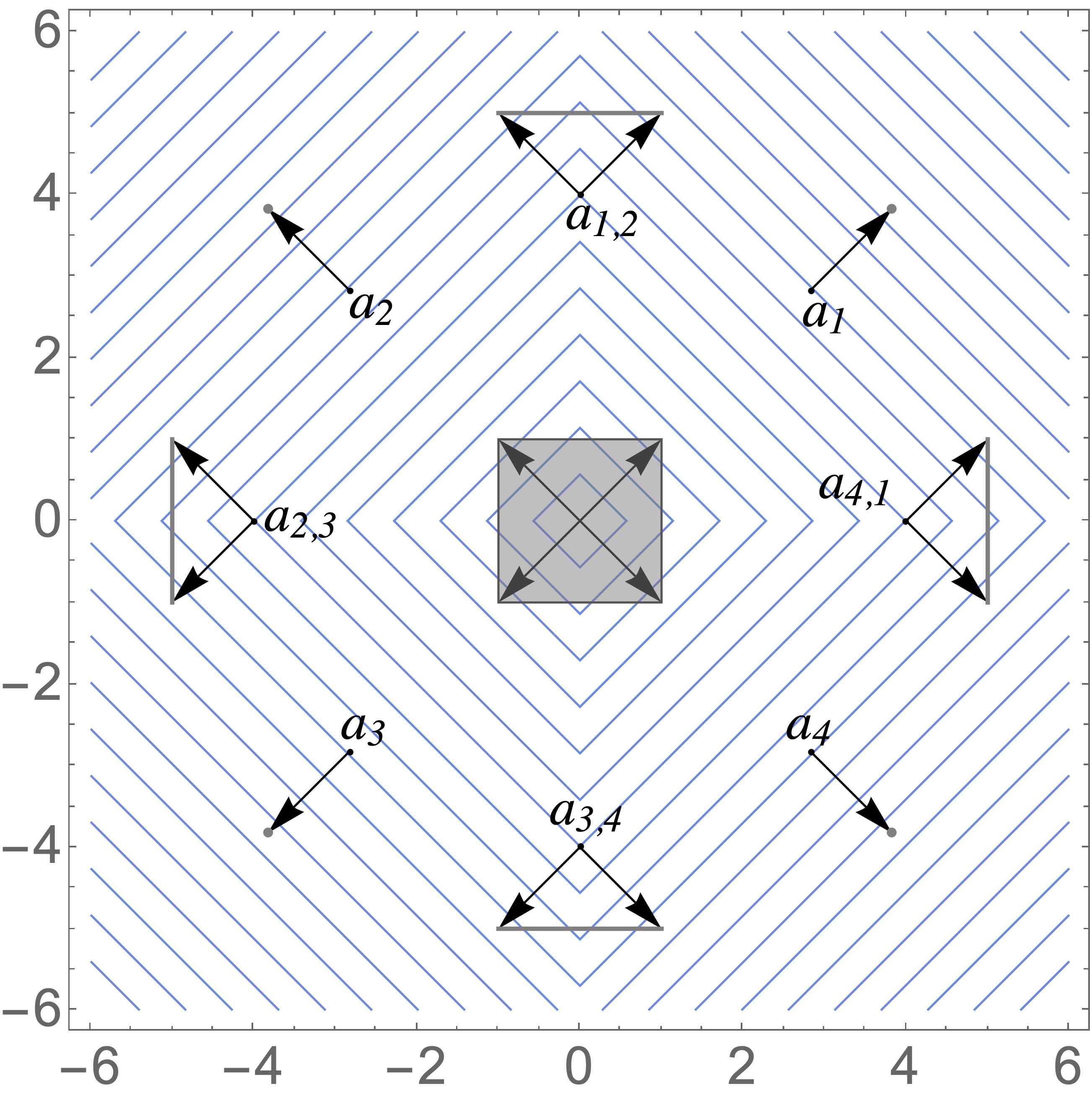}
	\caption{On the left the graph of the function $m(a)=|a-1|+|a+1|$ in blue and representative affine underestimators at $a=-1$ and $a=1$ in gray; we can see that $\partial m(-1)=[-2,0]$ and $\partial m(1)=[0,2]$. On the right, the $1$-norm level-sets with subdifferentials, which are convex hull of subgradients represented by arrows, shown at some representative vectors $a_{ij}$.\label{fig:subdifferential-sums}}
\end{figure}
\begin{example}[Sum of univariate functions]\label{ex:subdifferential-ex-sumunivar}
	Consider the function $m(a)=|a+1|+|a-1|$. Since $\partial|a+1|=\Sign(a+1)$ and $\partial|a-1|=\Sign(a-1)$ we have $\partial m(a)=\Sign(a+1)+\Sign(a-1)$, which is explicitly
    \begin{equation}\label{eq:subdifferential-ex-sumunivar}
    	\partial m(a)=\left\{\begin{array}{rll}\{-1\}+\{-1\}&= \ \{-2\}&\text{if }a<-1\\{[-1,1]+\{-1\}}&= \ [-2,0]&\text{if }a=-1\\\{1\}+\{-1\}&= \ \{0\}&\text{if }-1<a<1\\ \{1\}+[-1,1]&= \ [0,2]&\text{if }a=1\\\{1\}+\{1\}&= \ \{2\}&\text{if }a>1.\end{array}\right.
    \end{equation}
    Therefore, $m(a)$ is differentiable excepted at $a=-1$ and $a=1$. The underestimators corresponding to the subdifferentials at $a=-1$ and $a=1$ are represented on the left plot of Figure~\ref{fig:subdifferential-sums}.
\end{example}
Gradients of differentiable functions can be computed component wise, but this does not hold generally for subderivatives, for which componentwise calculation leads to overestimation. However, it is not hard to see that if $m(a)$ depends on one variable only, i.e., $m(a)=\tilde m(a_i)$, then its subdifferential is
\begin{equation}
\partial m(a)=(\{0\},\ldots,\{0\},\partial \tilde m(a_i),\{0\},\ldots,\{0\}),
\end{equation}
where a vector with set components is understood as the cartesian product of its components. The following example shows how this subdifferential calculation rule can be used together with the summation calculation rule.
\begin{example}[The $\ell^1$ norm as a sum of multivariate functions]\label{ex:norm1}
	The $1$-norm is the sum of univariate absolute value functions: $m(a)=\|a\|_1=|a_1|+\cdots+|a_n|$. Its subdifferential is therefore the sum of the differentials of each summand, which has been previously found to be the sign function:
	\begin{equation}\label{eq:ex:norm1}
	\partial m(a)=\begin{pmatrix}\Sign(a_1)\\\{0\}\\ \vdots\\\{0\}\end{pmatrix}+\begin{pmatrix}\{0\}\\\Sign(a_2)\\ \vdots\\\{0\}\end{pmatrix}+\cdots+\begin{pmatrix}\{0\}\\\{0\}\\ \vdots\\\Sign(a_n)\end{pmatrix}=\begin{pmatrix}\Sign(a_1)\\\Sign(a_2)\\ \vdots\\\Sign(a_n)\end{pmatrix}.
	\end{equation}
	The case $n=2$ is shown on the right plot of Figure~\ref{fig:subdifferential-sums}, where subdifferentials are represented for selected vectors. E.g., the vector $a_{1,2}=(0,1)$ belongs to the first and second quadrants, hence
	\begin{equation}
	\partial m(a_{1,2})=\begin{pmatrix}\Sign(0)\\\Sign(1)\end{pmatrix}=\begin{pmatrix}[-1,1]\\ \{1\}\end{pmatrix}=\conv\left\{\begin{pmatrix}-1\\ 1\end{pmatrix},\begin{pmatrix}1\\ 1\end{pmatrix}\right\}.
	\end{equation}	
	For the origin, we have
	\begin{equation}
	\partial m(0)=\begin{pmatrix}\Sign(0)\\\Sign(0)\end{pmatrix}=\begin{pmatrix}[-1,1]\\ [-1,1]\end{pmatrix}=\conv\left\{\begin{pmatrix}-1\\ 1\end{pmatrix},\begin{pmatrix}1\\ 1\end{pmatrix},\begin{pmatrix}1\\ -1\end{pmatrix},\begin{pmatrix}-1\\ -1\end{pmatrix}\right\}.
	\end{equation}
	Note that in $\R^n$ the convex hull representation requires $2^n$ generating vector while the cartesian product representation requires only $n$ intervals.
\end{example}

Another standard rule of particular interest here is the one for pointwise supremum functions, which are functions
\begin{equation}
m(a)=\max_{x\in X} \, m_x(a),
\end{equation}
with $X$ is compact, $m_x:\R^n\rightarrow\R$ is convex for all $x\in X$, and the function $x\mapsto m_x(a)$ is upper semi-continuous\footnote{A function $m(x)$ is upper semi-continuous at $x$ if $\limsup_{x_k\rightarrow x}m(x_k)\leq m(x)$ for all sequences $(x_k)_{k\in\N}$ that converges to $x$. We will consider only continuous functions in the sequel, which are upper semi-continuous.} in $X$ for all $a\in\R^n$.
Then, $m:\R^n\rightarrow\R$ is also convex and hence continuous. The subdifferential of this pointwise supremum $m(a)$ enjoys a simple explicit expression:
\begin{equation}\label{eq:subdifferential-ps}
\partial m(a)=\conv\bigcup\{\partial\, m_x(a):x\in \act(a)\},
\end{equation}
where $\act(a)=\{x\in X:m_x(a)=m(a)\}$ contains the maximizers of $m_a$, often called active indices.
\begin{remark}
Remind that the union operation acts on a set of sets, e.g., the union of the two sets $A$ and $B$ is denoted by $\bigcup\{A,B\}$, with the usual shortcut $A\cup B$. In~\eqref{eq:subdifferential-ps}, the subdifferential is defined to be the convex hull of the union of all subdifferentials $\partial\, m_x(a)$ for each $x\in \act(a)$.
\end{remark}
Formulas for the subdifferential of a pointwise supremum dates from the $70$s, and are nowadays textbook classic. As for the summation rule, the inclusion $\partial m(a)\supseteq\conv\bigcup\{\partial\, m_x(a):x\in \act(a)\}$ is a simple consequence of the subgradient definition and holds with no additional assumption. Again, the reverse inclusion is more subtil, and holds only under additional assumptions. Several versions exist with different assumptions, the one used here, \cite[Theorem 4.4.2 page 189]{jbhu2001}, is the simplest and is restricted to real-valued convex functions defined in $\R^n$, and requires only that $X$ is compact and the functions $x\mapsto m_x(a)$ is upper semi-continuous for all $a\in\R^n$. Several extensions exist, where the convex function has values in $\R\cup\{+\infty\}$~\cite[Proposition A22 page 154]{Bertsekas1971-PhD} encoding a domain for the function, and to infinite dimensional spaces, where Valadier's theorem~\cite{Valadier1969} plays a central role. See~\cite[page 247]{jbhu2001} for some historical details about this formula. Weakening assumptions for such formulas is a current research topic, e.g.,~\cite{Correa2023} and references therein.

In our context, $m_x(a)$ will be differentiable at active indices, i.e., $\partial\, m_x(a)=\{\nabla m_x(a)\}$ whenever $x\in \act(a)$. In this case, the subdifferential~\eqref{eq:subdifferential-ps} becomes
\begin{equation}\label{eq:subdifferential-ps-diff}
	\partial m(a)=\conv\{\nabla m_x(a):x\in\act(a)\}.
\end{equation}
Computing the subdifferential of a pointwise supremum function therefore mainly consists in finding the set of active indices and evaluating the gradient of $m_x(a)$ at these active indices. The following example reinterprets the previous absolute value function as finite pointwise maximum.
\begin{example}[The $\abs$ function as a pointwise maximum]
The absolute value function can be defined as a finite point wise supremum function, also called a pointwise maximum function: $m(a)=|a|=\max_{x\in X}m_x(a)$ with $X=\{-1,1\}$, $m_x(a)=x \,a$. Its subdifferential can be computed using~\eqref{eq:subdifferential-ps}. To this end, for a fixed $a$ we must find $\act(a)$, the set of indices $x$ such that $m(a)=m_x(a)$: here, if $a<0$ then $\act(a)=\{-1\}$; if $a>0$ then $\act(a)=\{1\}$; finally $\act(0)=\{-1,1\}$. The subdifferential follows: if $a<0$ then $\partial m(a)=\conv\bigcup\{\partial m_{-1}(a)\}=\{m_{-1}'(a)\}=\{-1\}$; if $a>0$ then $\partial m(a)=\conv\bigcup\{\partial m_{1}(a)\}=\{m_{1}'(a)\}=\{-1\}$; finally if $\partial m(0)=\conv\bigcup\{\partial m_{-1}(a),\partial m_{1}(a)\}=\conv\{m_{-1}'(a),m_{1}'(a)\}=[-1,1]$.
\end{example}
The next two examples show the explicit subdifferential computation for two examples of simple Chebyshev approximation problems. Formal computations are possible in these two cases of approximation of quadratic functions by linear or affine functions.
\begin{figure}[t!]
	\includegraphics[width=0.32\linewidth]{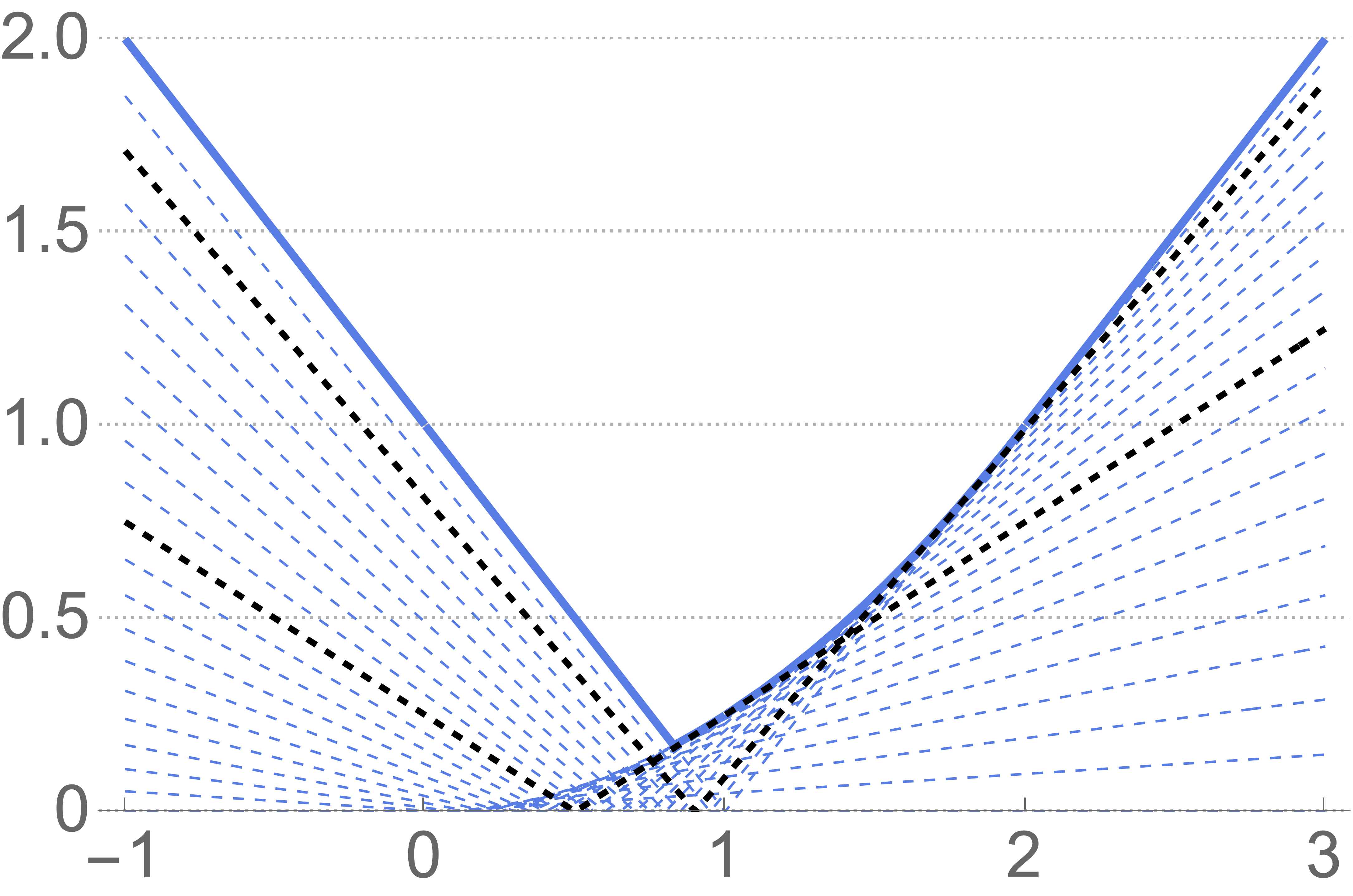}\hfill\includegraphics[width=0.32\linewidth]{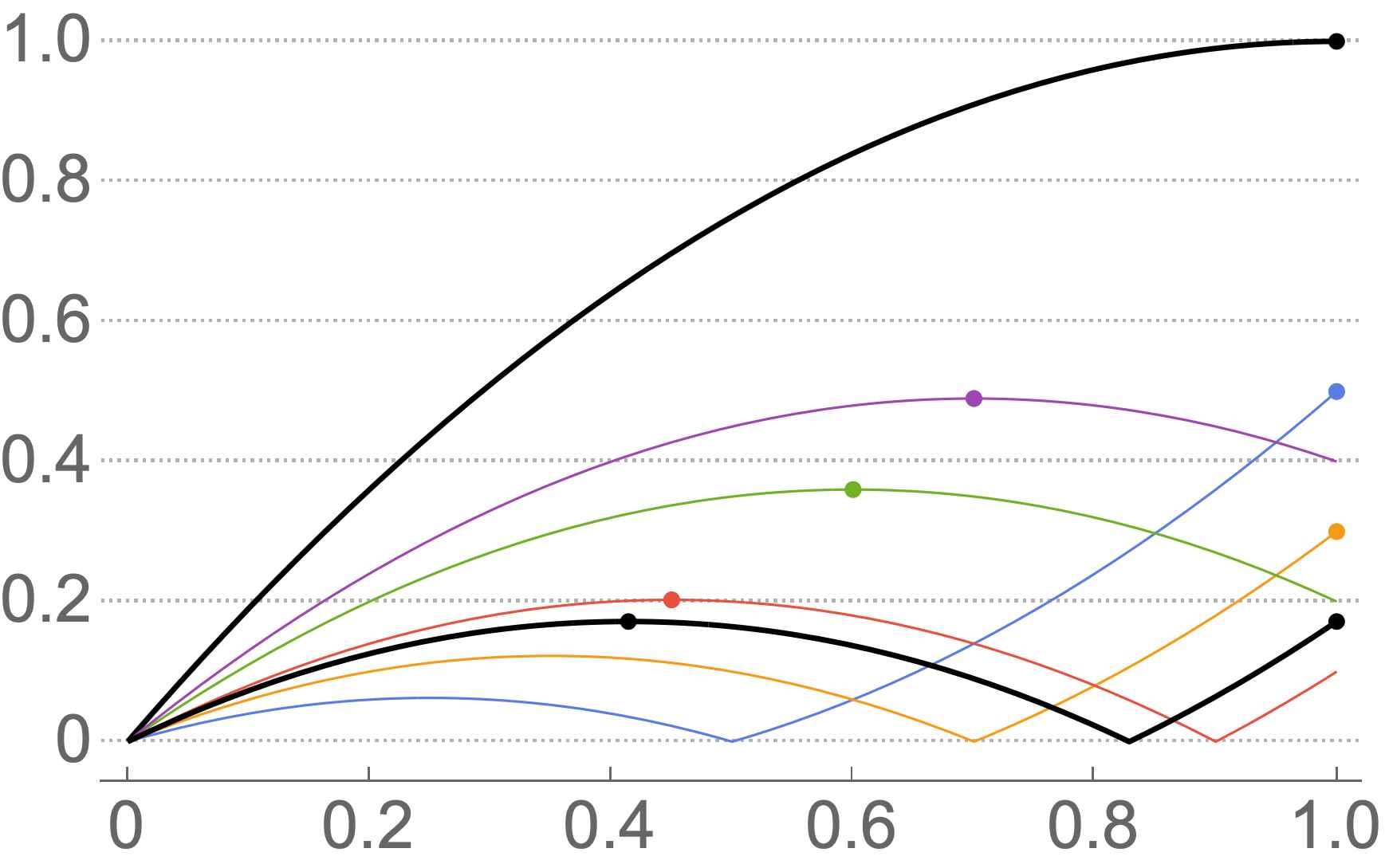}\hfill \includegraphics[width=0.32\linewidth]{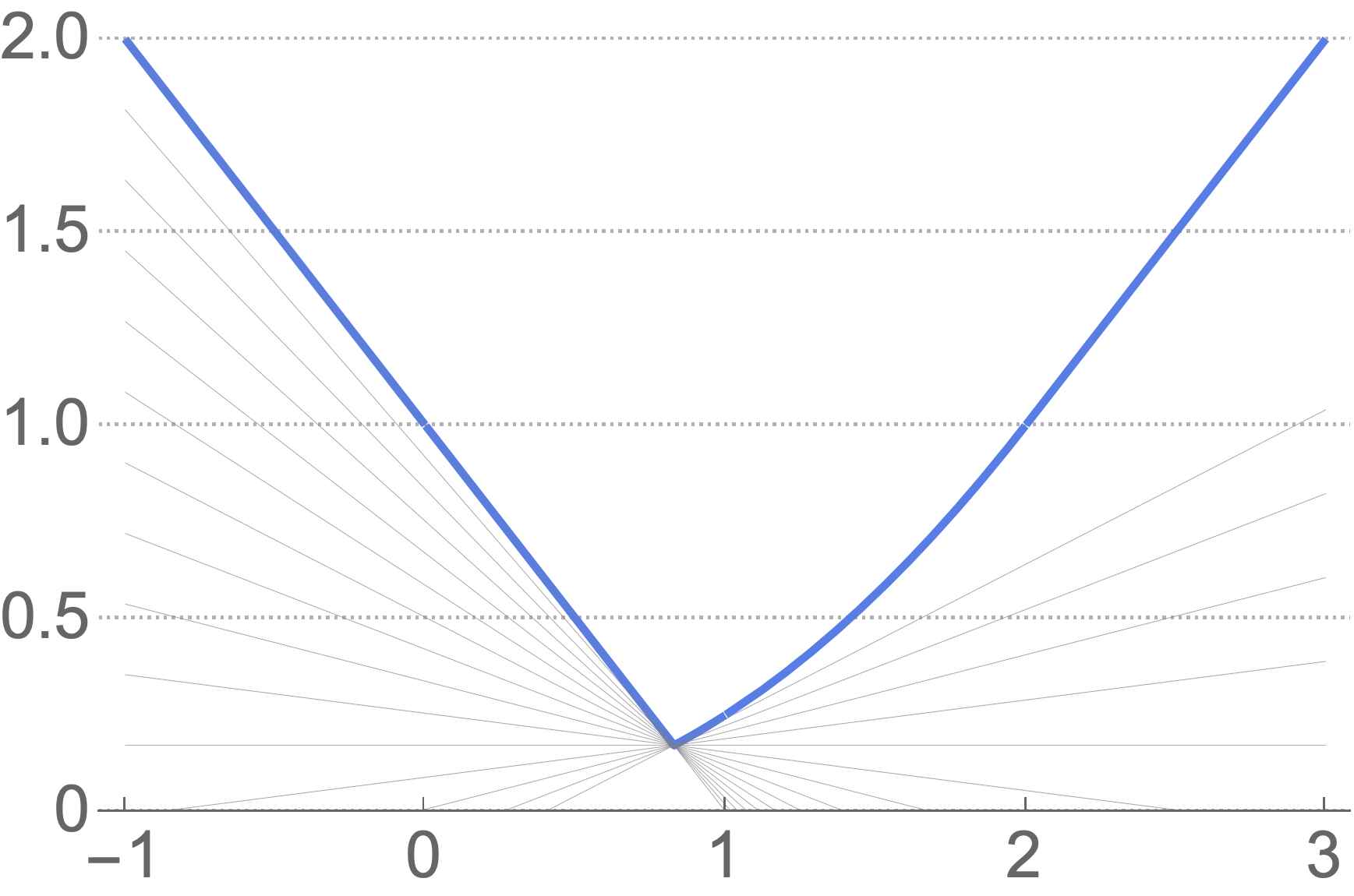}
	\caption{Left: the plot of $m(a)$ in thick blue, together with $m_x(a)$ in dashed line for several values of $x$ inside $[0,1]$ (two of them being highlighted in black, showing they are absolute values of affine functions). Middle: functions $m_x(a)$ for representative values of $a$. Right: the plot of $m(a)$ in blue, with representative affine underestimators at $a=0$.\label{fig:subdifferential-approximation}}
\end{figure}
\begin{example}[Approximation of a univariate quadratic function by a linear function]\label{ex:approximation-1D}
We consider the problem of uniformly approximating $f(x)=x^2$ by a linear function $a x$ inside the interval $X=[0,1]$. This consists in minimizing the pointwise supremum function $m(a)=\max_{x\in X}|ax-x^2|$, whose graph is shown in the left plot of Figure~\ref{fig:subdifferential-approximation}. On the one hand, for a fixed $x\in X$, the function $m_x(a)=|ax-x^2|$ is the absolute value of an affine function, and therefore convex. Such functions are broken lines displayed in dashed on the same plot for several values of $x$, and we see that the plain line graph of the pointwise maximum is indeed the upper enveloppe of the dashed broken line functions. On the other hand, for a fixed $a$ the function $x\mapsto m_x(a)$ is the absolute value of a quadratic function, and so continuous. Therefore the subgradient formula~\eqref{eq:subdifferential-ps} can be used.

Now in order to compute the subdifferential of $m(a)$, for a fixed $a$ one should compute $\max_{x\in X}|ax-x^2|$ and the corresponding maximizers, which we called the active indices. The function $x\mapsto |ax-x^2|$ is displayed on the center plot of Figure~\ref{fig:subdifferential-approximation} for different values of $a$, maxima being show by points. We can observe that functions have one maximizer, except the black function that has two maximizers. Formally, a maximizer of $x\mapsto |ax-x^2|$ subject to $x\in[0,1]$ can lie only at one bound of $[0,1]$ or at the unconstrained extrema $\tfrac{a}{2}$ when this latter is inside $[0,1]$, i.e., when $0\leq a\leq 2$. Therefore two cases arise: first, if $a<0$ or $a>2$ then the minimizer can be either $0$ or $1$, where the function evaluates to $0$ and $|a-1|$ respectively. Hence the maximizer is $x=1$ and $m(a)=|a-1|$. Second, if $0\leq a\leq 2$ then the minimizer can be either $0$ or $1$ or $\tfrac{a}{2}$, where the function evaluates to $0$, $|a-1|$ and $\tfrac{1}{4}\,a^2$ respectively. A basic investigation shows that, on the one hand, $|a-1|>\tfrac{1}{4}\,a^2$ inside $[0,\ba[$, with $\ba=-2+2\sqrt{2}\approx0.83$, hence the maximizer is at $x=1$ and the maximum is $|a-1|$. On the other hand, $|a-1|<\tfrac{1}{4}\,a^2$ inside $]\ba,2[$, hence the maximizer is at $x=\tfrac{a}{2}$ and the maximum is $\tfrac{1}{4}\,a^2$. For $a=\ba$ and $a=2$ we have $|a-1|=\tfrac{1}{4}\,a^2$ and $\act(a)=\{\tfrac{a}{2},1\}$. The function $x\mapsto |ax-x^2|$ is represented in black and gray for $a=\ba$ and $a=2$ respectively in the center plot of Figure~\ref{fig:subdifferential-approximation}. For $a=\ba$ there are two maximizers, $\act(\ba)=\{\tfrac{\ba}{2},1\}$, while for $a=2$ we have $\act(2)=\{\tfrac{2}{2},1\}=\{1\}$ and there is only one maximizer which is both stationary and on the boundary of the interval. Table~\ref{tab:approximation-1D} summarizes these computations. We see that $m(a)$ is differentiable everywhere excepted at $\ba$. Its graph is shown on the right plot of Figure~\ref{fig:subdifferential-approximation}, together with representative affine underestimators at $\ba$.
\begin{table}[b!]
\begin{center}
\begin{tabular}{|c|c|c|c|}
\hline $a<\ba$ & $\act(a)=\{1\}$ & $m(a)=1-a$ & $\partial m(a)=\{-1\}$
\\\hline $a=\ba$ & $\act(\ba)=\{\tfrac{\ba}{2},1\}$ & $m(\ba)=1-\ba=\tfrac{1}{4}\,\ba^2$ & $\partial m(\ba)=\conv\{-1,\tfrac{\ba}{2}\}$
\\\hline $\ba<a<2$ & $\act(a)=\{\tfrac{a}{2}\}$ & $m(a)=\tfrac{1}{4}\,a^2$ & $\partial m(a)=\{\tfrac{a}{2}\}$
\\\hline $a=2$ & $\act(2)=\{\tfrac{a}{2},1\}=\{1\}$ & $m(2)=1$ & $\partial m(2)=\{\tfrac{a}{2},1\}=\{1\}$
\\\hline $2<a$ & $\act(a)=\{1\}$ & $m(a)=a-1$ & $\partial m(a)=\{1\}$
\\\hline
\end{tabular}
\end{center}
\caption{Data for Example~\ref{ex:approximation-1D}: active indices, pointwise supremum value and subdifferential depending on the value of $a$.\label{tab:approximation-1D}}
\end{table}
\end{example}
The second example is similar: we now approximate $x^2$ by an affine function inside $[-1,1]$. We know that the optimal affine function is $p(x)=\tfrac{1}{2}$, for which the error function $x^2-p(x)$ is $\tfrac{1}{2}\,T_2(x)$. This is exactly Example~\ref{ex:example-1} with $n=1$.
\begin{figure}[t!]
\begin{center}
	\includegraphics[width=0.45\linewidth]{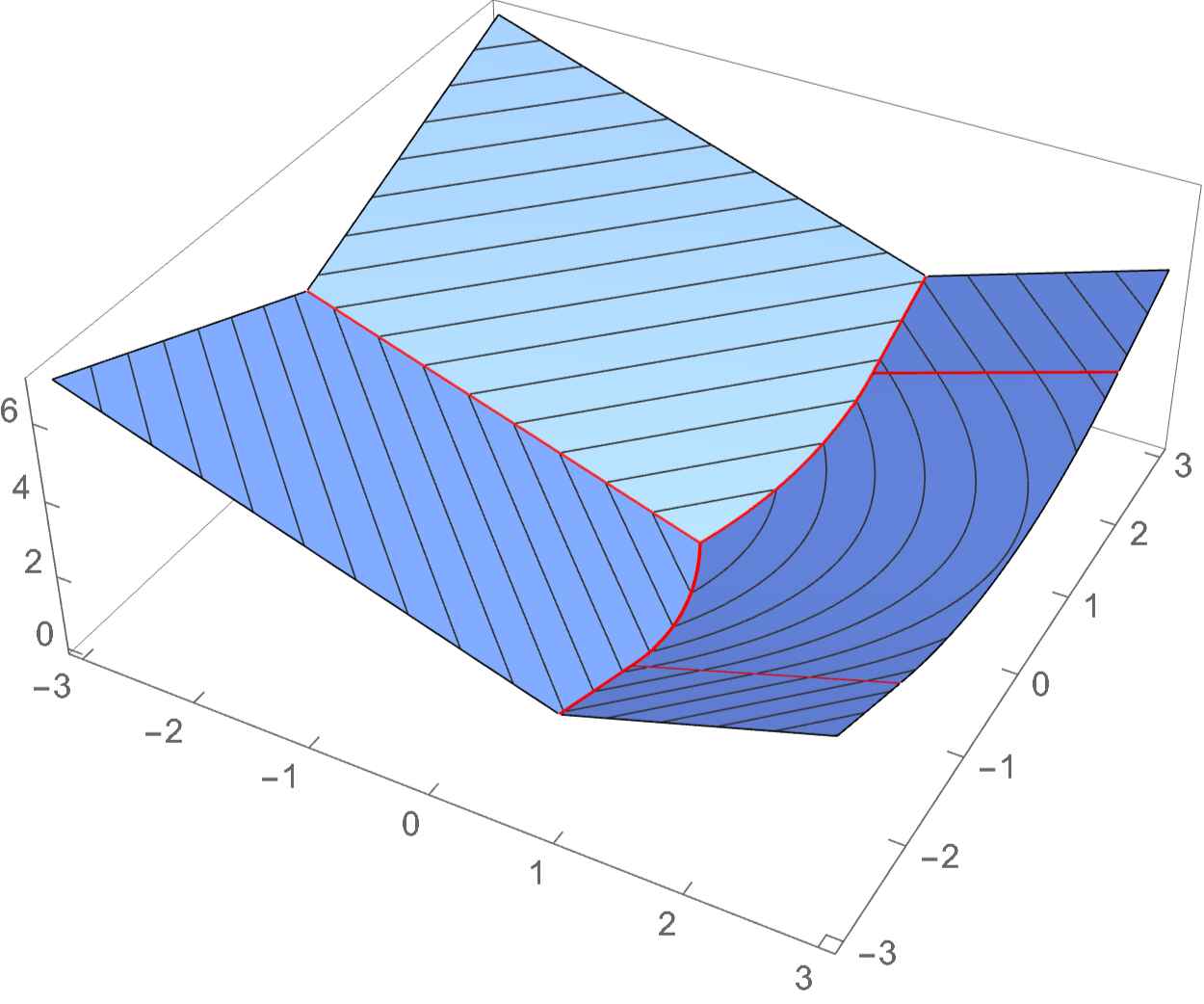}\hspace{1cm}\includegraphics[width=0.37\linewidth]{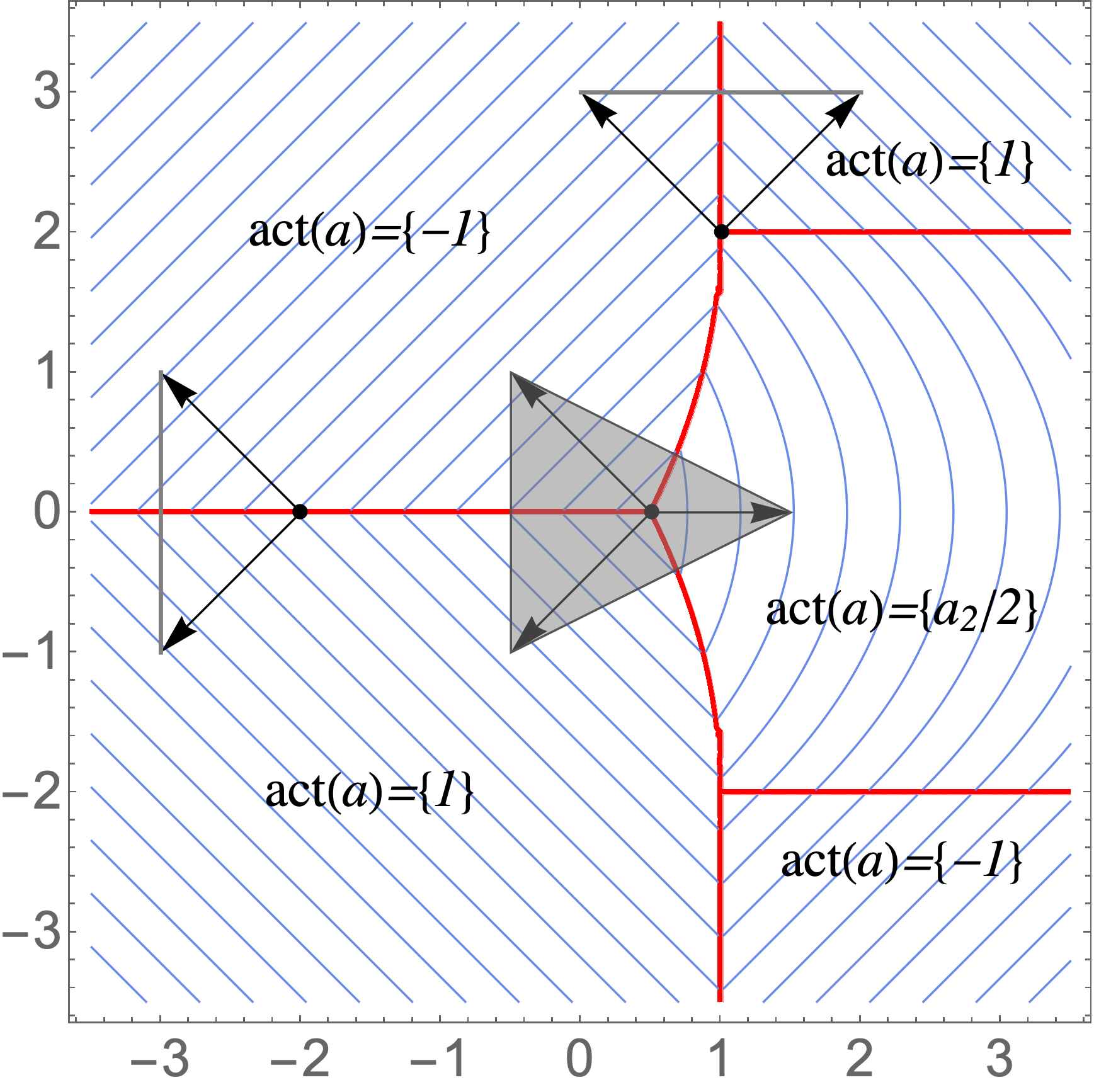}
\end{center}
	\caption{Pointwise supremum function for the second Chebyshev polynomial. Left: the plot of $m(a)$. Right: contour plot of $m(a)$ with subdifferentials at representative vectors. Red curves delimit areas with different active indices. We see well that the minimizer of $m(a)$ is $(\tfrac{1}{2},0)$, corresponding to the polynomial $\tfrac{1}{2}T_2(x)$.\label{fig:subdifferential-approximation-T2}}
\end{figure}
\begin{example}[The second Chebyshev polynomial]\label{ex:T2}
We now consider the problem of uniformly approximating $f(x)=x^2$ by an affine function $a_1+a_2 x$ inside the interval $X=[-1,1]$, and aim at computing the subdifferential of the pointwise supremum function $m(a)=\max_{x\in X}|a_1+a_2x-x^2|$. The graph of this pointwise supremum function is shown on the left plot of Figure~\ref{fig:subdifferential-approximation-T2}. For a fixed $a\in\R^2$, we need to find the maximizers of $x\mapsto |a_1+a_2x-x^2|$. As previously, the maximizers can lie only at three indices, either $x=-1$ or $x=1$ or $x=\tfrac{a_2}{2}$ if this latter lies inside $X$. A case by case study can be carried out similarly to previous example, and gives the same extremal points as in Example~\ref{ex:example-1} with $n=1$. The right plot of Figure~\ref{fig:subdifferential-approximation-T2} shows the level sets of $m(a)$ and different regions separated by red lines with their respective maximizers. Red lines are exactly where there are several maximizers. Three vectors are selected on the red lines, for each of them the subdifferential is represented.
\end{example}
The formal computation of the active indices can be carried out only in such simple situations like the previous examples, while numerical evaluation of subgradients is to be performed in practice (Taylor series and polynomial root finding algorithms~\cite{Chevillard2011} or branch-and-bound algorithms~\cite{Sahinidis1996} can be used for the computation of the uniform norm and maximizers, which ultimately rely on interval computations to produce bounds on the range of nonlinear functions~\cite{Moore2009,Goldsztejn-SIADS2011,Martin-SINUM2013}).

It is noticeable from these two examples that searching for maximizers of the function $m_a(x)=|\phi(x)^Ta-f(x)|$ in the process of computing the subdifferential of $m(a)$ is exactly searching for the extremal points of the error function $p-f$, with $p(x)=\phi(x)^Ta$, defined in the context of Chebyshev approximation:
\begin{equation}
\ext(p-f)=\act(a).
\end{equation}

\subsection{Optimality conditions for convex optimization}\label{ss:optimality-convex}

Standard optimality conditions and optimality conditions with sharp minimizers are presented in the following two subsections. Sharpness has been introduced by Polyak~\cite{Polyak1979,Polyak1987} in order to investigate the convergence of subgradient methods. Sharp minimizers correspond exactly to minimizers that are strongly unique in the context of Chebyshev approximation, hence the importance here of the related optimality conditions. Kernel optimality conditions, which are more practical, are presented as corollaries to the usual optimality conditions.

\subsubsection{Optimality conditions}

The subdifferential gives rise to two equivalent characterizations of optimality. Firstly, $\ba $ is a minimizer of $m(a)$ if and only if $0\in\partial m(\ba )$. This condition generalizes the necessary condition $\nabla m(a)=0$ for differentiable functions. Secondly, $\ba $ is a minimizer of $m(a)$ if and only if the directional derivative $m'(\ba,u)$ is non-negative in all directions $u$. To summarize we have:
\begin{proposition}\label{prop:optimality}
	Let $m:\R^n\rightarrow\R$ be a convex function. For an arbitrary $\ba\in\R^n$, the following three conditions are equivalent:
	\begin{itemize}
		\item[(1)] $\forall a\in\R^n, \ m(a)\geq m(\ba)$;
		\item[(2)] $0 \in \partial m(\ba)$;
		\item[(3)] $\forall u\in\R^n, \ m'(\ba,u)\geq0$;
	\end{itemize}
\end{proposition}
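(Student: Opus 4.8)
The plan is to prove the cyclic chain of implications $(1)\Rightarrow(2)\Rightarrow(3)\Rightarrow(1)$, which yields the equivalence of all three conditions. The key tools are already available in the excerpt: the definition of a subgradient via the affine-underestimator inequality~\eqref{eq:affine-underestimator}, and the formula~\eqref{eq:dirder-general} expressing the directional derivative as a support function of the subdifferential, $m'(\ba,u)=\max\{u^Tg:g\in\partial m(\ba)\}$.

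First I would prove $(1)\Rightarrow(2)$. Suppose $\ba$ minimizes $m$, so $m(a)\geq m(\ba)$ for all $a$. Then the constant affine function $a\mapsto m(\ba)+0^T(a-\ba)$ is an affine underestimator of $m$ at $\ba$, which by the definition of the subdifferential means exactly that $0\in\partial m(\ba)$. Next, $(2)\Rightarrow(3)$ is immediate from~\eqref{eq:dirder-general}: if $0\in\partial m(\ba)$ then for every direction $u$ we have $m'(\ba,u)=\max\{u^Tg:g\in\partial m(\ba)\}\geq u^T0=0$. Finally, for $(3)\Rightarrow(1)$, recall that for a convex function $m$ the difference quotient $t\mapsto \bigl(m(\ba+tu)-m(\ba)\bigr)/t$ is nondecreasing in $t>0$, so it is bounded below by its limit as $t\downarrow0$, namely $m'(\ba,u)$. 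Hence $m(\ba+tu)-m(\ba)\geq t\,m'(\ba,u)\geq0$ for all $t>0$. Given an arbitrary $a\in\R^n$, apply this with $u=a-\ba$ and $t=1$ to conclude $m(a)\geq m(\ba)$.

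The only step requiring any care is $(3)\Rightarrow(1)$, where one must invoke the monotonicity of the convex difference quotient to pass from an infinitesimal statement (the directional derivative is nonnegative) to a global one; this is a standard fact about convex functions on $\R^n$ and needs no further assumptions since $m$ is finite-valued and hence continuous. The remaining implications are essentially tautological once~\eqref{eq:affine-underestimator} and~\eqref{eq:dirder-general} are in hand. Alternatively, one could phrase the whole argument around $(1)\Leftrightarrow(2)$ directly from the subgradient inequality and then use~\eqref{eq:dirder-general} to splice in $(3)$, but the cyclic presentation is cleaner.
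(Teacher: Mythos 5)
Your proof is correct, but it is organized differently from what the paper does. The paper never proves Proposition~\ref{prop:optimality} in isolation: it treats it as the $r=0$ case of the sharp-minimizer statement, Proposition~\ref{prop:strong-uniqueness}, whose proof runs around the cycle $(2)\Rightarrow(1)$, $(1)\Rightarrow(3)$, $\neg(2)\Rightarrow\neg(3)$, using Cauchy--Schwarz for the first step and, crucially, compactness of $\partial m(\ba)$ plus the separating hyperplane theorem for the contrapositive step $(3)\Rightarrow(2)$. You instead close the cycle $(1)\Rightarrow(2)\Rightarrow(3)\Rightarrow(1)$: the first implication is immediate from the subgradient definition \eqref{eq:affine-underestimator} (the zero slope gives an affine underestimator), the second needs only the trivial half of \eqref{eq:dirder-general} (indeed $m'(\ba,u)\geq u^Tg$ for any single subgradient $g$ already follows from the subgradient inequality, so you do not even need the full max formula), and the third replaces the separation argument by monotonicity of the convex difference quotient, correctly passing from the infinitesimal condition to global optimality with $u=a-\ba$, $t=1$. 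What each approach buys: yours is more elementary and essentially self-contained, avoiding any compactness or separation machinery; the paper's arrangement is designed so that the same three steps carry the radius $r$ uniformly, which is why it proves the stronger Proposition~\ref{prop:strong-uniqueness} directly and obtains the present proposition as the degenerate case (your route would in fact also extend to $r>0$ with Cauchy--Schwarz inserted in $(1)\Rightarrow(2)$ and $(2)\Rightarrow(3)$, at the price of leaning on the exactness of \eqref{eq:dirder-general}).
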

\begin{example}
	Let $m(a)=|a+1|+|a-1|$ like in Example~\ref{ex:subdifferential-ex-sumunivar}. The subdifferential expression~\eqref{eq:subdifferential-ex-sumunivar} shows that $0\in\partial m(a)$ if and only if $a\in[-1,1]$. Therefore, the set $[-1,1]$ correspond to all minimizers, as confirmed by the left graph of Figure~\ref{fig:subdifferential-sums}.
\end{example}
\begin{example}\label{ex:norm1-opt}
	Let $m(a)=|a_1|+|a_2|$ like in Example~\ref{ex:norm1}. The subdifferential expression~\eqref{eq:ex:norm1} shows that $0\in\partial m(a)$ if and only if $a=0$, which is therefore the unique minimizers. This can also be seen on the right graph of Figure~\ref{fig:subdifferential-sums}.
\end{example}
\begin{example}[The second Chebyshev polynomial]\label{ex:T2-opt}
	Continuing Example~\ref{ex:T2}, we observe on the right plot of Figure~\ref{fig:subdifferential-approximation-T2} that the only vector $a$ for which $0\in m(a)$ is $\ba=(\tfrac{1}{2},0)^T$. This is the unique minimizer of $m(a)$, which corresponds to the monic Chebyshev polynomial $\tfrac{1}{2}\,T_2(x)=\phi(x)^T\,\ba=x^2-\tfrac{1}{2}$.
\end{example}

The subdifferential $\partial m(\ba)$ may contain infinitely many subgradients, making the convex hull condition \emph{(2)} of Proposition~\ref{prop:optimality} potentially difficult to use in practice. Carathéodory's convex hull theorem shows that only $k\leq n+1$ vectors from a generating set of subgradients are sufficient, leading to the following equivalent finite convex hull optimality condition:
\begin{equation}\label{eq:finite-convex-hull-condition}
	\exists g_1,\ldots,g_{k}\in E \ , \ 0\in\conv\{g_1,\ldots,g_k\},
\end{equation}
where $E$ is a generating set of subgradients. The following kernel condition is a simple rewriting of this finite convex hull conditions:
\begin{corollary}\label{cor:optimality}
Let $E$ be a generating set of subgradients. The optimality conditions of Proposition~\ref{prop:optimality} are equivalent to
\begin{equation}\label{eq:kernel-condition}
	\exists g_1,\ldots,g_{k}\in E \ , \ \exists \,u(\neq0)\in(\R_{\geq0})^k \ , \ G\,u=0,\hspace{2cm}
\end{equation}
where $G=\big( \, g_1 \ \cdots \ g_k \, \bigr)$. The number $k$ of subgradients can be chosen less or equal to $n+1$.
\end{corollary}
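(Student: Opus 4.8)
The plan is to show that the kernel condition~\eqref{eq:kernel-condition} is just a coordinate-level restatement of the finite convex hull condition~\eqref{eq:finite-convex-hull-condition}, which in turn is equivalent to the convex hull condition \emph{(2)} of Proposition~\ref{prop:optimality} via Carathéodory's theorem. So the argument has three links: (i) condition \emph{(2)}, i.e.\ $0\in\partial m(\ba)$, is equivalent to $0\in\conv E$ because $\partial m(\ba)=\conv E$ by the definition of a generating set of subgradients; (ii) $0\in\conv E$ is equivalent to~\eqref{eq:finite-convex-hull-condition}, i.e.\ $0$ lies in the convex hull of at most $n+1$ points of $E$, by Carathéodory's theorem applied in $\R^n$; and (iii) \eqref{eq:finite-convex-hull-condition} is equivalent to~\eqref{eq:kernel-condition} by unwinding the definition of convex hull.

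First I would establish (i) and (ii) together, since they are essentially the passage already made in the text just before the corollary, leading to~\eqref{eq:finite-convex-hull-condition}. Writing $\partial m(\ba)=\conv E$, the condition $0\in\partial m(\ba)$ says exactly $0\in\conv E$. Carathéodory's convex hull theorem in $\R^n$ then guarantees that any point of $\conv E$ — in particular $0$ — is a convex combination of at most $n+1$ points of $E$; conversely $\conv\{g_1,\dots,g_k\}\subseteq\conv E$ for any $g_i\in E$, so the two conditions are equivalent and one may always take $k\leq n+1$.

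The last link (iii) is the routine but necessary bookkeeping. Suppose~\eqref{eq:finite-convex-hull-condition} holds: there are $g_1,\dots,g_k\in E$ and scalars $\lambda_i\geq0$ with $\sum_{i=1}^k\lambda_i=1$ and $\sum_{i=1}^k\lambda_i g_i=0$. Set $u=(\lambda_1,\dots,\lambda_k)^T$. Then $u\in(\R_{\geq0})^k$, $u\neq0$ because its components sum to $1$, and $Gu=\sum_{i=1}^k\lambda_i g_i=0$, which is~\eqref{eq:kernel-condition}. Conversely, suppose~\eqref{eq:kernel-condition} holds with $g_1,\dots,g_k\in E$, $u\neq0$, $u\geq0$, and $Gu=0$. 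Since $u\neq0$ and $u\geq0$ we have $\sigma:=\sum_{i=1}^k u_i>0$; put $\lambda_i=u_i/\sigma$. Then $\lambda_i\geq0$, $\sum_i\lambda_i=1$, and $\sum_i\lambda_i g_i=\tfrac1\sigma Gu=0$, so $0\in\conv\{g_1,\dots,g_k\}$, which is~\eqref{eq:finite-convex-hull-condition}. The bound $k\leq n+1$ is inherited from~\eqref{eq:finite-convex-hull-condition}. Chaining the three equivalences, and recalling from Proposition~\ref{prop:optimality} that \emph{(2)} is equivalent to optimality, completes the proof.

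I do not expect a genuine obstacle here: the statement is essentially a reformulation, and the only nontrivial external input is Carathéodory's theorem, which is standard. The one point deserving a moment's care is the normalization step in the converse direction of (iii) — one must observe that $u\geq0$ together with $u\neq0$ forces $\sum_i u_i>0$, so that dividing by $\sigma$ is legitimate; this is exactly why the nonnegativity (rather than mere nonzero) hypothesis on $u$ matters.
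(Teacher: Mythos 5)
Your proposal is correct and follows exactly the route the paper intends: pass from condition \emph{(2)} of Proposition~\ref{prop:optimality} to $0\in\conv E$ (definition of a generating set), invoke Carath\'eodory's theorem to get the finite convex hull condition~\eqref{eq:finite-convex-hull-condition} with $k\leq n+1$, and then rewrite convex-combination membership as the nonnegative kernel condition~\eqref{eq:kernel-condition}, including the normalization observation that $u\geq 0$, $u\neq 0$ gives $\sum_i u_i>0$. The paper treats this as "a simple rewriting" and gives no further detail, so your write-up simply makes the same argument explicit.
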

Optimality is therefore expressed as finding a nonnegative kernel vector of a matrix whose columns are subgradients.

\subsubsection{Optimality conditions for sharp minimizers}\label{sss:sharp}

A sharp minimizer~\cite{Polyak1979,Polyak1987} $\ba$ satisfies $m(a)\geq m(\ba)+r\|a-\ba\|$ for all $a\in\R^n$ for an arbitrary but fixed $r>0$, and is therefore the unique minimizer. For $r=0$, the definition reduces to standard optimality. The following proposition is a stronger version of Proposition~\ref{prop:optimality}, which provides optimality conditions for sharp minimizers. Its statement is slightly improved with respect to \cite[Lemma~3 page 137]{Polyak1987} and is folklore for experts of the field. The proof is provided for completeness and to emphasize how simple sharpness is added to optimality in the context of convex optimization. The case $r=0$ is included in the statement, and corresponds to standard optimality.
\begin{proposition}\label{prop:strong-uniqueness}
	Let $m:\R^n\rightarrow\R$ be a convex function. For arbitrary $\ba\in\R^n$ and $r\geq0$, the following three conditions are equivalent
	\begin{itemize}
		\item[(1)] $\forall a\in\R^n,\ m(a)\geq m(\ba)+r\,\|a-\ba\|_2$;
		\item[(2)] $r\,\B^n \subseteq \partial m(\ba)$, where $\B^n$ is the $n$ dimensional Euclidean ball;
		\item[(3)] $\forall u\in\R^n,\ m'(\ba,u)\geq r\|u\|_2$.
	\end{itemize}
\end{proposition}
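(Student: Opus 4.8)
The plan is to prove the equivalences by routing both through the directional derivative, using the formula~\eqref{eq:dirder-general}, which identifies $u\mapsto m'(\ba,u)$ with the support function $\sigma_{\partial m(\ba)}(u)=\max\{u^Tg:g\in\partial m(\ba)\}$ of the nonempty compact convex set $\partial m(\ba)$. I would prove $(1)\Leftrightarrow(3)$ using only the one–variable convexity of $t\mapsto m(\ba+tu)$, and $(2)\Leftrightarrow(3)$ using the support-function characterization of set inclusion. Throughout, the case $r=0$ is covered for free: then $r\B^n=\{0\}$ and the three statements reduce exactly to those of Proposition~\ref{prop:optimality}.

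For $(1)\Rightarrow(3)$, fix $u$ and substitute $a=\ba+tu$ in (1) for $t>0$, giving $\tfrac{1}{t}\bigl(m(\ba+tu)-m(\ba)\bigr)\geq r\|u\|_2$; since by convexity the difference quotient $t\mapsto\tfrac{1}{t}\bigl(m(\ba+tu)-m(\ba)\bigr)$ is nondecreasing on $(0,\infty)$ with infimum $m'(\ba,u)$, taking the infimum over $t>0$ yields $m'(\ba,u)\geq r\|u\|_2$. For $(3)\Rightarrow(1)$, the same monotonicity gives $m'(\ba,u)\leq m(\ba+u)-m(\ba)$ (evaluate the quotient at $t=1$), whence $m(\ba+u)\geq m(\ba)+m'(\ba,u)\geq m(\ba)+r\|u\|_2$; setting $u=a-\ba$ recovers (1).

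For $(2)\Leftrightarrow(3)$, note first that $r\|u\|_2=\max\{u^Tg:\|g\|_2\leq r\}=\sigma_{r\B^n}(u)$ by the Cauchy--Schwarz inequality. Hence, using~\eqref{eq:dirder-general}, condition (3) reads $\sigma_{r\B^n}(u)\leq\sigma_{\partial m(\ba)}(u)$ for every $u\in\R^n$. Both $r\B^n$ and $\partial m(\ba)$ being nonempty closed convex sets, the standard fact that $A\subseteq B$ holds if and only if $\sigma_A\leq\sigma_B$ pointwise then gives $r\B^n\subseteq\partial m(\ba)$, which is (2), and the argument is reversible.

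The only point requiring care is the support-function inclusion lemma; it can be cited from~\cite{jbhu2001}, or obtained directly from the separation theorem: if some $g_0\in r\B^n$ were not in the closed convex set $\partial m(\ba)$, a separating hyperplane would furnish a $u$ with $u^Tg_0>\sigma_{\partial m(\ba)}(u)=m'(\ba,u)$, contradicting (3) since $u^Tg_0\leq\sigma_{r\B^n}(u)=r\|u\|_2$. All remaining steps are routine one-dimensional convexity arguments, so I expect no further obstacle.
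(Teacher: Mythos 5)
Your proof is correct, but it is organized differently from the paper's. The paper closes the cycle $(2)\Rightarrow(1)\Rightarrow(3)\Rightarrow(2)$: it gets $(2)\Rightarrow(1)$ directly from the subgradient inequality $m(a)\geq m(\ba)+g^T(a-\ba)$ maximized over $g\in r\B^n$ via Cauchy--Schwarz, proves $(1)\Rightarrow(3)$ by the same difference-quotient limit you use, and obtains $(3)\Rightarrow(2)$ by contraposition with the separating hyperplane theorem applied to the compact convex set $\partial m(\ba)$ --- which is precisely your support-function inclusion lemma in disguise. You instead route everything through the directional derivative: $(1)\Leftrightarrow(3)$ by monotonicity of $t\mapsto\tfrac{1}{t}\bigl(m(\ba+tu)-m(\ba)\bigr)$ (with the $t=1$ evaluation giving $(3)\Rightarrow(1)$, so you never need the subgradient inequality), and $(2)\Leftrightarrow(3)$ as the purely convex-geometric statement $\sigma_{r\B^n}\leq\sigma_{\partial m(\ba)}$ iff $r\B^n\subseteq\partial m(\ba)$, using \eqref{eq:dirder-general} to identify $m'(\ba,\cdot)$ with the support function. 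Your decomposition is more modular --- it cleanly separates the one-dimensional convexity content from the support-function content, and covers $r=0$ (Proposition~\ref{prop:optimality}) uniformly --- at the cost of appealing to \eqref{eq:dirder-general} (or separation, as you note) for both directions of $(2)\Leftrightarrow(3)$; the paper's direct $(2)\Rightarrow(1)$ step is more self-contained and makes explicit why a ball of radius $r$ inside the subdifferential forces the linear growth $r\|a-\ba\|_2$. Both arguments are valid and of essentially the same length.
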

\begin{proof}
$(2)\Rightarrow(1)$: For all subgradient $g\in B_r$ we have $m(a)\geq m(\ba)+g^T(a-\ba)$. Therefore, $m(a)\geq m(\ba)+\max_{g\in B_r}g^T(a-\ba)=m(\ba)+\|\bg\|_2\|a-\ba\|_2=m(\ba)+r\,\|a-\ba\|_2$ for $\bg$ radius $r$ aligned with $(a-\ba)$, using Cauchy-Schwarz inequality. $(1)\Rightarrow(3)$: we have $m'(\ba,u)=\lim_{t\rightarrow 0^+}\tfrac{1}{t}\bigl(m(\ba+tu)-m(\ba)\bigr)\geq \lim_{t\rightarrow 0^+}\tfrac{1}{t}\bigl(m(\ba)+r\,\|tu\|_2-m(\ba)\bigr)=r\|u\|_2$. $\neg(2)\Rightarrow\neg(3)$: Let $\bg\in B_r$ with $\bg\notin \partial m(\ba)$. Since $\partial m(\ba)$ is compact, the separating hyperplane theorem proves the existence of $u\in\R^n$ such that $\forall g\in\partial m(\ba),u^Tg<u^T\bg$. Finally, $m'(\ba,u)=\max_{g\in\partial m(\ba)}u^Tg<u^T\bg\leq \|u\|_2\,\|\bg\|_2\leq r\,\|u\|_2$, which is $\neg(3)$.
\end{proof}
\begin{figure}[t!]
\begin{center}
	\includegraphics[width=0.2\linewidth]{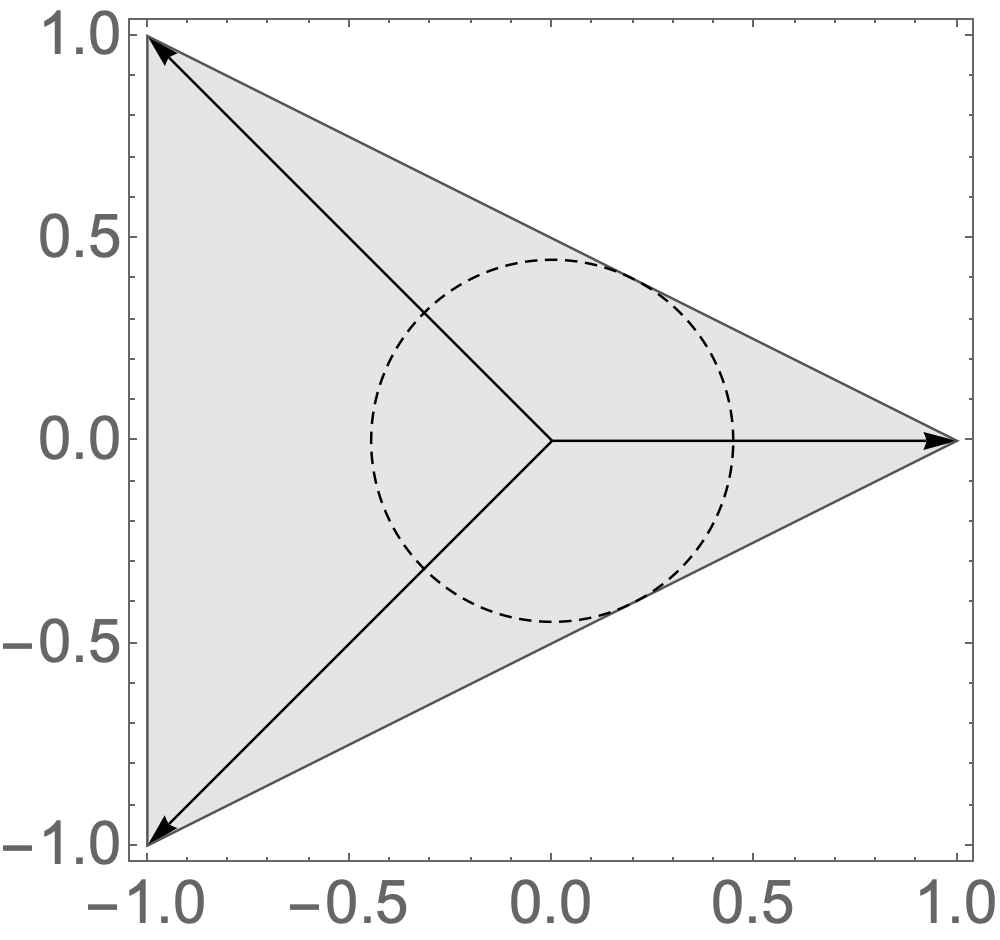}\hfill\includegraphics[width=0.52\linewidth]{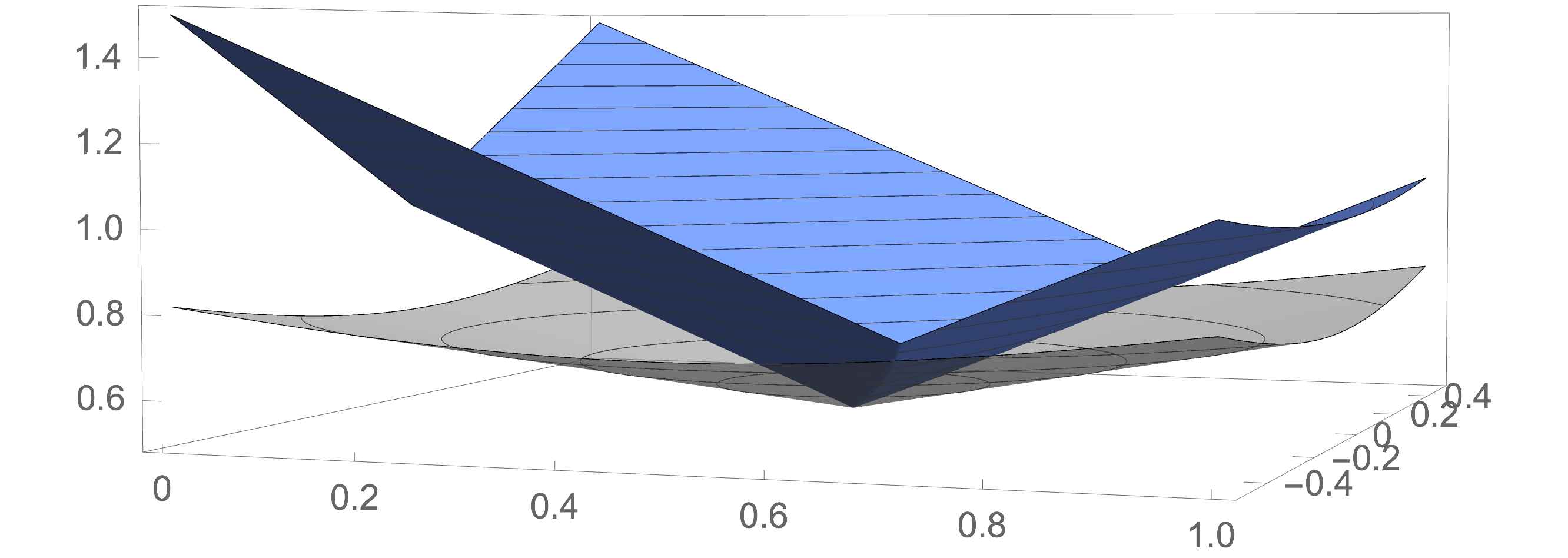}\hfill\includegraphics[width=0.27\linewidth]{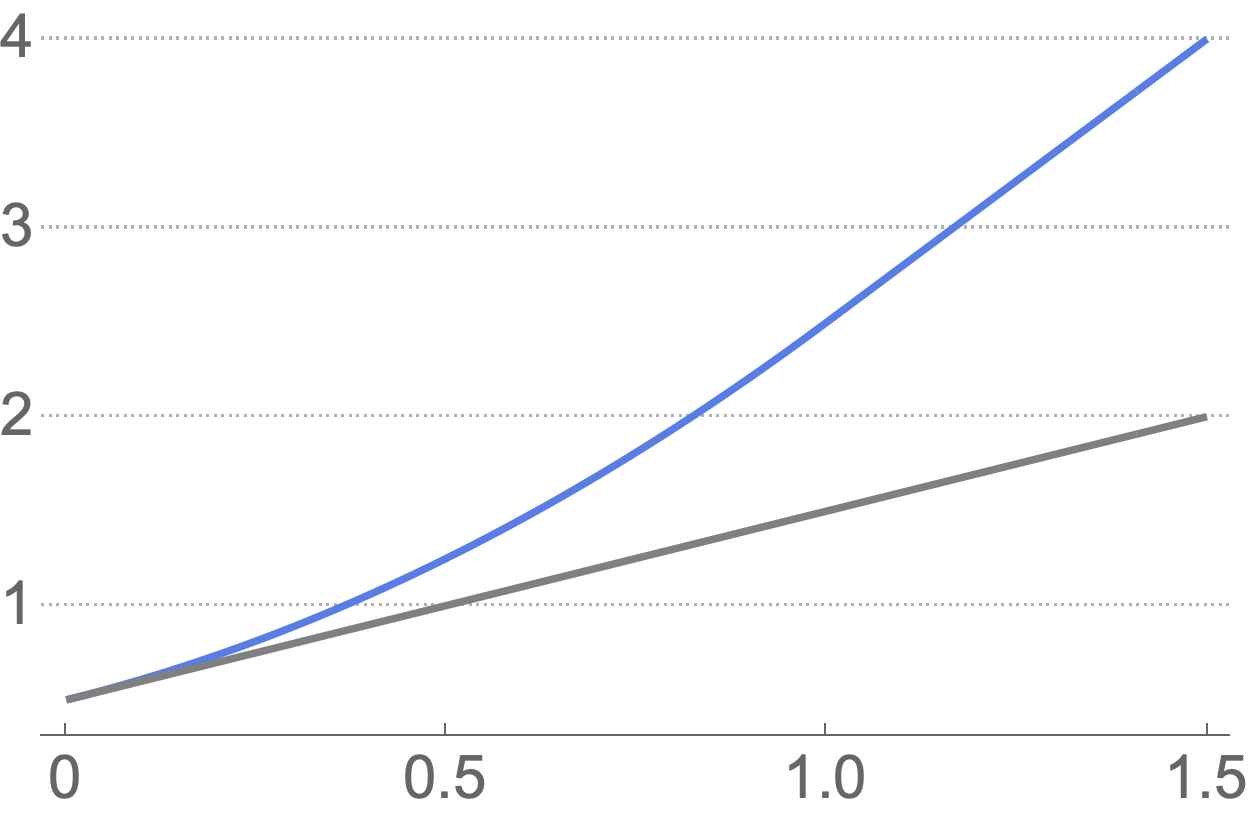}
\end{center}
	\caption{From left to right: the largest ball contained in the subdifferential of Example~\ref{ex:T2-sharp}, the corresponding lower bound displayed in $\R^2$ and in the direction $(1,2)^T$.\label{fig:sharp-T2}}
\end{figure}
\begin{example}[The second Chebyshev polynomial]\label{ex:T2-sharp}
	Continuing Example~\ref{ex:T2-opt}, let $m(a)=\max_{x\in[-1,1]}|a_1+a_2x-x^2|$. The vector $\ba=(\tfrac{1}{2},0)$ was identified to be a minimizer. Its subdifferential $\partial m(\ba)$ is represented on the left plot of Figure~\ref{fig:sharp-T2}. The largest ball included inside the subdifferential is shown in dashed, and has radius $1/\sqrt{5}$. Hence Proposition~\ref{prop:strong-uniqueness} proves that $\tfrac{1}{2}+1/\sqrt{5}\,\|x\|_2$ is less than $m(a)$ for all $a$. Both functions are represented on the central plot of Figure~\ref{fig:sharp-T2}. We can see that $1/\sqrt{5}$ is the largest such constant on the right plot of Figure~\ref{fig:sharp-T2}, where both functions displayed in the direction $(1,2)^T$ are seen to be tangent to each other.
\end{example}

\paragraph{The kernel condition for sharp minimizers}

As previously, the subdifferential $\partial m(\ba)$ may contain infinitely many subgradients, making the strict convex hull condition \emph{(2)} of Proposition~\ref{prop:strong-uniqueness} potentially difficult to use in practice. For such a strict convex hull condition, one uses Steinitz's theorem~\cite[Theorem 3.26 page 136]{Soltan2015}, instead of Carathéodory's theorem, to prove that $k\leq 2n$ subgradients from a generating set of subgradients are sufficient, leading to the following equivalent finite strict convex hull condition:
\begin{equation}\label{eq:finite-convex-hull-strict-uniqueness}
	\exists g_1,\ldots,g_k\in E \ , \ 0\in\interior\conv\{g_1,\ldots,g_k\},
\end{equation}
where $E$ is a generating set of subgradients. Note that now $2n$ vectors may be necessary to have zero in the interior of the convex hull, as compared to $n+1$ for Carathéodory's theorem, see~\cite[figure at top of page 136]{Soltan2015} and Figure~\ref{fig:subdifferential-sums}, where no subset of $3$ vectors have $0$ in the interior of their convex hull, hence $4=2n$ vectors are necessary for having $0$ in the interior of the convex hull. Note also that the radius information $r$ is lost in the condition~\eqref{eq:finite-convex-hull-strict-uniqueness}, although some quantitative Steinitz theorem~\cite{Ivanov2024} can bring related information. Therefore, the condition~\eqref{eq:finite-convex-hull-strict-uniqueness} is equivalent to $\ba$ is a strong minimizer.

The corresponding kernel condition is not anymore an obvious equivalence of the finite strict convex hull condition~\eqref{eq:finite-convex-hull-strict-uniqueness}, like~\eqref{eq:kernel-condition} was obviously equivalent to~\eqref{eq:finite-convex-hull-condition}: here we need to use Stiemke's alternative theorem, which asserts that for an arbitrary matrix $G=\bigl( \, g_1 \ \cdots \ g_k \, \bigr)\in\R^{n\times k}$ we have
\begin{equation}
	\Bigl( \ \forall v\in\R^n \, ,\, \bigl( G^Tv\leq 0\Rightarrow G^Tv=0\bigr) \ \Bigr) \ \iff \ \exists u\in(\R_{>0})^k,G\,u=0.
\end{equation}
The left side of the equivalence says that the directional derivative of the linear pointwise maximum $l(u)=\max\{g_1^Tu,\ldots,g_k^Tu\}$ is nonnegative in all directions $0\neq v\in\R^n$. The right side is a strict kernel condition, where the kernel vector has positive components, instead of nonnegative components in~\eqref{eq:kernel-condition}. Informally, Stiemke's alternative theorem says that subgradients points toward all directions, i.e., $\sum u_ig_i=0$ for some $u_i>0$, if and only if the directional derivative of $l(u)$ is non-negative in all directions $0\neq u\in\R^n$. The fact that $G$ is full rank must be deduced to conclude formally: by Proposition~\ref{prop:strong-uniqueness} the condition~\eqref{eq:finite-convex-hull-strict-uniqueness} is equivalent to $l(u)$ having a positive directional derivative in all directions. With $G=\big( \, g_1 \ \cdots \ g_k \, \bigr)\in\R^{n\times k}$, this condition can be expressed by $\forall v\in\R^n, G^Tv\leq0\Rightarrow v=0$. This implies that $G$ is full rank, so we have the following equivalent condition: G full rank and $\forall v\in\R^n, G^Tv\leq0\Rightarrow G^Tv=0$. Finally, applying Stiemke's alternative theorem give the following equivalent kernel characterization of optimality with strong uniqueness:

\begin{corollary}\label{cor:sharp-optimality}
Let $E$ be a generating set of subgradients. The sharp optimality conditions of Proposition~\ref{prop:strong-uniqueness} hold for some $r>0$ if and only if
\begin{equation}\label{eq:kernel-condition-strong-uniqueness}
	\exists g_1,\ldots,g_{k}\in E \ , \ \exists \,u\in(\R_{>0})^k \ , \ G\text{ full rank and }G\,u=0,
\end{equation}
where $G=\big( \, g_1 \ \cdots \ g_k \, \bigr)\in\R^{n\times k}$. The number $k$ of subgradients is greater than $n$ (because $G$ is full rank with a nontrivial kernel) and can be chosen less or equal to $2n$.
\end{corollary}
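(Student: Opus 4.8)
The plan is to chain together three equivalences, mirroring exactly the passage from Proposition~\ref{prop:optimality} to Corollary~\ref{cor:optimality}, but replacing Carathéodory's theorem by Steinitz's theorem and the trivial rewriting of a finite convex-hull condition as a kernel condition by Stiemke's alternative theorem. First I would start from equivalence~$(2)$ of Proposition~\ref{prop:strong-uniqueness}: the sharp optimality condition holds for some $r>0$ if and only if $r\,\B^n\subseteq\partial m(\ba)$ for some $r>0$. Since $\partial m(\ba)$ is a nonempty compact convex set, containing a ball of positive radius centred at the origin is the same as $0\in\interior\partial m(\ba)$, and since $E$ generates the subdifferential, $\partial m(\ba)=\conv E$ (passing to closures if necessary does not change interior membership of a convex set), so the condition becomes $0\in\interior\conv E$.

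Second I would carry out the finite reduction. The forward direction, that $0\in\interior\conv E$ implies the existence of $g_1,\ldots,g_k\in E$ with $k\le 2n$ and $0\in\interior\conv\{g_1,\ldots,g_k\}$, is exactly Steinitz's theorem~\cite[Theorem 3.26 page 136]{Soltan2015}; this is the step already invoked informally to reach the finite strict convex-hull condition~\eqref{eq:finite-convex-hull-strict-uniqueness}. The converse is just monotonicity of the interior under inclusion: from $\conv\{g_1,\ldots,g_k\}\subseteq\conv E$ one gets $\interior\conv\{g_1,\ldots,g_k\}\subseteq\interior\conv E$. Hence the sharp optimality condition is equivalent to the finite strict convex-hull condition~\eqref{eq:finite-convex-hull-strict-uniqueness}.

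Third — the one genuinely nontrivial step — I would turn $0\in\interior\conv\{g_1,\ldots,g_k\}$ into the kernel condition~\eqref{eq:kernel-condition-strong-uniqueness}. The support function of $\conv\{g_1,\ldots,g_k\}$ is $v\mapsto\max_i g_i^Tv$, i.e. the largest component of $G^Tv$, so by the supporting/separating hyperplane theorem $0\in\interior\conv\{g_1,\ldots,g_k\}$ holds if and only if $\max_i g_i^Tv>0$ for every $v\neq0$, equivalently if and only if $G^Tv\le 0\Rightarrow v=0$. Taking $v$ in the left null space of $G$ shows this already forces $G$ to have full row rank $n$; combined with a nontrivial kernel this yields $k>n$, and, $G$ being full rank, the implication $G^Tv\le0\Rightarrow v=0$ is the same as $G^Tv\le0\Rightarrow G^Tv=0$. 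Finally Stiemke's alternative theorem applied to $G$ converts the latter into $\exists u\in(\R_{>0})^k,\ Gu=0$, which is~\eqref{eq:kernel-condition-strong-uniqueness}; the bound $k\le 2n$ is inherited from the Steinitz step and $k>n$ from full rank with a nontrivial kernel.

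I expect the main obstacle to be precisely the observation that $0\in\interior\conv\{g_1,\ldots,g_k\}$ already forces $G$ to be full rank: without it, Stiemke's alternative would be applied to the wrong system and the clean equivalence would not close — this is exactly why, unlike in Corollary~\ref{cor:optimality}, the kernel condition here is not a mere restatement of the convex-hull condition but requires an alternative theorem. A secondary point deserving one sentence is that the Steinitz bound $2n$ genuinely cannot be lowered to $n+1$, since Carathéodory only puts $0$ in the convex hull and not in its interior, as Example~\ref{ex:norm1} illustrates with the $4=2n$ subgradients visible in Figure~\ref{fig:subdifferential-sums}.
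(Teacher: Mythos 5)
Your proposal is correct and follows essentially the same route as the paper: condition~(2) of Proposition~\ref{prop:strong-uniqueness} rephrased as $0\in\interior\partial m(\ba)$, Steinitz's theorem for the finite reduction to $k\leq 2n$ generators, the observation that the resulting positivity condition forces $G$ to be full rank, and Stiemke's alternative theorem to pass to the strictly positive kernel vector. The only cosmetic difference is that you justify the equivalence between $0\in\interior\conv\{g_1,\ldots,g_k\}$ and $\bigl(G^Tv\leq0\Rightarrow v=0\bigr)$ via the support function and separating hyperplanes, whereas the paper phrases the same step as positivity of the directional derivative of the linear pointwise maximum $l(u)=\max_i g_i^Tu$.
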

The following example illustrates the subtlety of Steinitz's theorem and Stiemke's alternative theorem.
\begin{example}
	Continuing Example~\ref{ex:norm1-opt}, let $m(a)=|a_1|+|a_2|$. We have $\partial m(0)=\conv E$ with $E=\{(-1,-1)^T,(-1,1)^T,(1,-1)^T,(1,1)^T\}$. Subgradient matrices with two columns
	\begin{equation}
		\begin{pmatrix}-1&1\\-1&1\end{pmatrix}\text{ and }\begin{pmatrix}-1&1\\1&-1\end{pmatrix}
	\end{equation}
	have a positive kernel vector, which proves optimality by Corollary~\ref{cor:optimality}, but they are not full rank hence do not allow applying Corollary~\ref{cor:sharp-optimality}. Subgradient matrices with three subgradients have a kernel vector with one zero component, hence again do not allow applying Corollary~\ref{cor:sharp-optimality}. This is finally the subgradient matrix with all four subgradients that is full rank and has a kernel vector $(1,1,1,1)^T$ with positive components.	This is a typical situation where $2\leq n+1$ vectors are enough to prove that $0$ is in the convex hull, in coherence with Carathéodory theorem, while $4=2n$ vectors are necessary to prove that $0$ is in the interior of the convex hull, in coherence with Steinitz's theorem.
\end{example}

\subsection{Subdifferential computation for uniform approximation problems}\label{ss:Chebyshev-convex}

We now compute the subdifferential of Chebyshev approximation problems and relative Chebyshev centers in a homogeneous way. The corresponding conditions for optimality will be given in Section~\ref{ss:convex-Chebyshev-approximation} below.

\subsubsection{Subdifferential of Chebyshev approximation problems}\label{s:subdifferential-Chebyshev}

We now consider the problem consisting in minimizing $m(a)=\|\phi^Ta-f\|_\infty$, where $\phi^Ta-f$ maps $x\in X$ to $\phi(x)^Ta-f(x)=p(x)$. This is a pointwise supremum with $m_x(a)=|\phi(x)^Ta-f(x)|$. The function $m_x(a)$ is convex for all $x\in X$, and the function $x\mapsto m_x(a)$ is continuous for all $a\in\R^n$, so we can apply the pointwise supremum formula~\eqref{eq:subdifferential-ps}. To this end, we need to evaluate $\partial m_x(a)$ for $x\in\act(a)=\ext(p-f)$. Since we assumed that $m(a)$ is positive for all $a\in\R^n$, so is $m_x(a)$ at active indices, and we have $m_x(a)=|\phi(x)^Ta-f(x)|=\sigma(a,x)(\phi(x)^Ta-f(x))$, where $\sigma(a,x)=\sign(\phi(x)^Ta-f(x))\in\{-1,1\}$ is constant in a small enough neighborhood of $a$. Therefore $m_x(a)$ is differentiable, with $\nabla m_x(a)=\sigma(a,x)\phi(x)$. Finally, the subdifferential calculus rule for pointwise supremum~\eqref{eq:subdifferential-ps-diff} leads to
\begin{subequations}\label{eq:subdifferential-Chebyshev-approximation}
\begin{align}
\partial \|\phi^Ta-f\|_\infty &= \conv \{\nabla m_x(a):x\in\act(a)\}
\\ &=\conv \{\sigma(a,x)\,\phi(x):x\in\act(a)\}
\\ &=\conv \{s\,\phi(x):(x,s)\in\sig(\phi^Ta-f)\}.
\end{align}
\end{subequations}
and we have finally
\begin{equation}
	\partial \|\phi^Ta-f\|_\infty =\conv \{s\,\phi(x):(x,s)\in\sig(\phi^Ta-f)\}\label{eq:subdifferential-Chebyshev-approximation-3}.
\end{equation}
The vectors $s\,\phi(x)$, for $(x,s)$ in the signature $\sig(\phi^Ta-f)$, form a generating set of subgradients for the uniform norm. Note that the subdifferential is computed with respect to the basis coordinates $a\in\R^n$, hence depend on the basis functions $\phi_i$.

\subsubsection{Subdifferential of relative Chebyshev center problems}\label{sss:subdifferentitial-center}

We now consider the problem consisting in minimizing $m(a)=\|\phi^Ta-\F\|_\infty$. As explained in Section~\ref{ss:def-center}, under the assumption that $\F$ is totally complete, we have $\|\phi^Ta-\F\|_\infty=\max\{\|\phi^Ta-f^-\|_\infty,\|\phi^Ta-f^+\|_\infty\}$, both functions $f^-,f^+:X\rightarrow\R$ being continuous. The subdifferential of $\|\phi^Ta-\F\|_\infty$ can therefore be computed from the rule for pointwise maximum the two functions $\|\phi^Ta-f^-\|_\infty$ and $\|\phi^Ta-f^+\|_\infty$: with $p=\phi^Ta$ for compactness,
\begin{equation}
	\partial\|p-\F\|_\infty=\left\{\begin{array}{cl}
		\partial \|p-f^-\|_\infty & \text{if } \|p-f^-\|_\infty>\|p-f^+\|_\infty
		\\\partial \|p-f^+\|_\infty & \text{if } \|p-f^-\|_\infty<\|p-f^+\|_\infty
		\\\conv(\partial\|p-f^-\|_\infty\cup\partial\|p-f^+\|_\infty)& \text{otherwise.}
	\end{array}\right.
\end{equation}
In the first two cases, we have $\partial\|p-f^\pm\|_\infty=\conv\{s\,\phi(x):(x,s)\in\sig(p-f^\pm)\}$ using~\eqref{eq:subdifferential-Chebyshev-approximation-3}, which are both equal to $\conv\{s\,\phi(x):(x,s)\in\sig(p-\F)\}$ by~\eqref{eq:sig-center-explicit}. For the third case, using the fact that $\conv((\conv A)\cup(\conv B))=\conv(A\cup B)$ we have
\begin{subequations}
\begin{align}
	\conv(\partial\|p-f^-\|_\infty&\cup\partial\|p-f^+\|_\infty)
	\\=&\conv\{s\,\phi(x):(x,s)\in\sig(p-f^\pm)\cup\sig(p-f^\pm)\}
	\\=&\conv\{s\,\phi(x):(x,s)\in\sig(p-\F)\},
\end{align}
\end{subequations}
using~\eqref{eq:subdifferential-Chebyshev-approximation-3} and~\eqref{eq:sig-center-explicit} like in the first two cases. Finally, all cases agree to the following formula:
\begin{equation}\label{eq:subdifferential-rcc}
	\partial\|\phi^Ta-\F\|_\infty=\conv\{s\,\phi(x):(x,s)\in\sig(\phi^Ta-\F)\},
\end{equation}
to which~\eqref{eq:subdifferential-Chebyshev-approximation-3} is a special case with $\F=\{f\}$.

\subsection{Optimality conditions for uniform approximation}\label{ss:convex-Chebyshev-approximation}

Using the subdifferential computed in the previous section, each optimality condition for convex programing from Section~\ref{ss:optimality-convex} gives rise to an optimality condition for uniform approximation. They are presented here for set-valued functions $\F$ that are totally complete, i.e., for relative Chebyshev center under the assumption of total completeness, and optimality conditions for standard uniform approximation of a real function $f$ correspond exactly to $\F(x)=\{f(x)\}$.

\subsubsection{Directional derivative condition}
Using the subdifferential expression of the directional derivative~\eqref{eq:dirder-general} and the subdifferential~\eqref{eq:subdifferential-rcc} we have
\begin{subequations}
\begin{align}
	m'(\ba,u)&=\max\{(s\,\phi(x))^Tu:(x,s)\in\sig(\bp-\F)\}
	\\&=\max\{s\,(\phi(x)^Tu):(x,s)\in\sig(\bp-\F)\}
\end{align}
\end{subequations}
with $\bp=\phi^T\ba$. So we can interpret the search direction $u$ as the coordinates of a generalized polynomial $p=\phi^Tu$ in the basis $\phi$ and write
\begin{equation}\label{eq:dirder-uniform}
	m'(\bp,p):=\max\{s\,p(x):(x,s)\in\sig(\bp-\F)\}.
\end{equation}
Then the equivalence between the Proposition~\ref{prop:strong-uniqueness} first condition (strong optimality) and the third condition (non-negative directional derivative) gives rise to the optimality condition in the following theorem, where $\|p\|_{\phi,2}$ is the $2$-norm of the vector of the coefficients of $p$ in the basis $\phi$.
\begin{theorem}\label{thm:convex-Chebyshev-approximation-dirder}
Let $\F:X\rightarrow2^\R$ be a totally continuous set-valued function and $r\geq0$. The generalized polynomial $\bp$ satisfies
\begin{equation}
	\forall p\in\V \ , \ \|p-\F\|_\infty\geq\|\bp-\F\|_\infty+r\|\bp-p\|_{\phi,2}
\end{equation}
if and only if
\begin{equation}
\forall p\in\V \ , \ \max\{s\,p(x):(x,s)\in\sig(\bp-\F)\} \ \geq \ r\,\|p\|_{\phi,2},
\end{equation}
\end{theorem}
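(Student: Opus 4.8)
The plan is to obtain the result as a direct instantiation of Proposition~\ref{prop:strong-uniqueness} to the convex function $m:\R^n\rightarrow\R$ defined by $m(a)=\|\phi^Ta-\F\|_\infty$. First I would recall why this function falls within the scope of Proposition~\ref{prop:strong-uniqueness}: under total completeness, $m(a)=\max\{\|\phi^Ta-f^-\|_\infty,\|\phi^Ta-f^+\|_\infty\}$ is a finite pointwise maximum of two continuous pointwise suprema, hence a genuine real-valued convex function on $\R^n$, so all three equivalent conditions of the proposition apply verbatim for every fixed $r\geq0$.

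Next I would set up the dictionary between vectors of coefficients and generalized polynomials. Because $\dim\V=n$ (the standing assumption recalled in the Remark following the definition of generalized polynomials), the linear map $a\mapsto\phi^Ta$ is a bijection from $\R^n$ onto $\V$, and by the convention fixed just before the statement the $2$-norm of a generalized polynomial is the Euclidean norm of its coefficient vector. Writing $\bp=\phi^T\ba$ and letting $p=\phi^Ta$ range over $\V$ as $a$ ranges over $\R^n$, condition $(1)$ of Proposition~\ref{prop:strong-uniqueness}, namely $m(a)\geq m(\ba)+r\|a-\ba\|_2$ for all $a$, becomes exactly $\|p-\F\|_\infty\geq\|\bp-\F\|_\infty+r\|\bp-p\|_2$ for all $p\in\V$, which is the first displayed condition of the theorem.

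For the second displayed condition I would translate condition $(3)$ of Proposition~\ref{prop:strong-uniqueness}. The subdifferential computation of Section~\ref{sss:subdifferentitial-center} gives $\partial m(\ba)=\conv\{s\,\phi(x):(x,s)\in\sig(\bp-\F)\}$ via~\eqref{eq:subdifferential-rcc}, and combining the directional-derivative formula~\eqref{eq:dirder-general} with this expression yields, after interpreting the direction $u$ as the coefficient vector of the generalized polynomial $p=\phi^Tu$, precisely~\eqref{eq:dirder-uniform}, that is $m'(\bp,p)=\max\{s\,p(x):(x,s)\in\sig(\bp-\F)\}$. Using once more the identification $u\leftrightarrow p$ and $\|u\|_2=\|p\|_2$, condition $(3)$, namely $m'(\ba,u)\geq r\|u\|_2$ for all $u\in\R^n$, becomes $\max\{s\,p(x):(x,s)\in\sig(\bp-\F)\}\geq r\|p\|_2$ for all $p\in\V$. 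Since Proposition~\ref{prop:strong-uniqueness} asserts $(1)\Leftrightarrow(3)$, the two displayed conditions of Theorem~\ref{thm:convex-Chebyshev-approximation-dirder} are equivalent.

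The only point requiring genuine care — and it is bookkeeping rather than an obstacle — is the passage through the coordinate identification: one must make sure that $a\mapsto\phi^Ta$ is injective so that $\|\bp-p\|_2$ and $\|p\|_2$ are unambiguously defined as the $2$-norms of $\ba-a$ and $u$, which is exactly the dimension hypothesis on $\V$. Everything else is a verbatim transcription of Proposition~\ref{prop:strong-uniqueness} together with the subdifferential formula~\eqref{eq:subdifferential-rcc}; the case $r=0$ is included and recovers the plain optimality characterization.
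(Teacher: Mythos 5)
Your proposal is correct and follows essentially the same route as the paper: instantiating the equivalence of conditions (1) and (3) of Proposition~\ref{prop:strong-uniqueness} for $m(a)=\|\phi^Ta-\F\|_\infty$, using the subdifferential formula~\eqref{eq:subdifferential-rcc} together with the directional-derivative expression~\eqref{eq:dirder-general} to obtain~\eqref{eq:dirder-uniform}, and identifying directions $u$ with coefficient vectors of generalized polynomials $p=\phi^Tu$. Your explicit remark on the injectivity of $a\mapsto\phi^Ta$ is a point the paper leaves implicit, but it is the same argument.
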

For $r=0$ this is exactly Kolmogorov criterion (Theorem~\ref{thm:Kolmogorov}), while for $r>0$ this is Bartelt's condition (Theorem~\ref{thm:Bartelt} and Equation~\eqref{eq:Bartelt-sig}) but with the $2$-norm of the coefficients instead of the uniform norm, here both extended to relative Chebyshev centers.

Equation~\eqref{eq:dirder-uniform} reveals a connection between Kolmogorov criterion and the directional derivative of the uniform norm: the polynomial $p$ in Kolmogorov criterion is interpreted as a search direction, and Kolmogorov criterion says exactly that the directional derivative is non-negative in all search directions.
\begin{remark}\label{rem:Kolmogorov}
In fact, the quantity
\begin{equation}
	\max_{x\in\ext(\bp-f)}\bigl(\,\bp(x)-f(x)\bigr)\,p(x)
\end{equation}
involved in Kolmogorov's criterion~\eqref{eq:Kolmogorov-max} is exactly the directional derivative of the pointwise supremum function $m(\bp)=\tfrac{1}{2}\,\max_{x\in X}(\bp(x)-f(x))^2$, which has the same optimal generalized polynomials, in the direction $p$.
\end{remark}

\subsubsection{Zero in the convex hull condition}

Using the subdifferential expression~\eqref{eq:subdifferential-rcc}, the equivalence between the Proposition~\ref{prop:strong-uniqueness} first condition (strong optimality) and the second condition (zero in the convex hull of the subdifferential) gives rise to the optimality condition in the following theorem:
\begin{theorem}\label{thm:convex-Chebyshev-approximation-subdifferential}
Let $\F:X\rightarrow2^\R$ be a totally continuous set-valued function and $r\geq0$. The generalized polynomial $\bp$ satisfies
\begin{equation}
	\forall p\in\V \ , \ \|p-\F\|_\infty\geq\|\bp-\F\|_\infty+r\|\bp-p\|_{\phi,2}.
\end{equation}
if and only if
\begin{equation}
	r\,\B^n \ \subseteq \ \conv\bigl\{s\,\phi(x): (x,s)\in\sig(\bp-\F)\bigr\}.
\end{equation}
\end{theorem}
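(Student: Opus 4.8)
The plan is to read this theorem as a direct translation of the equivalence $(1)\iff(2)$ in Proposition~\ref{prop:strong-uniqueness}, applied to the pointwise supremum function $m(a)=\|\phi^Ta-\F\|_\infty$, combined with the explicit subdifferential formula~\eqref{eq:subdifferential-rcc} obtained in Section~\ref{sss:subdifferentitial-center}. So the proof should be short: set up the correspondence between generalized polynomials and coefficient vectors, quote the convex-analytic proposition, and substitute the subdifferential.

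First I would fix the dictionary between $\V$ and $\R^n$. Because $\phi$ is a genuine basis of the $n$-dimensional space $\V$, the map $a\mapsto p=\phi^Ta$ is a linear bijection $\R^n\to\V$, and by the convention in force $\|p\|_2$ denotes the Euclidean norm of the coefficient vector $a$. Writing $\bp=\phi^T\ba$, this gives $\|\bp-\F\|_\infty=m(\ba)$, $\|p-\F\|_\infty=m(a)$ and $\|\bp-p\|_2=\|\ba-a\|_2$. Under this identification, the first displayed inequality of the theorem, quantified over $p\in\V$, is verbatim condition~(1) of Proposition~\ref{prop:strong-uniqueness} for the convex function $m$, quantified over $a\in\R^n$.

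Next I would invoke Proposition~\ref{prop:strong-uniqueness}. Its hypotheses hold because $m$ is a real-valued convex function on $\R^n$: finiteness and convexity follow from the pointwise-supremum structure with $X$ compact and $x\mapsto m_x(a)$ continuous, exactly as recalled in Section~\ref{s:subdifferential-Chebyshev}. The equivalence $(1)\iff(2)$ then says that the strong optimality inequality holds if and only if $r\,\B^n\subseteq\partial m(\ba)$, and substituting~\eqref{eq:subdifferential-rcc}, $\partial m(\ba)=\conv\{s\,\phi(x):(x,s)\in\sig(\bp-\F)\}$, yields precisely the second displayed condition of the theorem, finishing the argument.

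The only point requiring care — and hence the (mild) main obstacle — is checking that the hypotheses for the subdifferential formula~\eqref{eq:subdifferential-rcc} are actually in place. This is where the assumption on $\F$ enters: total completeness gives $\|\phi^Ta-\F\|_\infty=\max\{\|\phi^Ta-f^-\|_\infty,\|\phi^Ta-f^+\|_\infty\}$ with $f^-,f^+$ continuous, which lets one apply the finite pointwise-maximum rule and reduce to the already established real-valued formula~\eqref{eq:subdifferential-Chebyshev-approximation-3}, as carried out in Section~\ref{sss:subdifferentitial-center}. Once that is granted, the proof is a two-line composition of Proposition~\ref{prop:strong-uniqueness} with~\eqref{eq:subdifferential-rcc}. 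As sanity checks, the case $r=0$ recovers Cheney's zero-in-the-convex-hull condition and the case $\F=\{f\}$ recovers the classical real-valued statement.
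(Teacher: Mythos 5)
Your proposal is correct and matches the paper's own (implicit) argument exactly: the theorem is stated as a direct consequence of the equivalence $(1)\iff(2)$ in Proposition~\ref{prop:strong-uniqueness} applied to $m(a)=\|\phi^Ta-\F\|_\infty$, with the subdifferential substituted from~\eqref{eq:subdifferential-rcc}, whose validity under total completeness was established in Section~\ref{sss:subdifferentitial-center}. Your added care about the basis identification (so that $\|\bp-p\|_2$ is the Euclidean norm of the coefficient difference) and the sanity checks at $r=0$ and $\F=\{f\}$ are consistent with the paper's discussion following the theorem.
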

For $r=0$ this is exactly Cheney's condition replacing the sign of the error $s$ by the error itself $\bp(x)-f(x)$, leading to homothetic sets $\conv\bigl\{s\,\phi(x): (x,s)\in\sig(\bp-f)\bigr\}$ and $\conv\bigl\{(\bp(x)-f(x))\,\phi(x):x\in\ext(\bp-f)\bigr\}$ and to equivalent zero in the convex hull conditions. For $r>0$, this is a quantitive improvement on Bartelt's quantitative condition given in Theorem~\ref{thm:Bartelt-convexhull}, here extended to relative Chebyshev centers.

\subsubsection{Kernel condition}\label{sss:subdifferential-kernel}

The following theorem instantiates Corollary~\ref{cor:sharp-optimality} with the subdifferential of the uniform norm~\eqref{eq:subdifferential-rcc}.
\begin{theorem}\label{thm:convex-Chebyshev-approximation}
The generalized polynomial $\bp$ is a best uniform approximation of the totally continuous set-valued function $\F:X\rightarrow2^\R$ if and only if there exists a finite signature $\{(x_1,s_1),\ldots,(x_k,s_k)\}\subseteq\Sigma(\bp-\F)$ such that
\begin{equation}\label{eq:subgradient-condition}
\exists u(\neq0)\in\R_{\geq0}^k, \ S\,u=0,
\end{equation}
where $S=\begin{pmatrix}s_1\phi(x_1)&s_2\phi(x_2)&\cdots&s_k\phi(x_k)\end{pmatrix}\in\R^{n\times k}$ is a matrix whose columns are subgradients. The integer $k$ can be chosen less or equal to $n+1$. Furthermore, $\bp$ is strongly unique if and only if $\{(x_1,s_1),\ldots,(x_k,s_k)\}$ and $u$ can be chosen so that $S$ is full rank and $u\in\R_{>0}^k$. In this case, the integer $k$ is greater than $n$ and can be chosen less or equal to $2n$.
\end{theorem}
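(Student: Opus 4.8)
The plan is to obtain Theorem~\ref{thm:convex-Chebyshev-approximation} by instantiating the abstract kernel characterizations of Corollary~\ref{cor:optimality} (for $r=0$) and Corollary~\ref{cor:sharp-optimality} (for $r>0$) at the convex function $m(a)=\|\phi^Ta-\F\|_\infty$, using the explicit subdifferential~\eqref{eq:subdifferential-rcc} derived in Section~\ref{ss:Chebyshev-convex}. Write $\bp=\phi^T\ba$. Since $\F$ is totally complete, $m$ is a finite-valued convex function on $\R^n$ (the pointwise maximum of the two convex functions $\|\phi^Ta-f^-\|_\infty$ and $\|\phi^Ta-f^+\|_\infty$), so Propositions~\ref{prop:optimality} and~\ref{prop:strong-uniqueness} apply to it.

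First I would handle plain optimality. By Proposition~\ref{prop:optimality}, $\bp$ is a best uniform approximation of $\F$ if and only if $0\in\partial m(\ba)$. By~\eqref{eq:subdifferential-rcc}, $\partial m(\ba)=\conv E$ with $E=\{s\,\phi(x):(x,s)\in\sig(\bp-\F)\}$, so $E$ is a generating set of subgradients. Applying Corollary~\ref{cor:optimality} with this $E$ turns $0\in\partial m(\ba)$ into: there exist $g_1,\ldots,g_k\in E$ (with $k\le n+1$ by Carathéodory, already built into the corollary) and $u\neq0$ in $\R_{\ge0}^k$ with $Gu=0$, where $G=(g_1\ \cdots\ g_k)$. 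Each $g_i$ is by construction $s_i\,\phi(x_i)$ for some $(x_i,s_i)\in\sig(\bp-\F)$, so $G$ is precisely the matrix $S$ of the statement and one recovers exactly~\eqref{eq:subgradient-condition}.

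Next I would treat strong uniqueness. The one point that is not purely mechanical is that the definition~\eqref{eq:strong-definition}, extended to $\F$, uses the uniform norm $\|\bp-p\|_\infty$ of the generalized polynomial, whereas Proposition~\ref{prop:strong-uniqueness} and Corollary~\ref{cor:sharp-optimality} are phrased with the Euclidean norm $\|p\|_2$ of its coefficients. Both are norms on the finite-dimensional space $\R^n$ — $\|\phi^T(\cdot)\|_\infty$ is a norm by the standing dimension hypothesis on $\V$ — hence equivalent, so the existence of \emph{some} $r>0$ in one inequality is equivalent to the existence of \emph{some} (possibly different) $r>0$ in the other. Thus $\bp$ is strongly unique if and only if $\ba$ is a sharp minimizer of $m$; then Corollary~\ref{cor:sharp-optimality} applied to the same generating set $E$ gives that this holds if and only if $g_1,\ldots,g_k\in E$ and $u$ can be chosen with $G=S$ full rank and $u\in\R_{>0}^k$, with $n<k\le 2n$ by Steinitz's theorem (again incorporated in the corollary).

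So the argument is essentially a substitution, and the only genuine subtlety is the norm-equivalence step bridging the uniform norm of~\eqref{eq:strong-definition} with the Euclidean norm used in the convex-analysis toolbox, at the harmless cost of not tracking the precise constant $r$ — which is anyway lost once one passes to the finite strict-convex-hull form, as already noted after~\eqref{eq:finite-convex-hull-strict-uniqueness}. I do not anticipate any further obstacle.
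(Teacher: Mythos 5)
Your proposal is correct and follows essentially the same route as the paper, which obtains Theorem~\ref{thm:convex-Chebyshev-approximation} precisely by instantiating Corollary~\ref{cor:optimality} and Corollary~\ref{cor:sharp-optimality} with the generating set of subgradients $\{s\,\phi(x):(x,s)\in\sig(\bp-\F)\}$ furnished by~\eqref{eq:subdifferential-rcc}. Your explicit norm-equivalence step (bridging $\|\bp-p\|_\infty$ in~\eqref{eq:strong-definition} with the coefficient $2$-norm used in Proposition~\ref{prop:strong-uniqueness}, legitimate because $\|\phi(\cdot)^Ta\|_\infty$ is a norm on $\R^n$ under the standing dimension assumption and the kernel condition is qualitative in $r$) merely makes explicit a point the paper leaves implicit.
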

Note that the subgradient matrix is the same as the matrix in Smarzewski's condition, both being denoted by $S$. Hence Theorem~\ref{thm:convex-Chebyshev-approximation} is exactly Smarzewski's condition extended to relative Chebyshev centers. The following example shows how subgradients provide an insightful interpretation of Smarzewski's condition, in addition to generalizing the condition to relative Chebyshev centers.

\begin{example}[Continuing Example~\ref{ex:example-1}]
	Considering again the uniform approximation of $f(x)=x^Tx$ inside $\B^m$ by affine functions, in Example~\ref{ex:example-1-Smarzewski} the signature $\{(0,1),(e_1,-1),(-e_1,-1)\}$ leaded to the matrix
	\begin{equation}
	S=\begin{pmatrix}
		1&-1& -1
		\\0&-e_1&e_1
	\end{pmatrix}\in\R^{(m+1)\times 3},
	\end{equation}
	which proved optimality but not strong optimality. Theorem~\ref{thm:convex-Chebyshev-approximation} leads to the same conclusion, but the interpretation of the matrix columns as subgradients puts some light on the situation: the three subgradients give rise to a piecewise affine underestimator
	\begin{equation}
		l(a)=\max\{\tfrac{1}{2}+S_{:1}^T(a-\ba),\tfrac{1}{2}+S_{:2}^T(a-\ba),\tfrac{1}{2}+S_{:3}^T(a-\ba)\},
	\end{equation}
	where $S_{:i}$ is the $i^\mathrm{th}$ column of $S$. Now since $S$ has a non negative kernel vector, Gordan's alternative theorem shows that the directional derivative of $l$ is non-negative in all directions\footnote{Here we can see that directly: by way of contradiction, assume $S_{:i}^Tu<0$ for the three columns of $S$. The scalar product with the first column entails $u_1<0$. The scalar product with the second and third columns that $-u_1\pm\sum_{2\leq i\leq m}u_i<0$, which cannot be both true.}, hence $\ba$ is optimal. However, kernel vectors of $S^T$ have a zero directional derivative, so this affine underestimator does not allow proving strong uniqueness.
	
	The second signature of Example~\ref{ex:example-1-Smarzewski} is~\eqref{eq:ex:example-1-Smarzewski-signature} leading to $m+2$ subgradients that are the columns of the matrix $S$ in~\eqref{eq:ex:example-1-Smarzewski-S}. The corresponding affine underestimator has a positive directional derivative in all directions by Stiemke’s alternative, hence proving strong minimality.
	\end{example}

\begin{remark}\label{rem:annihilating-subdifferential}
Simple computations show that annihilating measure conditions (Rivlin and Shapiro's condition, Tanimoto's condition and Levis et al.'s condition) are closely related to the subgradient condition: the annihilating measure condition is that the generalized polynomial $\bp(x)=\phi(x)^Ta$ has a signature $\{(x_1,s_1),\ldots,(x_m,s_m)\}\subseteq \sig(\bp-\F)$ for which there exist $c_1,\ldots,c_m>0$ such that
\begin{equation}
\forall a\in\R^n \ , \ \sum_{i=1}^m \ c_i \, s_i \, \bigl(\phi(x_i)^Ta\bigr) = 0.
\end{equation}
Now, by associativity and distributivity one has
\begin{equation}
	\sum_{i=1}^m \ c_i \, s_i \, \bigl(\phi(x_i)^Ta\bigr)= \Bigl(\ \sum_{i=1}^m \ c_i \, s_i \, \phi(x_i)\ \Bigr)^Ta = \bigl( \ S\,c \ \bigr)^Ta,
\end{equation}
which is null for all $a\in\R^n$ if and only if $S\,c=0$. Hence, the measure weights $c_i$ in the annihilating measure condition are exactly the components of the kernel vector in the subgradient kernel condition. This is related to the dual nature of the annihilating measure condition, the components of the kernel vector of the subgradient matrix being exactly the dual variables of the linear problem consisting of minimizing the linear underestimator.
\end{remark}

The formalism of set-valued error functions and their signatures provides an homogeneous presentation of the optimality condition for uniform approximation of real-valued functions and relative Chebyshev centers. It also put lights on the fact that the signature of a set-valued error function may contain twice the same extremal point with opposite signs. As explained in Section~\ref{sss:Tanimoto}, this is a sufficient condition for optimality. This situation seemed rather delicate in the context of annihilating measures of Tanimoto's condition, but is clearly explained by subgradients: if $(x,-1)$ and $(x,1)$ belong to the signature $\sig(\phi^T\ba-f)$ then $\phi(x)$ and $-\phi(x)$ are two subgradients and the piecewise affine underestimator $\max\{m(\ba)-\phi(x)(a-\ba),m(\ba)+\phi(x)(a-\ba)\}$ alone entails optimality of $\ba$.

\section{Numerical applications}\label{s:numerical-applications}

Three cases of uniform approximation problems are solved numerically by the classical two-step method: first the classical multivariate Runge function is approximated for different dimensions and degrees in Subsection~\ref{ss:Runge}. Second, the two dimensional inverse geometric model of the DexTAR parallel robot is approximated uniformly in a singularity-free workspace by polynomials of different degrees in Subsection~\ref{ss:DexTAR}. Finally, the problem consisting in minimizing the approximation error together with the polynomial evaluation error, which was recently investigated~\cite{Arzelier2025}, is solved relying on an optimality condition derived from a simple subdifferential computation.

The classical two-step approach~\cite{Hettich1976} is used the three cases. It consists first in solving a finite discretization of the uniform problem and second in applying a local Newton method to a local version of the optimality conditions. This simple approach is sensitive to the initial discretization: it has to be thin enough so that all extremal points are correctly identified and Newton method converges to the actual optimal uniform approximation. A guaranteed a posteriori numerical optimality verification is non-trivial in general: extremal points are identified using local optimality conditions in the two-step approach, confirming their global optimality requires using global techniques\footnote{Nonlinear branch-and-bounds algorithms~\cite{Sahinidis1996,ibex} can provide a distance to optimality, but cannot actually guarantee optimality. This latter can be proved using exclusion regions~\cite{Schichl2004} and branch-and-prune algorithms~\cite{ibex}.}, which are not in the scope of this survey. In the sequel, global optimality of extremal points where always successfully verified a posteriori by hand on a case by case basis.

%

\subsection{Uniform approximation of the $n$ dimensional Runge function}\label{ss:Runge}

\begin{table}[tbhp]
\footnotesize
\captionsetup{position=top} 
\caption{Data for the approximation of the Runge function: $m$ is the dimension of $X$; $\mathrm{deg}$ is the degree of the polynomial and $n$ the corresponding number of monomials, i.e., the dimension of the subspace; $\#\mathrm{act}$ is the number of active constraints at the solution of the discretized linear problem, $\#\mathrm{ext}$ is the number of nearby local error maximizers; $\#\mathrm{zero}$ is the number of zeros in the linear combination of subgradients for the optimal polynomial. The last three columns correspond to values of errors, rounded to six decimals: the maximal error of the discretized linear problem, the error at extremal points at the limit polynomial of Newton iteration, and the global error of the latter polynomial.}\label{tab:Runge}
\begin{center}
\subfloat[$m=2$, number of samples $=36^2=1296$]
{
\begin{tabular}{|c|c|c|c|c|c|c|c|} \hline
\hspace{0.1cm}deg\hspace{0.1cm} & \hspace{0.2cm}$n$\hspace{0.2cm} & \#act & \#ext & \#zero & discrete error & Newton error & global error
\\\hline
$1$ & $3$ & $4$ & $4$ & $0$ & $0.308467$ & $0.310345$ & $0.310345$
\\\hline
$2$ & $6$ & $7$ & $7$ & $0$ & $0.165171$ & $0.165451$ & $0.165451$
\\\hline
$3$ & $10$ & $11$ & $9$ & $0$ & $0.091215$ & $0.091658$ & $0.091658$
\\\hline
$4$ & $15$ & $15$ & $14$ & $0$ & $0.062767$ & $0.062844$ & $0.062844$
\\\hline
$5$ & $21$ & $21$ & $16$ & $0$ & $0.039094$ & $0.039866$ & $0.039866$
\\\hline
\end{tabular}
}
\\\subfloat[$m=3$, number of samples $=10^3=1000$]
{
\begin{tabular}{|c|c|c|c|c|c|c|c|} \hline
\hspace{0.1cm}deg\hspace{0.1cm} & \hspace{0.2cm}$n$\hspace{0.2cm} & \#act & \#ext & \#zero & discrete error & Newton error & global error
\\\hline
$1$ & $4$ & $5$ & $5$ & $0$ & $0.351315$ & $0.352793$ & $0.352793$
\\\hline
$2$ & $10$ & $11$ & $11$ & $2$ & $0.208558$ & $0.221605$ & $0.221605$
\\\hline
\end{tabular}
}
\\\subfloat[$m=4$, number of samples $=6^4=1296$]
{
\begin{tabular}{|c|c|c|c|c|c|c|c|} \hline
\hspace{0.1cm}deg\hspace{0.1cm} & \hspace{0.2cm}$n$\hspace{0.2cm} & \#act & \#ext & \#zero & discrete error & Newton error & global error
\\\hline
$1$ & $5$ & $6$ & $6$ & $0$ & $0.375742$ & $0.377351$ & $0.377351$
\\\hline
$2$ & $15$ & $16$ & $16$ & $1$ & $0.247837$ & $0.258191$ & $0.258191$
\\\hline
\end{tabular}
}
\\\subfloat[$m=5$, number of samples $=4^5=1024$]
{
\begin{tabular}{|c|c|c|c|c|c|c|c|} \hline
\hspace{0.1cm}deg\hspace{0.1cm} & \hspace{0.2cm}$n$\hspace{0.2cm} & \#act & \#ext & \#zero & discrete error & Newton error & global error
\\\hline
$1$ & $6$ & $7$ & $7$ & $0$ & $0.392578$ & $0.393671$ & $0.393671$
\\\hline
\end{tabular}
}
\\\subfloat[$m=6$, number of samples $=3^6=729$]
{
\begin{tabular}{|c|c|c|c|c|c|c|c|} \hline
\hspace{0.1cm}deg\hspace{0.1cm} & \hspace{0.2cm}$n$\hspace{0.2cm} & \#act & \#ext & \#zero & discrete error & Newton error & global error
\\\hline
$1$ & $7$ & $8$ & $8$ & $0$ & $0.397987$ & $0.405442$ & $0.405442$
\\\hline
\end{tabular}
}
\\\subfloat[$m=7$, number of samples $=2^7=128$]
{
\begin{tabular}{|c|c|c|c|c|c|c|c|} \hline
\hspace{0.1cm}deg\hspace{0.1cm} & \hspace{0.2cm}$n$\hspace{0.2cm} & \#act & \#ext & \#zero & discrete error & Newton error & global error
\\\hline
$1$ & $8$ & $9$ & $9$ & $0$ & $0.40974$ & $0.414405$ & $0.414405$
\\\hline
\end{tabular}
}
\\\subfloat[$m=8$, number of samples $=2^{8}=256$]
{
\begin{tabular}{|c|c|c|c|c|c|c|c|} \hline
\hspace{0.1cm}deg\hspace{0.1cm} & \hspace{0.2cm}$n$\hspace{0.2cm} & \#act & \#ext & \#zero & discrete error & Newton error & global error
\\\hline
$1$ & $9$ & $10$ & $10$ & $0$ & $0.418580$ & $0.421501$ & $0.421501$
\\\hline
\end{tabular}
}
\\\subfloat[$m=9$, number of samples $=2^{9}=512$]
{
\begin{tabular}{|c|c|c|c|c|c|c|c|} \hline
\hspace{0.1cm}deg\hspace{0.1cm} & \hspace{0.2cm}$n$\hspace{0.2cm} & \#act & \#ext & \#zero & discrete error & Newton error & global error
\\\hline
$1$ & $10$ & $11$ & $11$ & $0$ & $0.42545$ & $0.427283$ & $0.427283$
\\\hline
\end{tabular}
}
\\\subfloat[$m=10$, number of samples $=2^{10}=1024$]
{
\begin{tabular}{|c|c|c|c|c|c|c|c|} \hline
\hspace{0.1cm}deg\hspace{0.1cm} & \hspace{0.2cm}$n$\hspace{0.2cm} & \#act & \#ext & \#zero & discrete error & Newton error & global error
\\\hline
$1$ & $11$ & $12$ & $12$ & $0$ & $0.430968$ & $0.432102$ & $0.432102$
\\\hline
\end{tabular}
}
\end{center}
\end{table}

\begin{figure}
\centering
\includegraphics[width=0.29\linewidth]{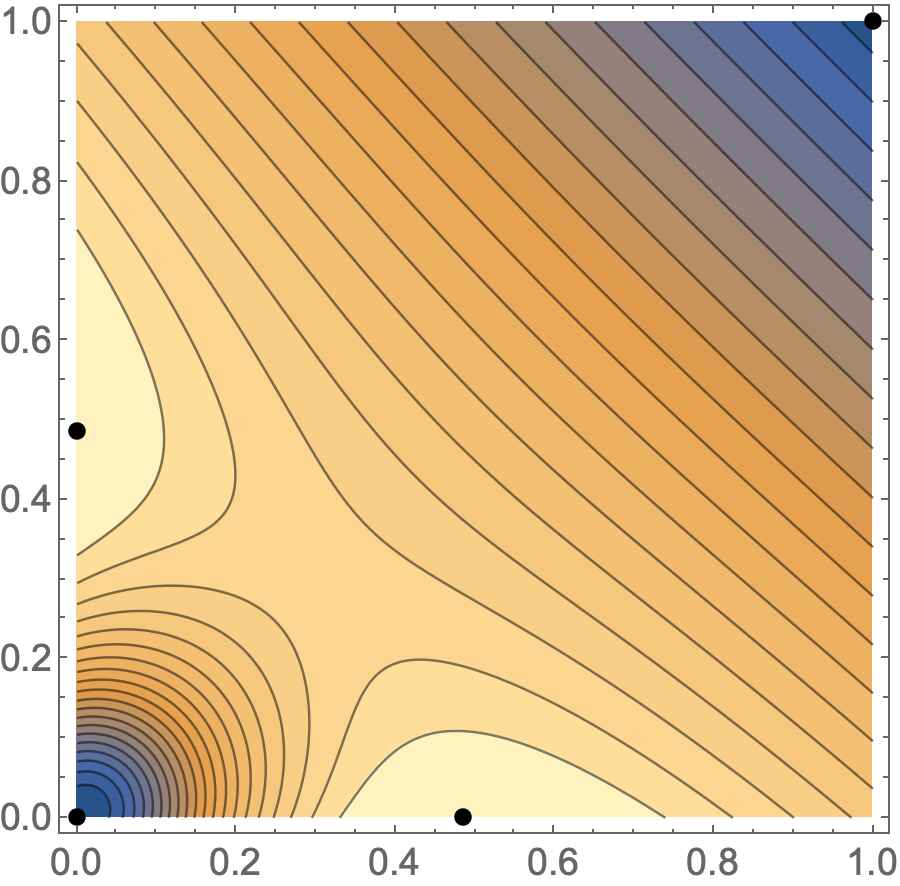}\hfill\includegraphics[width=0.29\linewidth]{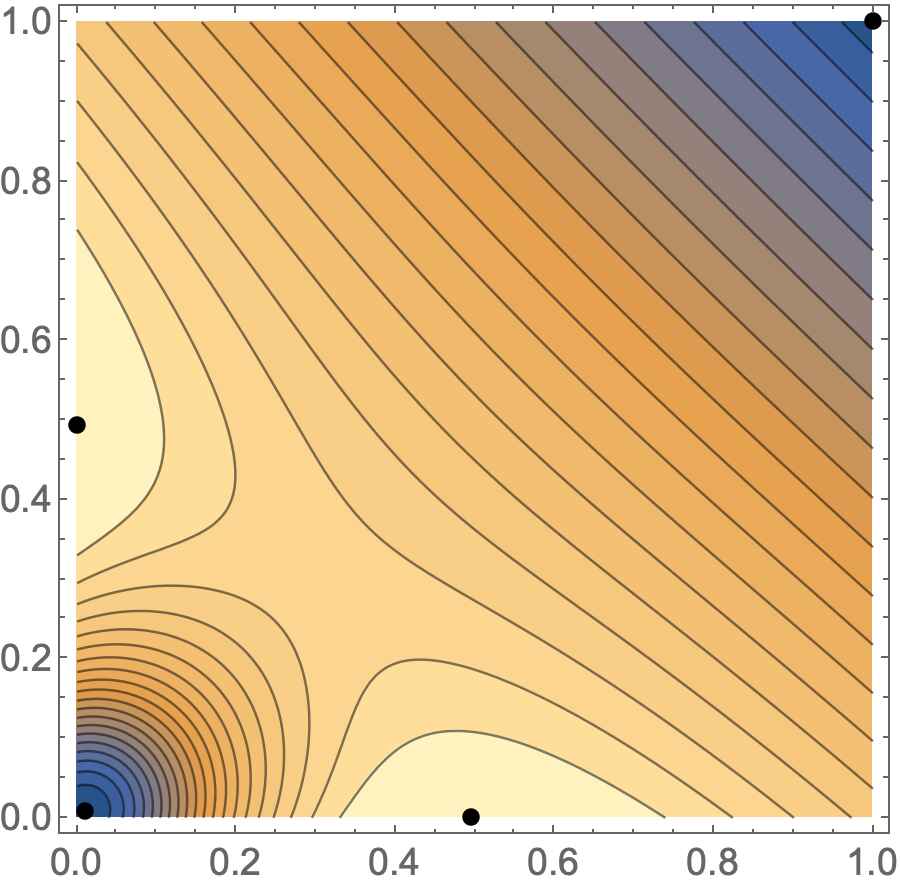}\hfill\includegraphics[width=0.4\linewidth]{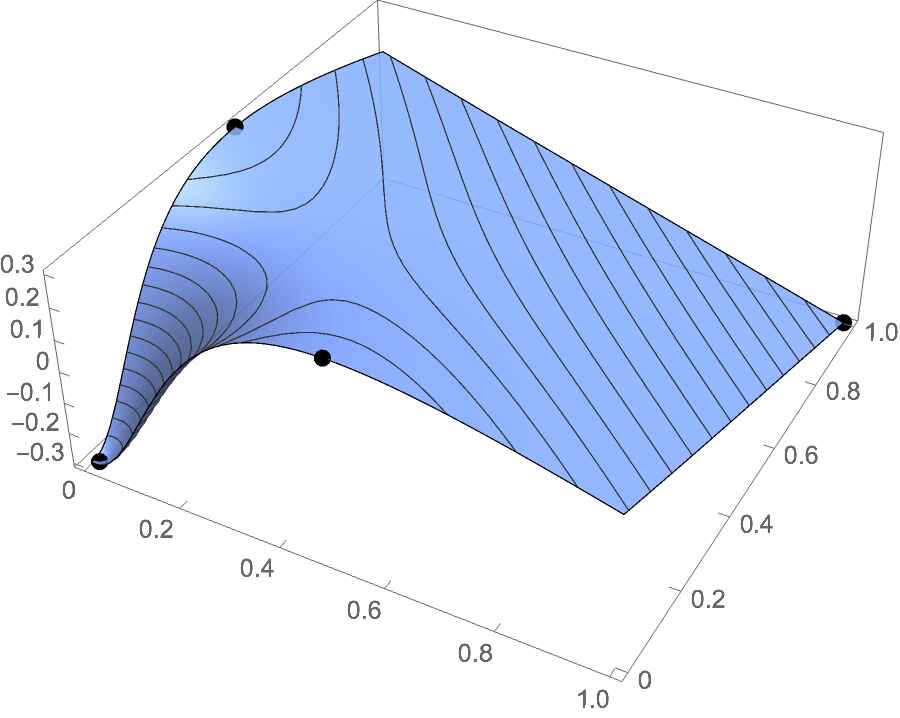}
\\\includegraphics[width=0.29\linewidth]{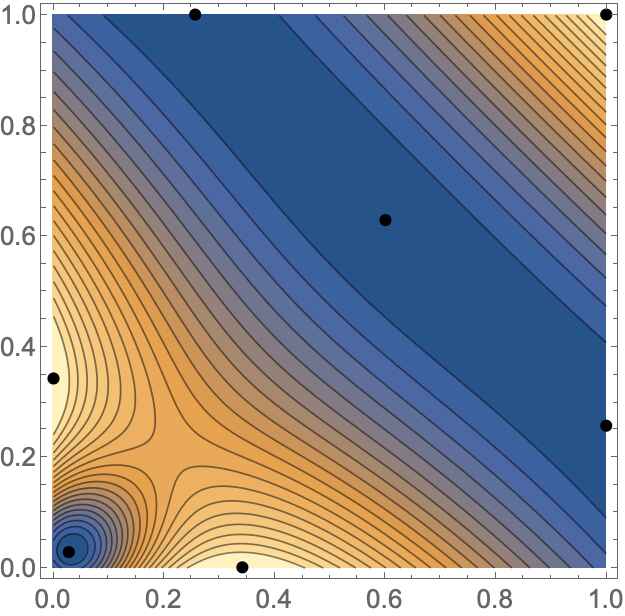}\hfill\includegraphics[width=0.29\linewidth]{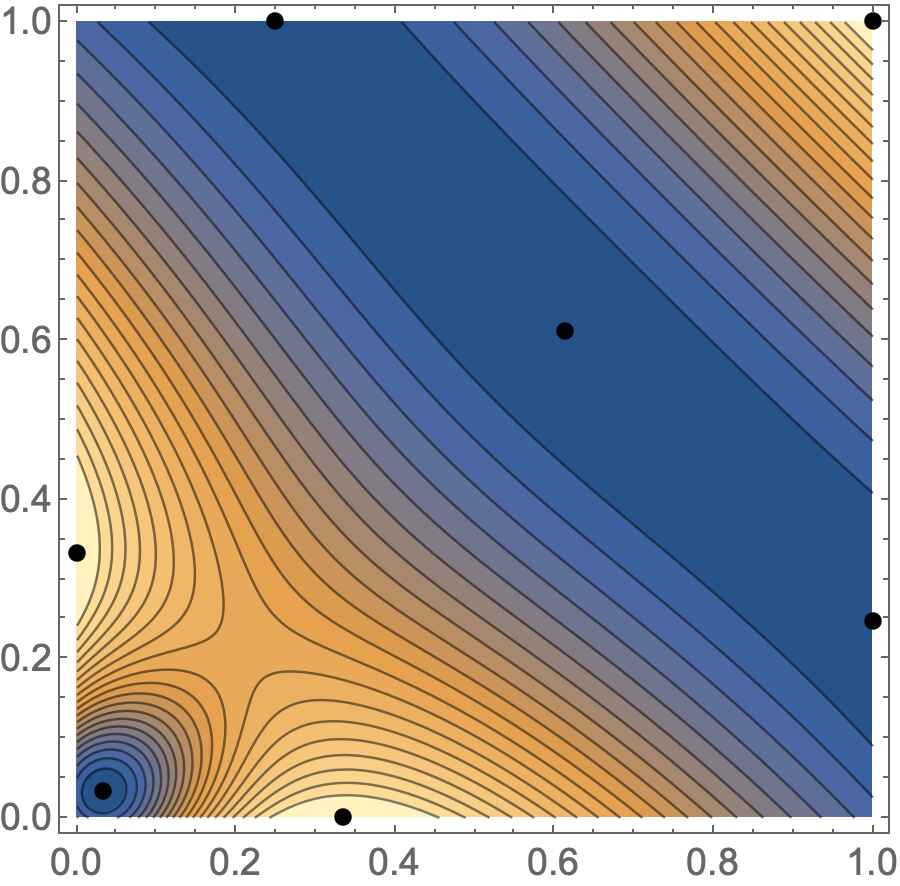}\hfill\includegraphics[width=0.4\linewidth]{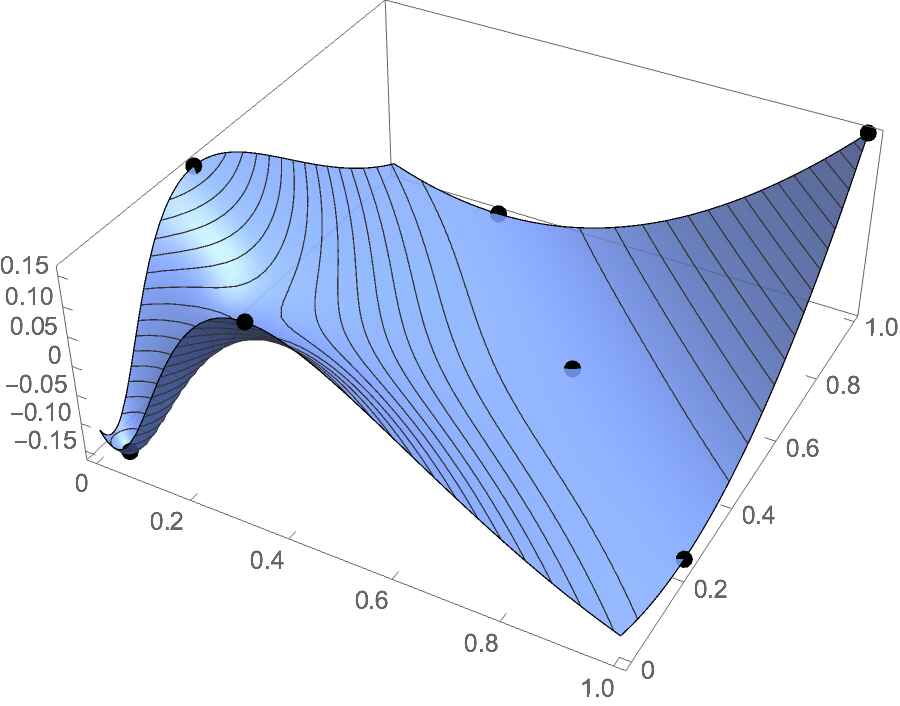}
\\\includegraphics[width=0.29\linewidth]{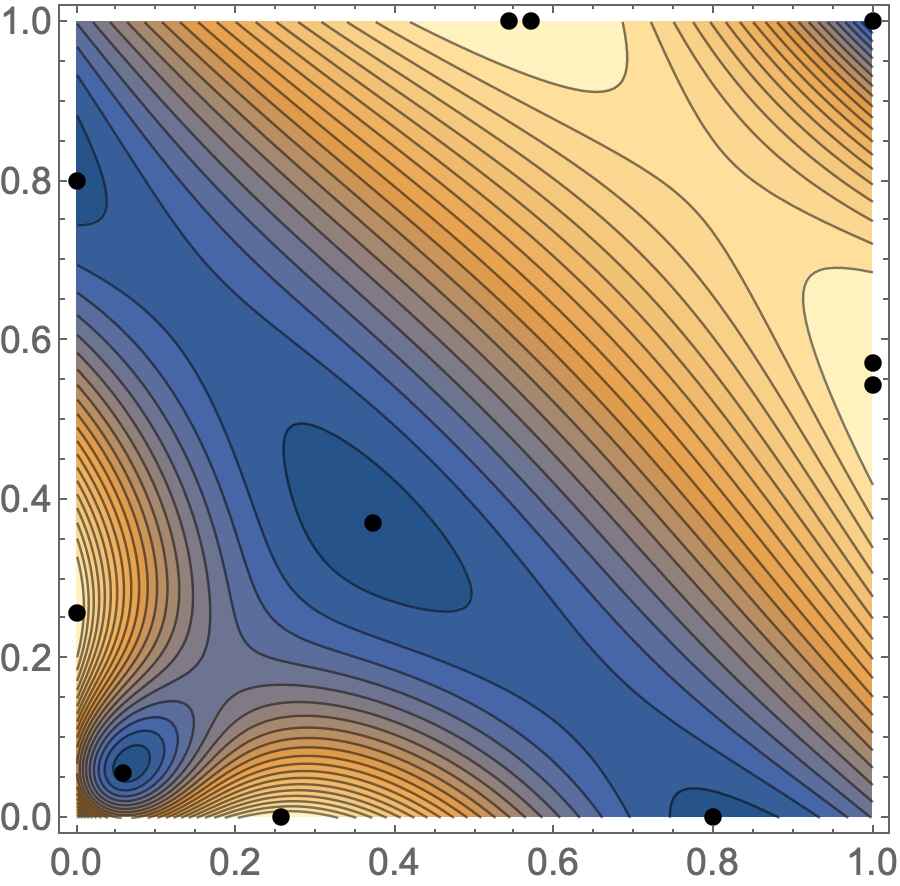}\hfill\includegraphics[width=0.29\linewidth]{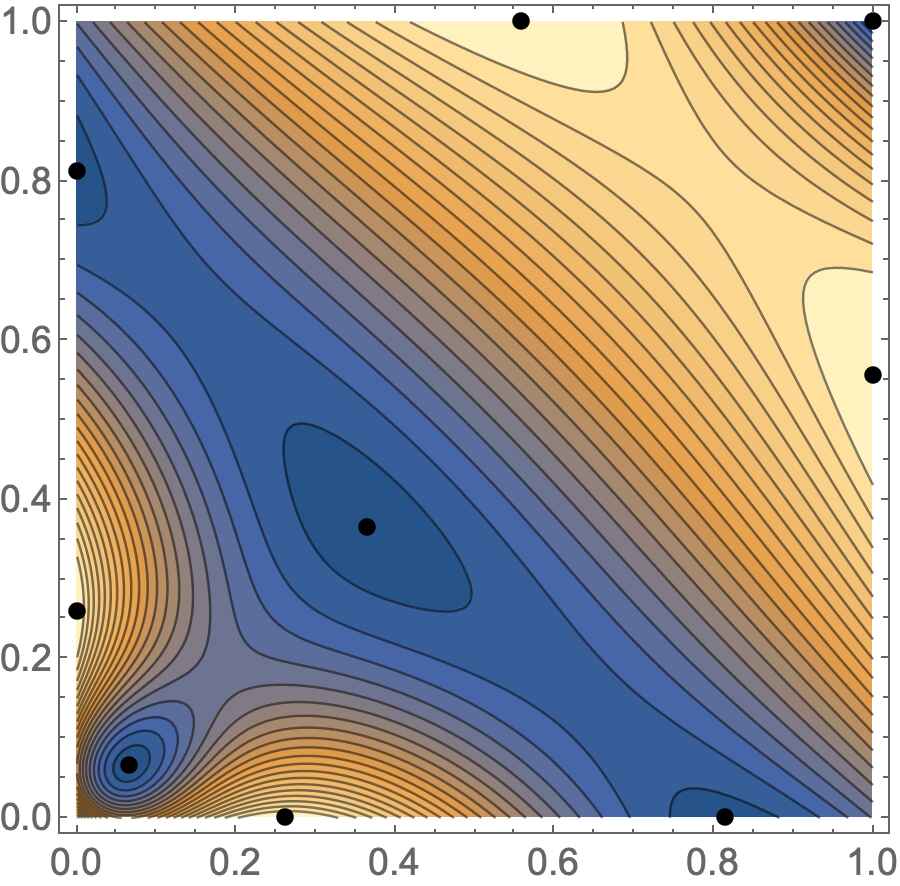}\hfill\includegraphics[width=0.4\linewidth]{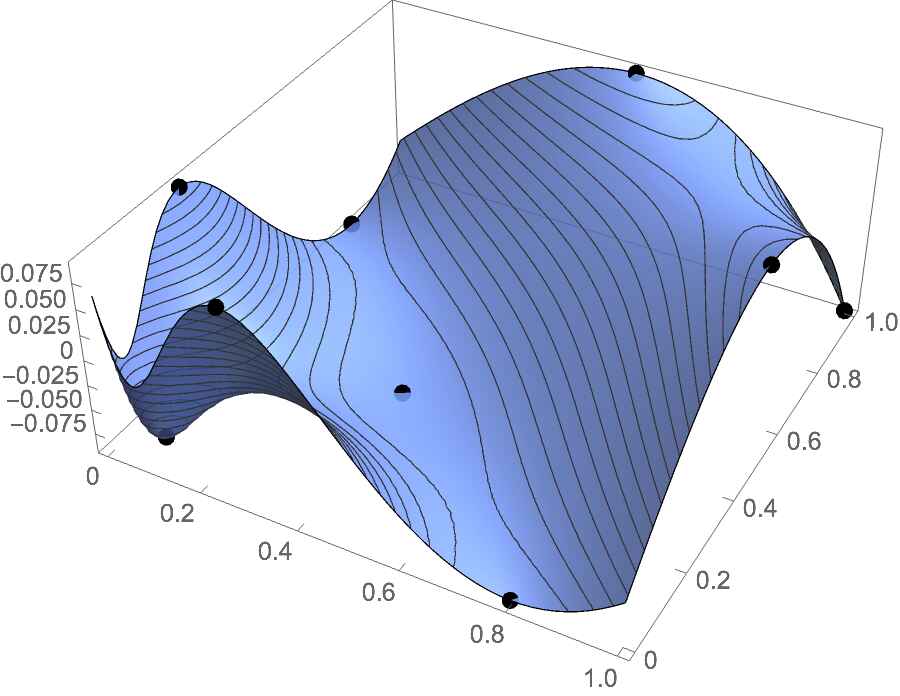}
\\\includegraphics[width=0.29\linewidth]{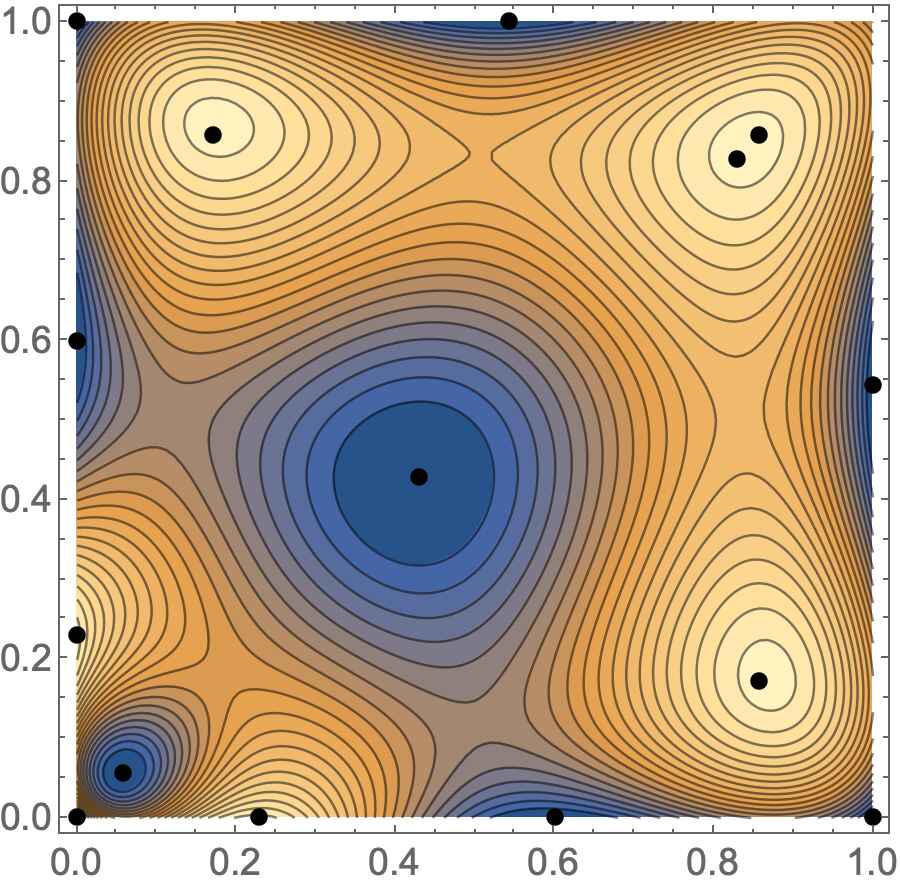}\hfill\includegraphics[width=0.29\linewidth]{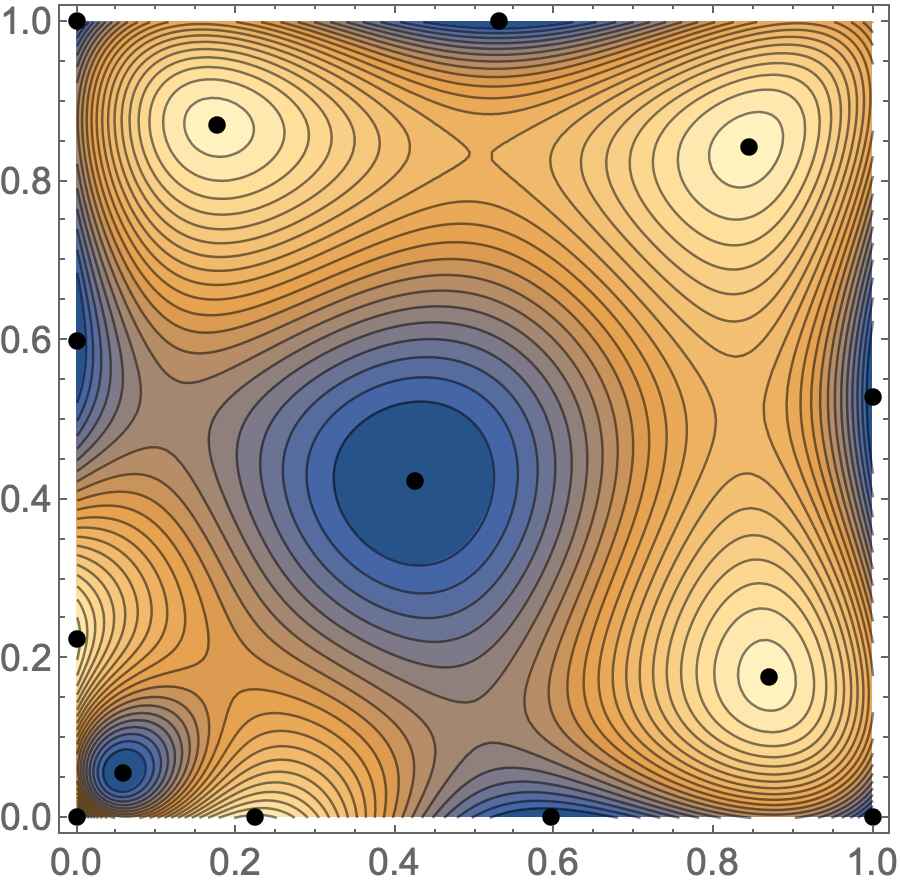}\hfill\includegraphics[width=0.4\linewidth]{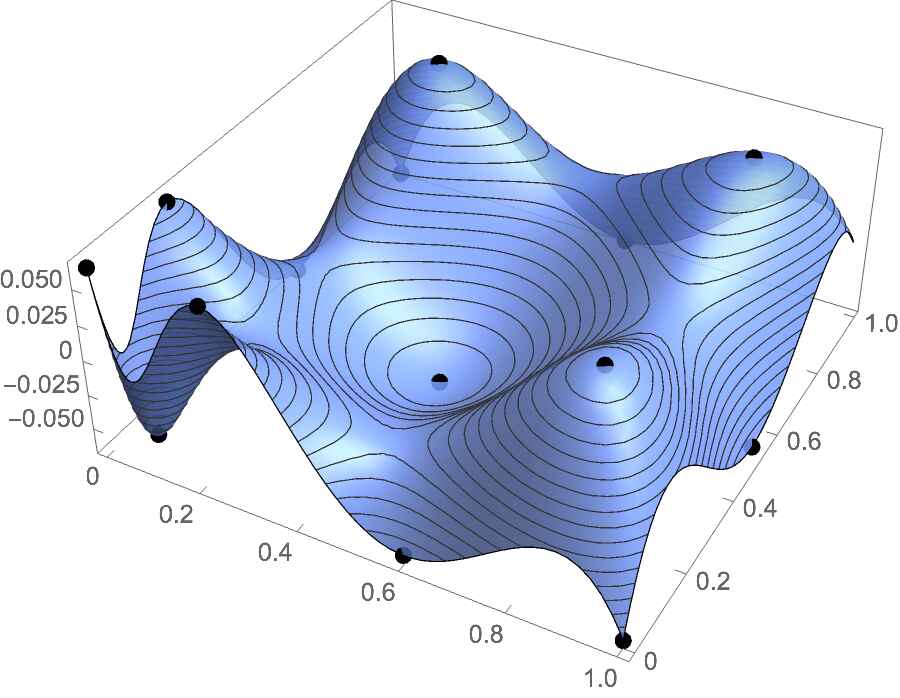}
\\\vspace{0.35cm}\includegraphics[width=0.29\linewidth]{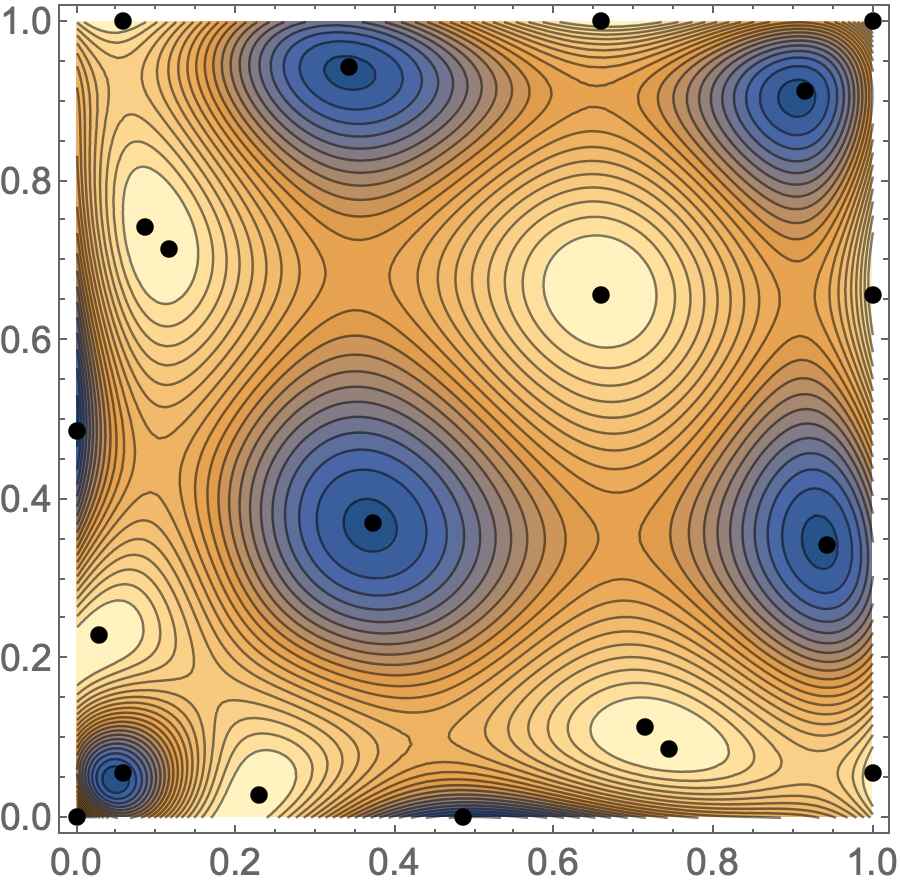}\hfill\includegraphics[width=0.29\linewidth]{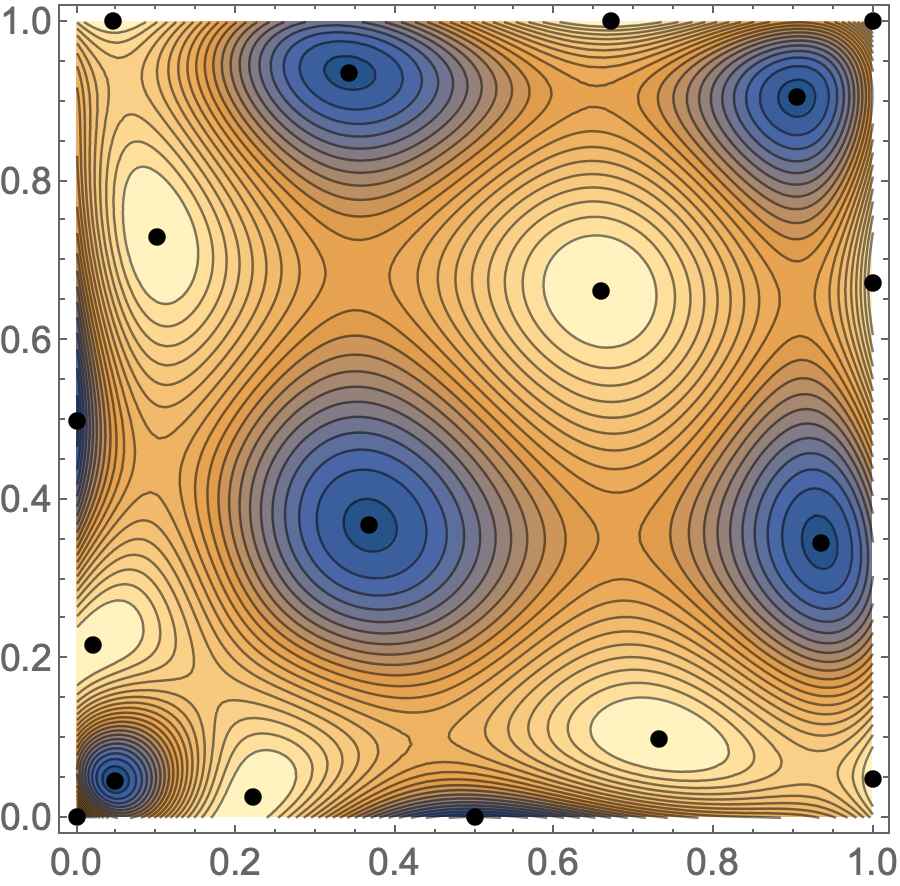}\hspace{0.8cm}\includegraphics[width=0.29\linewidth]{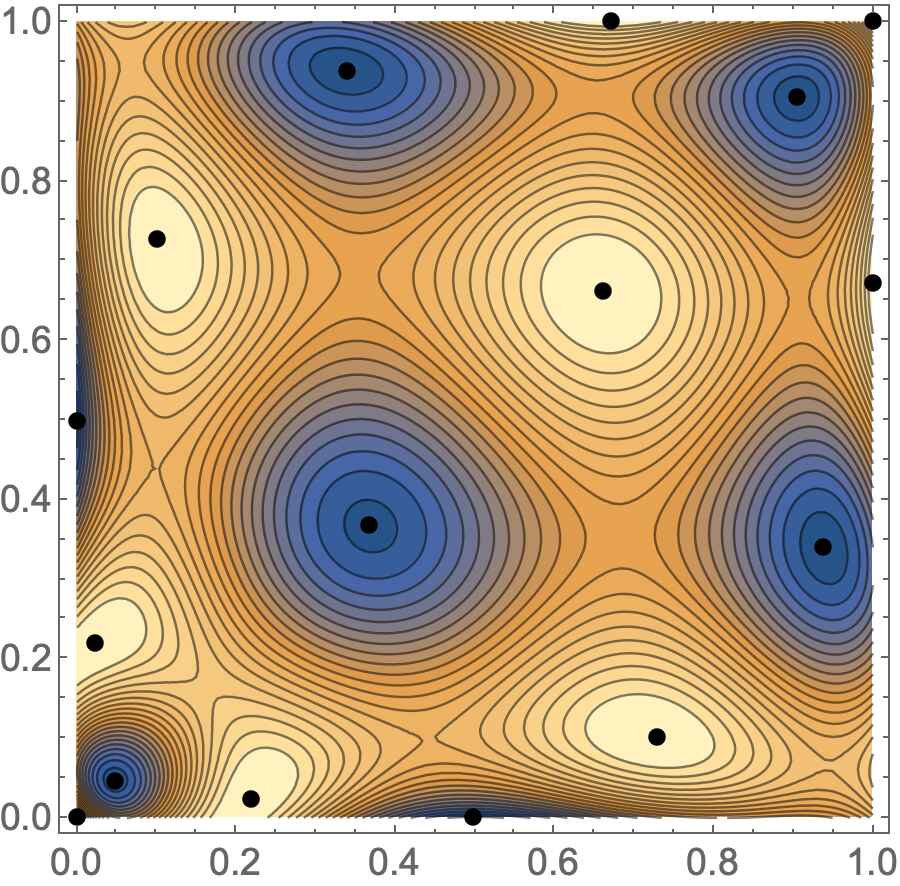}\hspace{0.65cm}
\caption{Approximation of the two dimensional Runge function: The $i^\mathrm{th}$ row corresponds to approximation by degree $i$ polynomials. The first two columns show the error level-set of the polynomial computed by discretization (with active points detected watching the dual solution for the first column), and nearby local error maximizers. The third column shows the error of the optimal polynomial and extremal points of the corresponding signature.\label{fig:Runge}}
\end{figure}

We consider the multivariate Runge function~\cite{hecht2024} defined by
\begin{equation}
	f(x)=\frac{1}{1+25\, x^Tx}
\end{equation}
inside $X=[0,1]^m\subseteq\R^m$. We approximate it using the canonical polynomial basis of degree $d$, i.e., $\phi(x)=(1,x_1,x_2,\ldots,x_1x_2\cdots x_d,\ldots,x_n^d)\in\R^n$, where the order of the $n$ monomials in the basis is fixed but has no impact.

The first step of the two-step approach consists in solving a discretized problem. To this end, we consider a discretized domain $\tX\subseteq X$. Here, for $m=2$ we chose a regular grid with $36$ samples in each dimension, for a total of $36^2=1296$ samples. For higher dimensions, we chose the thinnest regular grid so that the total number of samples does not exceed $1296$. The resulting number of samples for each dimension $m\in\{1,2,\ldots,10\}$ is detailed in Table~\ref{tab:Runge}. The discretized approximation problem is
\begin{equation}
	\min_{a\in\R^n} \ \max_{x\in \tX} \ \bigl| \ \phi(x)^Ta-f(x) \ \bigr|.
\end{equation}
It is rewritten to the following linear problem (LP)
\begin{equation}
	\min_{\begin{subarray}{c}t\in\R,a\in\R^n\\\forall x\in\tX,\phantom{-}\phi(x)^Ta-f(x)\leq t\\\forall x\in\tX,-\phi(x)^Ta+f(x)\leq t\end{subarray}} t,
\end{equation}
which contains two linear inequality constraint for each $x\in\tX$. The solution $\aLP$ of this LP is meant to be close to the best polynomial approximation. This LP can be solved by a simple call to a linear solver. Although attractive by its simplicity, this first step faces two difficulties related to the conditioning of the LP to be solved: high degree polynomials expressed in the canonical basis lead to badly conditioned LPs (similarly to Vandermonde matrices that are badly conditioned), and thin discretization also lead to badly conditioned LPs when the optimal polynomial is not strongly unique (this situation corresponding to the so-called singular approximation problems~\cite{Osborne1969,Reemtsen1990,Watson2000}). This may lead to inaccurate numerical solutions computed by linear solvers, but we don't handle this inaccuracy explicitly here and use the approximate solution. The usage of exact LP solvers may improve this situation.

Once the polynomial $\pLP(x)=\phi(x)^T\aLP$ is computed as the solution of the LP solved in the first step, we need to identify the approximate extremal points of the error $\pLP(x)-f(x)$. These approximate extremal points will be corrected by the Newton iteration in the second phase, and therefore must be correctly identified. There is no obvious way of doing so: while extremal points of the optimal polynomial all have the same error, approximate extremal points of $\pLP$ should correspond to exact extremal points of the optimal polynomial, but their errors are approximately equal. Computing all extremal points and comparing their errors leads to complicated tuning of thresholds. Instead, we identify active constraints of the discretized LP\footnote{Instead of watching constraint activity directly, i.e., $\pm\phi(x)^T\tilde a\mp f(x)=\tilde t$ for active indices $x\in\tX$, which requires again tuning some thresholds that depend on the accuracy of the LP solver, we watch non-zero dual variables, which correspond to active constraints.}, which correspond to values of $x\in \tX$ that are extremal points for the discretization. These active indices are shows on the first column of Figure~\ref{fig:Runge}. The number of active constraints for an optimal solution of a LP is generically equal to the number of variables, here $n+1$ because of the auxiliary variable $t$ used in the LP formulation. The number of active constraints in the LP is given in Table~\ref{tab:Runge} (column $\#\mathrm{act}$), where we can see that indeed it is equal to $n+1$ in all cases but degree $4$ and degree $5$ approximations of the 2D Runge function, with no clear explanation of the reason why the number of active constraint does not correspond to the generic situation in these two cases. When the optimal polynomial is not strongly unique, the number of extremal points is less than $n+1$. As a consequence, in this situation we expect to have more active indices than extremal points. This can be clearly observed in the first graphic of the third, fourth and fifth rows of Figure~\ref{fig:Runge}, where two active indices closely surround some local maximizers.
Approximate extremal points are computed by performing a local maximization of the error starting from all identified active indices, and deleting duplicates. The approximate extremal points are denoted by $\xLP_1,\xLP_2,\ldots,\xLP_k\in\R^m$ are shown in the second column of Figure~\ref{fig:Runge}, while the number of approximate extremal points is given in Table~\ref{tab:Runge} (column $\#\mathrm{ext}$).

The variables used for the Newton iteration are the coefficients of the polynomial $a\in\R^n$, the coordinates of the extremal points $x_1,x_2,\ldots,x_k\in\R^m$, where $k$ is the number of approximate extremal points identified in the first-step, and the entries $\lambda\in\R^k$ of the kernel vector. The system of equations to be solved encodes a local version of the kernel optimality condition: the extremal points are required to be local maximizers of the error
\begin{equation}\label{eq:Newton-KKT}
\left\{\begin{array}{cl}
x_{ij}=0 & \text{if } \xLP_{ij}=0 
\\ x_{ij}=1 & \text{if } \xLP_{ij}=1 
\\ \tfrac{d}{dy_j}(\phi(y)^Ta-f(y))\Bigr|_{y=x_i}=0 & \text{otherwise}
\end{array}\right.,\ i\in\{1,\ldots,k\},\ j\in\{1,\ldots,m\},
\end{equation}
where we make the practically meaningful assumption that extremal points lie on the same boundary portion than the corresponding approximate extremal points\footnote{One could use full KKT conditions, but would need to guess a value of the KKT multipliers, which would result in the same system.}, with the same error
\begin{equation}\label{eq:Newton-extreme-point-errors}
\bigl|\phi(x_i)^Ta-f(x_i)\bigr|=\bigl|\phi(x_{i+1})^Ta-f(x_{i+1})\bigr| \ ,\ i\in\{1,\ldots,k-1\}.
\end{equation}
Finally, the kernel condition is
\begin{equation}\label{eq:Newton-convex}
	\sum_{i=1}^{k}\sign\bigl(\phi(x_i)^Ta-f(x_i)\bigr)\,\lambda_i\,\phi(x_i)=0,
\end{equation}
where the kernel vector $\lambda$ can be normalized by the linear constraint
\begin{equation}\label{eq:Newton-normalization}
\sum_{i=1}^{k}\lambda_i=1
\end{equation}
because all components of the solution must be non-negative. Note that the absolute value in~\eqref{eq:Newton-extreme-point-errors} and the sign function in~\eqref{eq:Newton-convex}, which are respectively non-differentiable and discontinuous, do not affect the Newton iteration because the sign of the error is expected to remain constant during the iteration. In fact, the approximation computed in the first step must be accurate enough so that the sign of the error is expected to remain constant during the Newton's iteration.

We have defined a system \eqref{eq:Newton-KKT}--\eqref{eq:Newton-extreme-point-errors}--\eqref{eq:Newton-convex}--\eqref{eq:Newton-normalization} of $k\,m+(k-1)+n+1=k\,m+k+n$ equations with $k\,m+k+n$ unknowns, which can be solved using Newton's iteration. To this end, we use the initial iterates $\aLP$ and $\xLP_i$ but need an initial iterate $\lLP$ as well. When $k=n+1$, the approximate subgradient matrix
\begin{equation}
	\SLP=\begin{pmatrix}\sLP_1\phi(\xLP_1)&\sLP_2\phi(\xLP_2)&\cdots&\sLP_k\phi(\xLP_k)\end{pmatrix}\in\R^{n\times k}
\end{equation}
is expected to have a kernel of dimension one containing a normalized vector having positive components, which can be used as initial kernel vector $\lLP$. When $k<n+1$, the optimal subgradient matrix has a kernel but the approximate subgradient matrix does not. Therefore we need to compute an approximate kernel vector, by dropping the least singular value of $\SLP$ to zero. This approach worked well for all presented examples. Other cases, where $k>n+1$ or the kernel has a dimension greater than one, have not been met in the examples presented here, and require more investigation.

We are now in position to run a Newton iteration with initial iterate $\aLP\in\R^n$, $\xLP_1,\ldots,\xLP_k\in\R^m$ and $\lLP\in\R^m$. In all test-cases, we have observed a quadratic convergence toward a solution $\aNEW\in\R^n$, $\xNEW_1,\ldots,\xNEW_k\in\R^m$ and $\lNEW\in\R^m$, which satisfies all constraints. Two checks must be performed a posteriori: first the error at the extremal points of the Newton iteration (reported in the column Newton error of Table~\ref{tab:Runge}) must agree with the global error of the computed polynomial $\bp(x)=\phi(x)^T\ba$ (whose value is computed using a branch-and-bound algorithm and is reported in the column global error of Table~\ref{tab:Runge}). In all test-cases, both errors agreed. Secondly, the components of the kernel vector $\lNEW$ must be checked to be non-negative: this is true in all test-cases excepted for the 2D Runge function approximated by a degree five polynomial. Therefore, excepted for this latter failed-case, the optimal polynomial approximation was successfully computed in all other test-cases. The global error of the optimal polynomial is shown in the third column of Figure~\ref{fig:Runge} together with its extremal points. For the failed-case, the actual optimal polynomial approximation has been computed using a Remez-like algorithm. Its error, depicted in the last row of Figure~\ref{fig:Runge}, shows that two approximate extremal points were wrongly selected after the first phase, illustrating the sensitivity of this simple approach.

We end this section watching the strong uniqueness of the computed polynomial approximations. Smarzewski’s condition (Theorem~\ref{thm:Smarzewski}) and the subgradient condition (Theorem~\ref{thm:convex-Chebyshev-approximation}) require that all components of $\lNEW$ are strictly positive and the matrix $\SNEW$ is full rank. The number of zero components in $\lNEW$ is given in Table~\ref{tab:Runge}. In all test-cases, the kernel of $\SNEW$ has dimension one (if it has a greater dimension, there would exist infinitely many solutions to the system and the Newton iteration could not converge quadratically) therefore the matrix $\SNEW$ is full rank if and only if there are $n+1$ extremal points, i.e., the matrix has one more column than rows. We conclude from Table~\ref{tab:Runge} that all affine optimal approximations are strongly unique, as well as the quadratic approximation of the 2D Runge function. All other computed approximations are seen to be not strongly unique.

\subsection{Uniform approximation of the DexTAR inverse geometric model}\label{ss:DexTAR}

The DexTAR parallel robot~\cite{Campos2010,Koessler2020} seen in the left picture of Figure~\ref{fig:DexTAR} is schematically represented in the upper left diagramme of Figure~\ref{fig:DexTAR-XP}: it is made of four rigid bars connected with revolute joints, two of them being actuated with joint coordinates $\theta_1$ and $\theta_2$. Its geometric model relates the coordinates of the end-effector $x=(x_1,x_2)^T$ and the joint coordinates of the activated revolute joints $\theta=(\theta_1,\theta_2)^T$ through a system of two equations $h(x,\theta)=0\in\R^2$
\begin{equation}\label{eq:geometric-model}
	h(x,\theta)=\begin{pmatrix}\bigl(-l+L\cos(\theta_1)-x_1\bigr)^2+\bigl(L\sin(\theta_1)-x_2\bigr)^2-L^2 \\ \bigl(\phantom{+}l+L\cos(\theta_{2})-x_1\bigr)^2+\bigl(L\sin(\theta_{2})-x_2\bigr)^2-L^2\end{pmatrix}.
\end{equation}
We choose the specifications of the actual DexTAR robot $l=59\,\mathrm{mm}$ and $L=90\,\mathrm{mm}$. We aim at approximating the inverse geometric model, which associate joint coordinates $\theta_1$ and $\theta_2$ to the pose coordinates $x_1$ and $x_2$. We choose a workspace $X=[0,40]\times[2,42]$ and a home configuration $x\approx(0,22)$ and $\theta=(\tfrac{\pi}{2},\tfrac{\pi}{2})$, such that $f(x,\theta)=0$, for which continuous inverse geometric models $\theta_1:X\rightarrow\R$ and $\theta_2:X\rightarrow\R$ are well defined.

The two-step approach is adapted to the approximation of these inverse geometric models. Each $\theta_l(x)$, $l\in\{1,2\}$, is approximated independently, so we fix it to one of the two values. The initial discretization $\tX$ is a $36\times36$ regular grid, and the inverse model is evaluated on the grid by solving the inverse geometric problem using a standard Newton iteration. Solving the corresponding LP leads to the approximation solution $\pLP(x)=\phi(x)^T\aLP$ computed in the first step. Same statistics and contour plots as in previous section are given in Table~\ref{tab:DexTAR} and Figure~\ref{fig:DexTAR-XP}.

For the second step, we need an explicit expression suited to the Newton iteration, so we cannot rely on the evaluation of the inverse models through an inner Newton iteration. Therefore we need to include explicitly the geometric model~\eqref{eq:geometric-model} in the system and avoid using explicitly the expression of the function to be approximated $f(x)=\theta_l(x)$, where $l\in\{1,2\}$ is the coordinate fixed from the begining: firstly, for each extremal point variable $x_i\in X$ we add a new variable $\theta_i\in\R^2$ and the equation
\begin{equation}
h(x_i,\theta_i)=0\in\R^2,
\end{equation}
thus keeping the system of equations square. Secondly, Equation~\eqref{eq:Newton-extreme-point-errors} cannot be used as is because it involves explicitly $f(x)=\theta_l(x)$. Enforcing extremal points to have the same error is now achieved by
\begin{equation}\label{eq:Newton-extreme-point-errors-DexTAR}
	|\phi(x_i)^Ta-\theta_{il}|=|\phi(x_{i+1})^Ta-\theta_{(i+1)\,l}|.
\end{equation}
Thirdly, Equation~\eqref{eq:Newton-KKT} involves the gradient of the error, which again cannot be used explicitly. Therefore, we use the implicit function theorem and replace $\tfrac{d}{dy}f(y)$ in Equation~\eqref{eq:Newton-KKT} by
\begin{equation}\label{eq:Newton-KKT-DexTAR}
\frac{d}{dy}\theta_l(y) = -\left(\Bigl(\frac{d}{d\theta}h(y,\theta)\Bigr)^{-1}\Bigl(\frac{d}{dy}h(y,\theta)\Bigr)\right)_{l:},
\end{equation}
where the subscript $l\!:$ means the $l^\mathrm{th}$ row. Equation~\eqref{eq:Newton-KKT-DexTAR} can be computed formally in our example. We finally obtain a system of equations that involves the geometric model $h(x,\theta)$ instead of the inverse geometric model $\theta_l(x)$. The Newton iteration was ran successfully for degrees ranging from $1$ to $4$, showing a quadratic convergence to the solution. The kernel vector was checked to be positive and the error at the extremal points of this solution was checked to equal the global error (see Table~\ref{tab:DexTAR}), hence showing that the limit of the Newton iteration satisfies the optimality condition. We see again in Table~\ref{tab:DexTAR} that best affine approximations are strongly unique, as well as the quadratic best approximation of $\theta_2(x)$. We observe too once again on Figure~\ref{fig:DexTAR-XP} that non strongly unique approximation have cluster of several active indices (here two) surrounding approximate extremal points.

\begin{table}[tbhp]
\footnotesize
\caption{Data for the approximation of the DexTAR inverse geometric model: the first and second line of each group of lines correspond to the approximation of $\theta_1(x)$ and $\theta_2(x)$ respectively. The columns are the same as in Table~\ref{tab:Runge}.}\label{tab:DexTAR}
\begin{center}
\begin{tabular}{|c|c|c|c|c|c|c|c|} \hline
deg & \hspace{0.2cm}$n$\hspace{0.2cm} & \#act & \#ext & \#zero & discrete error & Newton error & global error
\\\hline
   $1$ & $3$ & $4$ & $4$ & $0$ & $0.049361$ & $0.049362$ & $0.049362$
\\ $1$ & $3$ & $4$ & $4$ & $0$ & $0.150606$ & $0.150664$ & $0.150664$
\\\hline
$2$ & $6$ & $7$ & $6$ & $0$ & $0.007901$ & $0.007904$ & $0.007904$
\\ $2$ & $6$ & $7$ & $7$ & $0$ & $0.054600$ & $0.054646$ & $0.054646$
\\\hline
$3$ & $10$ & $11$ & $9$ & $0$ & $0.001379$ & $0.001380$ & $0.001380$
\\$3$ & $10$ & $11$ & $9$ & $0$ & $0.017893$ & $0.017913$ & $0.017913$
\\\hline
$4$ & $15$ & $16$ & $13$ & $0$ & $0.000237$ & $0.000237$ & $0.000237$
\\$4$ & $15$ & $16$ & $13$ & $0$ & $0.006038$ & $0.006047$ & $0.006047$
\\\hline
\end{tabular}
\end{center}
\end{table}

\begin{figure}
\centering
\includegraphics[width=0.4\linewidth]{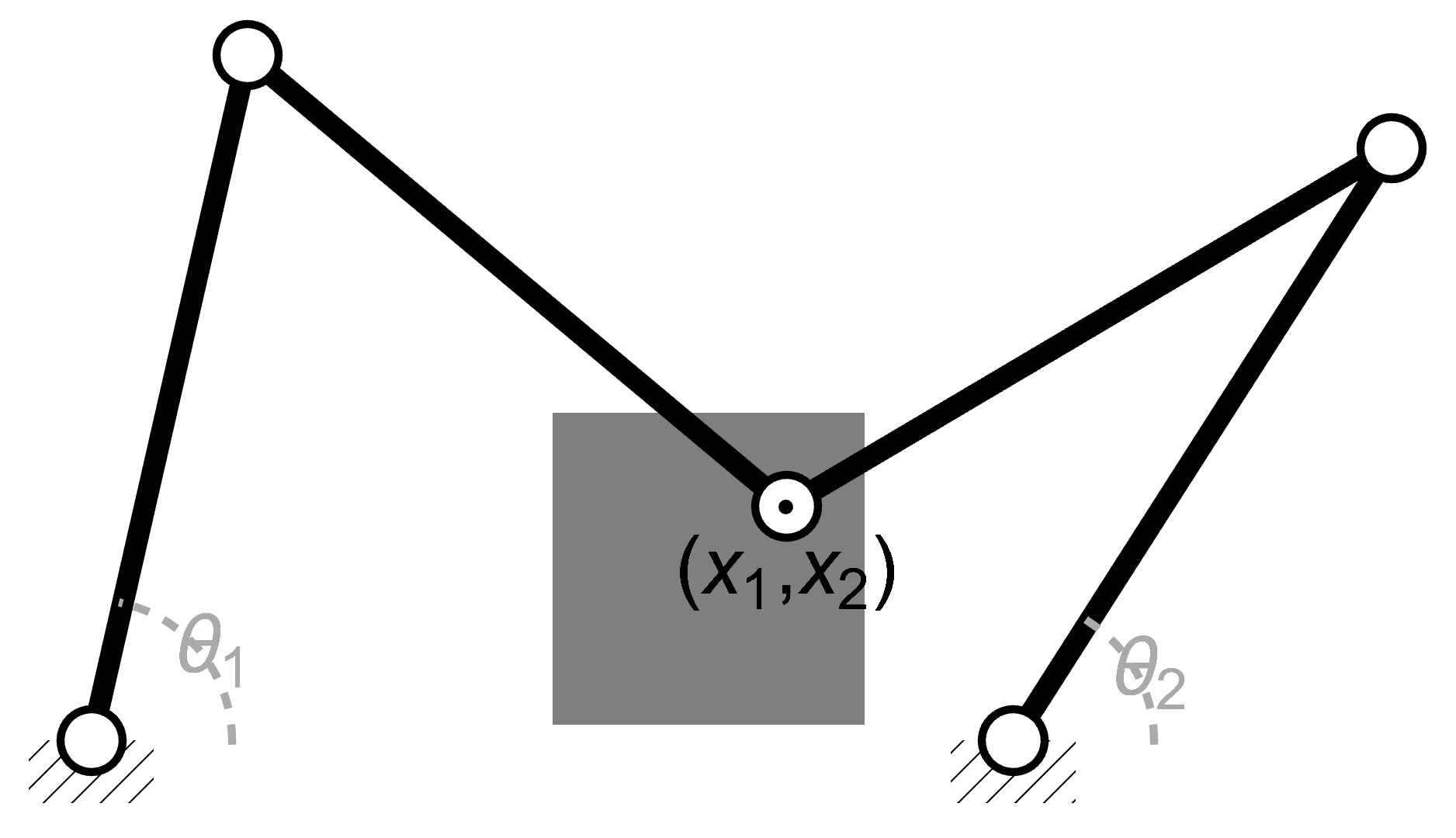}\hfill\includegraphics[width=0.29\linewidth]{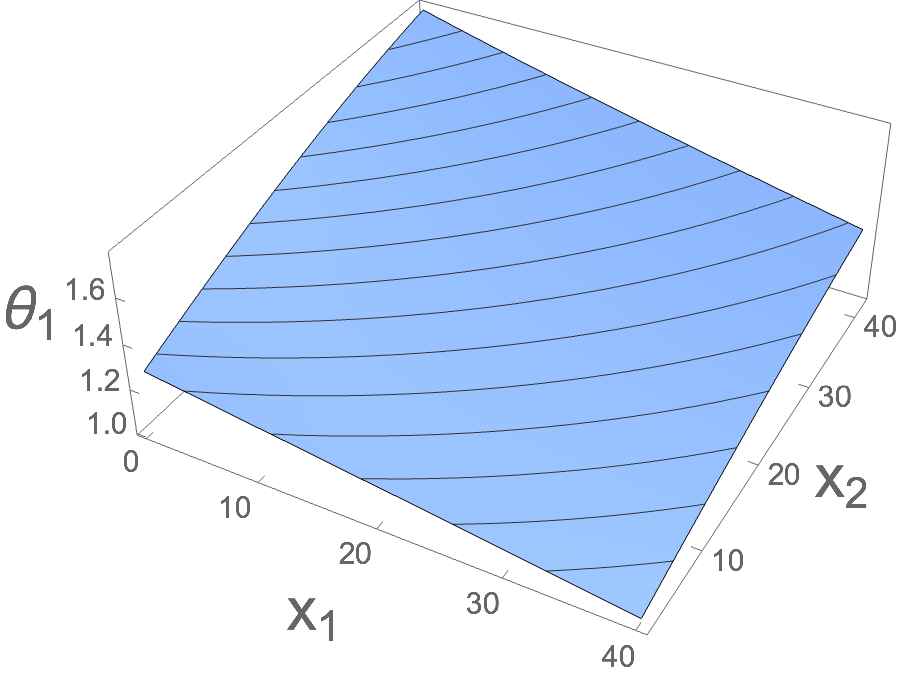}\hfill\includegraphics[width=0.29\linewidth]{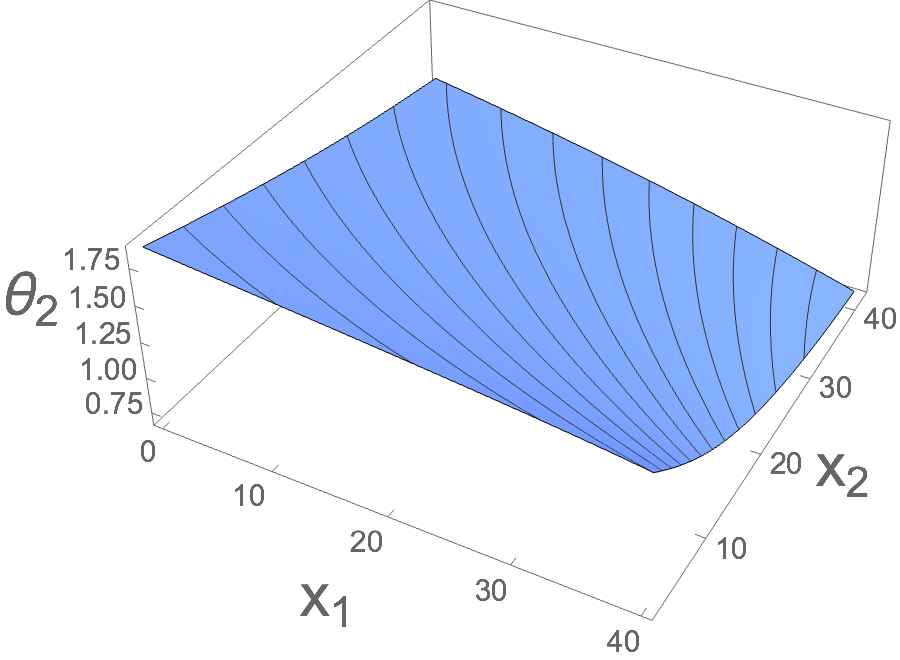}
\\\includegraphics[width=0.29\linewidth]{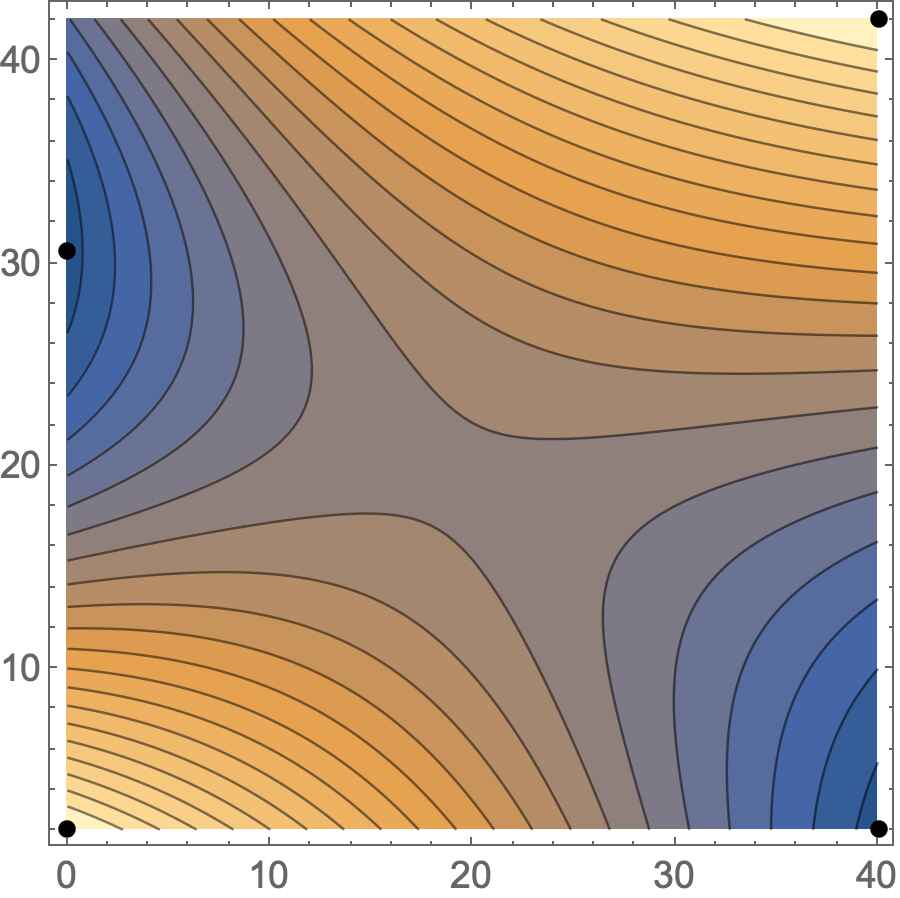}\hfill\includegraphics[width=0.29\linewidth]{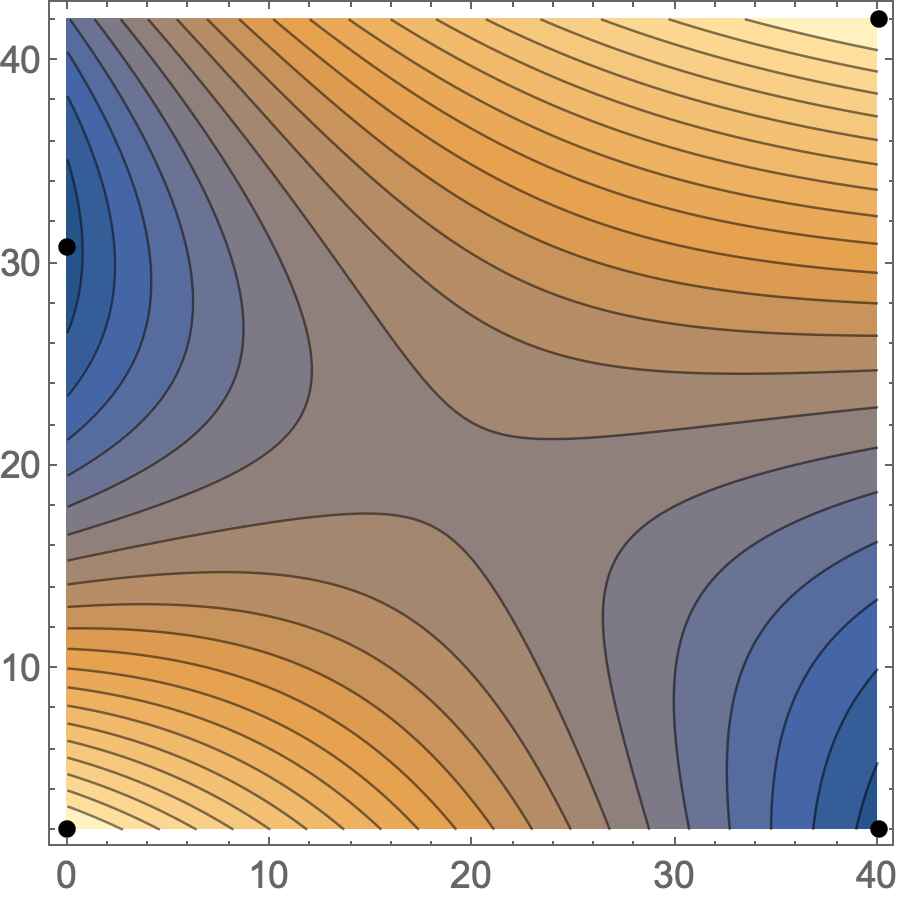}\hfill\includegraphics[width=0.4\linewidth]{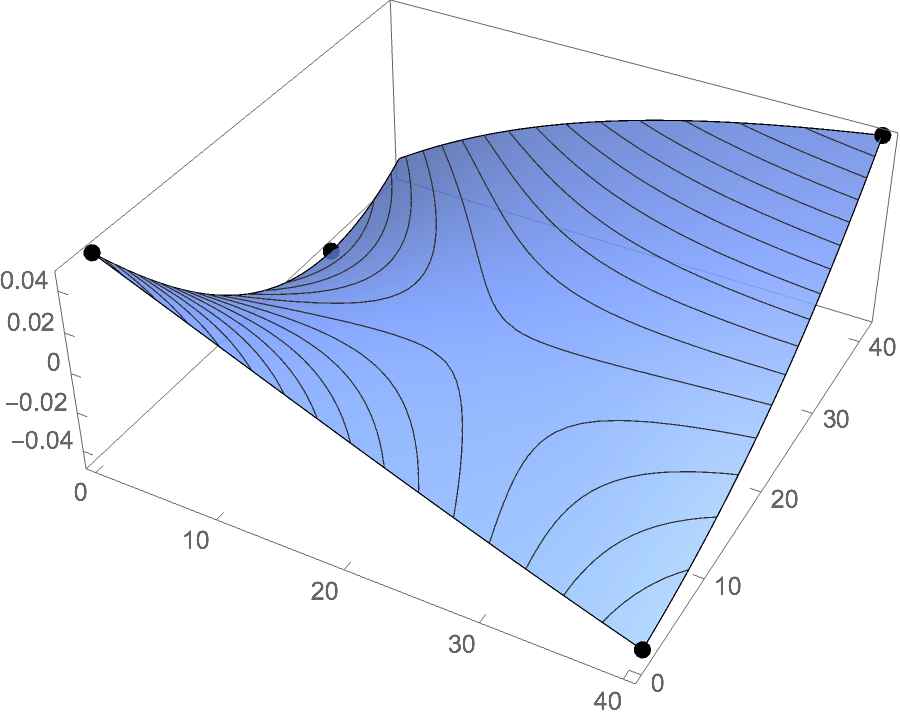}
\\\includegraphics[width=0.29\linewidth]{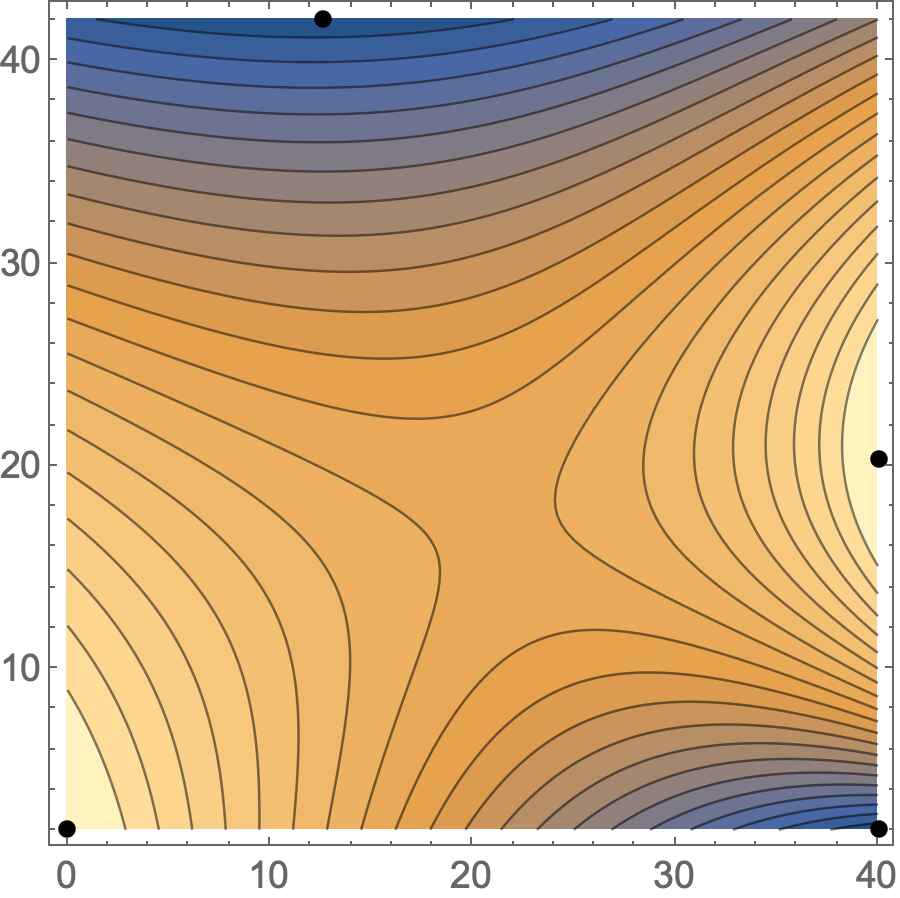}\hfill\includegraphics[width=0.29\linewidth]{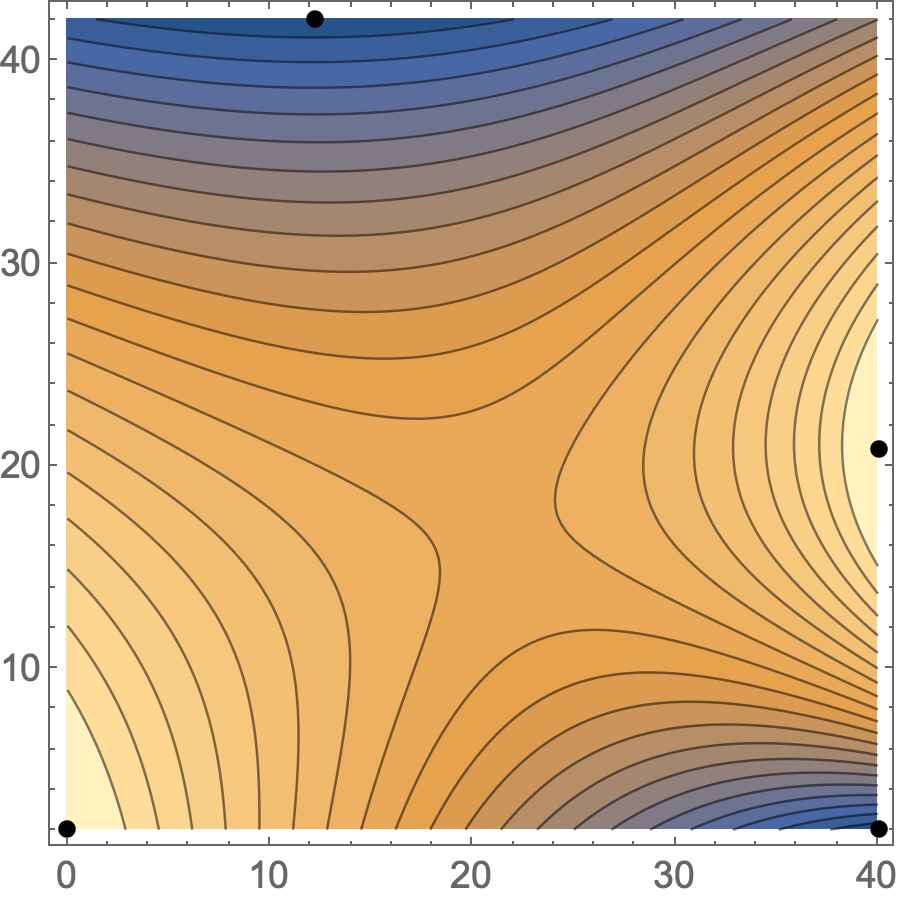}\hfill\includegraphics[width=0.4\linewidth]{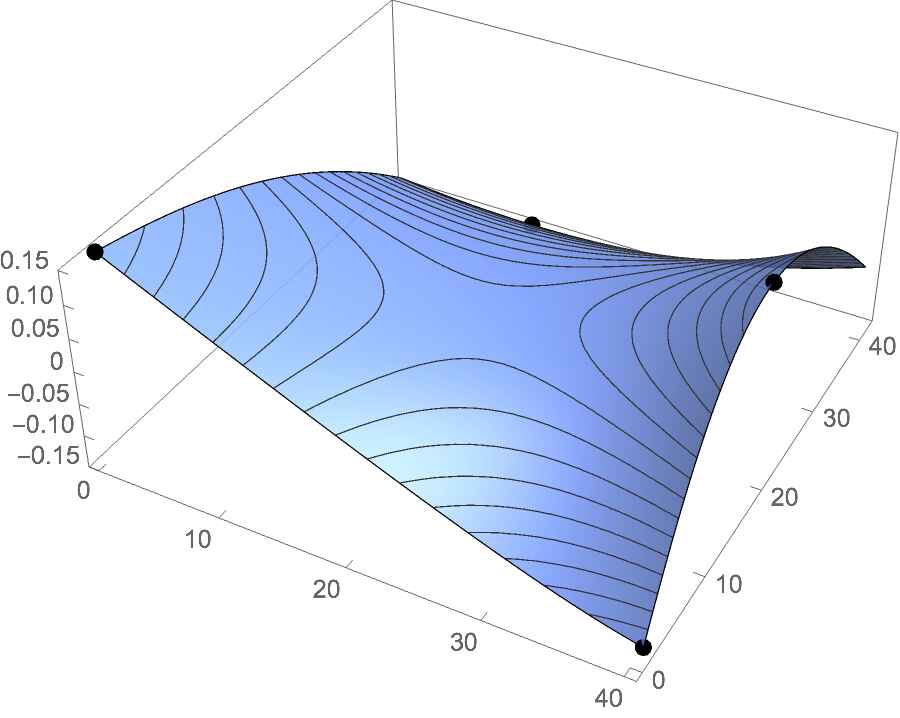}
\\\includegraphics[width=0.29\linewidth]{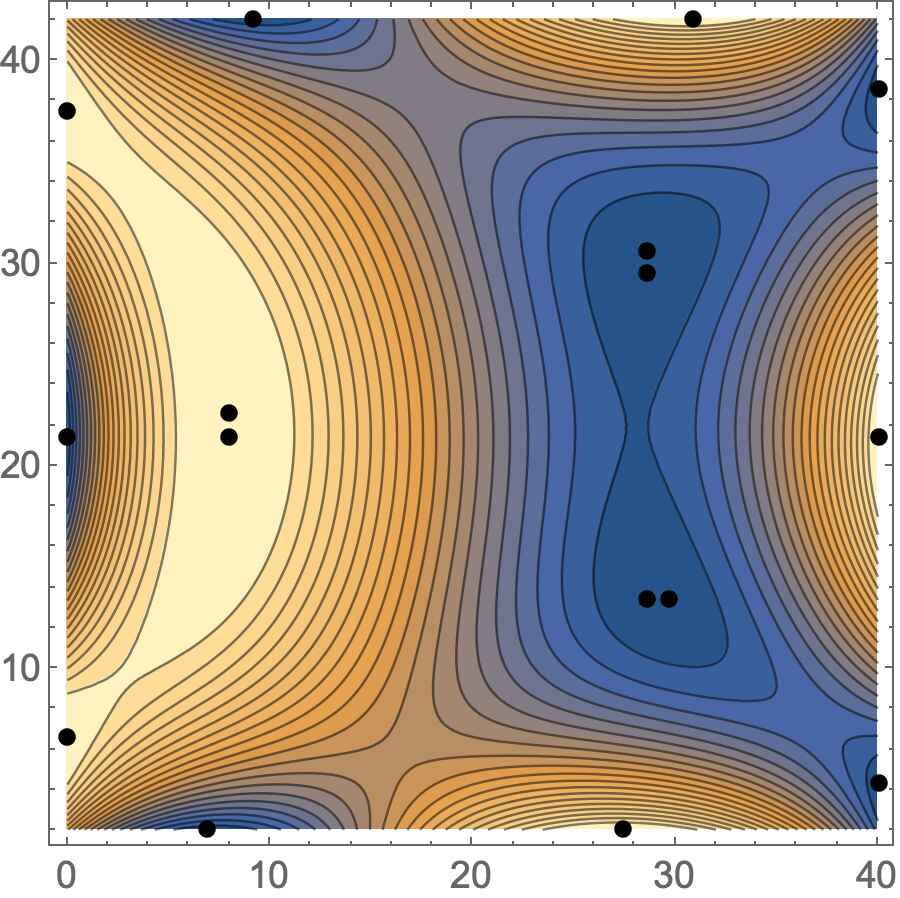}\hfill\includegraphics[width=0.29\linewidth]{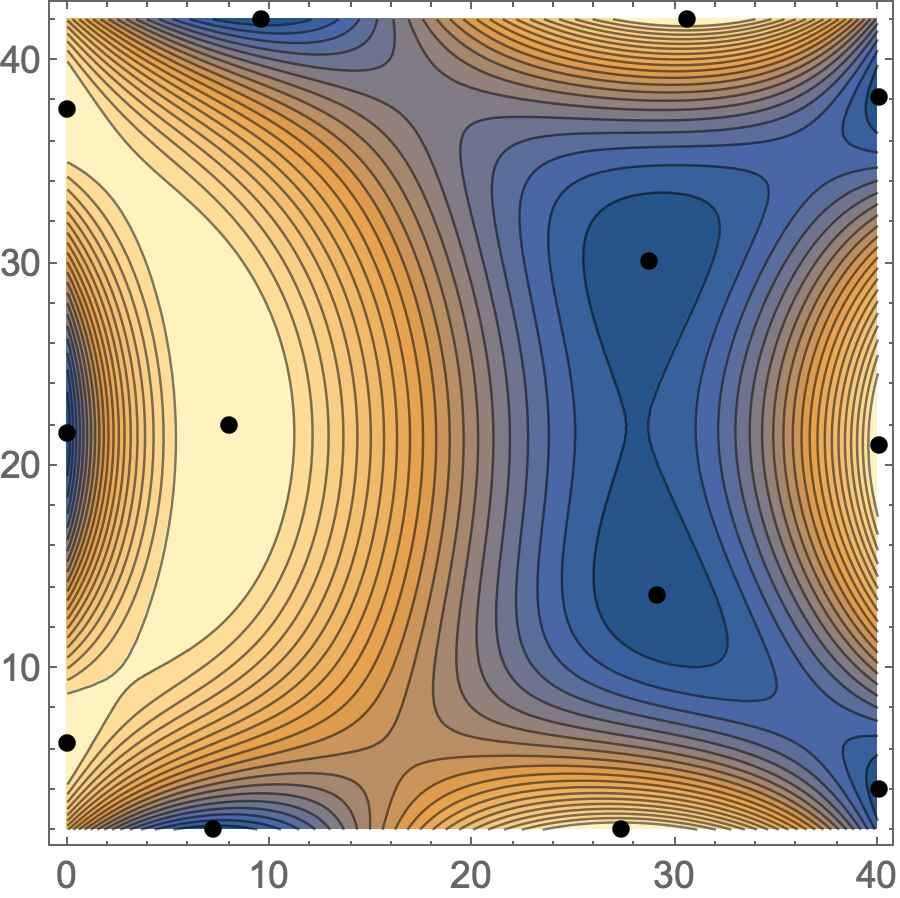}\hfill\includegraphics[width=0.4\linewidth]{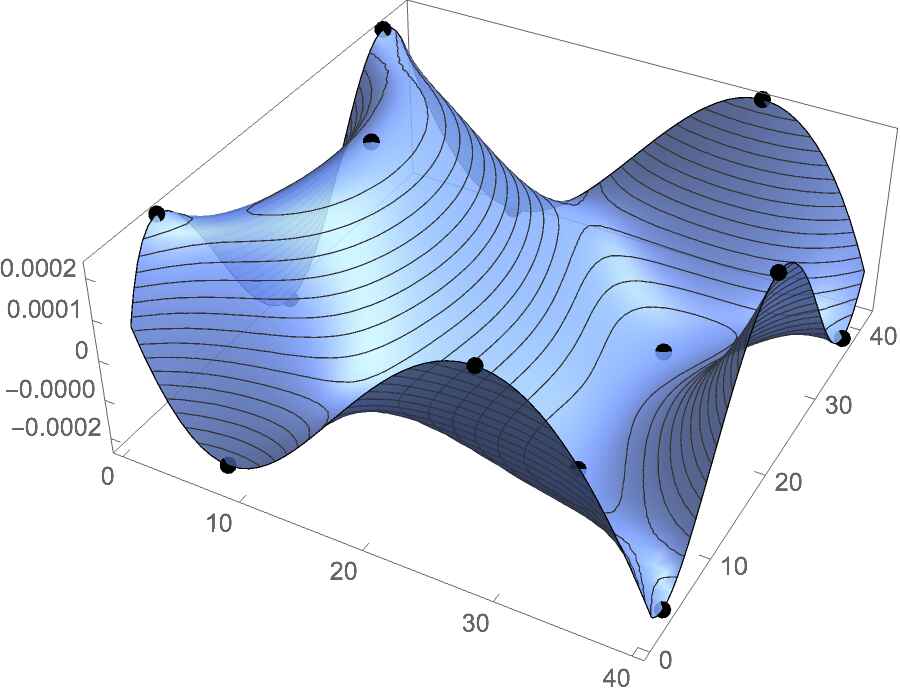}
\\\includegraphics[width=0.29\linewidth]{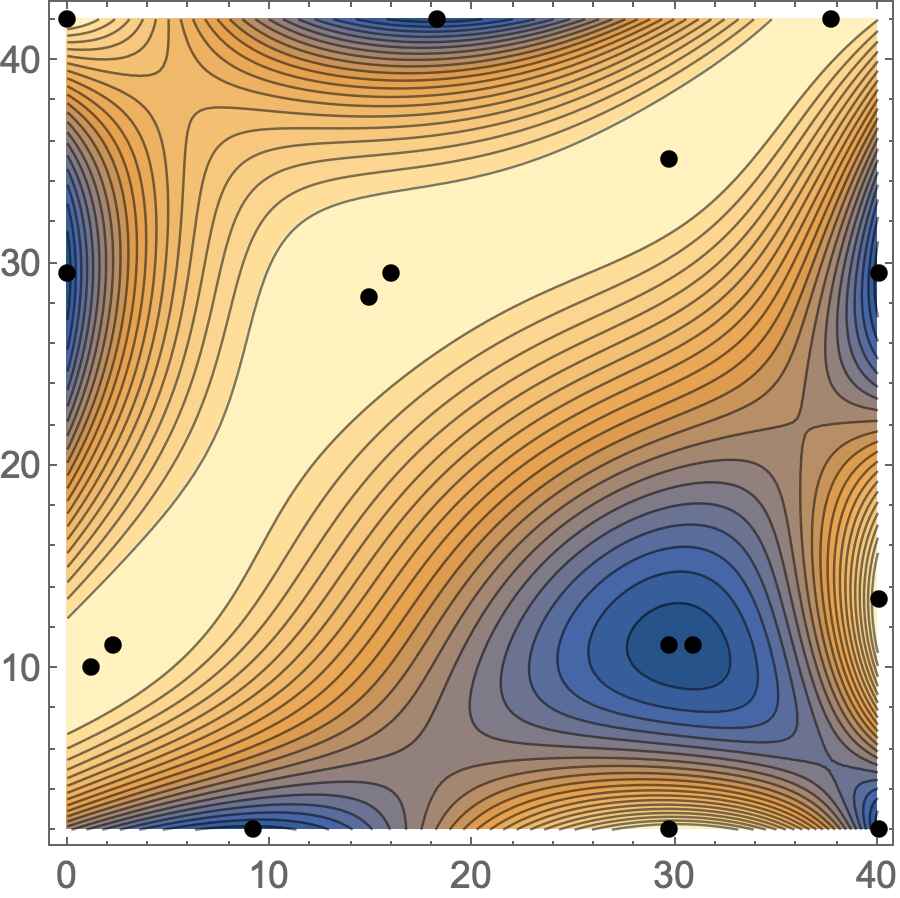}\hfill\includegraphics[width=0.29\linewidth]{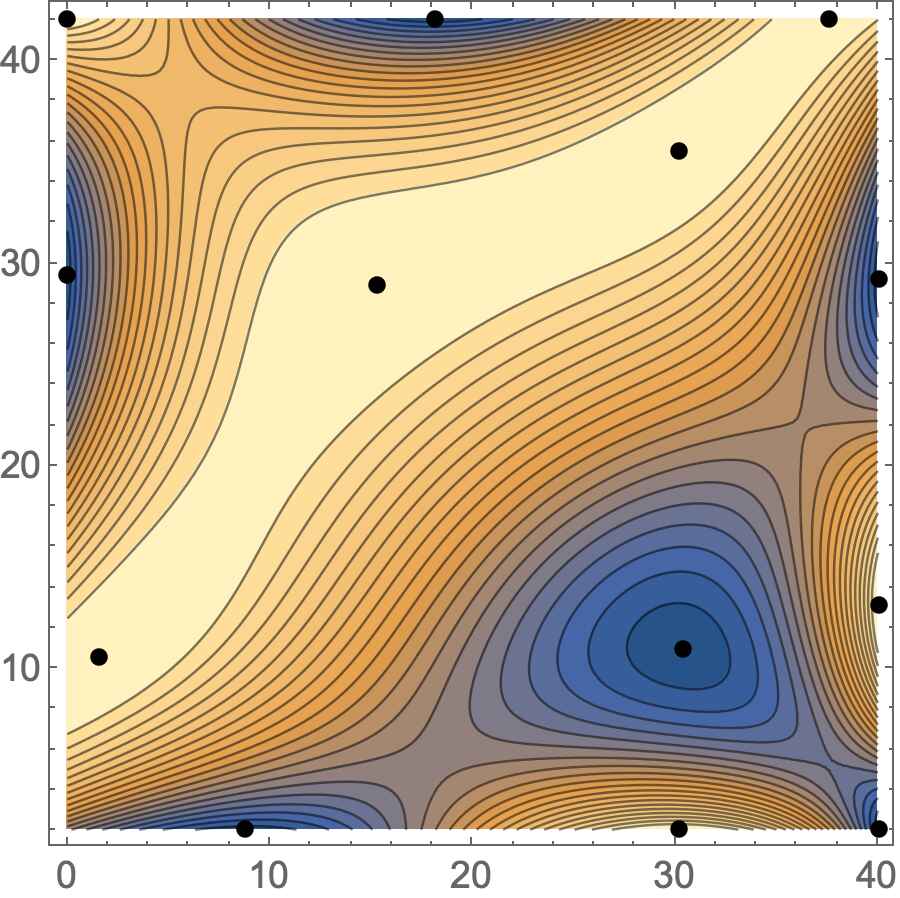}\hfill\includegraphics[width=0.4\linewidth]{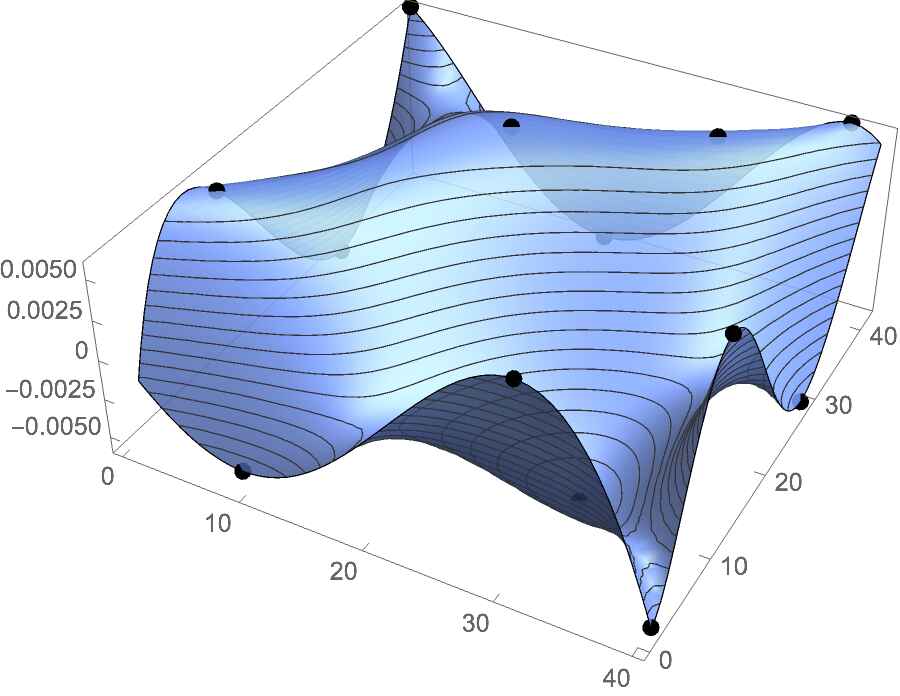}
\caption{The first row shows, from left to right, the diagramatic representation of the DexTAR, and the graphs of DexTAR inverse geometric models $\theta_1(x)$ and $\theta_2(x)$. The next four rows show the approximation of the two models by polynomials of degree $1$ and degree $4$.\label{fig:DexTAR-XP}}
\end{figure}

\subsection{Uniform approximation with polynomial evaluation error}\label{ss:approximation+evaluation}

In this section, we consider univariate polynomial approximation with $\phi:[\lx,\ux]\rightarrow\R^{n}$ and $\phi(x)=(1,x,x^2,\ldots,x^{n-1})$, so that $\phi(x)^Ta=\sum_{i=1}^{n}a_i\,x^{i-1}$. Arzelier, Bréhard and Joldes~\cite{Arzelier2025} proposed to minimize the evaluation error of a polynomial together with its approximation error. We restrict our attention to the evaluation of polynomials using the Horner form, whose worst case evaluation error can be approximated to first order by
\begin{equation}
u\,\sum_{j=1}^{n}c_j\bigl|\sum_{i=j}^na_i\,x^{i-1}\bigr|,
\end{equation}
where $u=2^{-p}$, $p\in\N$ being the precision of the floating point number format, i.e., number of bits of the mantissa~\cite{Muller2018}, and $c_1=c_n=1$ and $c_j=2$ otherwise. This can be written in matrix form $u\,\sum_{j=1}^{n}\,\bigl|(E_j\,\phi(x))^T\,a\bigr|$ with $E_j$ the diagonal matrix with zero on $j-1$ first diagonal entries, and $c_j$ in the other entries. For example, with $n=4$ we have
\begin{equation}
E_1=I, \ E_2=\begin{pmatrix}0&0&0&0\\0&2&0&0\\0&0&2&0\\0&0&0&2\end{pmatrix}, \ E_3=\begin{pmatrix}0&0&0&0\\0&0&0&0\\0&0&2&0\\0&0&0&2\end{pmatrix}, \ E_4=\begin{pmatrix}0&0&0&0\\0&0&0&0\\0&0&0&0\\0&0&0&1\end{pmatrix}.
\end{equation}
For writing convenience, let $e_0(a,x)=\phi(x)^Ta-f(x)$, and $e_i(a,x)=(E_i\,\phi(x))^T\,a$ for $i\in\{1,\ldots,n\}$, and $\vec e(a,x)=(e_0(a,x),\ldots,e_n(a,x))\in\R^{n+1}$, where the arrow emphasizes the vector valued nature of this error function. The approximation problem consists in minimizing worst case of the approximation error added to the linearized worst case evaluation error:
\begin{equation}
	m(a)=\max_{x\in[\lx,\ux]}\Bigl(|e_0(a,x)|+u\,\sum_{j=1}^{n}\,\bigl|e_j(a,x)|\Bigr).
\end{equation}
For writing convenience, we define $e(a,x)=|e_0(a,x)|+u\,\sum_{j=1}^{n}\,\bigl|e_j(a,x)|$. Extreme points of the error associated to the polynomial $p(x)=\phi(x)^Ta$ are now defined as $\ext(e_a)=\{x\in[\lx,\ux]:e(a,x)=m(a)\}$, where as previously $e_a(x)=e(a,x)$. In this context, some errors $e_i(a,x)$ may turn out to be zero for some extremal points. In order to have a convenient subgradient computation, we need to define signatures for the vector-valued error function $\sig(\vec e_a)\subseteq\R\times\R^{n+1}$ that include signs of each error $e_i(a,x)$, and so that if an error is zero then the corresponding extremal point appears twice with each sign:
\begin{equation}
	(x,s)\in\Sigma(\vec e_a) \iff x\in\ext(e_a) \text{ and } \forall i\in\{0,\ldots,n\},s_i\,e_i(a,x)\geq0.
\end{equation}
For example, if $x\in\ext(e_a)$ has $k$ zero errors then it will appear $2^k$ times inside $\sig(a)$, with different signs for each zero error. Finally, for $(x,s)\in\Sigma(\vec e_a)$ we define
\begin{align}
	\psi(x,s)&=\nabla\Bigl(s_0\,e_0(a,x)+u\,\sum_{j=1}^{n}\,s_j\,e_j(a,x)\Bigr)
	\\&=s_0\,\phi(x)+u\,\sum_{j=0}^{n}s_j\,E_j\,\phi(x).
\end{align}
The following theorem is a kernel optimality condition for problem of minimizing the sum of the evaluation error of a polynomial and with its approximation error.


\begin{theorem}\label{thm:approximation+evaluation}
	The polynomial $\bp(x)=\phi(x)^T\ba$ is a minimizer of $m(a)$ if and only if there exists a finite signature $\{(x_1,s_1),\ldots,(x_m,s_m)\}\subseteq \sig(\vec e_\ba)$, where $s_i=(s_{i0},\ldots,s_{in})\in\{-1,1\}^{n+1}$, such that the matrix whose columns are $\psi(x_i,s_i)$ has a nonzero kernel vector with non-negative components.
\end{theorem}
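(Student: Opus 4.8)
The plan is to read $m$ as a pointwise supremum of convex functions and run the subdifferential calculus of Section~\ref{ss:subdifferential}, exactly as was done for the plain uniform norm in Section~\ref{s:subdifferential-Chebyshev}. Write $m(a)=\max_{x\in[\lx,\ux]}m_x(a)$ with $m_x(a)=e(a,x)=|e_0(a,x)|+u\sum_{j=1}^{n}|e_j(a,x)|$. For fixed $x$ the function $m_x$ is convex in $a$, being a nonnegative combination of absolute values of affine functions of $a$, and for fixed $a$ the map $x\mapsto m_x(a)$ is continuous because $\phi$ and $f$ are. Hence the hypotheses of the pointwise supremum formula~\eqref{eq:subdifferential-ps} hold with $\act(\ba)=\ext(e_\ba)$, giving
\begin{equation*}
\partial m(\ba)=\conv\bigcup\{\partial m_x(\ba):x\in\ext(e_\ba)\}.
\end{equation*}

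Next I would compute $\partial m_x(\ba)$ for a fixed $x\in\ext(e_\ba)$. Each $e_i(\cdot,x)$ is affine, so by the chain rule $\partial_a|e_i(\ba,x)|=\Sign(e_i(\ba,x))\,g_i(x)$, where $g_0(x)=\phi(x)$ and $g_j(x)=E_j\phi(x)$ for $j\geq1$; this is the single vector $\sign(e_i(\ba,x))\,g_i(x)$ when $e_i(\ba,x)\neq0$ and the whole segment $\conv\{-g_i(x),g_i(x)\}$ when $e_i(\ba,x)=0$. The Minkowski-sum rule for subdifferentials then yields
\begin{equation*}
\partial m_x(\ba)=\Sign(e_0(\ba,x))\,\phi(x)+u\sum_{j=1}^{n}\Sign(e_j(\ba,x))\,E_j\phi(x),
\end{equation*}
which is precisely the convex hull of the vectors $\psi(x,s)$ over the sign patterns $s=(s_0,\ldots,s_n)\in\{-1,1\}^{n+1}$ with $s_i\,e_i(\ba,x)\geq0$ for all $i$, i.e.\ over the sign vectors admissible for $x$ in $\sig(\vec e_\ba)$. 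Taking the union over $x\in\ext(e_\ba)$ and then the convex hull gives
\begin{equation*}
\partial m(\ba)=\conv\{\psi(x,s):(x,s)\in\sig(\vec e_\ba)\},
\end{equation*}
so $\{\psi(x,s):(x,s)\in\sig(\vec e_\ba)\}$ is a generating set of subgradients for $m$ at $\ba$.

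To finish, I would invoke Corollary~\ref{cor:optimality} with this generating set: the optimality condition $0\in\partial m(\ba)$ of Proposition~\ref{prop:optimality} is equivalent to the existence of a finite subsignature $\{(x_1,s_1),\ldots,(x_m,s_m)\}\subseteq\sig(\vec e_\ba)$ such that the matrix with columns $\psi(x_i,s_i)$ has a nonzero kernel vector with nonnegative components, which is exactly the claimed characterization (and $m\leq n+1$ may be imposed by Carath\'eodory, as in Corollary~\ref{cor:optimality}). Conversely such a kernel vector, after normalization, exhibits $0$ as a convex combination of the $\psi(x_i,s_i)$, hence $0\in\partial m(\ba)$ and $\ba$ minimizes $m$ by Proposition~\ref{prop:optimality}.

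The bulk of the argument --- convexity and continuity of the $m_x$, the chain and sum rules, and the passage from $0\in\partial m(\ba)$ to a kernel condition --- is routine once phrased this way, following the template of Theorem~\ref{thm:convex-Chebyshev-approximation}. The one genuinely delicate point, and the reason the definition of $\sig(\vec e_\ba)$ lists an extreme point once per admissible sign vector, is the degenerate case where some component error $e_i(\ba,x)$ vanishes at an extreme point: there $|e_i(\cdot,x)|$ is nonsmooth and contributes a full segment $\conv\{-g_i(x),g_i(x)\}$ to the Minkowski sum, so the generating set of $\partial m_x(\ba)$ really is the set of all $\psi(x,s)$ with $s$ admissible, not a single gradient. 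Getting this multiplicity bookkeeping right is what makes the translation to the stated kernel condition exact.
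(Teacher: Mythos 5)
Your proposal is correct and follows essentially the same route as the paper: the pointwise supremum rule combined with the Minkowski-sum rule yields $\partial m(\ba)=\conv\{\psi(x,s):(x,s)\in\sig(\vec e_\ba)\}$, after which the kernel condition follows from the zero-in-the-convex-hull criterion and Carath\'eodory's theorem. Your explicit treatment of the degenerate case $e_i(\ba,x)=0$, where $\partial|e_i(\cdot,x)|$ contributes a full segment and the signature lists the extreme point once per admissible sign vector, is exactly the bookkeeping the paper's terser proof relies on implicitly.
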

\begin{proof}
The subdifferential of $m(\ba)$ is computed using standard rules:
\begin{align}
\partial m(\ba)&=\conv\bigl\{ \ \partial \bigl( \ |e_0(\ba,x)|+u\,\sum_{j=1}^{n}\,\bigl|e_j(\ba,x)| \ \bigr): \ x\in\ext(e_\ba) \ \bigr\}
\\ &=\conv\bigl\{ \ \partial|e_0(\ba,x)|+u\,\sum_{j=1}^{n}\,\partial\bigl|e_j(\ba,x)| \ : \ x\in\ext(e_\ba) \ \bigr\}
\\ &=\conv\bigl\{ \ s_0\,\nabla e(a,x)+u\,\sum_{j=1}^{n}\,s_j\,\nabla e_j(a,x) \ : \ (x,s)\in\sig(\vec e_\ba) \ \bigr\}.
\end{align}
The last expression gives rise to $\partial m(\ba)=\conv\bigl\{\psi(x,s) : (x,s)\in\sig(\vec e_\ba) \bigr\}$. Finally, by Carathéodory's theorem, zero in the convex hull of this subdifferential is equivalent to zero in the convex hull of finitely many generators, which is the statement.
\end{proof}
We now use the two-step approach with the optimality condition of Theorem~\ref{thm:approximation+evaluation} for the Newton step. We use a local necessary condition of Theorem~\ref{thm:approximation+evaluation} in the form of a system of equations. Variables are $x_1,\ldots,x_k\in[\lx,\ux]$, $a_1,\ldots,a_n\in\R$ and $\lambda_1,\ldots,\lambda_k\in\R$, where $k$ is fixed to the number of thought extrema from the initial iterate. The first group of $k$ constraints encodes local extremality of each extremal point. For simplicity, we now assume that no error is zero so that local extremality can be characterized using derivatives (this assumption needs to be confirmed on the initial iterate). Local extremality is then expressed by
\begin{subequations}
\begin{align}
	x_1&=\lx \text{ or } \tfrac{\partial}{\partial x}e(x_1,a)=0
	\\\tfrac{\partial}{\partial x}e(x_i,a)&=0 \text{ for } i\in\{2,\ldots,k-1\}
	\\x_m&=\lx \text{ or } \tfrac{\partial}{\partial x}e(x_k,a)=0,
\end{align}
\end{subequations}
where the disjunctions for the first and last extremal points depends whether they are guessed to lie on the boundary or inside the domain (see Example~\ref{ex:approximation+evaluation} below). The second group of $k-1$ constraints encodes that extremal points need to have the same total error value:
\begin{equation}
	e(x_i,a)=e(x_{i+1},a)\text{ for } i\in\{1,\ldots,k-1\},
\end{equation}
which is differentiable with respect variables $x_i$, accordingly to the assumption that no error is zero at any extremal point. The third group of $n+1$ constraints encodes the kernel condition
\begin{equation}
\sum_{i=1}^k \lambda_i\,\psi(x_i,\epsilon(a,x_i))=0 \text{ and }\sum_{i=1}^k \lambda_i=1,
\end{equation}
where $\epsilon(a,x_i)=(\sign{e_0(a,x)},\ldots,\sign{e_n(a,x)})\in\{-1,1\}^{n+1}$, a linear normalization being used because kernel vectors are expected to be non-negative. We have finally constructed a square system of dimension $2k+n$, which is a local version of Theorem~\ref{thm:approximation+evaluation}.

The two-step approach is illustrated on the case of Example~3 of~\cite{Arzelier2025}.

\begin{figure*}[t]
\centering
\subfloat[Approximation error of $p_0(x)$.\label{fig:p0-approximation}]{
	\centering\includegraphics[width=0.3\textwidth]{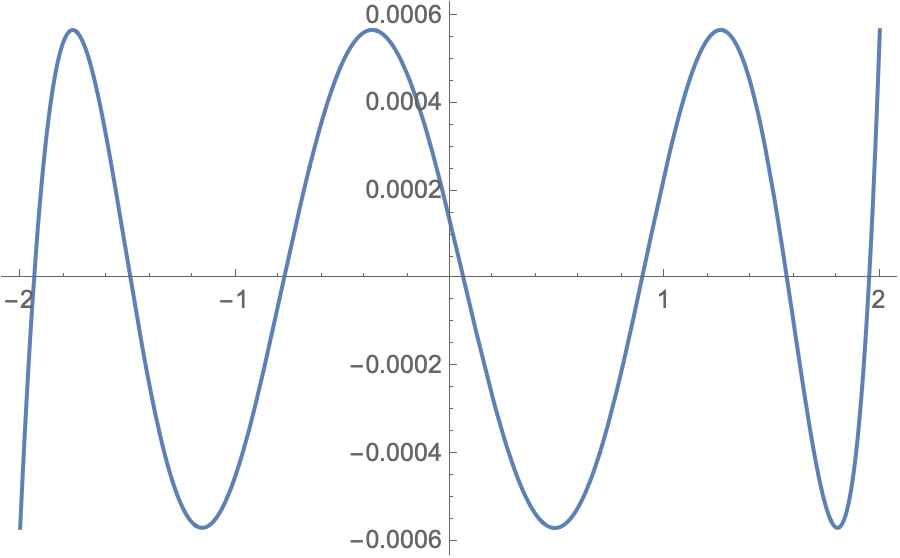}
}
\hspace{0.01\textwidth}
\subfloat[Worst case total error for $p_0(x)$ (linearized worst case evaluation error alone in orange).\label{fig:p0-approximation+evaluation}]{
	\centering\includegraphics[width=0.3\textwidth]{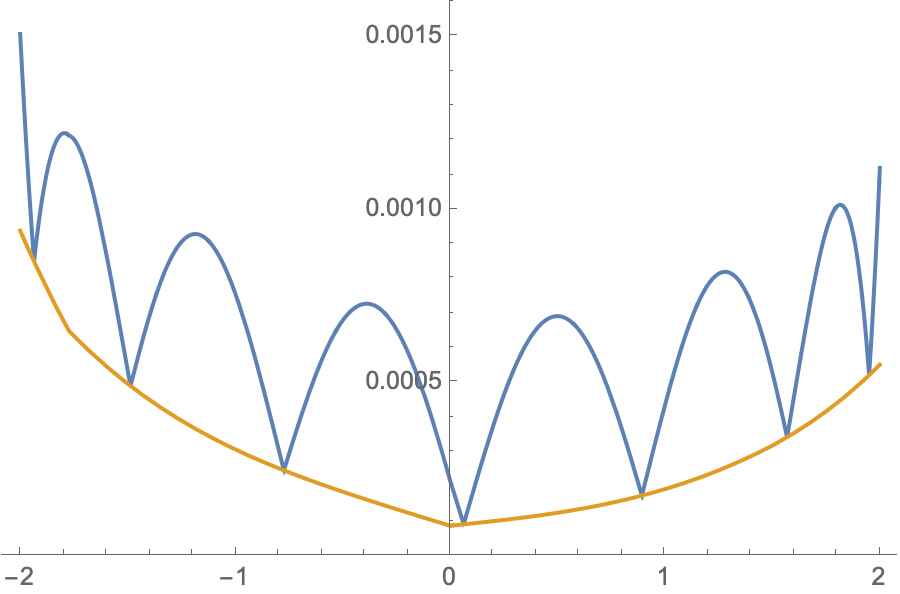}
}
\hspace{0.01\textwidth}
\subfloat[Worst case total error for $p_1(x)$ (linearized worst case evaluation error alone in orange).\label{fig:p0-approximation+evaluation-optimal}]{
	\centering\includegraphics[width=0.3\textwidth]{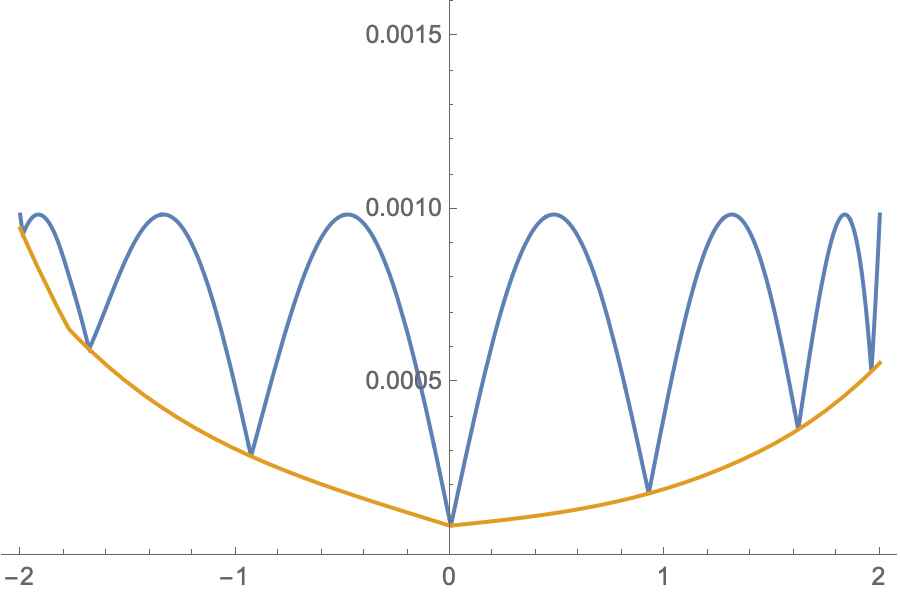}
}
\caption{Approximation and evaluation errors for different polynomial approximations.}
\end{figure*}

\begin{example}\label{ex:approximation+evaluation}
	The Airy function is to be approximated by a polynomial of degree~$6$ on the interval $[-1,1]$. We first approximate it in the usual Chebyshev sense. Using a sample of $81$ equidistant points (with sample distance $0.05$) and solving the corresponding finite linear problem, we obtain the polynomial
	\begin{multline}
	p_0(x)=0.00173x^6-0.0026 x^5-0.02068 x^4\\+0.06367 x^3-0.00088 x^2-0.26085 x+0.35516,
	\end{multline}
	where coefficients are rounded to $10^{-5}$. The error function $p_0(x)-f(x)$ is show in Figure~\ref{fig:p0-approximation}, where it is seen to approximatly equioscillate. Figure~\ref{fig:p0-approximation+evaluation} shows the linearized worst case evaluation error in orange, and the sum of the two. In order to apply Newton's method, we need initial guesses for the polynomial, the extremal points and the kernel vector. For the first two, we use $p_0(x)$ and approximations of its error extremal points given Table~\ref{table:extreme-points}.
\begin{table}
\centering
\begin{tabular}[t]{|c|c|c|c|c|c|c|c|c|}
\hline
$x_1$&$x_2$&$x_3$&$x_4$&$x_5$&$x_6$&$x_7$&$x_8$
\\\hline$-2.$& $-1.7943$& $-1.1847$& $-0.3875$& $0.4998$& $1.2803$& $1.8159$& $2.$
\\\hline
\end{tabular}
\caption{List of approximate extremal points for $p_0(x)-f(x)$.\label{table:extreme-points}}
\end{table}
	We clearly see that the first and last extremal points lie on the boundary of the interval, and we guess so for the optimal solution. The corresponding approximate subgradient matrix is
\begingroup
\everymath{\scriptstyle}
	\begin{equation}
\left(
\begin{array}{cccccccc}
 -0.9998 & 1.0002 & -0.9998 & 1.0002 & -0.9998 & 1.0002 & -0.9998 & 1.0002 \\
 2.0005 & -1.7939 & 1.1838 & -0.3878 & -0.4999 & 1.28 & -1.8163 & 1.9995 \\
 -4.0029 & 3.2173 & -1.4032 & 0.1502 & -0.2497 & 1.6395 & -3.2966 & 4.001 \\
 8.0098 & -5.7702 & 1.6631 & -0.0582 & -0.1247 & 2.1 & -5.9832 & 8.0059 \\
 -16.0273 & 10.3487 & -1.9712 & 0.0225 & -0.0624 & 2.6872 & -10.8701 & 16.0039 \\
 32.0391 & -18.5782 & 2.3342 & -0.0087 & -0.0312 & 3.4387 & -19.729 & 32.0234 \\
 -64.0625 & 33.3439 & -2.7646 & 0.0034 & -0.0156 & 4.4035 & -35.8165 & 64.0625 \\
\end{array}
\right),
	\end{equation}
\endgroup
	whose kernel vector $\lambda\approx(0.0597, 0.1188, 0.128, 0.14, 0.1476, 0.1563, 0.1648, 0.0848)$ can be used as an initial iterate for the Newton method. With these initial guesses, the Newton method converges to the polynomial
	\begin{multline}
	p_1(x)=0.0018 x^6-0.00277 x^5-0.02113 x^4\\+0.06447 x^3-0.00027 x^2-0.26164 x+0.35504,
	\end{multline}
	where coefficients are rounded to $10^{-5}$, in $5$ iterations with residual norm $10^{-14}$. Figure~\ref{fig:p0-approximation+evaluation-optimal} shows the sum of the worst case errors for this new polynomial: the approximation error is increased in the middle of the interval, where the evaluation error is small, and decreased near the bounds of the interval, where the evaluation error is large. All figures are consistent with the results from~\cite{Arzelier2025}.
\end{example}

\section{Conclusion}


Algorithms for computing multivariate best uniform approximations face difficulties that do not arise in the context of univariate approximation: firstly, the difficulty of computing and/or identifying error extremal points increases with the number of variables, which impacts both Remez-like algorithms and two-phase algorithms. Two-phase algorithms are simple but need a fine enough discretization, which becomes less and less tractable as the number of variables increases. Remez-like algorithms improve this situation by selecting samples corresponding to the worst error, but are impacted by the lack of strong uniqueness, which is common in the multivariate context. Some, attempts have been made to improve the situation, e.g.,~\cite{Osborne1969,Reemtsen1990,Sukhorukova2022}, and hybridizing Remez-like algorithms with Newton method applied to optimality conditions may offer some robust algorithmic framework for multivariate approximation.

The optimality condition most suited to algorithmic implementation are the kernel conditions. Nevertheless, all optimality conditions bring some insights. Although restricted to real-valued approximation, the subgradient conditions allowed homogenizing uniform approximation and relative Chebyshev centers through set-valued uniform approximation. On the other hand, subdifferential calculus and optimality conditions offer simple proofs of statements, which are more accurate in some cases. Furthermore, they present the advantage that they are related to affine underestimators, which bring some additional understanding and can be useful in practical algorithms, e.g., for finding descent directions or in subgradients algorithms. Interestingly, subgradient algorithms with memory, like Kelley's Method~\cite[Section 3.2.2 page 226]{Nesterov2018}, applied to the minimization of $m(a)$ are closely related to Remez's algorithm\footnote{In fact, the introduction cutting plane algorithms for convex programming in~\cite{Cheney1959,Kelley1960} was inspired by Remez's algorithm.}. In particular, the inner loop of the first Remez algorithm, which consists in maximizing $|e(a,x)|$ for a fixed $a$, actually turns out to being similar to computing a subgradient.

The key ingredient in this approach is the formula for the subdifferential of pointwise supremum functions. Extensions of this formula may lead to extensions of the optimality conditions in several directions:
\begin{itemize}
    \item The Formula for functions $m:\R^n\rightarrow\R\cup\{+\infty\}$ can handle constraint and can lead to optimality conditions for Chebyshev approximation problems with convex constraints on the coefficients. For example, finding the best uniform polynomial approximation with the constraint that the quadratic part of the polynomial is definite positive seems a challenge today.
    \item Modern formulas for subdifferential of pointwise supremum fonctions~\cite{Correa2023}, which don't require continuity nor compactness, can lead to extensions of approximation. For example, they may also help generalizing optimality conditions of relative Chebyshev centers to cases sets of function that are not totally complete.
    \item Clark generalized gradient applies to non-convex problem and enjoys a similar formula for pointwise supremum functions~\cite[Theorem 2.1 page 251]{Clarke1975}. This may lead to optimality conditions for nonlinear uniform approximation problems, e.g., low rank approximation approximations of multivariate functions~\cite{
Zamarashkin2022}, multivariate generalized rational approximations~\cite{Millan2022}, which are quasiconvex, or other classes of nonlinear approximation~\cite{Malachivskyy2019}.
	\item As pointed out in Remark~\ref{rem:Kolmogorov}, using the pointwise supremum function $\tfrac{1}{2}\,\max_{x\in X}(\bp(x)-f(x))^2$ may allow applying optimality conditions for convex optimization to the uniform approximation of functions in the field of complex numbers.
\end{itemize}



\section*{Acknowledgments}

The author was funded by the CNRS grant Emergence AMEGO.

\bibliographystyle{siamplain}
\bibliography{SIREV_approximation,refs-goldsztejn}
\end{document}